\definecolor{darkblue}{rgb}{0,0,0.3}
\definecolor{urlblue}{rgb}{0,0,0.7}
\newcommand{\TODO}[1]{\message{TODO Warning: {#1}}}
\newcommand{\updatetag}[2]{}
\newtheorem{theorem}{Theorem}[section]
\newtheorem{lemma}[theorem]{Lemma}
\newtheorem{cor}[theorem]{Corollary}
\newtheorem{prop}[theorem]{Proposition}
\theoremstyle{definition}
\newtheorem{rmk}[theorem]{Remark}
\newtheorem{condition}[theorem]{Condition}
\newtheorem{setup}[theorem]{General Setup}
\numberwithin{equation}{section}
\newcommand{\RR}{\mathbb{R}}
\DeclareMathOperator{\tr}{tr}
\DeclareMathOperator{\Ric}{Ric}
\DeclareMathOperator{\diam}{diam}
\DeclareMathOperator{\Area}{Area}
\DeclareMathOperator{\Vol}{Vol}
\DeclareMathOperator{\GH}{GH}
\DeclareMathOperator{\IF}{IF}
\renewcommand{\bar}{\overline}
\renewcommand{\tilde}{\widetilde}
\renewcommand{\epsilon}{\varepsilon}
\renewcommand{\leq}{\leqslant}
\renewcommand{\geq}{\geqslant}
\newcommand{\D}{\nabla}
\newcommand{\p}{\partial}
\renewcommand{\th}{\theta}
\newcommand{\metric}[2]{\langle{#1},{#2}\rangle}
\renewcommand{\L}{\mathcal{L}}
\renewcommand{\H}{\mathcal{H}}
\newcommand{\V}{\mathcal{V}}
\newcommand{\x}{\boldsymbol{x}}
\DeclareMathOperator{\Ch}{Ch}
\DeclareMathOperator{\length}{length}
\DeclareMathOperator{\pt}{pt}
\title{Codimension 2 drawstrings with scalar curvature lower bounds}
\author{Demetre Kazaras and Kai Xu}
\date{}
\begin{document}
\maketitle
\begin{abstract}
    We produce new examples of Riemannian manifolds with scalar curvature lower bounds and collapsing behavior along codimension 2 submanifolds. Applications of this construction are given, primarily on questions concerning the stability of scalar curvature rigidity phenomena, such as Llarull's Theorem and the Positive Mass Theorem.
\end{abstract}

\tableofcontents

\section{Introduction}\label{sec:intro}

\subsection{Background and main theorem}

The Geroch conjecture states that the only Riemannian metrics $g$ on the $3$-torus with nonnegative scalar curvature $R_g\geq0$ are flat. This fact was first established by Schoen-Yau and Gromov-Lawson \cite{SY, GL}, and represents a cornerstone in the study of scalar curvature. The present work is partially motivated by the associated {\emph{stability problem}}: if a suitably normalized $(T^3,g)$ satisfies $R_g\geq -\epsilon$, in what sense is $g$ close to a flat metric? One may probe the subtlety of this question using the Gromov-Lawson and Schoen-Yau tunnel construction \cite{GL,SY}. Indeed, there exist Riemannian $3$-tori of almost non-negative scalar curvature with regions of cylindrical and spherical geometry (respectively termed {\emph{gravity wells}} and {\emph{bags of gold}}), showing that Gromov-Hausdorff closeness is not generally expected. It was later conjectured by Sormani that a uniform ``minA'' lower bound (see \eqref{eq-intro:minA} below) would prohibit small Gromov-Lawson tunnels and that one could hope for closeness using the Sormani-Wenger Intrinsic Flat metric \cite{SorWen}. This is an entry point to a wide array of stability problems related to scalar curvature lower bounds, e.g. the Positive Mass Theorem and Llarull's theorem -- we refer the reader to the survey article of Sormani \cite{SormaniConj} for a more detailed account of these considerations.


In the authors' previous work \cite{KXdrawstring}, a new class of examples called \textit{drawstrings} were discovered, demonstrating complexity in the Geroch conjecture stability problem beyond those observed from the tunnel construction. These examples were partially inspired by the influential ideas of Lee-Naber-Neumayer \cite{LNN}. By \textit{creating a drawstring} (with parameter $\epsilon$) along an embedded submanifold $\Sigma$ in a Riemannian manifold, we mean to modify the metric in an $\epsilon$-neighborhood of $\Sigma$, such that the scalar curvature is decreased no more than $\epsilon$, and the diameter of $\Sigma$ becomes at most $\epsilon$. In \cite{KXdrawstring} it is shown that one can create a drawstring around a closed geodesic $\Sigma$ in a flat 3-torus, with any $\epsilon>0$. When $\epsilon\to0$, the resulting spaces converge to a metric quotient obtained by identifying $\Sigma$ to a point. This sequence of metrics represents the first counterexample to the minA-Intrinsic Flat stability conjecture, and a modification of this construction was used to study the volume entropy of hyperbolic 3-manifolds \cite{KSXentropy}.

The main theorem of this paper, stated as follows, produces drawstrings along any oriented codimension-2 submanifold. All the manifolds and metrics in this paper are assumed to be smooth. Below and throughout, $N_g(A,r)$ denotes the $r$-distance neighborhood of a set $A$ in a Riemannian manifold $(M,g)$.

\begin{theorem}\label{thm-intro:main}
    Let $n\geq3$ and $\Sigma^{n-2}$ be a closed oriented embedded codimension-2 submanifold of an oriented Riemannian $n$-manifold $(M^n,g)$. Given any $\epsilon>0$ and function $v_0\in C^\infty(\Sigma)$ with $v_0\leq0$, there is a radius $r_1<\epsilon$ and another metric $g'$ on $M$ satisfying the following:
    \begin{enumerate}[label=(\roman*), topsep=2pt, itemsep=-0.1ex]
        \item $g'=g$ outside $N_g(\Sigma,r_1)$,
        \item $g'|_\Sigma=e^{2v_0}g|_\Sigma$,
        \item $R_{g'}\geq R_g-\epsilon$ everywhere,
        \item for all $x\in\p N_g(\Sigma,r_1)$ we have $d_{g'}(x,\Sigma)\leq 3r_1$.
    \end{enumerate}
\end{theorem}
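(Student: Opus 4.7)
The plan is to adapt the doubly-warped product construction of \cite{KXdrawstring}, which handled $n = 3$ with $\Sigma$ a closed geodesic, to arbitrary oriented codimension-$2$ submanifolds. The key structural feature is that, $\Sigma$ being oriented in the oriented ambient $M$, the normal bundle $N\Sigma$ is a rank-$2$ oriented vector bundle, and so carries local angular coordinates. All modifications will be supported inside $N_g(\Sigma, r_1)$, immediately giving (i). Introduce Fermi-type coordinates $(r, \theta, y)$ on this tube, with $r$ the $g$-distance to $\Sigma$, $\theta$ the angular coordinate in the normal disk (defined in a local trivialization), and $y \in \Sigma$. Up to $O(r)$ terms controlled by the second fundamental form of $\Sigma$ and the ambient curvature, $g = dr^2 + r^2(d\theta + \pi^*A)^2 + g_\Sigma$ for a connection $1$-form $A$ on $N\Sigma$.

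Replace $g$ in the tube by a doubly-warped ansatz
\[
g' \;=\; dr^2 + h(r)^2 (d\theta + \pi^*A)^2 + e^{2v(r, y)} g_\Sigma + \text{(matching coupling terms)},
\]
with profiles $h(r)$, $v(r, y)$ satisfying $h(0) = 0$, $h'(0) = 1$ (smoothness across $\Sigma$); $h$, $v$ and their derivatives smoothly matching the profile of $g$ near $r = r_1$; and $v(0, y) = v_0(y)$. Conditions (ii) and (iv) are immediate: (ii) by $v(0, \cdot) = v_0$; (iv) because $|\p_r|_{g'} = 1$, so a radial curve shows $d_{g'}(x, \Sigma) \leq r_1 \leq 3r_1$ for $x \in \p N_g(\Sigma, r_1)$. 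The substance of the theorem is (iii). A direct computation yields
\[
R_{g'} = -\tfrac{2h''}{h} + \mathcal{Q}(\p_r h, \p_r v) + e^{-2v}\bigl(R_\Sigma + \mathcal{L}_\Sigma v\bigr) + E,
\]
where $\mathcal{Q}$ collects quadratic-and-mixed $r$-derivative terms of $h$ and $v$, $\mathcal{L}_\Sigma v$ collects $\Delta_\Sigma v$ and $|\D^\Sigma v|^2$ terms, and $E$ bundles uniformly bounded errors from $|F_A|^2$, the second fundamental form of $\Sigma$, and the ambient Riemann tensor.

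The decisive step is a scale analysis. Rescale $r = r_1 s$, $h = r_1 H(s)$, $v = \tilde v(s, y)$; then all terms involving $r$-derivatives scale as $r_1^{-2}$ times an $r_1$-independent profile quantity
\[
P(s, y) = -\tfrac{2H''}{H} + \tilde{\mathcal{Q}}(H', \tilde v_s),
\]
while everything else in $R_{g'}$ is uniformly bounded in $r_1$. Fix a smooth interpolation $\tilde v(s, y)$ from $v_0(y)$ at $s = 0$ to $0$ near $s = 1$, with $\p_s \tilde v(0, y) = 0$ so that $\tilde{\mathcal{Q}}$ stays bounded at $s = 0$ (where $H'/H \sim 1/s$). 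Then construct a profile $H$ on $[0, 1]$ that is strongly concave in the bulk and transitions smoothly to the identity near $s = 1$, adapted from \cite{KXdrawstring}, so that $P \geq \delta > 0$ uniformly. Consequently $R_{g'} \geq r_1^{-2}\delta - C$ for some $C = C(g, v_0, n)$, and for $r_1$ sufficiently small we obtain $R_{g'} \geq R_g - \epsilon$ throughout the tube; outside, $R_{g'} = R_g$ trivially.

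The principal technical obstacle is that $g$ is not an exact doubly-warped product: cross-terms from the second fundamental form of $\Sigma$ and the ambient Riemann tensor contribute to $E$, and one must verify these remain $O(1)$ (not $O(r_1^{-2})$) after rescaling --- a uniform perturbation estimate absent in a pure warped-product model. A secondary subtlety is that the boundary conditions on $H$ forbid strict concavity on $[0, 1]$, requiring a coupled design of $H$ and $\tilde v$ so that the $\tilde v$-contribution to $\tilde{\mathcal{Q}}$ compensates any convex portion of $H$ --- the core technique of \cite{KXdrawstring} that extends with care to the codimension-$2$ setting. Finally, the normal-bundle curvature $|F_A|^2$ makes a new $O(1)$ contribution to $E$ absent in the geodesic case; while it does not obstruct the scale argument, its absorption by $r_1^{-2}\delta$ for small $r_1$ requires explicit verification.
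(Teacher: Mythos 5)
The overall strategy is sound -- modify $g$ in a tube around $\Sigma$ via a doubly-warped ansatz, compute $R_{g'}$, and tune a radial warping against a conformal factor -- and your ansatz (after the reparameterization $d\tilde r=e^{-pu}dr$) is equivalent to the paper's
\[
g'=e^{-2pu}dr^2+e^{-2pu}h^2\omega_\th^2+e^{2u}g_\H.
\]
However, the decisive step of your argument, the scale analysis, does not work, and the gap is exactly where the paper's construction is subtle.

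First, the profile quantity $P(s,y)$ obtained after the rescaling $r=r_1s$ is \emph{not} $r_1$-independent. The profile functions that make the curvature balance possible are, following \eqref{eq-intro:KX23_main_func},
\[
f(r)=r\Big(1-\frac{c_1}{\log(1/r)}\Big),\qquad u(r)=-c_2\log\log\tfrac1r,
\]
and the logarithmic structure is essential. Under $r=r_1s$ these become $s$-profiles that still carry explicit $\log(1/r_1)$ and $\log\log(1/r_1)$ factors; there is no fixed function $H(s)$ of which $h$ is a rescaling. Moreover the paper's construction is genuinely multi-scale: the inner cone-smoothing radius $r_2$ is chosen with $r_2\ll r_1$ (and tied to $c_1,c_2$ via \eqref{eq-cutoff:w0=1}), so a single-scale substitution collapses distinct regimes.

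Second, the claimed uniform lower bound $P\geq\delta>0$, hence $R_{g'}\geq\delta r_1^{-2}-C$, is incompatible with the matching condition $g'=g$ near $r=r_1$. There $R_{g'}$ must agree with the (bounded) $R_g$; it cannot blow up as $r_1\to0$. And the boundary data $H(0)=0$, $H'(0)=1$, with $H,H',H''$ matching the linear profile at $s=1$, forces $\int_0^1 H''=0$, so $H''>0$ somewhere. In the paper this convex portion is not compensated by the $v$-term -- the pure $u_r^2$ contribution to the scalar curvature is $-(n-1)(n-2)u_r^2\leq 0$ and only makes the convex region worse -- instead the transitions of $h$ and $u$ are \emph{staggered} (compare the cutoffs $\eta(r/r_1)$ in \eqref{eq-cutoff:def_h} vs.\ $\eta(4r/r_1)$ in \eqref{eq-cutoff:def_u}), so that on $[r_1/4,r_1]$ one has $u\equiv0$ and the curvature loss is bounded only by $-\epsilon/3$ via Lemma \ref{lemma-cutoff:c1c2r2}(i), not by anything positive. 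In short, $R_{g'}-R_g$ is genuinely slightly negative in a transition annulus, and the paper's Theorem \ref{thm-main:scal_estimate} together with the three-case argument in Section \ref{sec:cutoff} is how the negative dip is kept below $\epsilon$; no argument of the form $R_{g'}\gtrsim r_1^{-2}$ can hold throughout the tube.

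Third, the quadratic terms are misestimated. If you make $u$ traverse the fixed range $|v_0|$ over a radius $r_1$ with generic profile, then $u_r^2\sim|v_0|^2/r_1^2$, while the positive cone-smoothing contribution is bounded by the (small) deficit $\sim r_1$ spread over $[0,r_1]$, i.e.\ $\sim r_1^{-1}$; these do not balance as $|v_0|$ grows. The paper achieves balance only because $u_r\sim c_2/(r\log(1/r))$ and $-h_{rr}/h-2h_r/(rh)\sim c_1/(r^2\log^3(1/r))$ are calibrated against each other via the parameter constraint $c_2(1+\sup|v_0|)\leq\sqrt{c_1/\alpha_n}$ in Lemma \ref{lemma-cutoff:c1c2r2}(ii); this is precisely the inequality behind the requirement $c_2^2\leq c_1$ in \eqref{eq-intro:scal_1}, and it cannot be read off a scale-invariant profile.

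You do correctly identify two of the secondary issues (that the normal-connection curvature and the failure of $g$ to be a warped product must contribute only lower-order errors), which the paper addresses via Theorem \ref{thm-exp:local_frame} and the $C_1,\dots,C_5$ error bounds in Theorem \ref{thm-main:scal_estimate}. But the main mechanism -- the logarithmically calibrated, multi-scale choice of $h$ and $u$ -- is not recoverable from the rescaling $r=r_1s$, and the conclusion $R_{g'}\geq r_1^{-2}\delta-C$ is false.
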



\begin{rmk}
\phantom{.}
    \begin{enumerate}[label=(\arabic*), nosep]
        \item A detailed version of Theorem \ref{thm-intro:main} is given in Section \ref{sec:main} (in particular, Theorems \ref{thm-main:scal_estimate} and \ref{thm-main:h_and_u}) with more refined statements about the metric $g'$.
        \item In Theorem \ref{thm-intro:main} (ii), note that we can prescribe the conformal factor $v_0$ on $\Sigma$. Choosing a sequence of constant functions $v_0\to-\infty$, the resulting induced metrics $g'|_\Sigma$ degenerate and the construction recovers the original notion of drawstrings mentioned above.
    \end{enumerate}
\end{rmk}

To place Theorem \ref{thm-intro:main} in context, we recall the work of Lee-Naber-Neumayer \cite[Section 9]{LNN} where drawstring-like examples were constructed to collapse closed geodesics in flat $n$-tori, $n\geq4$. The construction of \cite{LNN} made crucial use of the codimension at least $3$ nature of curves in dimensions $n\geq 4$, where the essential geometric observation boils down to the fact that 3-dimensional acute cones have scalar curvature of the order $+O(1/r^2)$ as one apporaches the cone tip $r=0$. In the present work, the normal bundle of $\Sigma$ has dimension 2 -- the situation becomes more subtle since 2-dimensional cones are flat. The idea behind Theorem \ref{thm-intro:main} is that by smoothing an acute $2$-dimensional cone, one can create large positive curvature at the tip region. The main difficulty is to confirm the existence of a cone smoothing so that the resulting curvature is large enough to compensate the negative curvature introduced by imposing the requisite metric degeneration $g'|_{\Sigma}=e^{2v_0}g|_\Sigma$.

\vspace{3pt}

This paper is organized as follows. The remainder of this section gives an outline of the proof of Theorem \ref{thm-intro:main} (Subsection \ref{subsec:sketch}), followed by an overview of our applications (Subsection \ref{subsec:app}).
Section \ref{sec:main} contains the precise description of the objects and properties summerized in Theorem \ref{thm-intro:main}, the proof of which is carried out in Sections \ref{sec:scal} and \ref{sec:cutoff}. Finally, Section \ref{sec:app} is devouted to the applications of the main result.

\subsection{The structure of drawstring metrics}\label{subsec:sketch}

The construction in Theorem \ref{thm-intro:main} is rather involved and so we begin by considering the geometrically simple setting of \cite{KXdrawstring}, where $g'$ takes the form of a warped product on $T^2\times S^1$.
The metric $g'$ is flat outside a small neighborhood of a vertical fiber, and inside that small neighborhood, the metric takes the following form in a cylindrical coordinate chart:
\begin{equation}\label{eq-intro:warped_prod}
    g'=e^{-2u(r)}\big(dr^2+f(r)^2d\th^2\big)+e^{2u(r)}dt^2.
\end{equation}
The function $u$ is designed to contract the central fiber and thus we suppose $u(0)\ll-1$. Meanwhile, $f$ is chosen to provide sufficiently large positive curvature so that the geometry of $dr^2+f^2d\th^2$ resembles a smoothened cone tip. The following prototype functions turn out to serve these purposes:
\begin{equation}\label{eq-intro:KX23_main_func}
    f(r)=r\big(1-\frac{c_1}{\log(1/r)}\big),\qquad u(r)=-c_2\log\log\frac1r.
\end{equation}
Indeed, one has $\lim_{r\to0}u(r)=-\infty$, and with some calculation,
\begin{equation}\label{eq-intro:scal_1}
    R_g=\frac2{r^2(\log 1/r)^{2+2c_2}}\Big[\frac{c_1(c_1+2)}{\log(1/r)-c_1}+c_1-c_2^2\Big]>0\quad\text{if $r\ll1$ and $c_2^2\leq c_1$}.
\end{equation}
Eventually, the examples in \cite{KXdrawstring} are built from \eqref{eq-intro:KX23_main_func} by smoothing the (mild) singularity at $r=0$ and gluing to a flat metric along an outer cylinder of constant $r$-value. 
We note that negative scalar curvature is only introduced in the outer gluing region.

\begin{rmk}
    It is natural to ask whether the functions in \eqref{eq-intro:KX23_main_func} appear in any natural geometric context. In Section \ref{sec:confinv}, we consider a model situation and provide a heuristic derivation of \eqref{eq-intro:KX23_main_func} using notion we call {\emph{conformal inversion}}.
\end{rmk}

To prove Theorem \ref{thm-intro:main}, we generalize \eqref{eq-intro:warped_prod} in the following way, which is formally presented in Section \ref{sec:main}. In a small neighborhood of $\Sigma$, we introduce unit radial and angular 1-forms $dr$, $\omega_\th$,
and the horizontal distribution $\H=\operatorname{span}(dr,\omega_\th)^\perp$. The original metric can be written as
\begin{equation}\label{eq-intro:old_metric}
    g=dr^2+\omega_\th^2+g_{\H}.
\end{equation}
Then we design the drawstring metric to take the form
\begin{equation}
    g'=e^{-2(n-2)u}\big(dr^2+h(r)^2\omega_\th^2\big)+e^{2u}g_{\H}.
\end{equation}
Here $h$ and $u$ are carefully chosen functions defined near $\Sigma$.\footnote{The function $h$ here is related to the previous function $f$ in \eqref{eq-intro:KX23_main_func} by $h=f/r$, since $d\th$ has length $r+o(r)$ while $\omega_\th$ has unit length.} 

Theorem \ref{thm-main:scal_estimate} is our main technical statement, which estimates the scalar curvature of $g'$. Upon choosing $h$ and $u$ in a manner inspired by \eqref{eq-intro:KX23_main_func}, we show that $R_{g'}$ is bounded from below by an expression similar to \eqref{eq-intro:scal_1}, plus a few error terms due to the curvature of $M$ and $\Sigma$, the non-flatness of the normal connection, and the non-constancy of $v_0$. Eventually, we show that these error terms have lower orders compared to the positive main term. The computation of $R_{g'}$ is carried out in Section \ref{sec:scal}. The distribution $\H$ is non-integrable in dimensions $n\geq4$ and so we apply the method of moving frames in our computation. The use of this approach is inspired by \cite{Zhou_2023}. We refer the reader to Subsection \ref{subsec:setup} for more details of the strategy. Finally, in Section \ref{sec:cutoff}, we construct the suitable functions $h$ and $u$ as modifications of the prototype functions in \eqref{eq-intro:KX23_main_func}.

\subsection{Applications}\label{subsec:app}

We are ready to discuss the applications of Theorem \ref{thm-intro:main}. As our primary motivation comes from scalar curvature stability problems, many of the following results construct sequences of manifolds with controlled scalar curvature and new pathological limits. On the other hand, we discuss positive stability results in the literature and demonstrate aspects of their sharpness with our examples. The results stated below, namely Theorems \ref{thm-intro:collapse_submfd}, \ref{thm-intro:partial_collapse}, \ref{thm-intro:leetopping}, \ref{thm-intro:AFexample}, \ref{thm-intro:larrull} below, are all proved in Section \ref{sec:app}. The contents of Sections \ref{sec:scal} and \ref{sec:cutoff} are not needed to access these applications.

\subsubsection{Collapsing under scalar curvature lower bounds.}

Fix an oriented manifold $(M^n,g)$ and $\Sigma^{n-2}\subset M^n$ as in Theorem \ref{thm-intro:main}. One may produce sequences of metric $g_i$ on $M$ by making increasingly extreme choices for the parameters in the drawstring construction. The Gromov-Hausdorff and Intrinsic-Flat limits exhibit interesting examples of limit spaces in the boundary of the space of metrics with scalar curvature lower bounds. 

To give a precise description of the examples, we require a few notions. Fix a compact subset $K\subset M^n$ and a constant $c>0$. Define the {\emph{$c$-partially pulled metric}} by
\begin{equation}\label{eq-intro:def_dc}
    \begin{aligned}
        d_c(x,y) &= \inf\Big\{d_M(x,p_1)+e^{-c}d_K(p_1,q_1)+d_M(q_1,p_2)+e^{-c}d_K(p_2,q_2)+\cdots \\
        &\hspace{60pt}+e^{-c}d_K(p_N,q_N)+d_M(q_N,y): p_i,q_i\in K, 1\leq i\leq N, N\in\mathbb{N}\Big\},
    \end{aligned}
\end{equation}
where $d_M$ is the $g$-Riemannian distance on $M$ and $d_K$ is the length distance on $K$ induced by $d_M$. If $K$ is an embedded submanifold, then $d_K$ is simply the Riemannian distance on $K$. We call $(M^n,d_c)$ the {\emph{space resulting from $c$-partially pulling $\Sigma$ in $(M^n,g)$}}. If $c=\infty$, then $d_\infty$ descends to the quotient $M^n/\{K\sim\pt\}$ and the resulting metric space is called a {\emph{pulled string space}} (see Appendix \ref{sec:scrunch} for more discussion). Pulled string limit spaces were first observed as limits of manifolds with controlled scalar curvature by Basilio-Dodziuk-Sormani \cite{BDS} and Basilio-Sormani \cite{BS}. Their methods are based on tunnel constructions and produce sequences with unbounded topology, whereas the following sequence of metrics is supported on a common manifold. 

\begin{theorem}[collapsing a submanifold]\label{thm-intro:collapse_submfd}
    Let $(M^n,g)$ be a smooth oriented manifold, and $K\subset M^n$ be a compact connected submanifold (possibly with boundary). There exists a sequence of smooth metrics $g_i$ on $M^n$, such that $R_{g_i}\geq R_g-1/i$ and $(M^n,g_i)\to (M^n,g)/\{K\sim\text{pt}\}$ in the Gromov-Hausdorff sense as $i\to\infty$. If additionally $n=3$, then the convergence is also in the Intrinsic-Flat sense.
\end{theorem}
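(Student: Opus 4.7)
The plan is to construct, for each $i$, a smooth closed oriented codim-$2$ embedded submanifold $\Sigma_i \subset M$ whose $g$-Hausdorff distance to $K$ is at most $1/i$, and apply Theorem~\ref{thm-intro:main} to $\Sigma_i$ with a constant conformal factor $v_0 \equiv -C_i$ so negative that the resulting drawstring metric $g_i$ makes $\Sigma_i$ (and therefore $K$) collapse to a point in the Gromov--Hausdorff limit.

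For the approximation $\Sigma_i$: since $K$ is compact and connected, one can pick a $\delta$-net $\{p_j\} \subset K$ for $\delta = 1/i$ and link consecutive net points by short arcs in $M$, forming a smoothly embedded closed loop $\gamma_i \subset N_g(K, \delta)$ that is $O(\delta)$-dense in $K$. When $n = 3$, $\gamma_i$ is already a codim-$2$ submanifold and I set $\Sigma_i = \gamma_i$. When $n \geq 4$, $\gamma_i$ has codimension $\geq 3$, and I would replace it by a thin connected linking submanifold: choose a nonvanishing section $V$ of the normal bundle of $\gamma_i$ (which exists since every oriented vector bundle over $S^1$ is trivial) and let $\Sigma_i$ be the $\eta$-sphere subbundle of $V^\perp \subset N\gamma_i$ for $\eta \ll \delta$, which is a closed oriented embedded codim-$2$ submanifold of $M$, Hausdorff $O(\delta)$-close to $K$.

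Next apply Theorem~\ref{thm-intro:main} with $\Sigma = \Sigma_i$, tolerance $\epsilon = 1/i$, and $v_0 \equiv -C_i$ with $C_i$ large enough that $e^{-C_i}\diam_g(\Sigma_i) \leq 1/i$; call the resulting metric $g_i$. By construction $R_{g_i} \geq R_g - 1/i$, $g_i = g$ outside $N_g(\Sigma_i, r_1)$ for some $r_1 < 1/i$, $\diam_{g_i}(\Sigma_i) \leq 1/i$, and by~(iv) the set $\partial N_g(\Sigma_i, r_1)$ lies within $g_i$-distance $3r_1 < 3/i$ of $\Sigma_i$. Combining these with $d^g_H(K, \Sigma_i) \leq 1/i$, the natural projection $\pi \colon (M, g_i) \to (M,g)/\{K \sim \text{pt}\}$ has distortion $O(1/i)$: for the upper bound, any $g$-minimizing path from $x$ to $y$ can be either detoured around or shortcut through $\Sigma_i$ to give
\[
d_{g_i}(x, y) \leq \min\bigl(d_g(x, y),\, d_g(x, K) + d_g(K, y)\bigr) + O(1/i);
\]
for the lower bound, any $g_i$-minimizing path decomposes into excursions inside and outside $N_g(\Sigma_i, r_1)$, and using $g_i = g$ off the neighborhood one obtains the matching lower bound. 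This yields GH convergence to the quotient. In the case $n = 3$, the intrinsic flat convergence is upgraded via the argument of \cite{KXdrawstring}: the drawstring modification is confined to the shrinking tube $N_g(\Sigma_i, r_1) \subset N_g(K, 2/i)$ whose volume tends to zero, and explicit integral current matchings adapted from \emph{loc.\ cit.} deliver the required IF convergence to the quotient.

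The main obstacle will be the approximation step for topologically complicated $K$: when $K$ has high codimension (so the linking construction above is needed) one must verify orientability of $\Sigma_i$ and embeddedness at the chosen scale $\eta$, and when $K$ has boundary the connecting arcs of $\gamma_i$ must be arranged so that the loop remains $O(1/i)$-dense in all of $K$ while tracking $\partial K$. All subsequent distance estimates are quantitative but routine consequences of properties~(iii) and~(iv) of Theorem~\ref{thm-intro:main}, applied at the right scale.
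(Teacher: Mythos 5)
Your proposal follows essentially the same route as the paper: approximate $K$ by a loop $\gamma_i$ in a small tube, thicken to a codimension-$2$ submanifold $\Sigma_i$ (a small sphere subbundle of the normal bundle when $n\geq4$), apply the drawstring construction with a very negative constant $v_0$, and pass to the limit. Two minor differences are worth noting. First, for the convergence, the paper does not argue distortion directly; it verifies a diameter bound and a volume bound for $N_g(K,\delta)$ and then invokes the Basilio--Sormani scrunching theorem (cited as [BS, Theorem 2.5] and discussed in Appendix~\ref{sec:scrunch}), which yields both GH and IF convergence at once, whereas you do GH by hand and gesture at the IF step via \cite{KXdrawstring}. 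Your hand-computed distortion bound is fine in outline, but the paper's route is cleaner. Second, the volume control you rely on (``the drawstring modification is confined to the shrinking tube \ldots whose volume tends to zero'') does not follow from Theorem~\ref{thm-intro:main} alone; the paper uses the refined statement Theorem~\ref{thm-main:h_and_u}(VI), which gives $\Vol_{g'}(N_g(\Sigma,r_1)) \leq 12\pi\Area_g(\Sigma)r_1^2$. You would want to cite that property explicitly (and also check that $\Area_g(\Sigma_i)$ does not blow up as the tube radius $\eta$ shrinks, which it does not since $\Sigma_i \to \gamma_i$ in Hausdorff distance and $|\Sigma_i|_g \to 0$). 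Otherwise the argument is sound and the orientability/embeddedness concerns you raise are resolved exactly as in the paper, since the normal bundle of a loop is trivial.
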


Drawstrings also produce examples of {\emph{partial collapsing}}. Below, we refer to the {\emph{$\mathrm{minA}$ invariant}}, defined by
\begin{equation}\label{eq-intro:minA}
    \mathrm{minA}(M^n,g)=\inf\Big\{|\mathcal{S}|: \mathcal{S}\subset M^n\text{ is an embedded minimal hypersurface}\Big\}.
\end{equation}

\begin{theorem}[partial collapsing]\label{thm-intro:partial_collapse}
    Let $(M^n,g)$ be a closed oriented manifold. Fix an oriented closed codimension 2 submanifold $\Sigma^{n-2}\subset M$ and a constant $c\in(0,+\infty]$. Then there is a sequence $(M,g_i)$ and a number $A_0>0$ such that $R_{g_i}\geq R_g-1/i$, $minA(M,g_i)\geq A_0$, and $(M,g_i)$ converges to the space resulting from $c$-partially pulling $\Sigma$ in $(M,g)$ in the Gromov-Hausdorff sense as $i\to0$. If additionally $n=3$ and $c=+\infty$, then the convergence is also in the Intrinsic-Flat sense.
\end{theorem}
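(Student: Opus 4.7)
The plan is to invoke Theorem \ref{thm-intro:main} with carefully chosen conformal factor $v_0$ and parameter $\epsilon$. For $c\in(0,\infty)$, take $v_0\equiv -c$ on $\Sigma$ and $\epsilon_i=1/i$; the construction returns radii $r_i<1/i$ and metrics $g_i$ on $M$ satisfying $R_{g_i}\geq R_g-1/i$, with $g_i=g$ outside $N_g(\Sigma,r_i)$, intrinsic rescaling $g_i|_\Sigma=e^{-2c}g|_\Sigma$, and the tube-depth bound $d_{g_i}(x,\Sigma)\leq 3r_i$ for every $x\in\p N_g(\Sigma,r_i)$. For $c=+\infty$, I would instead run the construction along a diagonal sequence of constant factors $v_0\equiv -c_i$ with $c_i\to\infty$ chosen slowly enough that, in conjunction with $\epsilon_i=1/i$, the scalar and tube-depth properties of Theorem \ref{thm-intro:main} (as refined in Theorems \ref{thm-main:scal_estimate} and \ref{thm-main:h_and_u}) remain valid.

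Gromov-Hausdorff convergence would be verified by checking $d_{g_i}(x,y)\to d_c(x,y)$ as in \eqref{eq-intro:def_dc} for $x,y\in M\setminus N_g(\Sigma,r_i)$. The upper bound follows by constructing explicit paths that realize each admissible concatenation in the definition of $d_c$: detour into the tube (cost $\leq 3r_i$ per entry by (iv)), traverse $\Sigma$ with its rescaled intrinsic metric (length exactly $e^{-c}d_K$), and exit. For the lower bound, decompose a $g_i$-near-minimizer between $x$ and $y$ into pieces outside and inside $N_g(\Sigma,r_i)$; the outside pieces see the unchanged metric $g$, while each inside excursion, lying within $g_i$-distance $6r_i$ of $\Sigma$ by (iv), is matched up to an $O(r_i)$ error to a term $e^{-c}d_K(p_j,q_j)$ via the $\Sigma$-projection of its endpoints. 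The case $c=+\infty$ is analogous with $e^{-c_i}\to0$, yielding convergence to the pulled string space of Appendix \ref{sec:scrunch}.

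The uniform minA bound is the most delicate step and I expect it to be the main obstacle. Any closed embedded minimal hypersurface $\mathcal S\subset(M,g_i)$ is either disjoint from $N_g(\Sigma,r_i)$—in which case $\mathcal S$ is minimal with respect to the unchanged original metric $g$, giving $|\mathcal S|_{g_i}=|\mathcal S|_g\geq\mathrm{minA}(M,g)>0$—or else it enters the tube. For the latter case, I would use a foliation barrier: from the explicit form $g_i=e^{-2(n-2)u}(dr^2+h^2\omega_\th^2)+e^{2u}g_\H$ in Section \ref{sec:main}, the constant-$r$ hypersurfaces should be mean-convex in the outward direction over a definite range of $r$, forcing $\mathcal S$ to stay above some threshold $r_0>0$ where a direct cross-section area computation yields a positive lower bound. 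Confirming this mean-convexity and extracting a bound uniform in $i$ (and in $c_i$ when $c=\infty$) is the nontrivial point and will rely on the refined description of $h$ and $u$ from Section \ref{sec:cutoff}. Finally, for $n=3$ and $c=+\infty$, intrinsic-flat convergence is obtained by combining the established GH convergence with the volume estimate $\Vol_{g_i}(N_g(\Sigma,r_i))\to0$—computed directly from the tube metric, using that $e^{nu}$ vanishes as $v_0\to-\infty$—and invoking a Volume-Above-Distance-Below type filling argument.
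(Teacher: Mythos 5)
Your overall strategy matches the paper's: apply Theorem~\ref{thm-main:h_and_u} with $v_0\equiv-c$ (or a diverging diagonal $v_0\equiv-c_i$ when $c=\infty$), and control minA via the mean-convex foliation barrier and the maximum principle. That part is sound; in fact the mean-convexity you flag as "nontrivial" is already recorded as Theorem~\ref{thm-main:h_and_u}(V), and the paper then applies the monotonicity formula to a ball $B(x,r_I)$ around a point $x\in\mathcal S\setminus N_g(\Sigma,2r_I)$ where $g_i=g$, rather than a cross-section computation, to get the uniform area bound. Your case split (disjoint from the tube vs.\ enters the tube) is unnecessary — the barrier argument covers both cases uniformly.

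Where you genuinely diverge is the Gromov--Hausdorff convergence, and your lower-bound sketch has a gap. You propose to decompose a $g_i$-near-minimizer and "match" each inside excursion to a term $e^{-c}d_K(p_j,q_j)$ "up to $O(r_i)$," but this matching is exactly the hard step — an excursion need not descend to $\Sigma$, and its $g_i$-length is not obviously $\geq e^{-c}d_\Sigma(\pi(\text{endpoints}))-O(r_i)$. The paper replaces this with a cleaner two-part argument: (a) for $d_{g_\delta}\leq d_c+o(1)$, use that $\{d_{g_\delta}\}$ is uniformly bi-Lipschitz to $d_g$ (Theorem~\ref{thm-main:h_and_u}(VII)), extract a $C^0$-limit by Arzel\`a--Ascoli, and bound it by $d_c$ via pointwise convergence of the metric tensors $g_\delta\to g_\infty$ along curves; (b) for $d_{g_\delta}\geq d_c-o(1)$, lower-bound $g_\delta$ by $(1-o(1))$ times the $c$-pull of the \emph{full tubular neighborhood} $\bar N_g(\Sigma,\delta)$, then transfer to the $c$-pull of $\Sigma$ via the explicit $(1+C\delta)$-Lipschitz radial projection/stretch map and Lemma~\ref{lemma-app:Lipschitz_to_cpull}. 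The Lipschitz-map lemma is precisely the tool that makes your "matching" rigorous; without it (or a comparable device) your lower bound does not close.

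Finally, for the intrinsic-flat conclusion at $n=3$, $c=\infty$: a Volume-Above-Distance-Below argument is not the right tool, because those theorems require the limit to be a smooth Riemannian manifold, while the target here is a pulled-string space. The paper instead invokes the Basilio--Sormani Scrunching Theorem (see Appendix~\ref{sec:scrunch} and Proposition~\ref{prop-prlim:pulledstringconv}), which is designed exactly for pulled-string limits; the volume bound Theorem~\ref{thm-main:h_and_u}(VI) and the diameter collapse Theorem~\ref{thm-main:h_and_u}(IV) supply its hypotheses.
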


We note that the proof of Theorem \ref{thm-intro:partial_collapse} shows that the distance functions $d_i$ converge to the partially pulled metric $d_c$ in the $C^0$-topology. The authors suspect that the sequence in Theorem \ref{thm-intro:partial_collapse} also converges to the $c$-partially pulled string space in the Intrinsic-Flat sense for finite $c$. However, to obtain the Intrisic-Flat convergence, we make essential use of the Basilio-Sormani Scrunching Theorem \cite{BS}, which requires $c=+\infty$.

\subsubsection{Pointwise limit of distance functions.}

What are the topologies on the space of Riemannian manifolds so that scalar curvature lower bounds are preserved under limits $(M,g_i)\to(M,g_\infty)$? One might na{\"i}vely guess that $C^2$ convergence is necessary for such preservation. However, a remarkable theorem of Gromov \cite{Billiards} (significantly generalized by Bamler \cite{Bamler}) states that if non-negative scalar curvature metrics $g_i$ converge to a $C^2$ metric $g_\infty$ in the $C^0$-topology, then the limit necessarily has non-negative scalar curvature. On the other hand, Lee-Topping \cite{Lee-Topping} produced examples on $S^n$ ($n\geq4$) with $R>0$ and converging to an arbitrary metric in the round conformal class. This shows that $C^0$-convergence of metric tensors cannot be weakened to pointwise convergence of distance functions, see also \cite[Section 6.8]{Gromov4lectures}. Using Theorem \ref{thm-intro:main}, we extend the Lee-Topping phenomenon to dimension 3, with arbitrary background metric and any scalar curvature lower bound.

\begin{theorem}\label{thm-intro:leetopping}
    Let $(M^3,g)$ be a closed manifold with $R_g>\lambda$, and $f<0$ be a smooth function. Then there is a sequence of smooth metrics $g_i$ such that $R_{g_i}>\lambda$ and $d_{g_i}\to d_{e^{2f}g}$ uniformly. Moreover, there is a constant $C$ independent of $i$ such that $C^{-1}g\leq g_i\leq Cg$.
\end{theorem}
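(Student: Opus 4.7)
The plan is to apply Theorem \ref{thm-intro:main} along a very fine, disjoint network of closed embedded curves in $M^3$, each with prescribed conformal factor $v_0=f$ restricted to it. Since $(M,g)$ is compact with $R_g>\lambda$, there is $\delta>0$ with $R_g\geq \lambda+\delta$ pointwise, and $f$, being a negative continuous function on compact $M$, satisfies $-\Lambda\leq f\leq -\Lambda'<0$ for some $\Lambda\geq\Lambda'>0$; these facts will give the scalar curvature bound and the uniform metric comparability, respectively.

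Concretely, for each $i$ I would construct a finite disjoint union $\Sigma^{(i)}=\bigsqcup_{j=1}^{N_i}\Sigma_j^{(i)}$ of smooth closed embedded curves in $M$ such that (a) the $g$-tubular neighborhoods $N_g(\Sigma_j^{(i)},r_i)$ are pairwise disjoint with $r_i\to0$, and (b) for any two points $x,y\in M$, some concatenation of $\Sigma^{(i)}$-arcs and short $g$-jumps of total length $o_i(1)$ approximates an $e^{2f}g$-minimizing geodesic from $x$ to $y$. Such a network can be produced from a fine cubical decomposition of $M$ at a scale $\ell_i$ with $r_i\ll\ell_i\to0$, placing inside each cube a single closed curve whose ``legs'' track the cube's coordinate directions, so that a rectifiable path of $g$-length $L$ can be tracked by $O(L/\ell_i)$ hops of length $\leq r_i$. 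I then apply Theorem \ref{thm-intro:main} to each $\Sigma_j^{(i)}$ with $v_0:=f|_{\Sigma_j^{(i)}}$ and $\epsilon<\delta$; since the tubes are disjoint, the drawstring modifications are independent, the resulting metric $g_i$ satisfies $R_{g_i}\geq R_g-\epsilon>\lambda$ pointwise, and the warped-product structure recalled in Subsection \ref{subsec:sketch} has conformal factors $u,h$ bounded in terms of $\|f\|_{C^0}$ alone, giving $C^{-1}g\leq g_i\leq Cg$ for some $C$ independent of $i$.

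For the distance convergence, the upper bound $d_{g_i}(x,y)\leq d_{e^{2f}g}(x,y)+o_i(1)$ follows from the approximating path above: by Theorem \ref{thm-intro:main}(ii) each $\Sigma_j^{(i)}$-arc has $g_i$-length equal to its $e^{2f}g$-length (since $v_0=f$ on the curve), while the total $g$-length of the hops is $O(L r_i/\ell_i)=o_i(1)$ once $r_i/\ell_i\to0$. For the reverse inequality $d_{g_i}(x,y)\geq d_{e^{2f}g}(x,y)-o_i(1)$, my aim is to establish the tensor comparison $g_i\geq (1-o_i(1))e^{2f}g$ pointwise: outside the tubes this is immediate from $g_i=g$ and $f\leq 0$, and inside each tube it amounts to comparing the warped coefficients $e^{-2u}$, $e^{-2u}h^2$, $e^{2u}$ with $e^{2f}$ at the corresponding point of $M$, using the interpolation of $u$ from $v_0=f|_\Sigma$ at $r=0$ to $0$ at $r=r_1$ together with the continuity of $f$.

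The hard part will be the lower-bound tensor inequality in the angular direction inside each tube, whose coefficient is $e^{-2u}h(r)^2$: since $h(r)\leq 1$ there is room for trouble, especially at interior radii and when $f$ is only mildly negative, and ensuring $e^{-2u}h^2\geq e^{2f}(1-o_i(1))$ requires careful calibration of the drawstring functions. I expect this step to rely on the refined statements in Theorems \ref{thm-main:scal_estimate} and \ref{thm-main:h_and_u} rather than only the summary in Theorem \ref{thm-intro:main}, and it may be convenient to first replace $f$ by a slightly more negative smooth function $\tilde f<f$ before applying the construction, in order to create the slack needed to absorb the angular error term.
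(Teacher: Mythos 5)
Your overall strategy is the same as the paper's: place drawstrings with $v_0=f$ along an increasingly fine disjoint network of closed curves, then compare distances. The paper builds its network out of short geodesic segments joining pairs in an $\epsilon_1^2$-dense subset of $\{(x,y):d_g(x,y)=2\epsilon_1\}$ (extended to disjoint embedded loops) rather than from a cubical decomposition, which gives a cleaner bound on the number of hops along a $\bar g$-geodesic, but this is a cosmetic difference.

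The one substantive point worth flagging is that the difficulty you anticipate in the last paragraph is not actually there, and the proposed fix (running the construction with a strictly more negative $\tilde f<f$) is both unnecessary and, as stated, would make $d_{g_i}\to d_{e^{2\tilde f}g}$ rather than $d_{e^{2f}g}$. For the pointwise tensor lower bound you only need $g'\geq (1-o(1))e^{2f}g$, not that the angular coefficient track $e^{2f}$ closely in both directions. Since $u\leq 0$ and $1-\epsilon\leq h\leq 1$ (Theorem \ref{thm-main:h_and_u}(VII)), the angular coefficient satisfies $e^{-2u}h^2\geq (1-\epsilon)^2$, while $f<0$ gives $e^{2f}\leq 1$; hence $e^{-2u}h^2\geq (1-\epsilon)^2 e^{2f}$ automatically, with no calibration of the drawstring functions and no dependence on how negative $f$ is. The same sign observation handles the $dr^2$ term, and the horizontal term uses $f(\pi(x))\leq u\leq 0$ together with $|\nabla f|\leq L$ and $d(x,\pi(x))\leq\epsilon$ to get $e^{2u}\geq e^{-2L\epsilon}e^{2f}$. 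This is exactly how the paper obtains the bound $(1-\epsilon_1^2)^2 e^{-2L\epsilon_1^2}\bar g\leq g'$ in \eqref{eq-leetopping:joint_bounds}.
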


In particular, if a conformal manifold $(M^3,[g'])$ is Yamabe positive, then there exists metrics $g_i$ with positive scalar curvature $R>0$ so that $d_{g_i}\to d_{g'}$ uniformly. Since $g'$ itself can be chosen to have negative scalar curvature on large portions of $M^3$, we see that positive scalar curvature is generally not preserved under this convergence.

\subsubsection{Stability of the Positive Mass Theorem.}

The Positive Mass Theorem of Schoen-Yau \cite{SYPMT} and Witten \cite{Witten} states that flat Euclidean space is the only complete asymptotically flat $3$-manifold with non-negative scalar curvature and non-positive ADM mass. The reader is refered to Lee's book \cite{geometric} for detailed statements and proofs on this matter. We now turn to the associated stability question, where one considers asymptotically flat $3$-manifolds of non-negative scalar curvature and small ADM mass, seeking a sense in which they are close to flat $\mathbb{R}^3$. Questions of this type were first raised by Huisken-Ilmanen \cite{HI} and Lee-Sormani \cite{LeeSormanispherical}. The next statement produces small-mass manifolds with drawstring regions. Below, the {\emph{Schwarzschild metric of mass $m$}} refers to the constant time-slice of the corresponding Schwarzschild spacetime.

\begin{theorem}\label{thm-intro:AFexample}
    Let $\sigma=\{z=0, x^2+y^2=1\}\subset \mathbb{R}^3$ denote a unit circle, $B_2\subset \mathbb{R}^3$ the ball of radius 2 centered at 0, and $g_0$ the standard Euclidean metric. Given sufficiently small $\varepsilon>0$, there exists an asymptotically flat metric $g$ on $\mathbb{R}^3$ such that:
    \begin{enumerate}[label={(\roman*)}, topsep=3pt, itemsep=0ex]
        \item $R_g\geq0$,
        \item $(\mathbb{R}^3,g)$ possesses no closed embedded minimal surfaces,
        \item Outside $B_2$, $g$ is isometric to the Schwarzschild metric of mass $\epsilon$,
        \item $||g-g_0||_{C^2(\mathbb{R}^3\setminus N_{g_0}(\sigma,\varepsilon))}<\varepsilon$,
        \item the $g$-distance between any two points $p,q\in N_{g_0}(\sigma,\varepsilon)$ is at most $10\varepsilon$.
    \end{enumerate}
\end{theorem}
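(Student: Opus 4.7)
The plan is to apply Theorem \ref{thm-intro:main} to an intermediate asymptotically flat metric $g_1$ on $\mathbb{R}^3$ that already satisfies analogues of (ii)--(iv) and carries a slight positive scalar curvature cushion near $\sigma$. The drawstring modification then absorbs the $\epsilon$ decrease in scalar curvature while producing the collapsing behavior required by (v).

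First, I build $g_1$ in the conformally flat class $g_1 = u^4 g_0$. Choose a smooth positive radial function $u(|x|)$ equal to $1+\epsilon/(2|x|)$ on $\{|x|\geq 2\}$, equal to the constant $1+\epsilon/4$ on $\{|x|\leq 3/2\}$, and smoothly interpolating in between with $\Delta_{g_0} u \leq 0$. The standard conformal formula $R_{g_1} = -8 u^{-5}\Delta_{g_0}u$ gives $R_{g_1}\geq 0$, and the construction can be arranged so that $\|u-1\|_{C^2(\mathbb{R}^3)} = O(\epsilon)$. Since the Schwarzschild horizon $\{|x|=\epsilon/2\}$ lies well inside the region where $u$ is constant, all spheres $\{|x|=R\}$ have strictly positive outward $g_1$-mean curvature; a maximum principle argument then excludes any closed embedded minimal surface in $(\mathbb{R}^3, g_1)$. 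To install the cushion, add to $u$ a small concave bump of the form $\tfrac{\epsilon}{16}(\delta^2 - d_{g_0}(\cdot,\sigma)^2)_+$ with $\delta = \epsilon/4$, suitably smoothed; it has $-\Delta_{g_0}\gtrsim \epsilon$ on its support, $C^2$-norm $O(\epsilon)$, and forces $R_{g_1}\geq 2\epsilon$ on $N_{g_0}(\sigma,\epsilon/8)$ while preserving the positive outward mean curvature of the sphere foliation.

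Next, I apply Theorem \ref{thm-intro:main} to $(\mathbb{R}^3, g_1)$ along $\Sigma = \sigma$ with drawstring parameter $\epsilon/2$ and constant conformal factor $v_0\equiv -C$, choosing $C$ large enough that the length of $\sigma$ in the induced metric $g'|_\sigma$ is less than $\epsilon$. Call the resulting metric $g$, and let $r_1 < \epsilon/2$ be the radius furnished by the theorem. Because $g_1$ and $g_0$ are $C^0$-equivalent to within a factor $1+O(\epsilon)$, one has $N_{g_1}(\sigma, r_1)\subset N_{g_0}(\sigma, \epsilon)$; outside this neighborhood $g = g_1$, which gives (iii) directly and (iv) from the $C^2$-smallness of $g_1 - g_0$. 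Property (i) holds since $R_g \geq R_{g_1}- \epsilon/2 \geq \tfrac{3}{2}\epsilon$ on the drawstring region and $R_g = R_{g_1}\geq 0$ elsewhere. For (v), item (iv) of Theorem \ref{thm-intro:main} gives $d_g(x,\sigma)\leq 3r_1 < \tfrac{3}{2}\epsilon$ for all $x\in \partial N_{g_1}(\sigma, r_1)$, so combined with the $g$-diameter of $\sigma$ being less than $\epsilon$ (by choice of $v_0$), any pair of points in $N_{g_0}(\sigma, \epsilon)$ can be joined by a path well under length $10\epsilon$.

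The main obstacle I expect is verifying (ii) after the drawstring is attached. Minimal surfaces entirely contained in $\mathbb{R}^3\setminus N_{g_1}(\sigma, r_1)$ are ruled out by the sphere foliation inherited from $g_1$. For surfaces entirely inside the drawstring region, the tubes $\{r = \mathrm{const}\}$ around $\sigma$ have strictly positive outward $g$-mean curvature: this follows from the warped-product form of $g'$ recorded in Subsection \ref{subsec:sketch}, via the direct computation that the $g$-area of $\{r = R\}$ is essentially $2\pi h(R)R\cdot \mathrm{length}(\sigma)$, which is strictly increasing in $R$ for $R$ small since $h(R)\to 1$. The subtle remaining case is a closed minimal surface straddling $\partial N_{g_1}(\sigma, r_1)$; it should be handled by splicing the two mean-convex foliations together, using that in the regime where the drawstring is small they both coincide to leading order with the flat cylindrical foliation near $\sigma$. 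This is essentially the same local mean-curvature analysis underlying the $\mathrm{minA}$ control in Theorem \ref{thm-intro:partial_collapse} and should be treated in parallel with that proof in Section \ref{sec:app}.
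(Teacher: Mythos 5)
Your high-level strategy is the right one, and matches the paper's: build an intermediate asymptotically flat metric that is flat near $\sigma$, Schwarzschild outside, has $R\geq 0$, has a positive scalar curvature cushion near $\sigma$ to absorb the drawstring's $\epsilon$-decrease, and has a mean-convex sphere foliation; then apply the drawstring theorem. However, the way you implement the cushion is fundamentally broken, and your argument for (ii) has a genuine gap that the paper handles with a completely different idea.

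\textbf{The cushion bump cannot exist.} You want a conformal factor $u+b$ with $b$ supported in a small neighborhood of $\sigma$, $b\geq 0$, and $R_{g_1}=-8(u+b)^{-5}\Delta_{g_0}(u+b)\geq 0$ everywhere (with strict positivity near $\sigma$). Since your $u$ is constant on $\{|x|\leq 3/2\}\supset \mathrm{supp}(b)$, this forces $\Delta_{g_0}b\leq 0$ on a ball containing $\mathrm{supp}(b)$. But a nonnegative superharmonic function with compact support in a ball is identically zero by the strong maximum principle. Concretely for your choice $b=\tfrac{\epsilon}{16}(\delta^2-d^2)_+$: in the interior of the support $\Delta_{g_0}b\approx -\tfrac{\epsilon}{4}$, but along $\{d=\delta\}$ the one-sided jump in $\partial_d b$ equals $+\tfrac{\epsilon}{8}\delta$, so after any smoothing there is a collar where $\Delta_{g_0}b>0$ and hence $R_{g_1}<0$. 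This cannot be patched by adjusting the smoothing. (There is also a smaller bookkeeping error: $u$ cannot be $\equiv 1+\epsilon/4$ on $\{|x|\leq 3/2\}$ and equal $1+\epsilon/(2|x|)$ on $\{|x|\geq 2\}$ with $\Delta_{g_0}u\leq 0$ in between, since a nonpositive Laplacian makes $r^2u'$ nonincreasing and hence $u$ strictly decreasing on $(3/2,2)$ while your stated boundary values coincide; the interior constant must be larger.) The paper sidesteps the whole issue by not using a conformally flat ansatz: it builds $g_m$ as a spherical cap of curvature $2m$ glued to Schwarzschild via $g_m=V_m(r)^{-1}dr^2+r^2g_{S^2}$, and a one-line computation of $\tfrac{r^2}{2}R_{g_m}=6m\eta r^2+2m\eta'(r^3-1)$ shows $R_{g_m}\geq 0$ everywhere with $R_{g_m}=12m>0$ on the cap containing $\sigma$. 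This gives the cushion for free, with no bump to smooth.

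\textbf{The argument for (ii) is incomplete.} You correctly identify that the straddling case is the subtle one, but ``splicing the two mean-convex foliations'' is not an argument: the sphere foliation $\{|x|=R\}$ and the tube foliation $\{d(\cdot,\sigma)=r\}$ are topologically incompatible (spheres vs.\ tori) and do not match up across $\partial N(\sigma,r_1)$, and mean-convexity does not naturally transfer between them. The paper's approach is entirely different and worth learning: they observe the whole construction is axisymmetric about the $z$-axis, pass to the outermost closed minimal surface, use stability plus $R\geq 0$ (Schoen--Yau) to see it is a topological $S^2$, argue it must itself be axisymmetric by an outer-minimizing enclosure trick, conclude it meets the $z$-axis (an axisymmetric sphere must), extract a point uniformly far from $\sigma$, apply the monotonicity formula to get a uniform lower area bound, and finally contradict the Riemannian Penrose inequality $|\mathcal{S}|\leq 16\pi m^2$ for small $m$. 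This is robust and does not require splicing anything. Your ``case 2'' of surfaces inside the drawstring region is fine (it is the maximum principle against the mean-convex tubes from Theorem \ref{thm-main:h_and_u}(V)), but you need the Penrose/axisymmetry mechanism, or something of equivalent power, to finish.

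One more note: you apply Theorem \ref{thm-intro:main} with $\Sigma=\sigma$ at radius $1$. The paper instead places the drawstring on a smaller circle ($x^2+y^2=1/4$) and scales by a factor of $4$ at the end, which keeps the drawstring inside the region where the intermediate metric has strictly positive scalar curvature and gives clean room for the monotonicity ball on the $z$-axis. If you keep $\sigma$ at radius $1$, you must ensure the cushion covers a full tubular neighborhood of $\sigma$ and that the Schwarzschild region starts far enough out.
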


For small $\varepsilon$, the loop $\sigma$ in Theorem \ref{thm-intro:AFexample} acts as a shortcut -- points on opposite sides of $\sigma$ are close with respect to $g$, even though they are distance 2 with respect to $g_0$. Regarding the above examples as initial data for Einstein's equations, property (ii) implies that $(\mathbb{R}^3,g)$ contain no apparent horizons. This suggests that $\sigma$ exhibits properties of a worm-hole without necessitating a black-hole region. A rigorous statement of this sort, however, would require procuding a suitable spacetime into which $(\mathbb{R}^3,g)$ naturally embeds.

Taking $\varepsilon\to0$ in the theorem, we obtain a sequence of manifolds which probes the stability of the positive mass theorem.

\begin{cor}\label{cor-intro:pmt}
    There exists a sequence of asymptotically flat metrics $\{g_i\}_{i=1}^\infty$ on $\RR^3$ 
    which have nonnegative scalar curvature, no closed minimal surfaces, with masses $m_{\text{ADM}}(M_i,g_i)\to0$, and such that $\{(\mathbb{R}^3,g_i,\vec{0})\}_{i=1}^\infty$ converge in the pointed Gromov-Hausdorff and Intrinsic Flat senses to a pulled string space.
\end{cor}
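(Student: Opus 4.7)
The plan is to apply Theorem \ref{thm-intro:AFexample} to a sequence $\varepsilon_i\downarrow 0$, producing metrics $g_i=g_{\varepsilon_i}$ on $\mathbb{R}^3$, and then identify the limit as a pulled string space and verify convergence in the two senses. Items (i) and (ii) of Theorem \ref{thm-intro:AFexample} immediately give nonnegative scalar curvature and the absence of closed minimal surfaces. For the mass, item (iii) says $g_i$ coincides with the Schwarzschild metric of mass $\varepsilon_i$ outside $B_2$, so from the standard computation of ADM mass in Schwarzschild coordinates one gets $m_{\mathrm{ADM}}(\mathbb{R}^3,g_i)=\varepsilon_i\to 0$.

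For the candidate limit, note that as $\varepsilon_i\to0$ the Schwarzschild metric of mass $\varepsilon_i$ converges smoothly on compact subsets (away from the horizon) to the flat Euclidean metric $g_0$, and by (iv) the same holds for $g_i$ on $\mathbb{R}^3\setminus N_{g_0}(\sigma,\varepsilon_i)$. Meanwhile, by (v), the neighborhood $N_{g_0}(\sigma,\varepsilon_i)$ has $g_i$-diameter at most $10\varepsilon_i$ and thus collapses in the limit. I therefore take the limit to be the pulled string space
\[
X_\infty\;:=\;\bigl(\mathbb{R}^3,d_{g_0}\bigr)\big/\{\sigma\sim\mathrm{pt}\},
\]
equipped with the quotient length metric, and basepoint the image of $\vec 0$.

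For the pointed Gromov--Hausdorff convergence, I would directly compare distance functions. Given $p,q\in\mathbb{R}^3$ and a minimizing $g_i$-geodesic $\gamma$ between them, split $\gamma$ into its portions inside and outside $N_{g_0}(\sigma,\varepsilon_i)$. By (iv) and the smooth convergence of Schwarzschild to Euclidean on compacts, the arclength of each outside piece is within $o(1)$ of its $g_0$-arclength. By (v), each inside excursion contributes at most $10\varepsilon_i$. Conversely, any path realizing $d_{X_\infty}(\pi(p),\pi(q))$ can be lifted to an almost-competitor for $d_{g_i}(p,q)$ using (iv) for the ambient part and (v) to transit through $\sigma$. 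This yields the $C^0$-convergence $d_{g_i}\to d_{X_\infty}$ on compact sets, and hence pointed GH convergence after standard exhaustion arguments in the AF setting.

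The main obstacle will be the Intrinsic Flat convergence in the asymptotically flat (noncompact) setting. My plan is to reduce to the compact case by exhausting $\mathbb{R}^3$ with large coordinate balls $\bar B_R$, on each of which $g_i\to g_0$ smoothly outside $N_{g_0}(\sigma,\varepsilon_i)$ while $N_{g_0}(\sigma,\varepsilon_i)$ has shrinking $g_i$-diameter. On each such $\bar B_R$ the hypotheses of the Basilio--Sormani Scrunching Theorem \cite{BS} apply to give IF convergence to $(\bar B_R,d_{g_0})/\{\sigma\sim\mathrm{pt}\}$, and a diagonal argument upgrades this to pointed IF convergence to $X_\infty$. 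The delicate point is to verify the scrunching hypotheses uniformly in $i$ (uniform volume and distortion bounds on the scrunched region, provided by (iv)--(v)) and to check that the boundary contributions from $\partial B_R$ are controlled, since Schwarzschild converges to Euclidean in $C^2$ on $\partial B_R$ with uniform constants as $\varepsilon_i\to 0$.
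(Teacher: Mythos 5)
Your overall strategy matches the paper's: take the metrics $g_{\varepsilon_i}$ from Theorem~\ref{thm-intro:AFexample}, read off the scalar curvature, minimal-surface, and mass properties directly, exhaust $\mathbb{R}^3$ by coordinate balls, apply a scrunching theorem on each ball, and diagonalize to get pointed GH and IF convergence to the pulled string space. The mass identification from item (iii) and the candidate limit are correctly set up.

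The gap is in the IF step. You write that on each $\bar B_R$ ``the hypotheses of the Basilio--Sormani Scrunching Theorem~\cite{BS} apply.'' They do not, as stated. The Scrunching Theorem \cite[Theorem~2.5]{BS}, and the version of it invoked in the paper's proof of Theorem~\ref{thm-intro:collapse_submfd}, requires the metrics $g_i$ to agree with the background metric outside the collapsing region. Here, however, $g_i$ never agrees with $g_0$ anywhere on $\bar B_R\setminus N_{g_0}(\sigma,\varepsilon_i)$: by Theorem~\ref{thm-intro:AFexample}(iii)--(iv), $g_i$ is only $C^0$-close to $g_0$ there, being a drawstring perturbation of a mass-$\varepsilon_i$ Schwarzschild metric. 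You gesture at a related issue (``boundary contributions from $\partial B_R$''), but that is not the obstruction; the obstruction is that the Scrunching Theorem assumes exact equality away from the scrunched region. The paper resolves this by proving Proposition~\ref{prop-prlim:pulledstringconv}, a genuine modification of the Scrunching Theorem that replaces ``$g_i=g_\infty$ outside $U_i$'' with ``$\|g_i-g_\infty\|_{C^0(M\setminus \mathring U_i,g_\infty)}\to 0$.'' The proof of that proposition in turn rests on Lemma~\ref{lem-prelim:auxconvergence}, which constructs an explicit comparison space $Z=M\times[0,3\eta]$ interpolating between $g$ and $g_0$, with a conformal cusp over $U\times[\eta,2\eta]$ to control the filling volume. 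Without this modification, or some substitute argument, the IF convergence claim in your proposal does not follow.
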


\begin{rmk}
    Corollary \ref{cor-intro:pmt} represents a counterexample to conjectures \cite[Conjecture 6.2]{LeeSormanispherical} and \cite[Conjecture 10.1]{SormaniConj} concerning the Intrinsic Flat stability of the positive mass theorem. It is worth pointing out that if one additionally assumes that the sequence $g_i$ satisfies a uniform Ricci curvature lower bound, pulled string limit spaces are not possible, see \cite{KKL} and \cite{DongStability}.
\end{rmk}

Dong and Song \cite{Dong-Song_2023} recently proved that, given an asymptotically flat manifold $(M,g)$ with $R\geq0$ and small mass, one can remove a ``bad set'' $Z\subset M$ with small boundary area, such that the induced length metric on $M\setminus Z$ is Gromov-Hausdorff close to flat $\RR^3$ via a harmonic map $\mathcal{U}:M\setminus Z\to\RR^3$. The examples of Corollary \ref{cor-intro:pmt} show that it is necessary to consider the \textit{induced length metric} on $M\setminus Z$ (rather than the restriction of $d_M$ to $M\setminus Z$) in their statement.

When the asymptotically flat manifold $(M,g)$ contains Gromov-Lawson tunnels or drawstrings, the image of $Z$ in $\mathbb{R}^3$ (under $\mathcal{U}$) resembles a neighborhood of a set of points or loops. The following result shows that the bad set has ``codimension larger than 1'' in the coarse sense stated in item (3) below. It is a natural open question: does the image of $\p Z$ coarsely resembles tubular neighborhoods of codimension 2 or 3 sets? An appropriate answer may help rule out potential geometric phenomena in scalar curvature almost ridity problems. 

\begin{theorem} {\emph{(Corollary to \cite[Theorem 1.3]{Dong-Song_2023})}}\label{thm-intro:DSplane}
    Suppose $\{(M_i,g_i)\}_{i=1}^\infty$ are complete asymptotically flat $3$-manifolds, with $R_{g_i}\geq0$ and ADM masses $m_i\to0$. Then there exist open sets $Z_i\subset M_i$ and maps $\mathcal{U}_i:M_i\setminus Z_i\to\mathbb{R}^3$ so that
    \begin{enumerate}[topsep=3pt, itemsep=-3pt]
        \item the boundary areas satisfy $|\partial Z_i|\to0$,
        \item $\mathcal{U}_i$ is a pointed measured $\epsilon_i$-Gromov-Hausdorff approximation with $\epsilon_i\to0$,
        \item for any 2-plane $P\subset \mathbb{R}^3$ we have $\big|P\cap\mathcal{U}_i( Z_i)\big|\to0$.
    \end{enumerate}
\end{theorem}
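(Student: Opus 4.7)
The strategy is to import items (1) and (2) directly from \cite[Theorem 1.3]{Dong-Song_2023} and then deduce (3) via a standard projection inequality for bounded sets in $\mathbb{R}^3$.

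Applying \cite[Theorem 1.3]{Dong-Song_2023} to the sequence $\{(M_i,g_i)\}_{i=1}^\infty$ produces compact sets $Z_i\subset M_i$ with $|\partial Z_i|_{g_i}\to 0$ along with harmonic maps $\mathcal{U}_i$ realizing pointed measured $\epsilon_i$-Gromov-Hausdorff approximations with $\epsilon_i\to 0$. This delivers (1) and (2) verbatim, so only (3) requires work.

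Set $V_i:=\mathcal{U}_i(Z_i)\subset\mathbb{R}^3$, which is bounded since $Z_i$ is compact and $\mathcal{U}_i$ is Lipschitz. The bi-Lipschitz / near-isometry estimates on $\mathcal{U}_i$ underlying (2) imply that the topological boundary of $V_i$ in $\mathbb{R}^3$ is contained, up to a set of zero $\mathcal{H}^2$-measure, in $\mathcal{U}_i(\partial Z_i)$, yielding
\[
\mathcal{H}^2(\partial V_i)\;\leq\;(1+o(1))\,|\partial Z_i|_{g_i}\;\longrightarrow\;0.
\]
Now fix any 2-plane $P\subset\mathbb{R}^3$ with unit normal $\nu$, and let $\pi_\nu:\mathbb{R}^3\to\nu^\perp$ be orthogonal projection. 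Clearly $|P\cap V_i|\leq|\pi_\nu(V_i)|$. Since $V_i$ is bounded with piecewise smooth boundary, each line normal to $P$ that meets $V_i$ must cross $\partial V_i$ in at least two points; thus the area formula for projections gives
\[
|\pi_\nu(V_i)|\;\leq\;\tfrac{1}{2}\int_{\partial V_i}|\langle n,\nu\rangle|\,d\mathcal{H}^2\;\leq\;\tfrac{1}{2}\,\mathcal{H}^2(\partial V_i),
\]
where $n$ denotes the unit normal to $\partial V_i$ (defined a.e.). Combining, $|P\cap\mathcal{U}_i(Z_i)|\leq\tfrac{1}{2}(1+o(1))|\partial Z_i|_{g_i}\to 0$, uniformly over all planes $P$, which is (3).

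The main obstacle is purely bookkeeping: one must extract from Dong-Song's measured Gromov-Hausdorff approximation the clean area comparison $\mathcal{H}^2(\partial V_i)\lesssim|\partial Z_i|_{g_i}$. Since $\partial Z_i$ lies in the good region $M_i\setminus Z_i$ where $\mathcal{U}_i$ is a near-isometry, this follows from the Lipschitz estimates inherent in their construction. With that estimate in hand, the projection inequality is classical and no further scalar curvature input is needed beyond what is already encoded in (1) and (2).
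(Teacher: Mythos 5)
Your proof is correct and follows essentially the same route as the paper. The paper's argument for item (3) is slightly more direct: it notes that $\mathcal{U}_i(Z_i)\cap P\subset\pi\big(\mathcal{U}_i(\partial Z_i)\big)$ (any line normal to $P$ through a point of $\mathcal{U}_i(Z_i)\cap P$ must exit the bounded set $\mathcal{U}_i(Z_i)$ and hence meet its boundary), and then uses simply that $\pi$ is $1$-Lipschitz to conclude $|\mathcal{U}_i(Z_i)\cap P|\leq|\partial\mathcal{U}_i(Z_i)|$. You instead invoke the Cauchy projection formula $|\pi_\nu(V_i)|\leq\tfrac12\int_{\partial V_i}|\langle n,\nu\rangle|\,d\mathcal{H}^2$, which relies on each normal line hitting $\partial V_i$ at least twice and gains a factor $1/2$ — a marginally sharper constant that is immaterial here. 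The area comparison $\mathcal{H}^2(\partial V_i)\lesssim|\partial Z_i|_{g_i}$ is asserted in both arguments from the $C^0$-closeness of $g_i$ and $\mathcal{U}_i^*\delta$ on the good region together with the fact that $\mathcal{U}_i$ is an injective immersion (the paper flags these explicitly; you cite them as "bi-Lipschitz / near-isometry estimates"). In short, the two proofs differ only in which classical projection inequality is used in the final step.
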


\noindent{}Items (1) and (2) of Theorem \ref{thm-intro:DSplane} are the main content of \cite[Theorem 1.3]{Dong-Song_2023}, and item (3) follows from a brief argument presented in Section \ref{sec:AFexamples}.

\subsubsection{Area instability of Llarull's Theorem.} 

Llarull \cite{Llarull} showed that the unit sphere $(S^n,g_0)$ satisfies a certain area-extremality property in the class of all manifolds with scalar curvature at least $n(n-1)$. In particular, a special case of Llarull's Theorem states that any metric $g$ on $S^n$ satisfying the following conditions must be isometric to $g_0$:
\begin{enumerate}[label={(\arabic*)}, nosep]
    \item $R_g\geq n(n-1)$,
    \item $|T|_g\geq|T|_{g_0}$ for all 2-forms $T$, i.e. the identity map $(S^n,g)\to(S^n,g_0)$ is area non-increasing.
\end{enumerate}
The stability problem for Llarull's Theorem was investigated by Allen-Bryden-Kazaras in dimension 3 \cite{AllenBrydenK} and later in all dimensions by Hirsch-Zhang in \cite{HirschZhang}. In these works, it is shown that if a sequence of metrics $g_i$ on $S^n$ satisfies 
\[
    R_{g_i}\geq n(n-1)-1/i,\qquad\qquad |X|_{g_i}\geq|X|_{g_0}\text{ for all vector fields }X,
\]
and an additional ``no-bubble condition," then $(S^n,g_i)$ converges to the unit sphere in the intrinsic flat sense.  The no-bubble condition imposed in \cite{AllenBrydenK} is a uniform (in $i$) lower bound on the Cheeger constants
\[
    \mathrm{Ch}(S^n,g_i)=\inf\Big\{\frac{|\partial E|_{g_i}}{\min\big\{|E|_{g_i},|S^n\setminus E|_{g_i}\big\}}:E\subset S^n\Big\}
\]
and \cite[Theorem 1.6]{HirschZhang} imposed the weaker assumption of a uniform lower bound on the Poincar{\'e} constants of $g_i$. 

It was an open question whether or not similar stability statements held under the weaker assumption $|T|_{g_i}\geq|T|_{g_0}$ for all 2-forms $T$. The following provides a negative answer to this question.

\begin{theorem}\label{thm-intro:larrull}
    There exists Riemannian $3$-spheres $(S^3,g_i)$ and a constant $C>0$ which satisfy the following for all $i=1,2,3\dots$ 
    \begin{enumerate}[label={(\roman*)}, topsep=3pt, itemsep=0ex]
        \item $|T|_{g_i}\geq |T|_{g_0}$ for all 2-forms $T$,
        \item $R_{g_i}\geq6-1/i$,
        \item $\mathrm{Ch}(S^3,g_i)\geq C$,
    \end{enumerate}
    and which converge to a pulled string space in the intrinsic flat and Gromov-Hausdorff senses as $i\to\infty$.
\end{theorem}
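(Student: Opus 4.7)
The plan is to apply the drawstring construction of Theorem \ref{thm-intro:main} to a closed geodesic in the round 3-sphere, and then rescale globally by a mild constant factor to restore the area-comparison condition.

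Fix a closed geodesic $\sigma \subset (S^3, g_0)$. For each $i$, apply Theorem \ref{thm-intro:main} with $\Sigma = \sigma$, tolerance $\epsilon_i > 0$ to be chosen, and constant conformal factor $v_0 = v_i \to -\infty$. This produces a metric $g_i'$ equal to $g_0$ outside a shrinking tube $N_{g_0}(\sigma, r_{1,i})$, with $r_{1,i} < \epsilon_i \to 0$, satisfying $R_{g_i'} \geq 6 - \epsilon_i$ and $\mathrm{diam}_{g_i'}(\sigma) \leq \tfrac{1}{2} e^{v_i} \mathrm{length}(\sigma) \to 0$.

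The area condition (i) is not immediate. Inside the tube, the detailed drawstring (Theorem \ref{thm-main:h_and_u}) has local form $e^{-2u}(dr^2 + h^2 \omega_\theta^2) + e^{2u} g_H$. Computing bivector norms against $g_0$-orthonormal pairs of frame vectors gives ratios $|T|_{g_i'}/|T|_{g_0}$ of $e^{-2u} h$, $1$, and $h$ in the radial-angular, radial-horizontal, and angular-horizontal 2-planes respectively; only the third is $<1$, since $h<1$ is needed to create positive tip curvature. To correct this, define $g_i := h_{\min,i}^{-1} g_i'$, where $h_{\min,i} \in (0,1]$ is the minimum of $h$ over the tube. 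Every bivector ratio in $g_i$ is then at least $1$, yielding (i). The rescaling changes scalar curvature by $R_{g_i} = h_{\min,i} R_{g_i'} \geq h_{\min,i}(6 - \epsilon_i)$; since the prototype \eqref{eq-intro:KX23_main_func} gives $h_{\min,i} = 1 - O(1/|\log r_{1,i}|)$, choosing $\epsilon_i$ small enough that $r_{1,i}$ is exponentially small in $i$ yields (ii) $R_{g_i} \geq 6 - 1/i$.

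For the uniform Cheeger bound (iii), outside the tube $g_i$ is a uniform rescaling of $g_0$ by a factor tending to $1$, so large sets have isoperimetric ratios close to those of the round sphere; inside the tube, the explicit form of $g_i'$ shows that a subtube of radius $r_0$ has volume of order $(\log 1/r_0)^{c_2} r_0^2 \cdot \mathrm{length}(\sigma)$ but boundary area of order $\mathrm{length}(\sigma)$, so sets concentrated near $\sigma$ have diverging isoperimetric ratio. A standard overlap argument combining these regimes then yields $\mathrm{Ch}(S^3, g_i) \geq C > 0$ uniformly. Finally, since $\mathrm{diam}_{g_i}(\sigma) \to 0$ and $g_i$ agrees with $h_{\min,i}^{-1} g_0 \to g_0$ off the shrinking tube, the distance functions $d_{g_i}$ converge uniformly to the quotient distance on $S^3/\{\sigma \sim \mathrm{pt}\}$, giving pointed Gromov-Hausdorff convergence; intrinsic-flat convergence follows from the Basilio-Sormani Scrunching Theorem \cite{BS} exactly as in Theorem \ref{thm-intro:collapse_submfd}. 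The main obstacle will be the uniform Cheeger bound: the extreme distortions $e^{\pm 2u}$ inside the tube must be shown not to create nearly-minimizing sets for the isoperimetric ratio, whereas condition (i), delicate at first glance because the drawstring is not uniformly area-dominating, is resolved cleanly by the rescaling trick once the monotone behavior of $h$ is extracted from the detailed construction.
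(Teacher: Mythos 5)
Your overall approach — apply the drawstring construction to a great circle, read off the eigenvalue ratios of $g_i$ relative to $g_0$ at the level of 2-vectors, and invoke the Scrunching Theorem for the convergence — is the same as the paper's. Your global rescaling $g_i := h_{\min,i}^{-1}g_i'$ is a genuine and useful observation: as you correctly notice, the three 2-plane norm ratios $\{e^{-2u}h,\,1,\,h\}$ are all bounded below by $h_{\min,i}\geq 1-r_{1,i}$ but can dip \emph{below} $1$ wherever $h<1$ (in particular near $r\approx r_1$ where $u=0$ but $h<1$; your parenthetical ``only the third is $<1$'' overlooks that the first can also be $<1$ there, but since $e^{-2u}h\geq h$ always, the minimum is controlled by $h$ anyway, so your conclusion stands). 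The paper's proof observes that the eigenvalues are $>1-\tfrac{1}{100i}$ and moves on without explicitly performing the rescaling needed to upgrade this to $\geq 1$, so your explicit treatment is tidier, and the resulting $R_{g_i}\geq h_{\min,i}(6-\tfrac{1}{100i})\geq 6-\tfrac1i$ is correct without any need for $r_1$ to be exponentially small (the linear bound $h_{\min,i}\geq 1-r_{1,i}>1-\tfrac1{100i}$ from Theorem~\ref{thm-main:h_and_u}(VII) suffices).

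The genuine gap is the Cheeger bound (iii). You write that ``a standard overlap argument combining these regimes then yields $\mathrm{Ch}(S^3,g_i)\geq C$'' and yourself flag this as the main obstacle. There is no such standard argument: the difficulty with Cheeger minimizers is precisely that they may straddle the tube boundary, so an estimate inside the tube plus an estimate outside does not combine. Your sub-tube estimate is also off — the $g_i$-area of $\Sigma_{r_0}$ is $\approx h\cdot|\Sigma_{r_0}|_{g_0}=O(r_0\,\mathrm{length}(\sigma))$, not $O(\mathrm{length}(\sigma))$ — though the ratio still diverges, so this is cosmetic. What the paper actually does is structurally different and this is where the content lies: a Cheeger minimizer $E$ has CMC boundary $\partial E$ with $|H_{\partial E}|\leq\mathrm{Ch}(S^3,g_i)$; testing $\mathrm{Ch}$ against a fixed geodesic ball away from $\gamma$ gives $|H_{\partial E}|\leq 100$; by Theorem~\ref{thm-main:h_and_u}(V) the tubes $\Sigma_r$ are mean convex with $H_{\Sigma_r}\geq\tfrac{1}{4r\log(1/r)}$, so a maximum principle against this foliation forces $\partial E$ to exit a uniform $\rho$-neighborhood of $\gamma$ where $g_i=g_0$; then the monotonicity formula gives a uniform lower bound on $|\partial E|$. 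None of these ingredients appear in your sketch, and they are the parts that require both the mean convexity statement (V) built into Theorem~\ref{thm-main:h_and_u} and a variational characterization of the Cheeger minimizer, so this step needs to be supplied.
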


In general dimensions Hirsch-Zhang also showed \cite[Theorem 1.3]{HirschZhang} that under conditions (i) and (ii) of Theorem \ref{thm-intro:larrull}, there are sets $Z_i\subset S^n$ of small volume such that $g_i$ $C^0$-converges to the unit sphere metric on the ``good" region $S^n\setminus Z_i$. The examples in Theorem \ref{thm-intro:larrull} demonstrate that the removal of $Z_i$ is necessary.

\vspace{12pt}
\noindent \textbf{Acknowledgements.} We would like to thank Man-Chun Lee for helpful discussions on the topic of this paper, Antoine Song for consultation on Theorem \ref{thm-intro:DSplane}, and Christina Sormani for her interest in Section \ref{sec:confinv}. 


\section{The main construction and statements}\label{sec:main}

We fix the following geometrical setup for this and the following two sections. Let $(M^n,g)$ be an oriented Riemannian manifold, and $\Sigma^{n-2}$ be a closed connected embedded oriented submanifold. We write $\mathcal{N}\Sigma$ for the normal bundle of $\Sigma$ in $M$. Given $r>0$, we let $N(\Sigma,r)=N_g(\Sigma,r)$ be the $r$-neighborhood of $\Sigma$, and set $\Sigma_r:=\p N(\Sigma,r)$. Let $r_I<1/100$ be a sufficiently small radius so that: (i) the normal exponential map
\begin{equation}
    \exp^\perp: \big\{(x,v)\in \mathcal{N}\Sigma: |v|<2r_I\big\}\to N(\Sigma,2r_I)
\end{equation}
is a diffeomorphism onto its image and (ii) for all $r\leq 2r_I$, the surface $\Sigma_r$ has outward mean curvature at least $1/2r$ and area at most $4\pi|\Sigma|$. Denote the radial function $r(x)=d(x,\Sigma)$, which is smooth in $N(\Sigma,r_I)$. In our work below, the modification of $g$ is only performed within the region $N(\Sigma, r_I)$.

There is an induced fiber bundle structure $\pi: N(\Sigma,r_I)\to\Sigma$ given by $\pi(\exp^\perp(x,v))=x$. Denote by $\V$ the vertical distribution and $\H=\V^\perp$ the horizontal distribution, which we note is not generally integrable when $n\geq4$. Both distributions are smooth, and $\H$ coincides with $\Sigma$'s tangent bundle on $\Sigma$. On $N(\Sigma,r_I)\setminus\Sigma$, we define an oriented orthonormal frame $\{e_r,e_\th\}$ for $\V$ where $e_r:=\D r$ (and so $e_\th$ represents the angular direction). Let $\omega_\th=\metric{-}{e_\th}$ be the 1-form dual to $e_\th$. In this notation, the original metric on $N(\Sigma,r_I)$ takes the form
\begin{equation}\label{eq-main:original_g}
    g=dr^2+\omega_\th^2+g_{\H}.
\end{equation}
where $g_{\H}$ the restriction of $g$ on $\H$. It's worth reminding ourselves that the fiber bundle $\pi: N(\Sigma,r_I)\to\Sigma$ does not generally respect the Riemannian structure i.e. $\pi^*g_{\Sigma}\ne g_\H$.

Finally, given two functions $h$ and $u$ on $N(\Sigma,r_I)$, we consider the metric
\begin{equation}\label{eq-main:new_g}
    g'=e^{-2pu}dr^2+e^{-2pu}h^2\omega_\th^2+e^{2u}g_{\H},
\end{equation}
where $p=n-2$.

For the sake of clarity and to avoid needless repetition, we summarize the above setting and constructions:

\begin{setup}\label{setup-main:general}
    When assuming this setup, we will use the notations $M$, $g$, $\Sigma$, $\mathcal{N}\Sigma$, $N(\Sigma,r)$, $\Sigma_r,$ $r_I$, $r(x)$, $\pi$, $\V$, $\H$, $e_r$, $e_\th$, $\omega_\th$, $g'$ for the objects described above.
\end{setup}

To simplify the computation of $R_{g'}$, it is sufficient to consider functions $h$ and $u$ appearing in \eqref{eq-main:new_g} which satisfy the certain conditions which we will often assume.

\begin{condition}\label{cond-main:h_and_u}
    Functions $h$ and $u$ on $N(\Sigma,r_I)$ satisfy this condition if:

    \begin{enumerate}[label={(\roman*)}, nosep]
        \item $h=h(r)$ depends only on $r$, 
        \item $u$ splits as a product in the sense that there exists $v\in C^\infty(\Sigma)$ and $w=w(r)$ with $w(0)=1$, such that $u(x)=v(\pi(x))\cdot w(r(x))$,
        \item these functions satisfy
        \begin{equation}\label{eq-main:further_assump}
            \frac12\leq h\leq 2,\qquad
            u\leq 0,\qquad |w|\leq1,\qquad re^{-2nu}\leq1.
        \end{equation}
    \end{enumerate}
\end{condition}

The following is the main estimate of the scalar curvature of $g'$:

\begin{theorem}\label{thm-main:scal_estimate}
    Assume the General Setup \ref{setup-main:general}. There are constants $r_0\in(0,r_I)$ and $C_1,\dots, C_5>0$, depending only on $g$ and $\Sigma$, such that the following holds. If $h, u$ satisfy Condition \ref{cond-main:h_and_u}, then in $N_g(\Sigma,r_0)$ we have
    \begin{equation}\label{eq-main:scal_final}
        \begin{aligned}
            R_{g'} &\geq e^{2pu}R_g+2e^{2pu}\Big[-\frac{h_{rr}}h-\frac{2h_r}{rh}-\alpha_nu_r^2\Big] \\
            &\qquad -C_1e^{2pu}|h_r|
            -C_2e^{2pu}|u_r|
            -C_3\frac{e^{2pu}}{r}\big|1-h\big|
            -C_4e^{-2nu}\big|1-e^{(p+1)u}\big| \\
            &\qquad -C_5e^{-2u}|w|\big(1+||v||^2_{C^2(g_\Sigma)}\big),
        \end{aligned}
    \end{equation}
    where $\alpha_n:=\frac12(n-1)(n-2)$ and $v,w$ are the functions appearing in Condition \ref{cond-main:h_and_u}.
\end{theorem}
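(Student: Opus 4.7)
The plan is to compute $R_{g'}$ directly via Cartan's method of moving frames, using coframes adapted to the splitting $TM = \V \oplus \H$. Concretely, I would choose a local $g$-orthonormal frame $\{e_1 = e_r,\, e_2 = e_\theta,\, e_3, \ldots, e_n\}$ with $\{e_3,\ldots,e_n\}$ spanning $\H$ (normalized in a convenient way along radial geodesics from $\Sigma$), and dual coframe $\omega^i$. Then
\[
\tilde\omega^1 = e^{-pu}\omega^1,\qquad \tilde\omega^2 = e^{-pu}h\,\omega^2,\qquad \tilde\omega^\alpha = e^{u}\omega^\alpha\ (\alpha\geq 3)
\]
is a $g'$-orthonormal coframe. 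The structure coefficients of the $g$-frame already encode the geometry we need: $[e_r, e_\theta]$ reflects the $\approx 1/r$ mean curvature of $\Sigma_r$ in the angular direction, brackets $[e_\alpha,e_\beta]$ with $\alpha,\beta\geq 3$ record the non-integrability of $\H$ (O'Neill-type tensors), and $[e_r,e_\alpha]$ records the shape operator of $\Sigma_r$ along horizontal directions.

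The first substep is to extract $\tilde\omega^i_j$ from $d\tilde\omega^i = -\tilde\omega^i_j\wedge\tilde\omega^j$. Key simplifications come from Condition \ref{cond-main:h_and_u}: since $h=h(r)$ we have $dh = h_r\,dr$, and since $u = (v\circ\pi)\cdot w(r)$ we have $du = v\,w_r\,dr + w\,d(v\circ\pi)$, where $d(v\circ\pi)$ annihilates both $e_r$ and $e_\theta$ (the fibers of $\pi$ are tangent to $\V$). Hence only $u_r$ appears in vertical directions while horizontal derivatives of $u$ carry a tame factor of $w$. Each $\tilde\omega^i_j$ will then be of the form (conformal rescaling of $\omega^i_j$) $+$ (an explicit correction built from $du$, $dh$, and the $\omega^k$), which keeps the computation tractable.

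The second substep is to apply $\tilde\Omega^i_j = d\tilde\omega^i_j + \tilde\omega^i_k\wedge\tilde\omega^k_j$ and harvest sectional curvatures $\tilde K_{ij}$, then sum $R_{g'} = 2\sum_{i<j}\tilde K_{ij}$. I expect three families of contributions. The purely vertical plane $\tilde K_{12}$ yields the 2D ``cone smoothing'' term: in polar-like variables the vertical slice is $e^{-2pu}(dr^2 + (hr)^2 d\theta^2/r^2)$, whose Gaussian curvature produces $e^{2pu}\big(-h_{rr}/h - 2h_r/(rh)\big)$ plus conformal corrections involving $u_r^2$, $|u_r|$, and $|h_r|$. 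The $2p$ vertical-horizontal planes $\tilde K_{1\alpha},\tilde K_{2\alpha}$ contribute the $-\alpha_n u_r^2$ term after summation (the coefficient $\alpha_n = \frac12(n-1)(n-2)$ arising combinatorially from the anisotropic scaling $e^{-pu}$ versus $e^{u}$ in the two subbundles). The horizontal planes $\tilde K_{\alpha\beta}$ assemble, after accounting for the conformal factor $e^{u}$ applied to $g_\H$, into the $e^{2pu}R_g$ term -- here one subtracts off the already-counted vertical and mixed contributions to $R_g$ and converts the leftover $e^{-2u}$ factor to $e^{2pu}$, producing the discrepancy $e^{-2nu}(1 - e^{(p+1)u})$ that appears as one of the error terms.

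The main obstacle will be bookkeeping rather than any single deep step. Non-integrability of $\H$ injects O'Neill-type tensors into every sectional curvature computation, the ambient curvature of $(M,g)$ and second fundamental form of $\Sigma$ enter through $d\omega^i_j$ near $r=0$, and the anisotropic conformal change couples these to derivatives of $h$ and $u$. The five error coefficients $C_1|h_r|$, $C_2|u_r|$, $C_3 r^{-1}|1-h|$, $C_4 e^{-2nu}|1-e^{(p+1)u}|$, $C_5 e^{-2u}|w|(1+\|v\|_{C^2(g_\Sigma)}^2)$ essentially dictate the grouping: $C_3$ absorbs deviations of the angular mean curvature from the flat $1/r$ model, $C_4$ absorbs the gap between the anisotropic rescaling and a genuine conformal change (hence its appearance in comparing $e^{2pu}R_g$ to the sum of horizontal $\tilde K_{\alpha\beta}$), and $C_5$ absorbs all pieces involving horizontal derivatives of $u$, which factor through $w\,d(v\circ\pi)$. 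The threshold radius $r_0$ is then chosen so that each of these error contributions remains lower order than the positive main term across the whole region $N_g(\Sigma,r_0)$, using the inequalities $|u|\leq\|v\|_{C^0}$, $h\approx 1$, and $re^{-2nu}\leq 1$ from Condition \ref{cond-main:h_and_u}.
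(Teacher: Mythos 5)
Your plan takes a genuinely different route from the paper. The paper does not compute $R_{g'}$ via the Cartan structure equations on the full $n$-manifold. Instead, it decomposes via the \emph{traced Gauss equation} applied to the level sets $\Sigma_r=\partial N(\Sigma,r)$, writing
\[
R_{g'}=R'_r-\Big(H'^2+|A'|_{g'}^2+2e^{pu}\tfrac{\partial H'}{\partial r}\Big),
\]
and then compares each piece with its $g$-counterpart. The extrinsic terms $H'$, $|A'|^2_{g'}$ are handled in Theorem \ref{thm-ds:extrinsic_curv}, and the scalar curvature $R'_r$ of the $(n-1)$-slice in Theorem \ref{thm-ds:scal_gr}; only the latter uses the moving-frame curvature-form machinery, and only in dimension $n-1$. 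Your proposal instead computes all $n$-dimensional sectional curvatures $\tilde K_{ij}$ directly and sums. Both approaches are legitimate, but they organize the terms very differently. What the paper's slicing buys is modularity: the radial terms $h_{rr}/h$, $2h_r/(rh)$, $\alpha_n u_r^2$ fall out cleanly as extrinsic contributions of $\Sigma_r$ (they come from Lie derivatives $\L_{\partial_r}$ applied to the coframe, not from curvature 2-forms), while all the genuine curvature computation is confined to an $(n-1)$-dimensional conformal-type rescaling $g'_r=X^2\omega_\theta^2+Y^2\sum\omega_a^2$ of $g_r$. Your direct approach must carry the non-integrability of $\H$ through every sectional curvature simultaneously, which makes the bookkeeping heavier but is not in principle incorrect.

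A couple of specific cautions about your plan. First, your heuristic that ``the $2p$ vertical-horizontal planes $\tilde K_{1\alpha},\tilde K_{2\alpha}$ contribute the $-\alpha_n u_r^2$ term'' will not partition cleanly in the way you describe: in the paper's accounting, $\alpha_n u_r^2=\tfrac12(n-1)(n-2)u_r^2$ arises as the $u_r^2$-coefficient of $|A'|^2_{g'}$ (concretely $p^2+(n-2)=(n-1)(n-2)$), which mixes both the $e^{-pu}$ and $e^{u}$ scalings through the radial second fundamental form, not purely through mixed planes. In your decomposition these pieces will be redistributed across several $\tilde K_{ij}$'s, so verifying the exact coefficient will be more delicate than the combinatorial argument you sketch. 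Second, your description of the vertical 2-plane metric has a slip: with $\omega_\theta$ a \emph{unit} 1-form (so $\omega_\theta\approx r\,d\theta$), the vertical slice is $e^{-2pu}(dr^2+h^2\omega_\theta^2)\approx e^{-2pu}(dr^2+(hr)^2 d\theta^2)$, not $e^{-2pu}(dr^2+(hr)^2 d\theta^2/r^2)$; the role of the extra $1/r$ is precisely captured by the Lie bracket $[e_r,e_\theta]\approx -r^{-1}e_\theta$ and is what generates the $2h_r/(rh)$ term. Neither of these is a fatal flaw in the approach, but they are exactly the kind of coefficient-level details where a direct $n$-dimensional computation is prone to error, and the Gauss-equation decomposition in the paper was chosen to make them transparent.
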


It is now time to present the main statement on the creation of drawstrings, summerized previously in Theorem \ref{thm-intro:main}.

\begin{theorem}\label{thm-main:h_and_u}
    Assume the General Setup \ref{setup-main:general}. Given $v_0\in C^\infty(\Sigma)$ with $v_0\leq0$, and any $\epsilon>0$, there exists a radius $r_1<\min\{\epsilon,r_I\}$ and functions $h, u$ satisfying the following:
    \begin{enumerate}[label=\emph{(\Roman*)}, topsep=3pt, itemsep=0ex]
        \item Condition \ref{cond-main:h_and_u} is satisfied with $v=v_0$, and in particular we have $g'|_\Sigma=e^{2v_0}g|_\Sigma$.
        \item The metric $g'$ defined by \eqref{eq-main:new_g} is smooth across $\Sigma$. Moreover, we have $h(r)= u(r)= 0$ for $r\geq r_1$, so $g'=g$ outside $N(\Sigma,r_1)$.
        \item The scalar curvature lower bound $R_{g'}\geq R_g-\epsilon$.
        \item For all $x\in\p N(\Sigma,r_1)$ we have $d_{g'}(x,\pi(x))\leq 3r_1$.
    \end{enumerate}
    Moreover, $g'$ has the following additional properties:
    \begin{enumerate}[label=\emph{(\Roman*)}, topsep=3pt, itemsep=0ex]
        \setcounter{enumi}{4}
        \item the $g'$-mean curvature of $\Sigma_r$ is at least $\frac{1}{4r\log(1/r)}$ for all $r\leq r_1$,
        \item $\Vol_{g'}(N_g(\Sigma,r_1))\leq 12\pi\Area_g(\Sigma)r_1^2$,
        \item the functions $u,h$ satisfy $-\max_\Sigma|v_0|\leq u\leq0$ and $1-r_1\leq h\leq1$.
    \end{enumerate}
\end{theorem}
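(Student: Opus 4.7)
The plan is to realize $h$ and $u = v_0 w$ as explicit modifications of the prototype functions from \eqref{eq-intro:KX23_main_func}, then invoke Theorem \ref{thm-main:scal_estimate} to deduce property (III); the remaining conditions will follow by direct calculations with the explicit form. The key observation driving the construction is that for $h \sim 1-c_1/\log(1/r)$, the positive main term $-h_{rr}/h - 2h_r/(rh)$ in \eqref{eq-main:scal_final} is of order $c_1/(r^2\log^2(1/r))$, which dominates both the negative term $\alpha_n v_0^2 w_r^2$ and the listed lower-order errors provided the scaling $|w_r|\lesssim (Vr\log(1/r))^{-1}$ holds (where $V:=\max_\Sigma|v_0|$) and $r_1$ is chosen sufficiently small.

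First fix $\mu\in(0,1)$, set $c_1\asymp V^2$, and choose $r_1<\min(\epsilon,r_I)$ small enough that $\log(1/r_1)$ exceeds thresholds depending on $V$, $\|v_0\|_{C^2}$, $\epsilon$, and the constants $C_1,\ldots,C_5,r_0$ from Theorem \ref{thm-main:scal_estimate}. Introduce an intermediate radius $r_*$ satisfying $\log(1/r_*)=\log(1/r_1)\exp(V\sqrt{\alpha_n/c_1}/\mu)$, so that $r_*$ is doubly-exponentially smaller than $r_1$. Then build $h$ and $w$ piecewise on $[0,r_1]$: on the \emph{prototype interval} $[r_*,r_1]$, put $h(r)=1-c_1/\log(1/r)$ and let $w(r)$ be the affine function of $\log\log(1/r)$ interpolating $w(r_*)=1$ and $w(r_1)=0$; on the \emph{inner interval} $[0,r_*]$, set $w\equiv 1$ (so $u\equiv v_0$ and $u_r\equiv 0$) and $h(r)=1-\beta r^2/2$ with $\beta$ determined by matching $h$ at $r_*$; on a short \emph{cutoff interval} $[r_1-\rho,r_1]$, smoothly taper $h$ and $w$ down to the exterior values $1$ and $0$. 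Finally, mollify the joints at $r_*$ and $r_1-\rho$ to obtain $C^\infty$ smoothness without degrading the key inequalities.

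Most conditions are then easy: (I) and (VII) are built in; (II) holds because $h$ is even in $r$ near zero and $w$ is constant near zero, making $h(r)r$ an odd smooth function with derivative $1$ at $0$, which is the standard criterion for smoothness of the 2D metric $dr^2+h^2\omega_\theta^2$ across the origin of the normal disc bundle. Property (III) is Theorem \ref{thm-main:scal_estimate} applied zone-by-zone: on the prototype zone $|w_r|$ sits at the scaling bound with slack factor $\mu$, leaving $(1-\mu^2)c_1/(r^2\log^2(1/r))$ of the main positive term as surplus to swallow the $1/r$-type errors once $r_1$ is small; on the inner zone the uniform positive contribution $\approx 3\beta e^{2pv_0}$ dominates the $r$-independent errors from $C_4$ and $C_5$; on the cutoff zone the additional negative contributions are controlled by choosing $\rho$ small. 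For (IV), the radial upper bound $d_{g'}(x,\pi(x))\leq\int_0^{r_1}e^{-pv_0(\pi(x))w(r)}\,dr$ splits into an $[0,r_*]$ piece of size at most $r_*e^{pV}$ (still doubly-exponentially smaller than $r_1$) and an $[r_*,r_1]$ piece which, after the substitution $s=\log(1/r)$, becomes a Laplace-type integral of order $r_1$. Properties (V) and (VI) follow from direct mean-curvature and volume computations using the explicit form of $h$ in the warped-product structure.

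The main obstacle is the scalar curvature estimate on the inner zone $[0,r_*]$. Since $w\equiv 1$ there, the error terms $C_4 e^{-2nv_0}|1-e^{(p+1)v_0}|$ and $C_5 e^{-2v_0}(1+\|v_0\|^2_{C^2})$ appear as essentially $r$-independent constants that blow up with $|v_0|$, and they must be absorbed by the uniform positive contribution $\approx 3\beta$ from the quadratic $h$. This forces $\beta$ to be very large, while the matching condition $\beta(r_*)^2=2c_1/\log(1/r_*)$ together with the structural bound $h\geq 1/2$ simultaneously pin down how small $r_*$ (and through the matching, how small $r_1$) must be. Balancing all of these constraints against the prototype-zone requirement that $r_1$ be small enough to suppress the $1/r$-type errors is the delicate part of the construction.
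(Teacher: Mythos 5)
Your proposal captures the essential architecture of the paper's construction: the same prototype functions $h\approx 1 - c_1/\log(1/r)$ and $w\approx c_2\log\log(1/r)$, the same three-zone decomposition (inner parabolic cap, prototype region, outer cutoff), and the central scaling constraint between $c_1$, the slack factor $\mu$, and $V := \sup_\Sigma|v_0|$ forcing $c_2\lesssim\sqrt{c_1/\alpha_n}/V$, which is precisely what makes the positive main term $c_1/(r^2\log^2(1/r))$ dominate $\alpha_n v_0^2 w_r^2$. Invoking Theorem \ref{thm-main:scal_estimate} zone by zone, and verifying (IV), (V), (VI) from the explicit form of $h$ and $u$, is exactly the paper's strategy.

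The ``match $h$ at $r_*$, then mollify'' step is, however, a genuine gap. Matching only the value of $h$ across the inner/prototype joint forces $\beta = 2c_1/(r_*^2\log(1/r_*))$ and leaves a jump in $h_r$: one has $h_r(r_*^-)=-\beta r_*=-2c_1/(r_*\log(1/r_*))$ versus $h_r(r_*^+)=-c_1/(r_*\log^2(1/r_*))$, differing by a factor $2\log(1/r_*)$. Any mollification of $h$ near $r_*$ therefore introduces a positive $h_{rr}$ spike of size roughly $c_1/(r_*^2\log(1/r_*))$ over a transition of width $\sim r_*$, and the resulting negative contribution $-h_{rr}/h$ is of the same order as the surplus; it is far from clear that $-h_{rr}/h - 2h_r/(rh)$ stays nonnegative after this mollification. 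The paper never matches at the level of $h$ (or $\psi$). Instead it defines $\psi'(r) = \zeta(r/r_2)\,\frac{1}{r\log^2(1/r)} + \big(1-\zeta(r/r_2)\big)\,\frac{r}{r_2}$ with a fixed monotone cutoff $\zeta$ and sets $\psi=\int_0^r\psi'$, so $h$ is automatically $C^\infty$ with no joints, and the transition-region contribution of $\zeta'$ to $-h_{rr}/h - 2h_r/(rh)$ comes out as $\frac{c_1\zeta'}{r_2(1-c_1\psi)}\big(\tfrac{1}{rs^2}-\tfrac{r}{r_2}\big)$, which is nonnegative because $\zeta'\geq 0$ and $\tfrac{1}{rs^2}\geq\tfrac{r}{r_2}$ on $[0,r_2]$ by the smallness of $r_1$ (see \eqref{eq-cutoff:0to18p1}). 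To close the gap you should construct $h$ by integrating a smooth interpolation of $h_r$, as the paper does, rather than patching $h$ and mollifying afterward; the sign structure that makes the estimate survive is a specific feature of that interpolation.
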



\section{Computation of scalar curvature}\label{sec:scal}

\subsection{Strategy and setup to prove Theorem \ref{thm-main:scal_estimate}}\label{subsec:setup}

Throughout this section we assume the General Setup \ref{setup-main:general} and that $h,u$ are given functions defined on $N(\Sigma,r_I)$. To establish the scalar curvature estimate of Theorem \ref{thm-main:scal_estimate}, we work locally, on a fixed coordinate chart $U\subset \Sigma$ on which $\mathcal{N}\Sigma$ is trivial. We denote by $\x=(x_1,\cdots,x_{n-2})$ the coordinates on $U$, and make implicit the coordinate map for brevity. For a radius $r_0<r_I$, set $\Omega_{r_0}=\pi^{-1}(U)\cap N(\Sigma,r_0)$. The estimate \eqref{eq-main:scal_final} of Theorem \ref{thm-main:scal_estimate} is obtained on $\Omega_{r_0}$ for some small $r_0$, and then the estimate is patched together via a finite covering of coordinate charts. The constants $C_1\,\sim\,C_5$ in Theorem \ref{thm-main:scal_estimate} thus depend on the choice of charts.

To describe the strategy of our scalar curvature computation, recall that $\Sigma_r=\p N(\Sigma,r)$. We use the shorthand $g_r=g|_{\Sigma_r}$, $g'_r=g'|_{\Sigma_r}$, and let $A,H,R_r$ (resp. $A,'H',R'_r$) denote the second fundamental form, mean curvature, and scalar curvature of $\Sigma_r$ with respect to $g$ (resp. $g'$). Thus, we may express $g=dr^2+g_r$ and $g'=e^{-2pu}dr^2+g'_r$ in $N(\Sigma,r_I)$ (cf. the formulas \eqref{eq-main:original_g}, \eqref{eq-main:new_g}). By a standard combination of the traced Gauss equation and the variation of mean curvature, we have
\begin{equation}\label{eq-setup:gauss_eq1}
	R_g=R_r-\Big(H^2+|A|^2_g+2\frac{\p H}{\p r}\Big).
\end{equation}
For the new metric $g'$ we have
\begin{equation}\label{eq-setup:gauss_eq2}
	R_{g'}=R'_r-\Big(H'^2+|A'|_{g'}^2+2e^{pu}\frac{\p H'}{\p r}\Big),
\end{equation}
where the $e^{pu}$ coefficient comes from the scaling in the $dr$ direction. To compare $R_g$ to $R_{g'}$, it suffices to compute $H'$, $|A'|_{g'}^2$, and $R'_r$ in terms of $H$, $|A|_g^2$, and $R_r$. This is accomplished in Theorems \ref{thm-ds:extrinsic_curv} and \ref{thm-ds:scal_gr} below.

The computations are carried out using the moving frame method. Subsection \ref{subsec:exp} contains general facts of Riemannian geometry near codimension-2 submanifolds. There we set up a particular orthonormal frame $\{e_a\}_{1\leq a\leq n-2}$ for $\H$ near $U\subset\Sigma$ and obtain asymptotic expansions for the pairwise Lie brackets of $\{e_a,e_r,e_\th\}$. Then in Subsections \ref{subsec:fund} and \ref{subsec:scal}, we specialize to the setting of Theorem \ref{thm-main:scal_estimate} and compute the second fundamental form and scalar curvature of $\Sigma_r$ respectively.

\vspace{3pt}

\noindent{\bf{Convention:}} For the rest of this section, the notation $O(r^k)$ is used to denote specific expressions that are bounded uniformly by $Cr^k$ throughout $\Omega_{r_0}$ for some constant $C$ depending only on $g,\Sigma,r_0$, and the chart $U$ (instead of being merely asymptotically bounded as $r\to0$ in the traditional sense).

\subsection{Controlling the Lie brackets}\label{subsec:exp}

The aim of this subsection is to prove the following statement establishing a desirable frame for $\mathcal{H}$ with essential estimates used later in the section.

\begin{theorem}\label{thm-exp:local_frame}
    Assume the General Setup \ref{setup-main:general}. There exists a small radius $r_0<r_I$, a local orthonormal frame $\{e_a\}_{a=1}^{n-2}$ for the horizontal distribution $\H$ over $\Omega_{r_0}$, and a constant $C$, such that the following hold in $\Omega_{r_0}$:
    \begin{enumerate}[label=(\roman*), topsep=3pt, itemsep=0ex]
        \item $\metric{[e_a,e_b]}{e_\th}=O(r)$ and $\metric{[e_a,e_b]}{e_c}=O(1)$ and $e_a\metric{[e_b,e_c]}{e_d}=O(1)$,
        \item $\metric{[e_\th,e_a]}{e_b}=O(1)$ and $\metric{[e_\th,e_a]}{e_\th}= O(r)$, moreover, $e_\th\metric{[e_\th,e_a]}{e_b}= O(r^{-1})$ and $e_a\metric{[e_\th,e_b]}{e_\th}= O(r)$,
        \item $[e_r,e_\th]=\big(-r^{-1}+O(r)\big)e_\th$, and $\metric{[e_r,e_a]}{e_\th}= O(r)$, and $\metric{[e_r,e_a]}{e_b}= O(1)$,
        \item Given any smooth function $v$ on $\Sigma$, define a function $u\in C^\infty(\Omega_{r_0})$ by $u(x)=v(\pi(x))$. Then $|{e_a}(u)|\leq C|\D_\Sigma v|$ and $|{e_a}({e_b}(u))|\leq C\big(|\D_\Sigma v|+|\D^2_\Sigma v|\big)$.
    \end{enumerate}
\end{theorem}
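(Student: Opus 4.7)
\emph{Setting up Fermi coordinates.} After possibly shrinking $U$, pick a smooth orthonormal frame $\{N_1, N_2\}$ of $\mathcal{N}\Sigma|_U$ and introduce Fermi coordinates on $\Omega_{r_0}$ via
\[
(\x, r, \th) \longmapsto \exp_\x^\perp\bigl(r\cos\th\cdot N_1(\x) + r\sin\th\cdot N_2(\x)\bigr).
\]
The Gauss lemma for the submanifold $\Sigma$ gives $g_{rr} = 1$ and $g_{ra} = g_{r\th} = 0$. Standard Jacobi-field calculations then yield the expansions
\[
g_{ab} = (g_\Sigma)_{ab}(\x) + O(r),\quad g_{a\th} = r^2\,\omega_{12}(\p_a) + O(r^3),\quad g_{\th\th} = r^2 + O(r^4),
\]
where $\omega_{12}$ is the normal-connection 1-form of $(N_1, N_2)$. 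All $(\x, r, \th)$-partial derivatives of these coefficients are uniformly bounded on $\Omega_{r_0}$ once $r_0$ is small. Importantly, the leading behaviors in $r$ are $\th$-independent, so $\p_\th g_{\th\th} = O(r^4)$ and $\p_\th g_{a\th} = O(r^3)$.

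\emph{The frame.} Setting $e_r = \p_r$ and $e_\th = \phi\,\p_\th$ with $\phi := g_{\th\th}^{-1/2} = r^{-1}(1 + O(r^2))$, I project each $\p_a$ orthogonally onto $\H$, obtaining $\hat e_a = \p_a - (g_{a\th}/g_{\th\th})\p_\th$. Gram--Schmidting these with respect to $g|_\H$, whose matrix is $(g_\Sigma)_{ab} + O(r)$, produces a smooth orthonormal frame of the form
\[
e_a = A_a^b\,\p_b + B_a\,\p_\th,\qquad A_a^b = \delta_a^b + O(r),\quad B_a = O(1),
\]
with all $(\x, r, \th)$-derivatives of $A, B$ bounded uniformly on $\Omega_{r_0}$.

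\emph{Computing the brackets.} Since coordinate fields commute, each bracket expands cleanly in terms of derivatives of $A, B, \phi$. For example,
\[
[e_a, e_b] = \bigl(e_a(A_b^c) - e_b(A_a^c)\bigr)\p_c + \bigl(e_a(B_b) - e_b(B_a)\bigr)\p_\th
\]
has both coefficients $O(1)$; converting via $\p_c = e_c + O(r)\,e_\th + O(r)\,e_{c'}$ and $\p_\th = \phi^{-1}e_\th = O(r)\,e_\th$ yields the $O(1)$ horizontal and $O(r)$ vertical components claimed in (i), and the $e_a$-derivative estimate in (i) follows from the bounded coefficient derivatives. The identity in (iii) is immediate from $[\p_r, \phi\p_\th] = (\p_r\log\phi)\,e_\th$ together with $\p_r\log\phi = -\tfrac12\p_r\log g_{\th\th} = -r^{-1} + O(r)$; the remaining bounds in (iii) follow analogously by $r$-differentiating $A, B$. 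The main obstacle is (ii). For $\metric{[e_\th, e_a]}{e_\th} = O(r)$ I use the identity $\metric{[e_\th, e_a]}{e_\th} = -\metric{e_a}{\D_{e_\th}e_\th}$; a Christoffel computation shows $\Gamma^a_{\th\th} = O(r^3)$ (rather than the naive $O(r^2)$) thanks to the $\th$-independence of leading terms noted above, so $\D_{e_\th}e_\th$'s horizontal part is $\phi^2 \cdot O(r^3) = O(r)$. The estimate $e_\th\metric{[e_\th, e_a]}{e_b} = O(r^{-1})$ is analogous: each factor of $e_\th = \phi\p_\th$ contributes $\phi = O(r^{-1})$, but the $\p_\th$-derivatives involved only produce $O(1)$ rather than $O(r^{-1})$, again by the $\th$-independence argument, so the worst case is a single $\phi$ factor times a bounded quantity.

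\emph{Item (iv).} In these Fermi coordinates, $u = v\circ\pi$ depends only on $\x$, hence $e_a(u) = A_a^b\,\p_b v$ is bounded by $C|\D_\Sigma v|$. Differentiating once more, $e_a(e_b(u)) = e_a(A_b^c)\,\p_c v + A_a^d A_b^c\,\p_d\p_c v$ is controlled by $C(|\D_\Sigma v| + |\D_\Sigma^2 v|)$ using the uniform bounds on $e_a(A_b^c)$ and $A_a^d$.
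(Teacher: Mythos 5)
Your approach is essentially the same as the paper's: polar (Fermi) coordinates built from a trivializing frame of $\mathcal{N}\Sigma$, projecting the coordinate fields $\p_{x^a}$ orthogonally onto $\H$ to get $\hat e_a=\p_a-(g_{a\th}/g_{\th\th})\p_\th$, and Gram--Schmidting. Your alternative route via the identity $\metric{[e_\th,e_a]}{e_\th}=-\metric{e_a}{\D_{e_\th}e_\th}$ and the Christoffel bound $\Gamma^a_{\th\th}=O(r^3)$ is a clean and valid way to obtain the $O(r)$ estimate in item (ii). However, there is a genuine gap in your treatment of the hardest bound, $e_\th\metric{[e_\th,e_a]}{e_b}=O(r^{-1})$.

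The issue is that "all $(\x,r,\th)$-partial derivatives of $A,B$ bounded uniformly" is too weak. Tracking the dangerous piece: writing $\metric{[e_\th,e_a]}{e_b}=\tfrac{1}{\sqrt{g_{\th\th}}}(\p_\th A_a^c)\cdot (A^{-1})_c^b$, the $e_\th$-derivative contains the term $\tfrac{1}{g_{\th\th}}(\p_\th^2 A_a^c)(A^{-1})_c^b=O(r^{-2})\cdot(\p_\th^2 A)\cdot O(1)$. If one only knows $\p_\th^2 A=O(1)$, this is $O(r^{-2})$, not $O(r^{-1})$. To close the estimate one needs the finer bounds $\p_\th A=O(r)$ and $\p_\th^2 A=O(r)$, which in turn require $\p_\th(g_{a\th}/g_{\th\th})=O(r)$ and $\p_\th^2(g_{a\th}/g_{\th\th})=O(r)$. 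The first is visible from your asymptotic expansion with the leading coefficients $\th$-independent (as you note), but the second requires an additional argument: the paper observes that $r^2(g_{a\th}/g_{\th\th})$ extends smoothly in Cartesian coordinates, so one can apply a lemma asserting that for a smooth $f=O(r^k)$ one has $\p_\th f=O(r^{\max\{k,1\}})$, getting $\p_\th^2(g_{a\th}/g_{\th\th})=O(r)$ from the smoothness of $\p_\th(r^2 g_{a\th}/g_{\th\th})$. Your invocation of "the $\th$-independence argument" for the second derivative is not a substitute for this step and as stated would only yield $O(r^{-2})$. You also do not address the bound $e_a\metric{[e_\th,e_b]}{e_\th}=O(r)$ from item (ii). Finally, a minor point: the frame you construct is not smooth across $\Sigma$ — only Lipschitz — though this does not affect the scalar curvature computation.
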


\begin{rmk}
    The frame $\{e_a\}_{a=1}^{n-2}$ need not be smooth across $\Sigma$ in the construction and is only guaranteed to have Lipschitz continuity. This does not affect the smoothness of the drawstring metric or the validity of the eventual scalar curvature estimate in Theorem \ref{thm-main:scal_estimate}, since these do not depend on the choice of $e_a$.
\end{rmk}

To facilitate the calculations in Theorem \ref{thm-exp:local_frame}, we work in a polar coordinate system on $\Omega_{r_0}$ in the following way. Choose a smooth oriented orthonormal frame $\{\nu,\nu^\perp\}$ for $\mathcal{N}\Sigma|_U$. For an angle $\th$, define the normal vector 
\begin{equation}\label{eq-exp:nu_th}
    \nu_\th=\nu\cos\th+\nu^\perp\sin\th
\end{equation} 
and define polar coordinates $\Phi: U\times B^{\RR^2}(r_0)\to\Omega_{r_0}$ by
\begin{equation}\label{eq-exp:polarcoords}
    \Phi(\x,r,\th)=\exp_{\x}(r\nu_\th).
\end{equation}
The metric tensor in this coordinate can be written in general as
\begin{equation}\label{eq-exp:FGdef}
    g=dr^2+Ed\th^2+2\sum_{a} F_ad\th dx_a+\sum_{a,b}G_{ab}dx_adx_b,
\end{equation}
where $E$, $F_a$, $G_{ab}$ are smooth functions on $\Omega_{r_0}$.

We first calculate the Taylor expansion of $E, F_a$. Note that these coefficients are only defined for $r>0$. When referring to their value at $r=0$, we always mean the limit as $r\to0$ while fixing the remaining coordinates $\x,\th$, whenever this limit exists.

\begin{lemma}\label{lemma-exp:deriv_pth}
Assume the General Setup \ref{setup-main:general}. Then the vector \eqref{eq-exp:nu_th} and chart \eqref{eq-exp:polarcoords} satisfy

    (i) $\D_r\p_\th|_{r=0}=\nu_{\th+\pi/2}$ for each $\th$,
    
    (ii) $\D_r\D_r\p_\th|_{r=0}=0$.
\end{lemma}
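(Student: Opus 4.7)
My strategy is to interpret $\p_\th$, restricted to each radial geodesic $\gamma_{\x,\th}(r) := \exp_{\x}(r\nu_\th)$, as a Jacobi field along $\gamma_{\x,\th}$, and then read off the two claimed identities from the standard initial-condition formulas for Jacobi fields.

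First I would recognize that $\Phi(\x,r,\th) = \exp_{\x}(r\nu_\th)$ is, for each fixed $\x$, a smooth geodesic variation in the parameter $\th$. Consequently the restriction $J(r) := \p_\th|_{\Phi(\x,r,\th)}$ is a Jacobi field along $\gamma_{\x,\th}$. Because the basepoint $\gamma_{\x,\th}(0) = \x$ is independent of $\th$, this Jacobi field has vanishing initial value $J(0) = 0$, and so $\D_r J|_{r=0}$ and $\D_r\D_r J|_{r=0}$ are well-defined as the intrinsic covariant derivatives of $J$ along $\gamma_{\x,\th}$.

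For part (i), the standard formula for the initial covariant derivative of a Jacobi field arising from a variation of geodesics with fixed starting point yields
\begin{equation*}
    \D_r\p_\th\big|_{r=0} \;=\; \D_r J\big|_{r=0} \;=\; \frac{d}{d\th}\dot\gamma_{\x,\th}(0) \;=\; \frac{d}{d\th}\nu_\th \;=\; \nu_{\th+\pi/2},
\end{equation*}
where the last equality uses $\nu_\th = \nu\cos\th + \nu^\perp\sin\th$. For part (ii), I would invoke the Jacobi equation $\D_r\D_r J = -R(J,\dot\gamma)\dot\gamma$ along $\gamma_{\x,\th}$ and evaluate at $r=0$; since $J(0) = 0$, the curvature term vanishes and one concludes $\D_r\D_r\p_\th|_{r=0} = 0$.

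No serious obstacle is anticipated, as both claims reduce to elementary Jacobi field identities. The only point requiring minor care is that $\p_\th$ vanishes at $r=0$ (as a vector field on the total space of polar coordinates), so one must interpret $\D_r\p_\th$ there as the covariant derivative of the Jacobi field $J$ along the radial geodesic, not as a pointwise ambient derivative at the singular point of the polar chart. Since $J$ extends smoothly to $r=0$ in its intrinsic description along $\gamma_{\x,\th}$, this causes no difficulty.
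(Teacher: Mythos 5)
Your proof is correct. Part (ii) is essentially the paper's own argument: both invoke the Jacobi equation and $\p_\th|_{r=0}=0$. Part (i), however, takes a genuinely different (and cleaner) route. The paper establishes (i) by switching to the Cartesian coordinates $(\x,y,z)$ with $y=r\cos\th$, $z=r\sin\th$, writing $\p_r$ and $\p_\th$ in terms of $\p_y,\p_z$, and directly computing $\D_{\p_r}\p_\th$ using bounded Christoffel symbols; the identity then appears as the limit $\nu_{\th+\pi/2}+o(1)$ as $r\to 0$. You instead observe that $J(r)=\p_\th|_{\gamma_{\x,\th}(r)}$ is a Jacobi field along $\gamma_{\x,\th}(r)=\exp_\x(r\nu_\th)$ with $J(0)=0$, and apply the standard initial-condition formula $\D_r J(0)=\frac{d}{d\th}\dot\gamma_{\x,\th}(0)=\nu_{\th+\pi/2}$, which follows from the symmetry $\D_r\p_\th=\D_\th\p_r$ evaluated at the fixed base point. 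Your version avoids the error-term bookkeeping and puts parts (i) and (ii) on the same conceptual footing (both read off Jacobi-field data), at the cost of citing one additional standard fact. Both approaches are valid; the paper's direct computation is more self-contained but less structural.
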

\begin{proof}
    To establish (i), we switch to an associated Cartesian coordinate system $(\x,y,z)$ where $y=r\cos\th$ and $z=r\sin\th$. Let $\Gamma_{ij}^k$ denote a generic Christoffel symbol for $g$ in these coordinates, and note that these are bounded on $\Omega_{r_0}$. Then we have 
    \[\begin{aligned}
        \D_{\p_r}\p_\th &= \frac{y\D_{\p_y}+z\D_{\p_z}}{\sqrt{y^2+z^2}}\big(y\p_z-z\p_y\big) \\
        &= \frac{y\,\p/\p y+z\,\p/\p z}{\sqrt{y^2+z^2}}\big(y\p_z-z\p_y\big)+O\big(|\Gamma_{ij}^k|\cdot|y^2+z^2|^{3/2}\big) \\
	&= \frac{y\p_z-z\p_y}{\sqrt{y^2+z^2}}+O(r^{3/2})
	= \nu_{\th+\pi/2}+o(1).
    \end{aligned}\]
    Item (ii) follows from the Jacobi equation $\D_{\p_r}\D_{\p_r}\p_\th=R(\p_r,\p_\th)\p_r$ and $\p_\th|_{r=0}=0$.
\end{proof}

\begin{lemma}\label{lemma-exp:E}
    Assume the General Setup \ref{setup-main:general} and consider the function $R_1=-\frac13\sec(\nu,\nu^\perp)$ along $\Sigma$. Then vector field $re_\th$ and function $r^{-2}E$ defined in \eqref{eq-exp:FGdef} are smooth in $N(\Sigma,r_I)$. Moreover, we have
    \begin{equation}\label{eq-exp:asymp_E}
        E(\x,r,\th)=r^2+R_1(\x)r^4+O(r^5)\quad\text{in}\ \ \Omega_{r_0}.
    \end{equation}
\end{lemma}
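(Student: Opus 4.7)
My plan is to introduce Cartesian counterparts $(y, z) = (r\cos\th, r\sin\th)$ to the polar coordinates, working in the Fermi-type chart $\Psi(\x, y, z) = \exp_\x(y\nu + z\nu^\perp)$. This is a smooth diffeomorphism onto a neighborhood of $\Sigma$, and in these Cartesian variables both $\p_\th = -z\,\p_y + y\,\p_z$ and $r^2 = y^2 + z^2$ extend smoothly across $\Sigma$, while the metric components $g_{yy}, g_{yz}, g_{zz}$ are smooth functions of $(\x, y, z)$.

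The first step is to establish smoothness of $E/r^2$ by invoking the Gauss lemma for the normal exponential map: for each $\x \in U$ and unit $(y_0, z_0)$, the curve $t \mapsto \Psi(\x, ty_0, tz_0)$ is a unit-speed $g$-geodesic with tangent $(y\p_y + z\p_z)/r$, so the unit-length condition yields the pointwise identity
\[
    y^2 g_{yy} + 2yz\, g_{yz} + z^2 g_{zz} = r^2.
\]
Since $E = g(\p_\th, \p_\th) = z^2 g_{yy} - 2yz\, g_{yz} + y^2 g_{zz}$, adding the two expressions and dividing by $r^2$ gives the clean formula $E/r^2 = g_{yy} + g_{zz} - 1$, which is manifestly smooth on $N(\Sigma, r_I)$ and equals $1$ along $\Sigma$. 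Consequently $r e_\th = \p_\th / \sqrt{E/r^2}$ is smooth in a neighborhood of $\Sigma$ where $E/r^2$ is bounded away from $0$.

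For the Taylor expansion, I would fix $\x_0 \in U$ and examine the 2-dimensional slice $S_{\x_0} := \Psi(\{\x_0\} \times B_\RR^2(r_0))$. Its tangent plane at $\x_0$ is $\mathcal{N}_{\x_0}\Sigma = \operatorname{span}(\nu, \nu^\perp)$, and the radial curves from $\x_0$ are $M$-geodesics lying in $S_{\x_0}$; hence $S_{\x_0}$ is totally geodesic at $\x_0$, and by the Gauss equation its intrinsic Gaussian curvature at $\x_0$ equals the ambient sectional curvature $\sec(\nu, \nu^\perp)(\x_0)$. Since $(r, \th)$ serve as geodesic polar coordinates on $(S_{\x_0}, g|_{S_{\x_0}})$ and $E$ is the angular coefficient of this induced metric, the standard 2D expansion $E = r^2 - \tfrac{K}{3}r^4 + O(r^5)$ in geodesic polar coordinates yields
\[
    E(\x_0, r, \th) = r^2 - \tfrac{1}{3}\sec(\nu, \nu^\perp)(\x_0)\, r^4 + O(r^5) = r^2 + R_1(\x_0)\, r^4 + O(r^5).
\]
The main technical point I anticipate is the uniformity of the $O(r^5)$ remainder across $\x_0 \in U$ and $\th$; this I would handle by using the already-established smoothness of $E/r^2$ to control the Taylor remainder in $r$ uniformly on the precompact closure of $\Omega_{r_0}$, together with the vanishing of all first-order $(y,z)$-derivatives of $g_{yy}, g_{yz}, g_{zz}$ along $\Sigma$ (obtained by differentiating the Gauss identity in $t$ at $t=0$).
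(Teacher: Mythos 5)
Your proof is correct and, for the smoothness part, takes a genuinely different and more elementary route than the paper. The paper invokes Lemma~\ref{lemma-ana:smoothness_jac} (smoothness of solutions to the Jacobi IVP in Cartesian variables, itself built on Lemma~\ref{lemma-ana:smoothness}), applied with $h=\sqrt E$ and $K$ the Gauss curvature of the fiber $S_x$. You instead observe that the unit-speed radial condition forces $y^2g_{yy}+2yz\,g_{yz}+z^2g_{zz}=r^2$ and combine it with $E=z^2g_{yy}-2yz\,g_{yz}+y^2g_{zz}$ to get the clean identity $E/r^2=g_{yy}+g_{zz}-1$, which is manifestly smooth in the Fermi chart. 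This bypasses the Taylor-expansion machinery of Appendix B entirely for this lemma and is a real simplification; the paper's lemma is more general and gets reused, but for Lemma~\ref{lemma-exp:E} your route is shorter. For the $r^4$ coefficient, your argument -- $S_{\x_0}$ is geodesic at $\x_0$, so the Gauss equation identifies its Gaussian curvature with $\sec(\nu,\nu^\perp)$, then apply the Bertrand--Diquet--Puiseux expansion -- is the same as the paper's ``well-known metric expansion applied to the fiber.''

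One small inaccuracy in your justification of the remainder: the vanishing of \emph{all} first-order $(y,z)$-derivatives of $g_{yy},g_{yz},g_{zz}$ along $\Sigma$ does not follow from the unit-length Gauss identity alone. Differentiating $y^2g_{yy}+2yz\,g_{yz}+z^2g_{zz}=r^2$ radially only yields $\p_yg_{yy}=0$, $\p_zg_{zz}=0$, $\p_zg_{yy}=-2\p_yg_{yz}$, and $\p_yg_{zz}=-2\p_zg_{yz}$; to kill $\p_yg_{yz}$ and $\p_zg_{yz}$ (and hence $\p_y(g_{yy}+g_{zz})$ and $\p_z(g_{yy}+g_{zz})$, which are what you actually need) you must also use the orthogonality part of the Gauss lemma, $g(\p_r,\p_\th)=0$, i.e.\ $-yz\,g_{yy}+(y^2-z^2)g_{yz}+yz\,g_{zz}=0$. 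Alternatively -- and more economically -- you need not establish the vanishing of first derivatives separately at all: Taylor-expand the smooth function $g_{yy}+g_{zz}-1$ in $(y,z)$ with uniform $O(r^3)$ remainder, then use the fiberwise 2D expansion $E/r^2=1-\tfrac{K}{3}r^2+o(r^2)$ to identify the linear part as zero and the quadratic part as $-\tfrac{K}{3}(y^2+z^2)$. This gives \eqref{eq-exp:asymp_E} with the desired uniformity in a single stroke.
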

\begin{proof}
    The formula \eqref{eq-exp:asymp_E} follows from the well-known metric expansion in geodesic polar coordinates, which is applied to the 2-dimensional fiber $S_x=\pi^{-1}(x)\cap N(\Sigma,r_I)$. The smoothness of $r^{-2}E$ and $re_\th$ follow from Lemma \ref{lemma-ana:smoothness_jac}, taking $h$ and $K$ in that Lemma to be $\sqrt{E}$ and the Gauss curvature of $S_x$.
\end{proof}

In the next calculation, we refine our understanding of the coefficients $F_a$. First, let $\tau\in\Omega^1(\Sigma)$ denote the normal connection form, defined by $\tau(W)=\metric{\D_W\nu}{\nu^\perp}$ for $W\in T\Sigma$. To denote the components of $\tau$ in the frame $\{x^a\}$, we use $\tau_a=\tau(\p_{x_a})$.

\begin{lemma}\label{lemma-exp:Fa}
    Assume the General Setup \ref{setup-main:general}. Then the components of $F$ satisfy
    \begin{equation}\label{eq-exp:asymp_F}
        F_a(\x,r,\th)=\tau_a(\x)r^2+O(r^3)\quad\text{in}\ \ \Omega_{r_0}
    \end{equation}
    where $\tau(w)=\metric{\D_w\nu}{\nu^\perp}$, $w\in T\Sigma$, denotes the normal connection form of $\Sigma$.
\end{lemma}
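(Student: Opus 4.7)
The plan is to Taylor-expand $F_a(\x, r, \th) = \metric{\p_\th}{\p_{x_a}}$ to second order in $r$ along each radial geodesic $\gamma_{\x,\th}(r) = \exp_\x(r\nu_\th)$, holding $\x$ and $\th$ fixed. Both $\p_\th$ and $\p_{x_a}$ restrict to Jacobi fields along $\gamma_{\x,\th}$, so the first three Taylor coefficients of $F_a$ can be computed via covariant derivatives from the initial data. The relevant initial data are: $\p_\th|_{r=0} = 0$, $\D_r\p_\th|_{r=0} = \nu_{\th+\pi/2}$, and $\D_r^2\p_\th|_{r=0} = 0$ (all supplied by Lemma \ref{lemma-exp:deriv_pth}); $\p_{x_a}|_{r=0} = \p_{x_a}^\Sigma$; and $\D_r\p_{x_a}|_{r=0} = \D_{\p_{x_a}^\Sigma}\nu_\th$, the last obtained from the torsion-free identity $\D_{\p_r}\p_{x_a} = \D_{\p_{x_a}}\p_r$ (valid since $[\p_r, \p_{x_a}] = 0$ in the polar chart) and the continuous extension of $\p_r$ to $\nu_\th$ at $r=0$.

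The zeroth and first coefficients both vanish: $F_a|_{r=0} = 0$ because $\p_\th|_{r=0} = 0$, and, using the product rule, $\p_r F_a|_{r=0} = \metric{\nu_{\th+\pi/2}}{\p_{x_a}^\Sigma} = 0$ since $\nu_{\th+\pi/2} \in \mathcal{N}\Sigma$ is $g$-orthogonal to $T\Sigma$. For the $r^2$ coefficient, I would expand $\p_r^2 F_a$ into three product-rule terms; the first vanishes at $r = 0$ by $\D_r^2\p_\th|_{r=0} = 0$ and the third vanishes since $\p_\th|_{r=0} = 0$, leaving
\[
    \tfrac{1}{2}\p_r^2 F_a\big|_{r=0} = \metric{\D_r\p_\th}{\D_r\p_{x_a}}\big|_{r=0} = \metric{\nu_{\th+\pi/2}}{\D_{\p_{x_a}^\Sigma}\nu_\th}.
\]

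Next, I would decompose $\nu_\th = \cos\th\,\nu + \sin\th\,\nu^\perp$ and $\nu_{\th+\pi/2} = -\sin\th\,\nu + \cos\th\,\nu^\perp$ and apply the orthonormality identities $\metric{\nu}{\D\nu} = \metric{\nu^\perp}{\D\nu^\perp} = 0$ together with $\metric{\nu^\perp}{\D_X\nu} = -\metric{\nu}{\D_X\nu^\perp} = \tau(X)$. The four cross terms should collapse via $\sin^2\th + \cos^2\th = 1$ to the $\th$-independent expression $\tau_a(\x)$, so Taylor's theorem delivers $F_a(\x, r, \th) = \tau_a(\x)r^2 + O(r^3)$. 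The uniformity of the remainder across $\Omega_{r_0}$ follows from smoothness of all the data expressed in the associated Cartesian coordinates $(\x, r\cos\th, r\sin\th)$ on $\Omega_{r_0}$, combined with compactness of the fixed coordinate patch, in the manner of the convention following Subsection \ref{subsec:setup}.

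The main obstacle is confirming the $\th$-independence of the $r^2$ coefficient: a priori the expression $\metric{\nu_{\th+\pi/2}}{\D_{\p_{x_a}^\Sigma}\nu_\th}$ contains several $\th$-dependent pieces, and only after the orthonormality cancellations does it reduce to the intrinsically-defined normal connection form $\tau_a$, with the offending $\sin\th\cos\th$ terms dropping out. This cancellation is the geometric heart of the lemma and mirrors the fact that $\tau$ is a connection on $\mathcal{N}\Sigma$ independent of the chosen orthonormal frame $\{\nu, \nu^\perp\}$.
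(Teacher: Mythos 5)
Your proposal is correct and follows the paper's proof essentially step for step: Taylor expansion of $F_a$ in $r$ at $r=0$, using $\p_\th|_{r=0}=0$, Lemma \ref{lemma-exp:deriv_pth} for the initial data of the Jacobi fields, and the torsion-free identity to identify $\D_{\p_r}\p_{x_a}|_{r=0}$ with $\D_{\p_{x_a}}\nu_\th$. The one small difference is cosmetic: where the paper simply asserts the identity $\metric{\D_w\nu_\th}{\nu_{\th+\pi/2}}=\tau(w)$ as a ``fact,'' you spell out the trigonometric cancellation that proves it — a welcome but not substantive addition.
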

\begin{proof}
    Clearly, $F_a$ is smooth and equals 0 when $r=0$. The first order term is
    \begin{align}
        \frac{\p F_a}{\p r}|_{r=0} &= \metric{\D_{\p_r}\p_\th}{\p_{x_a}}|_{r=0}+\metric{\p_\th}{\D_{\p_r}\p_{x_a}}|_{r=0} \\
        &= \metric{\nu_{\th+\pi/2}}{\p_{x_a}}|_\Sigma+\metric{\p_\th}{\D_{\p_{x_a}}\nu_\th}|_{r=0}=0.
    \end{align}
    The second order term is
    \begin{align}
        \frac{\p^2F_a}{\p r^2}|_{r=0} &= \metric{\D_{\p_r}\D_{\p_r}\p_\th}{\p_{x_a}}|_{r=0}+2\metric{\D_{\p_r}\p_\th}{\D_{\p_r}\p_{x_a}}|_{r=0}+\metric{\p_\th}{\D_{\p_r}\D_{\p_r}\p_{x_a}}|_{r=0} \\
        &= 0+2\metric{\nu_{\th+\pi/2}}{\D_{\p x_a}\nu_\th}+0\qquad
        \text{(by Lemma \ref{lemma-exp:deriv_pth})}\\
        &= 2\tau_a,
    \end{align}
    where the last line follows from the fact $\metric{\D_w\nu_\th}{\nu_{\th+\pi/2}}=\tau(w)$ for all angle $\th$.
\end{proof}

Finally, the following analytic fact is useful.

\begin{lemma}\label{lemma-exp:deriv_asymp}
    If $f\in C^\infty(\bar{\Omega_{r_I}})$ with $f=O(r^k)$ for some integer $k\geq0$, then it satisfies
    \begin{equation}\label{eq-exp:deriv_asymp}
        \p_r f=O\big(r^{\max\{k-1, 0\}}\big),\quad
        \p_{x_a}f= O(r^k),\quad
        \p_\th f= O\big(r^{\max\{k,1\}}\big),
    \end{equation}
    Moreover, for all $\alpha_1,\dots,\alpha_{n-2},\beta\geq0$ we have
    \begin{equation}
        \partial^{\alpha_1}_{x^1}\dots \partial^{\alpha_{n-2}}_{x^{n-2}}\partial^\beta_{\theta}f= O(r^k).
    \end{equation}
\end{lemma}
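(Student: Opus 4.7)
The plan is to reduce the polar-coordinate estimates to a Taylor expansion of $f$ in the Cartesian coordinates $(\x, y, z)$ on $\Omega_{r_I}$ defined by $y = r\cos\theta$, $z = r\sin\theta$. In these coordinates $f$ is smooth up to $r = 0$, and the polar basis fields become
\[
\partial_r = \cos\theta\,\partial_y + \sin\theta\,\partial_z, \qquad \partial_\theta = -z\,\partial_y + y\,\partial_z,
\]
while $\partial_{x_a}$ is unchanged. The guiding heuristic is that $\partial_r$ drops the effective $r$-order by one, $\partial_\theta$ carries an extra factor of $r$, and $\partial_{x_a}$ is neutral with respect to the radial weight.

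I would first apply Hadamard's lemma (Taylor's theorem with integral remainder) repeatedly in the $(y,z)$ variables. The assumption $f = O(r^k)$ forces every mixed partial $\partial_y^i\partial_z^j f|_{y=z=0}$ with $i+j<k$ to vanish, which produces a decomposition
\[
f(\x, y, z) = \sum_{i+j = k} y^i z^j\, g_{ij}(\x, y, z),
\]
with each $g_{ij}$ smooth on $\bar\Omega_{r_0}$; in particular, every partial derivative of $g_{ij}$ is uniformly bounded on $\bar\Omega_{r_0}$. Since $y^i z^j = r^k\cos^i\theta\sin^j\theta$, each monomial is $O(r^k)$ with a bounded angular factor.

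Plugging this decomposition into each derivative gives the three first-order estimates directly. For $\partial_r$, one has $\partial_r(y^iz^j) = (i+j)\,y^iz^j/r$, which is $O(r^{k-1})$ when $k\geq1$ and vanishes when $k=0$; combined with the bounded factor $\partial_r g_{ij} = \cos\theta\,\partial_y g_{ij}+\sin\theta\,\partial_z g_{ij}$, the Leibniz rule yields $\partial_r f = O(r^{\max\{k-1,0\}})$. For $\partial_{x_a}$ the monomials are inert, so $\partial_{x_a}f = \sum y^iz^j\,\partial_{x_a}g_{ij} = O(r^k)$. For $\partial_\theta$ I would expand
\[
\partial_\theta(y^iz^j g_{ij}) = \partial_\theta(y^iz^j)\,g_{ij} + y^iz^j\bigl(-z\,\partial_yg_{ij}+y\,\partial_zg_{ij}\bigr);
\]
the first term is $O(r^k)$ when $k\geq1$ and vanishes when $k=0$, while the second is $O(r^{k+1})$, giving $\partial_\theta f = O(r^{\max\{k,1\}})$.

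Finally, for the iterated estimate, the same computation shows that both $\partial_{x_a}$ and $\partial_\theta$ map the class of smooth functions on $\bar\Omega_{r_0}$ satisfying an $O(r^k)$ bound into itself (when $k=0$, $\partial_\theta$ produces an $O(r)$ function, which is a fortiori $O(r^0)$). A straightforward induction on $|\alpha| + \beta$ then yields $\partial^{\alpha_1}_{x_1}\cdots\partial^{\alpha_{n-2}}_{x_{n-2}}\partial^\beta_\theta f = O(r^k)$. There is no substantive obstacle here; the one point worth isolating is the smoothness of the Taylor coefficients $g_{ij}$ across the coordinate singularity $r=0$, which is precisely why the argument is carried out in the Cartesian, rather than polar, chart.
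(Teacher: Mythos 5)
Your proof is correct and follows essentially the same route as the paper: both use the Taylor/Hadamard decomposition $f=\sum_{i+j=k}y^iz^j g_{ij}(\x,y,z)$ with smooth coefficients in the Cartesian chart, read off the first-order estimates from the radial weight of each monomial, and obtain the iterated estimate by induction using the fact that $\partial_{x_a}$ and $\partial_\theta$ preserve the $O(r^k)$ class. The only cosmetic difference is that the paper handles the $k=0$ case by expanding $f-f(\x,0,0)$ to first order, whereas you observe directly that $\partial_\theta$ of an $O(1)$ function is $O(r)\subset O(1)$; the two are interchangeable.
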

\begin{proof}
    Switch to Cartesian coordinates $(\x,y=r\cos\th,z=r\sin\th)$. Now we Taylor expand with precise remainder to write
    \[f(\x,y,z)=\sum_{i=0}^k f_i(\x,y,z)y^iz^{k-i}\]
    for some \textit{smooth} functions $f_i$. The bound $\p_{x_a}f=O(r^k)$ immediately follows. Since $\p_r=r^{-1}(y\p_y+z\p_z)$ and $\p_\th=y\p_z-z\p_y$, when $k\geq 1$ it is not hard to see that $\p_r f=O(r^{k-1})$ and $\p_\th f= O(r^k)$. For the case $k=0$, we instead use
    \[f(\x,y,z)=f(\x,0,0)+\sum_{i=0}^1f_i(\x,y,z)y^iz^{1-i},\]
    and note that the zeroth order term vanishes when taking $r$- and $\th$-derivatives. The final statement follows by iteratively applying \eqref{eq-exp:deriv_asymp} to the partial derivatives of $f$.
\end{proof}

With that preparation complete, we may now begin:

\begin{proof}[Proof of Theorem \ref{thm-exp:local_frame}]
    First let $r_0$ be sufficiently small so that the quantity $E$ defined in \eqref{eq-exp:FGdef} satisfies
    \begin{equation}\label{eq-exp:nondeg}
        E\geq\frac12r^2\ \ \text{in}\ \ \Omega_{r_0},
    \end{equation}
    which is possible due to Lemma \ref{lemma-exp:E}. Note that $\frac{r^2}E$ is smooth in $\Omega_{r_0}$.
    
    Working in the polar coordinates defined by \eqref{eq-exp:polarcoords}, consider the vector fields
    \begin{equation}
        w_a:=\p_{x^a}-\frac{\metric{\p_{x^a}}{\p_\th}}{|\p_\th|^2}\p_\th=\p_{x^a}-\frac{F_a}{E}\p_\th,\quad\quad i=1,\dots,n-2,
    \end{equation}
    which are orthogonal to both $\p_r$ and $\p_\th$ by construction. Choose $\{e_a\}$ as the Gram-Schmidt orthogonalization of $\{w_a\}$. In particular, $\{e_a\}$ is an orthonomal frame for $\H$. Let $\gamma_a^b$ denote the matrix of base change, namely $e_a=\gamma_{a}^bw_b$. By Lemma \ref{lemma-exp:E} and \ref{lemma-exp:Fa} we have $|w_a-\p_{x^a}|\leq Cr$, and hence we may choose $r_0$ perhaps smaller to ensure that $w_a$ is well-defined and that $\gamma_a^b$ is bounded in $\Omega_{r_0}$.

    Let us investigate the regularity of $\frac{F_a}E$ and $\gamma_a^b$ which is needed later. For zeroth order bounds, Lemmas \ref{lemma-exp:E} and \ref{lemma-exp:Fa} imply $\big|\frac{F_a}E\big|\leq C$. To bound its derivatives, first consider
    \[
    \p_\th\big(\frac{F_a}E\big)=\frac{\p_\th F_a}{E}-F_a\frac{\p_\th E}{E^2}.
    \]
    Applying Lemma \ref{lemma-exp:deriv_asymp} to the functions $E-r^2-R_1(\x)r^4$ and $F_a-\tau_a(\x)r^2$ (which are smooth of order $r^5$ and $r^3$, respectively), we have $\p_\th E=O(r^5)$ and $\p_\th F_a=O(r^3)$. Combining those observations yeilds $\p_\th\big(\frac{F_a}E\big)=O(r)$. Next, note that $r^2\frac{F_a}E$ is a smooth function, and therefore $r^2\p_\th\big(\frac{F_a}E\big)=\p_\th\big(r^2\frac{F_a}E\big)$ is also smooth and has the order $r^3$. By Lemma \ref{lemma-exp:deriv_asymp} we may then conclude that $\p_\th^2\big(\frac{F_a}E\big)=O(r)$ and $\p_{x^b}\p_\th\big(\frac{F_a}E\big)=O(r)$. Another conclusion from the smoothness of $r^2\frac{F_a}E$ is that $\p_{x^b}\big(\frac{F_a}E\big)=O(1)$ and $\p_{x^b}\p_{x^c}\big(\frac{F_a}E\big)=O(1)$. Finally, we use the asymptotics in Lemmas \ref{lemma-exp:E} and \ref{lemma-exp:Fa} to compute
    \[\begin{aligned}
        \p_r\big(\frac{F_a}E\big)
        &= \frac{2\tau_ar+O(r^2)}{E}-\big(\tau_ar^2+O(r^3)\big)\frac{2r+O(r^3)}{E^2}\\
        {}&= \frac{2\tau_a r\big(r^2+O(r^4)\big)-2\tau_ar^3+O(r^4)}{E^2}
        = \frac{O(r^4)}{E^2}=O(1).
    \end{aligned}\]
    To summarize our work so far, there exists a $C$ so that the following hold throughout $\Omega_{r_0}$:
    \begin{equation}\label{eq-exp:Fa/E_deriv}
        \begin{aligned}
            & \big|\frac{F_a}E\big|\leq C,\quad
            \big|\p_\th\big(\frac{F_a}E\big)\big|\leq Cr,\quad
            \big|\p_\th^2\big(\frac{F_a}E\big)\big|\leq Cr,\quad
            \big|\p_r\big(\frac{F_a}E\big)\big|\leq C, \\
            &\qquad
            \big|\p_{x^b}\big(\frac{F_a}E\big)\big|\leq C,\quad
            \big|\p_{x^b}\p_{x^c}\big(\frac{F_a}E\big)\big|\leq C,\quad
            \big|\p_{x^b}\p_{\th}\big(\frac{F_a}E\big)\big|\leq Cr.
        \end{aligned}
    \end{equation}
    
    We would like to emphasize that, in the above calculations our bounds on $\p_\th(F_a/E)$ and $\p_r(F_a/E)$ depend on the particular form of $F_a$ and $E$, namely that $R_1,\tau_a$ do not depend on $\th$. Directly applying Lemma \ref{lemma-exp:deriv_asymp} with the rough asymptotics $E=O(r^2)$, $F_a=O(r^2)$ would have yielded a bound that is one degree worse than \eqref{eq-exp:Fa/E_deriv}.
    
    From the Gram-Schmidt orthogonalization process, we note that there is sufficiently small $r_0$ and multivariable functions $P_a^b(x,q_1,\cdots,q_{n-2})\in C^\infty\big(\Omega_{r_0}\times(-C,C)^{n-2}\big)$, $a,b=1,\dots,n-2$, such that $\gamma_a^b=P_a^b(x,\frac{F_1}E,\cdots,\frac{F_{n-2}}E)$. By the chain rule and Lemma \ref{lemma-exp:deriv_asymp} we have
    \begin{equation}\label{eq-exp:gamma_deriv}
        \begin{aligned}
            & \big|\gamma_a^b\big|\leq C,\quad
            \big|\p_\th\gamma_a^b\big|\leq Cr,\quad
            \big|\p_\th^2\gamma_a^b\big|\leq Cr,\quad
            \big|\p_r\gamma_a^b\big|\leq C,\quad
            \big|\p_{x^c}\gamma_a^b\big|\leq C, \\
            &\qquad\qquad\qquad\qquad
            \big|\p_{x^c}\p_{x^d}\gamma_a^b\big|\leq C,\quad
            \big|\p_{x^c}\p_\th\gamma_a^b\big|\leq Cr.
        \end{aligned}
    \end{equation}
    We can start verifying the items in the theorem statement.

    \vspace{6pt}
    \noindent\textbf{Item (i).} We compute
    \begin{align}\label{eq-exp:item1prf1}
        [e_a,e_b] &= [\gamma_a^cw_c,\gamma_b^dw_d]
        = \gamma_a^c\gamma_b^d[w_c,w_d]+\gamma_a^c(w_c\gamma_b^d)w_d-\gamma_b^d(w_d\gamma_a^c)w_c.
    \end{align}
    The first term is
    \[\begin{aligned}
        [w_c,w_d] &= \big[\p_{x^c}-\frac{F_c}E\p_\th,\p_{x^d}-\frac{F_d}{E}\p_\th\big] \\
        &= -\p_{x^c}\big(\frac{F_d}E\big)\p_\th+\p_{x^d}\big(\frac{F_c}E\big)\p_\th+\frac{F_c}E\p_\th\big(\frac{F_d}E\big)\p_\th-\frac{F_d}E\p_\th\big(\frac{F_c}E\big)\p_\th \\
        &= O(1)\p_\th
        = O(r)e_\th\qquad\text{(by \eqref{eq-exp:Fa/E_deriv})}.
    \end{aligned}\]
    To estimate the last two terms of \eqref{eq-exp:item1prf1}, we use \eqref{eq-exp:gamma_deriv} to find
    \[\begin{aligned}
        \gamma_a^c(w_c\gamma_b^d)w_d &= \gamma_a^c\big(\p_{x^c}\gamma_b^d-\frac{F_c}E\p_\th\gamma_b^d\big)w_d \\
        &= O(1)\cdot\big(O(1)+O(r)\big)\cdot\big(\p_{x^d}+O(1)\p_\th\big),
    \end{aligned}\]
    and the other term can be bounded similarly. Hence $[e_a,e_b]=\sum_{c=1}^{n-2}O(1)\p_{x^c}+O(r)e_\th$.
    
    To bound $e_a\metric{[e_b,e_c]}{e_d}$, we first note that
    \begin{equation}\label{eq-exp:eaebec}
        \metric{[e_a,e_b]}{e_c}=\underbrace{\gamma_a^p(w_p\gamma_b^q)(\gamma^{-1})_q^l\delta_{cl}}_{=:\mathcal{A}}-\underbrace{\gamma_b^q(w_q\gamma_a^p)(\gamma^{-1})_p^l\delta_{cl}}_{=:\mathcal{B}},
    \end{equation}
    where $\gamma^{-1}$ is the inverse matrix of $\gamma$. It suffices to bound $e_d\mathcal{A}$ and $e_d\mathcal{B}$. For the first term, we have
    \[\begin{aligned}
        e_d\mathcal{A} &= \gamma_d^m\Big(\p_{x^m}-\frac{F_m}E\p_\th\Big)\Big(\gamma_a^p\big(\p_{x^p}\gamma_b^q-\frac{F_p}E\p_\th\gamma_b^q)(\gamma^{-1})_q^l\delta_{cl}\Big).
    \end{aligned}\]
    By expanding this expression and applying \eqref{eq-exp:Fa/E_deriv} and \eqref{eq-exp:gamma_deriv}, we find that all the terms are bounded by constants. The quantity $e_d\mathcal{B}$ is estimated in a nearly identical manner.

    \vspace{6pt}
    \noindent\textbf{Item (ii).} First note that 
    \[\begin{aligned}
        [\p_\th,w_a] &= \big[\p_\th,\p_{x^a}-\frac{F_a}{E}\p_\th\big]
        = -\p_\th\big(\frac{F_a}{E}\big)\p_\th.
    \end{aligned}\]
    Hence
    \begin{equation}\label{eq-exp:eth_ea}
        \begin{aligned}
        [e_\th,e_a] &= \big[\frac{\p_\th}{\sqrt E},\gamma_{a}^bw_b\big] \\
        &= \frac1{\sqrt E}(\p_\th \gamma_{a}^b)w_b
            -\frac1{\sqrt E}\gamma_{a}^b\p_\th\big(\frac{F_b}{E}\big)\p_\th
            -\gamma_{a}^b\big(w_b(\frac1{\sqrt E})\big)\p_\th.
    \end{aligned}
    \end{equation}
    Inserting the definition of $w_b$ and using $w_b\perp e_\th$, $\metric{\p_\th}{e_\th}=\sqrt E$, we have
    \begin{equation}\label{eq-exp:thath}
        \metric{[e_\th,e_a]}{e_\th} = 
        \gamma_a^b\p_\th\big(\frac{F_b}E\big)+\frac1{2E}\gamma_a^b\big(\p_{x^b}E-\frac{F_b}E\p_\th E\big).
    \end{equation}
    Here the first term is bounded by $Cr$ by \eqref{eq-exp:Fa/E_deriv}. For the second term, we observe that $\p_{x^b}E=\p_{x^b}(E-r^2)=O(r^4)$ by \eqref{eq-exp:asymp_E} and Lemma \ref{lemma-exp:deriv_asymp}. Similarly, $\p_\th E=O(r^4)$. Combining $E\geq\frac12r^2$ from \eqref{eq-exp:nondeg} and the fact $\frac{F_a}{E}=O(1)$, it follows that $\metric{[e_\th,e_a]}{e_\th}=O(r)$. 
    
    Next we compute the derivative of $\metric{[e_\th,e_a]}{e_\th}$ from \eqref{eq-exp:thath}:
    \begin{equation}\label{eq-exp:ecetheaeth}
        \begin{aligned}
            e_c\metric{[e_\th,e_a]}{e_\th} &= e_c\big(\gamma_a^b\p_\th(\frac{F_b}E)\big)
            -\frac1{2E^2}(e_cE)\big(\p_{x^b}E-\frac{F_b}E\p_\th E\big) \\
            &\qquad\qquad +\frac1{2E}\Big[e_c\p_{x^b}E-e_c\big(\frac{F_b}E\big)\p_\th E-\frac{F_b}Ee_c\p_\th E\Big].
        \end{aligned}
    \end{equation}
    Using $e_c=\gamma_c^d\big(\p_{x^d}+O(1)\p_\th\big)$ and the inequalities \eqref{eq-exp:Fa/E_deriv} and \eqref{eq-exp:gamma_deriv}, the first term of \eqref{eq-exp:ecetheaeth} is bounded by $Cr$. To estimate the remaining two terms, we apply Lemma \ref{lemma-exp:deriv_asymp} to find
    \begin{equation}\label{eq-exp:bigEineqs}
        \p_\th E,\ \ \p_{x^a}E,\ \ \p_\th^2E,\ \ \p_{x^a}\p_{x^b}E,\ \ \p_{x^a}\p_\th E\ \  \leq \ Cr^4.
    \end{equation}
    Inserting these estimates into the remaining terms of \eqref{eq-exp:ecetheaeth}, we conclude $e_c\metric{[e_\th,e_a]}{e_\th}=O(r)$.
    
    Moving on, by \eqref{eq-exp:eth_ea} again we have
    \[
    \metric{[e_\th,e_a]}{e_b}=\frac1{\sqrt E}(\p_\th\gamma_a^c)\metric{w_c}{e_b}=\frac1{\sqrt E}(\p_\th\gamma_a^c)\cdot(\gamma^{-1})_c^d\delta_{bd},
    \]
    where $\gamma^{-1}$ is the inverse matrix of $\gamma$ and $\delta$ denotes the Kronecker delta. Inequalities \eqref{eq-exp:nondeg} and \eqref{eq-exp:gamma_deriv} then imply that $\metric{[e_\th,e_a]}{e_b}=O(1)$. Finally, we compute
    \[\begin{aligned}
        e_\th\metric{[e_\th,e_a]}{e_b}
        &= \frac1{\sqrt E}\p_\th\Big[\frac1{\sqrt E}(\p_\th\gamma_a^c)(\gamma^{-1})_c^d\delta_{bd}\Big] \\
        &= -\frac1{2E^2}(\p_\th E)(\p_\th\gamma_a^c)(\gamma^{-1})_c^d\delta_{bd}
        +\frac1E(\p_\th^2\gamma_a^c)(\gamma^{-1})_c^d\delta_{bd} \\
        &\qquad +\frac1E(\p_\th\gamma_a^c)(\gamma^{-1})_c^d(\p_\th\gamma_d^f)(\gamma^{-1})_f^g\delta_{bg} \\
        &= O(r^{-4})\cdot O(r^4)\cdot O(r)\cdot O(1)
        +O(r^{-2})\cdot O(r)\cdot O(1) \\
        &\qquad +O(r^{-2})\cdot O(r)\cdot O(1)\cdot O(r)\cdot O(1) \\
        &= O(r^{-1})
    \end{aligned}\]
    where we have made use of \eqref{eq-exp:gamma_deriv} and \eqref{eq-exp:bigEineqs}.

    \vspace{6pt}
    
    \noindent\textbf{Item (iii).} We directly compute
    \[\begin{aligned}
        [e_r,e_\th] &= [\p_r,\frac{\p_\th}{\sqrt E}]
        = -\frac1{2E^{3/2}}\p_r E\cdot\p_\th
        = \big(\!-r^{-1}+O(r)\big)e_\th
    \end{aligned}\]
    in which the last equality follows from Lemma \ref{lemma-exp:E}. Next,
    \[\begin{aligned}
        [e_r,e_a] &= [\p_r,\gamma_a^b(\p_{x^b}-\frac{F_b}{E}\p_\th)]
        = (\p_r\gamma_a^b)\big(\p_{x^b}-\frac{F_b}{E}\p_\th\big)-\gamma_a^b\big(\p_r\frac{F_b}E\big)\p_\th \\
        &= O(1)(\p_{x^b}+O(r)e_\th)+O(1)\cdot O(1)\cdot O(r)e_\th,
    \end{aligned}\]
    where we used \eqref{eq-exp:Fa/E_deriv} and \eqref{eq-exp:gamma_deriv}.

    \vspace{6pt}
    
    \noindent\textbf{Item (iv).} Writen in our coordinate system, the assumption on $u$ becomes $u(\x,r,\th)=v(\x)$. We have
    \[
    e_au=\gamma_a^b\big(\p_{x^b}u-\frac{F_a}E\p_\th u\big)=\gamma_a^b\p_{x^b}u=\gamma_a^b\p_{x^b}v=O(|\D_\Sigma v|)
    \]
    where we used $\p_\th u=0$. For the Hessian of $u$, we compute
    \[\begin{aligned}
        e_ae_bu &= e_a(\gamma_b^c\p_{x^c}u)
        = \gamma_a^d\p_{x^d}\big(\gamma_b^c\p_{x^c}u\big)-\gamma_a^d\frac{F_d}E(\p_\th\gamma_b^c)\p_{x^c}u \\
        &= \gamma_a^d(\p_{x^d}\gamma_b^c)\p_{x^c}u+\gamma_a^d\gamma_b^c(\p_{x^d}\p_{x^c}u)-\gamma_a^d\frac{F_d}E(\p_\th\gamma_b^c)\p_{x^c}u \\
        &= O(1)\cdot O(1)\cdot O(|\D_\Sigma v|)+O(1)\cdot O(|\D_\Sigma v|+|\D^2_\Sigma v|)-O(1)\cdot O(r)\cdot O\big(|\D_\Sigma v|\big),
    \end{aligned}\]
    where we have made use of \eqref{eq-exp:gamma_deriv} in the last line. Item (iv) follows.
\end{proof}

\subsection{Second fundamental form of \texorpdfstring{$\Sigma_r$}{Σr}}\label{subsec:fund}

Having set up the desirable frame and estimates in Theorem \ref{thm-exp:local_frame}, we are ready to compute the extrinsic curvature of the distance neighborhoods $\Sigma_r$.

\begin{theorem}\label{thm-ds:extrinsic_curv}
    Assume the General Setup \ref{setup-main:general} and suppose $r_0<r_I$ is the radius given by Theorem \ref{thm-exp:local_frame}. For any $u,h\in C^\infty(\Omega_{r_0})$, the mean curvature and second fundamental form of $\Sigma_r$ with respect to $g'$ satisfy
    \begin{equation}\label{eq-ds:formula_H}
	H'=e^{pu}\Big(H+\frac{h_r}h\Big)
    \end{equation}
    and
    \begin{equation}\label{eq-ds:formula_A2}
	\begin{aligned}
            |A'|^2_{g'} &= e^{2pu}\Big[|A|^2+\frac{h_r^2}{h^2}-2pu_r\frac{h_r}h+(n-1)(n-2)u_r^2-2pHu_r+2H\frac{h_r}h\Big] \\
		&\qquad +I_1+I_2+I_3,
	\end{aligned}
    \end{equation}
    where $I_1,I_2,I_3$ are certain expressions which satisfy
    \begin{equation}
	|I_1|\leq Ce^{2pu}|u_r|,\quad
	|I_2|\leq Ce^{2pu}\frac{|h_r|}h,\quad
	|I_3|\leq Cr^2e^{2pu}\big|e^{-2pu-2u}h^2-1\big|
    \end{equation}
    in $\Omega_{r_0}$, for some constant $C$ independent of $h, u$.
\end{theorem}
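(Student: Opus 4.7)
The plan is to compute $H'$ from the volume form of $\Sigma_r$, and $|A'|^2_{g'}$ via the Koszul formula applied to a well-chosen $g'$-orthonormal frame adapted to $\Sigma_r$.

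Writing $g' = e^{-2pu}dr^2 + g'_r$, the outward $g'$-unit normal to $\Sigma_r$ is $\nu' = e^{pu}e_r$. The $g'$-induced volume form on $\Sigma_r$ equals $h\sqrt{\det g_r}$, owing to the cancellation $1\cdot(-pu) + (n-2)\cdot u = 0$ from $p = n-2$. The first variation formula in the direction $\nu' = e^{pu}\p_r$ then yields
$$H' = \frac{e^{pu}\p_r(h\sqrt{\det g_r})}{h\sqrt{\det g_r}} = e^{pu}\Bigl(H + \frac{h_r}{h}\Bigr),$$
establishing \eqref{eq-ds:formula_H}.

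For \eqref{eq-ds:formula_A2} I will work in the $g'$-orthonormal frame $\tilde{e}_\theta := e^{pu}h^{-1}e_\theta$, $\tilde{e}_a := e^{-u}e_a$ for $T\Sigma_r$. Since these vectors annihilate the function $r$, their Lie brackets remain tangent to $\Sigma_r$; combined with $g'(e_r,\tilde{e}_i) = 0$ and $g'(\tilde{e}_i,\tilde{e}_j) = \delta_{ij}$, the Koszul formula collapses to
$$A'(\tilde{e}_i,\tilde{e}_j) = -\frac{e^{pu}}{2}\bigl(g'([e_r,\tilde{e}_i],\tilde{e}_j) + g'([e_r,\tilde{e}_j],\tilde{e}_i)\bigr),$$
so only the radial brackets are relevant. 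Via Leibniz,
$$[e_r,\tilde{e}_\theta] = (pu_r - h_r/h)\tilde{e}_\theta + e^{pu}h^{-1}[e_r,e_\theta],\qquad [e_r,\tilde{e}_a] = -u_r\tilde{e}_a + e^{-u}[e_r,e_a],$$
with $u_r = e_r(u)$ and $h_r = e_r(h)$. Substituting the bracket structure from Theorem \ref{thm-exp:local_frame}(iii), and matching against the analogous Koszul identities for the unperturbed $A$ (namely $A_{\theta\theta} = -\metric{[e_r,e_\theta]}{e_\theta}$, $A_{\theta a} = -\tfrac{1}{2}\metric{[e_r,e_a]}{e_\theta}$, $A_{ab} = -\tfrac{1}{2}(\metric{[e_r,e_a]}{e_b} + \metric{[e_r,e_b]}{e_a})$), I arrive at the exact identities
$$A'_{\theta\theta} = e^{pu}(A_{\theta\theta} - pu_r + h_r/h),\qquad A'_{\theta a} = he^{-u}A_{\theta a},\qquad A'_{ab} = e^{pu}(A_{ab} + u_r\delta_{ab}).$$

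The last step is to sum the squares. Using $|A|^2 = A_{\theta\theta}^2 + 2\sum_a A_{\theta a}^2 + \sum_{a,b} A_{ab}^2$ and the trace identity $\sum_a A_{aa} = H - A_{\theta\theta}$, a direct algebraic expansion recovers the main term of \eqref{eq-ds:formula_A2} plus three remainder pieces: $I_1 := 2(n-1)e^{2pu}u_r\sum_a A_{aa}$, $I_2 := -2e^{2pu}(h_r/h)\sum_a A_{aa}$, and $I_3 := 2e^{2pu}(h^2 e^{-2(p+1)u} - 1)\sum_a A_{\theta a}^2$. The stated bounds are then immediate from Theorem \ref{thm-exp:local_frame}(iii): $\metric{[e_r,e_a]}{e_b} = O(1)$ gives $|A_{aa}| \leq C$ (producing the $|I_1|,|I_2|$ bounds), and $\metric{[e_r,e_a]}{e_\theta} = O(r)$ gives $|A_{\theta a}| \leq Cr$ (producing the $r^2$ factor in $|I_3|$; note $-2(p+1)u = -2pu - 2u$). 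The main bookkeeping obstacle will be the anisotropic scaling of $A'_{\theta a}$ by $he^{-u}$ rather than $e^{pu}$, which is precisely what generates $I_3$; extracting $I_1$ and $I_2$ further requires the cleanup identities $-2(p+1)A_{\theta\theta}u_r + 2Hu_r = -2pHu_r + 2(n-1)u_r\sum_a A_{aa}$ and $2(A_{\theta\theta} - H)h_r/h = -2(h_r/h)\sum_a A_{aa}$.
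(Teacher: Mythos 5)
Your proposal is correct and takes essentially the same route as the paper's: both boil down to computing second-fundamental-form components of $\Sigma_r$ from the Lie brackets in Theorem \ref{thm-exp:local_frame}(iii) and then tracking the anisotropic rescaling factors $e^{-pu}h$ versus $e^{u}$. The only packaging differences are cosmetic --- you extract $H'$ from the first variation of the volume form (exploiting the same cancellation $-pu+(n-2)u=0$) rather than by tracing $A'=\tfrac12 e^{pu}\mathcal{L}_{\partial_r}g'_r$, and you derive exact component identities $A'_{\theta\theta},A'_{\theta a},A'_{ab}$ rather than expanding the Lie derivative symbolically; the resulting $I_1,I_2,I_3$ and their bounds coincide with the paper's.
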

\begin{proof}
    We compute using the local framing $\{e_a\}_{a=1}^{n-2}$ of $\mathcal{H}$ from Theorem \ref{thm-exp:local_frame} and its corresponding coframing $\{\omega_a\}_{a=1}^{n-2}$. (Note that we choose to keep the index $\omega_a$ lower for ease of notation and refrain from using Einstein summation notation in the relevant expressions.) Denote by $g_r=\omega_\th^2+\sum_a\omega_a^2$ and $g'_r=e^{-2pu}h^2\omega_\th^2+e^{2u}\sum_a\omega_a^2$ the induced metrics on $\Sigma_r$. Recall the facts
    \[A=\frac12\L_{\p r}g_r,\quad
        A'=\frac12e^{pu}\L_{\p_r}g'_r.\]
    Therefore, by the chain rule,
    \begin{equation}\label{eq-ds:new_2nd_fund}
	\begin{aligned}
            e^{-pu}A' &= -pe^{-2pu}u_rh^2\omega_\th^2+e^{-2pu}hh_r\omega_\th^2+e^{-2pu}h^2\Big(\frac12\L_{\p_r}\omega_\th^2\Big) \\
            &\qquad +e^{2u}u_r\sum_a\omega_a^2+e^{2u}\sum_a\Big(\frac12\L_{\p_r}\omega_a^2\Big).
	\end{aligned}
    \end{equation}
    Notice that, for all $a,b,c$ with $c\ne a,\,c\ne b$, we have by the Leibniz rule
    \begin{equation}\label{eq-ds:vanishing}
        \L_{\p_r}\omega_a^2(e_\th,e_\th)
        =\L_{\p_r}\omega_\th^2(e_a,e_b)
        =\L_{\p_r}\omega_c^2(e_a,e_b)
        =\L_{\p_r}\omega_c^2(e_\th,e_a)=0.
    \end{equation}
    Taking trace of \eqref{eq-ds:new_2nd_fund} and using \eqref{eq-ds:vanishing}, we obtain
    \[\begin{aligned}
        e^{-pu}H' &= e^{-pu}A'(e^{pu}h^{-1}e_\th,e^{pu}h^{-1}e_\th)+\sum_a e^{-pu}A'(e^{-u}e_a,e^{-u}e_a) \\
        &= -pu_r+\frac{h_r}h+\frac12\L_{\p_r}\omega_\th^2(e_\th,e_\th)+(n-2)u_r+\sum_a \frac12\L_{\p_r}\omega_a^2(e_a,e_a).
    \end{aligned}\]
    Since 
    \begin{equation}\label{eq-ds:fund_form_reduction}
	\begin{aligned}
		\frac12\L_{\p_r}\omega_\th^2(e_\th,e_\th)=A(e_\th,e_\th),\quad
		\frac12\L_{\p_r}\omega_a^2(e_a,e_a)=A(e_a,e_a),
	\end{aligned}
    \end{equation}
    we find
    \[\begin{aligned}
        e^{-pu}H' &= (n-2-p)u_r+\frac{h_r}h+A(e_\th,e_\th)+\sum_a A(e_a,e_a)
	= H+\frac{h_r}h,
    \end{aligned}\]
    establishing \eqref{eq-ds:formula_H}.
    
    Next, we compute $|A'|^2_{g'}$. Note that
    \[\begin{aligned}
        \L_{\p_r}\omega_a^2(e_\th,e_a) &= \big[\omega_a\otimes\L_{\p_r}\omega_a+\L_{\p_r}\omega_a\otimes\omega_a\big](e_\th,e_a)
	= \L_{\p_r}\omega_a(e_\th) \\
	&= \big[(d\iota_{\p_r}+\iota_{\p r}d)\omega_a\big](e_\th)
	= d\omega_a(\p_r,e_\th) \\
	&= -\omega_a([\p_r,e_\th])=0\qquad\text{(by Theorem \ref{thm-exp:local_frame} (iii))}.
    \end{aligned}\]
    Together with \eqref{eq-ds:vanishing} and \eqref{eq-ds:fund_form_reduction}, we calculate from \eqref{eq-ds:new_2nd_fund}
    \begin{align}
        e^{-2pu}|A'|^2_{g'} &= e^{-2pu}A'(e^{pu}h^{-1}e_\th,e^{pu}h^{-1}e_\th)^2
        + e^{-2pu}\sum_{a,b} A'(e^{-u}e_a,e^{-u}e_b)^2 \nonumber\\
        &\qquad +2e^{-2pu}\sum_a A'(e^{pu}h^{-1}e_\th,e^{-u}e_a)^2 \nonumber\\
 	\begin{split}&= \Big[-pu_r+\frac{h_r}h+A(e_\th,e_\th)\Big]^2 \\
	&\qquad +\sum_a\Big[u_r+A(e_a,e_a)\Big]^2
        +\sum_{a\ne b}\sum_c\Big[\frac12\L_{\p_r}\omega_c^2(e_a,e_b)\Big]^2 \label{eq-ds:new_A2}\\
        &\qquad +2\sum_a\Big[\frac12e^{-pu-u}h\L_{\p_r}\omega_\th^2(e_\th,e_a)\Big]^2 
        \end{split}
    \end{align}
    Notice that the original $|A|^2_g$ is recovered from \eqref{eq-ds:new_A2} by making the formal replacements $h\equiv1$, $u\equiv0$, and thus
    \[\begin{aligned}
        |A|^2_g 
        &= A(e_\th,e_\th)^2+\sum_a A(e_a,e_a)^2+\sum_{a\ne b}\sum_c\Big[\frac12\L_{\p r}\omega_c^2(e_a,e_b)\Big]^2 \\
        &\qquad +2\sum_a\Big[\frac12\L_{\p r}\omega_\th^2(e_\th,e_a)\Big]^2.
    \end{aligned}\]
    Expanding the terms in \eqref{eq-ds:new_A2} and extracting the above expression leads to
    \[\begin{aligned}
        e^{-2pu}|A'|^2_{g'} &= |A|_g^2+(p^2+n-2)u_r^2+\frac{h_r^2}{h^2}-2pu_r\frac{h_r}h \\
        &\qquad +2\big(-pu_r+\frac{h_r}h\big)A(e_\th,e_\th)+2u_r\sum_a A(e_a,e_a) \\
        &\qquad +2\Big[e^{-2pu-2u}h^2-1\Big]\cdot\sum_a\Big[\frac12\L_{\p r}\omega_\th^2(e_\th,e_a)\Big]^2.
    \end{aligned}\]
    Using $A(e_\th,e_\th)=H-\sum_a A(e_a,e_a)$ and $p=n-2$, we may convert the expression to
    \begin{align}
    \begin{split}
        e^{-2pu}|A'|^2_{g'} &= |A|_g^2+(n-1)(n-2)u_r^2+\frac{h_r^2}{h^2}-2pu_r\frac{h_r}h \\
        &\qquad -2pHu_r+2H\frac{h_r}h+2\big((p+1)u_r-\frac{h_r}h\big)\sum_a A(e_a,e_a)\\
        &\qquad +\Big[e^{-2pu-2u}h^2-1\Big]\cdot\sum_a\Big[\frac12\L_{\p r}\omega_\th^2(e_\th,e_a)\Big]^2.
            \end{split}
    \end{align}
    We identify the following three terms in the above expression:
    \[\begin{aligned}
        I'_1&=2(p+1)u_r\sum_a A(e_a,e_a), \qquad I'_2=-2\frac{h_r}{h}\sum_a A(e_a,e_a), \\
        I'_3&=\Big[e^{-2pu-2u}h^2-1\Big]\cdot\sum_a\Big[\frac12\L_{\p r}\omega_\th^2(e_\th,e_a)\Big]^2.
    \end{aligned}\]
    To bound these terms, we find
    \[\begin{aligned}
        A(e_a,e_a)=\frac12\L_{\p_r}\omega_a^2(e_a,e_a)=\L_{\p_r}\omega_a(e_a)=-\omega_a([\p_r,e_a]).
    \end{aligned}\]
    Therefore $|A(e_a,e_a)|\leq C$ by Theorem \ref{thm-exp:local_frame} (iii). Finally,
    \[\frac12\L_{\p_r}\omega_\th^2(e_\th,e_a)=\frac12\L_{\p_r}\omega_\th(e_a)=\frac12d\omega_\th(\p_r,e_a)=-\frac12\omega_\th([\p_r,e_a]),\]
    thus $\big|\frac12\L_{\p_r}\omega_\th^2(e_\th,e_a)\big|\leq Cr$ by Theorem \ref{thm-exp:local_frame} (iii). This shows \eqref{eq-ds:formula_A2} with $I_i=e^{2pu}I'_i$.
\end{proof}

For the purposes of our eventual scalar curvature computation, it is useful to organize our estimates in the following fashion.

\begin{cor}
    Assume the General Setup \ref{setup-main:general} and suppose $r_0<r_I$ is the radius given by Theorem \ref{thm-exp:local_frame}. For any $h,u\in C^\infty(\Omega_{r_0})$ we have
    \begin{equation}\label{eq-ds:ext_terms_final}
        \begin{aligned}
            \Big(H'^2+|A'|^2_{g'}+2e^{pu}\frac{\p H'}{\p r}\Big) &= e^{2pu}\Big(H^2+|A|^2_{g}+2\frac{\p H}{\p r}\Big) \\
            &\qquad +2e^{2pu}\Big(\frac{h_{rr}}h+2\frac{h_r}{rh}+\alpha_n u_r^2\Big)+\big(I_4+I_5+I_6\big),
        \end{aligned}
    \end{equation}
    where $\alpha_n=\frac12(n-1)(n-2)$ and $I_4,I_5,I_6$ are specific expressions that satisfy the bounds
    \begin{equation}\label{eq-ds:remainder_bounds}
	|I_4|\leq Ce^{2pu}|u_r|,\quad
	|I_5|\leq Ce^{2pu}\frac{|h_r|}h,\quad
	|I_6|\leq Cr^2e^{2pu}\big|e^{-2pu-2u}h^2-1\big|
    \end{equation}
    in $\Omega_{r_0}$ for some constant $C$.
\end{cor}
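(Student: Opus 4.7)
The plan is to deduce the identity as a direct algebraic consequence of Theorem \ref{thm-ds:extrinsic_curv}, combined with the standard tubular asymptotic $H = 1/r + O(1)$ for the ambient mean curvature of $\Sigma_r$. Concretely, I would expand $H'^2 + |A'|^2_{g'} + 2e^{pu}\partial_r H'$ using the formulas from Theorem \ref{thm-ds:extrinsic_curv}, observe the algebraic cancellation of several cross terms, and then convert the single remaining term of the form $Hh_r/h$ into the target $h_r/(rh)$ by harvesting the leading-order behavior of $H$.

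First I would square \eqref{eq-ds:formula_H} to get $H'^2 = e^{2pu}(H^2 + 2Hh_r/h + h_r^2/h^2)$, and differentiate via the product rule to obtain
\begin{equation*}
    2e^{pu}\frac{\partial H'}{\partial r} = 2e^{2pu}\Big[pu_r\Big(H + \frac{h_r}{h}\Big) + \frac{\partial H}{\partial r} + \frac{h_{rr}}{h} - \frac{h_r^2}{h^2}\Big].
\end{equation*}
Adding these two to \eqref{eq-ds:formula_A2}, the terms involving $Hu_r$, $u_r h_r/h$, and $h_r^2/h^2$ cancel pairwise by direct inspection. What survives organizes as
\begin{equation*}
    e^{2pu}\Big(H^2 + |A|^2_g + 2\frac{\partial H}{\partial r}\Big) + 2e^{2pu}\Big(\frac{h_{rr}}{h} + \alpha_n u_r^2\Big) + 4e^{2pu} H\frac{h_r}{h} + I_1 + I_2 + I_3,
\end{equation*}
matching the target up to the isolated term $4e^{2pu}Hh_r/h$ and a relabeling of the $I_j$.

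To convert this remaining term, I would invoke the tubular expansion: the vertical contribution to $H$ is $A(e_\theta, e_\theta) = \frac{1}{2E}\partial_r E = 1/r + O(r)$ by Lemma \ref{lemma-exp:E}, while each horizontal contribution $A(e_a, e_a) = -\omega_a([\partial_r, e_a])$ is $O(1)$ by Theorem \ref{thm-exp:local_frame}(iii), so that $H = 1/r + O(1)$ with implicit constant depending only on $g, \Sigma, r_0$, and the chart $U$. Writing $4e^{2pu}Hh_r/h = 4e^{2pu}h_r/(rh) + e^{2pu}\cdot O(|h_r|/h)$ extracts the principal term cleanly. Setting $I_4 := I_1$, $I_5 := I_2$ plus this $O(|h_r|/h)$ residual, and $I_6 := I_3$, the bounds \eqref{eq-ds:remainder_bounds} are inherited immediately from those of Theorem \ref{thm-ds:extrinsic_curv}. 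The computation is entirely routine bookkeeping; the only step of genuine content is the tubular asymptotic for $H$, which is already prepared by Lemma \ref{lemma-exp:E} and the framing of Theorem \ref{thm-exp:local_frame}, so no real obstacle is expected.
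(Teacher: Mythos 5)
Your proof is correct and follows essentially the same route as the paper's: expand $H'^2$ and $2e^{pu}\p_r H'$ using Theorem \ref{thm-ds:extrinsic_curv}, cancel the cross terms against \eqref{eq-ds:formula_A2}, and convert the surviving $4e^{2pu}Hh_r/h$ into $4e^{2pu}h_r/(rh)$ plus an admissible error via $H = r^{-1}+O(1)$. The only cosmetic difference is that you extract $A(e_\th,e_\th)=\tfrac{\p_r E}{2E}=r^{-1}+O(r)$ via Lemma \ref{lemma-exp:E}, whereas the paper reads off $A(e_\th,e_\th)=-\omega_\th([e_r,e_\th])=r^{-1}+O(r)$ directly from Theorem \ref{thm-exp:local_frame}(iii); these are the same fact.
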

\begin{proof}
    Combining the straight-forward computations
    \begin{equation}
        H'^2=e^{2pu}\big(H^2+\frac{h_r^2}{h^2}+2H\frac{h_r}h\big)
    \end{equation}
    and
    \begin{equation}
        e^{pu}\frac{\p H'}{\p r}=e^{2pu}\Big(pu_rH+pu_r\frac{h_r}h+\frac{\p H}{\p r}+\frac{h_{rr}}{h}-\frac{h_r^2}{h^2}\Big)
    \end{equation}
    with \eqref{eq-ds:formula_A2}, we obtain
    \begin{equation}\label{eq-ds:ext_curv}
        \begin{aligned}
            \Big(H'^2+|A'|^2_{g'}+2e^{pu}\frac{\p H'}{\p r}\Big) &= e^{2pu}\Big(H^2+|A|^2_{g}+2\frac{\p H}{\p r}\Big) \\
            &\qquad +2e^{2pu}\Big(\frac{h_{rr}}h+2H\frac{h_r}h+\alpha_nu_r^2\Big)+(I_1+I_2+I_3),
        \end{aligned}
    \end{equation}
    where $I_1,I_2,I_3$ are as in \eqref{eq-ds:formula_A2}. To simplify the $2H\frac{h_r}h$ term, we use \eqref{eq-ds:fund_form_reduction} and Theorem \ref{thm-exp:local_frame}(iii) to find
    \[\begin{aligned}
        H = A(e_\th,e_\th)+\sum_a A(e_a,e_a)
        = -\omega_\th([e_r,e_\th])-\sum_a\omega_a([e_r,e_a])
        = r^{-1}+O(1).
    \end{aligned}\]
    This implies
    \[
    2e^{2pu}\cdot 2H\frac{h_r}h=2e^{2pu}\cdot 2\frac{h_r}{rh}+I_7,
    \]
    where $|I_7|\leq Ce^{2pu}\frac{|h_r|}h$. Then apply this to \eqref{eq-ds:ext_curv} to obtain the desired \eqref{eq-ds:ext_terms_final} (by setting $I_4=I_1$, $I_5=I_2+I_7$, $I_6=I_3$).
\end{proof}

Before moving on, we record an elementary and useful fact about the extrinsic curvature of $\Sigma_r$.

\begin{lemma}\label{lemma-exp:H2-A2}
    Assume the General Setup \ref{setup-main:general} and suppose $r_0<r_I$ is the radius given by Theorem \ref{thm-exp:local_frame}. We have $\big|H^2-|A|^2\big|\leq Cr^{-1}$ for some constant $C$ in $\Omega_{r_0}$.
\end{lemma}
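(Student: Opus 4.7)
The plan is to work in the orthonormal frame $\{e_\theta, e_1, \dots, e_{n-2}\}$ for $T\Sigma_r$ provided by Theorem \ref{thm-exp:local_frame}, and expand
\[
H^2 - |A|^2 = 2\sum_{i<j} A_{ii}A_{jj} - 2\sum_{i<j} A_{ij}^2,
\]
where $A_{ij}$ denotes the components of $A$ in this frame. Thus it suffices to obtain pointwise estimates on each component $A_{ij}$ and verify that after the cancellations above, the remaining divergence is no worse than $r^{-1}$.

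Each component can be read off from the Lie brackets using that $\{e_\theta, e_a\}$ is $g$-orthonormal and hence
\[
A(X,Y) = \tfrac{1}{2}\mathcal{L}_{\partial_r} g_r(X,Y) = -\tfrac{1}{2}\bigl(\metric{[\partial_r,X]}{Y} + \metric{X}{[\partial_r,Y]}\bigr)
\]
for any two frame vectors $X,Y$. I will then apply Theorem \ref{thm-exp:local_frame}(iii) to each. The key points are: from $[e_r,e_\theta]=(-r^{-1}+O(r))e_\theta$ we obtain the divergent diagonal entry $A(e_\theta,e_\theta) = r^{-1}+O(r)$, while the same relation forces $\metric{[\partial_r,e_\theta]}{e_a}=0$, so $A(e_\theta,e_a) = -\tfrac{1}{2}\metric{[\partial_r,e_a]}{e_\theta} = O(r)$. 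The remaining entries $A(e_a,e_a)$ and $A(e_a,e_b)$ for $a\ne b$ are $O(1)$ thanks to the bound $\metric{[e_r,e_a]}{e_b}=O(1)$.

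Substituting these estimates into the expansion, the only potentially divergent contribution is the cross term $2A(e_\theta,e_\theta)\sum_a A(e_a,e_a) = O(r^{-1})\cdot O(1) = O(r^{-1})$. Every other term, namely $A(e_a,e_a)A(e_b,e_b)$, $A(e_\theta,e_a)^2$, and $A(e_a,e_b)^2$, is bounded by a constant (in fact, $A(e_\theta,e_a)^2 = O(r^2)$). The crucial feature is that the $r^{-2}$ pieces in $H^2$ and $|A|^2$ both come solely from $A(e_\theta,e_\theta)^2$ and therefore cancel exactly in the difference $H^2 - |A|^2$. Collecting the bounds yields $|H^2 - |A|^2|\leq Cr^{-1}$, as desired. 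I do not anticipate any real difficulty here — the argument is a bookkeeping exercise made possible entirely by the bracket asymptotics already established in Theorem \ref{thm-exp:local_frame}.
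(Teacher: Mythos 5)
Your proof is correct and follows essentially the same route as the paper: cancel the common $A(e_\theta,e_\theta)^2$ term in $H^2$ and $|A|^2$, observe that the only potentially singular surviving contribution is the cross term $2A(e_\theta,e_\theta)\sum_a A(e_a,e_a)=O(r^{-1})$, and bound everything else by constants using the bracket asymptotics from Theorem \ref{thm-exp:local_frame}(iii). The paper's proof is just a condensed version of this same bookkeeping.
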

\begin{proof}
    We have calculated above that $A(e_\th,e_\th)=r^{-1}+O(r)$ and $A(e_a,e_a)=O(1)$ and $H=r^{-1}+O(1)$. Moreover, 
    \[
    A(e_a,e_\th)=\frac12\L_{\p_r}\omega_\th^2(e_a,e_\th)+\frac12\L_{\p_r}\omega_a^2(e_a,e_\th)=-\frac12\omega_\th([\p_r,e_a])-\frac12\omega_a([\p_r,e_\th])=O(r)
    \]
    by Theorem \ref{thm-exp:local_frame}(iii). Cancelling the common term $A(e_\th,e_\th)^2$ in both $H^2$ and $|A|^2$, we can bound
    \[\begin{aligned}
        H^2-|A|^2 &= \Big[\sum_a A(e_a,e_a)\Big]^2+2A(e_\th,e_\th)\sum_a A(e_a,e_a)-\sum_a A(e_a,e_\th)^2-\sum_{a,b}A(e_a,e_b)^2 \\
        &= O(r^{-1}). \hfill\qedhere
    \end{aligned}\]
\end{proof}

\subsection{The scalar curvature of \texorpdfstring{$\Sigma_r$}{Σr}}\label{subsec:scal}

The next step is to compute the scalar curvature of the constant $r$ slice $\Sigma_r$. Theorem \ref{thm-exp:local_frame} defined radius $r_0<r_I$ and a $g$-orthonormal frame $\{e_a,e_\th\}$ for $T\Sigma_r$ ($r<r_0$) with dual 1-forms $\{\omega_a,\omega_\th\}$, where $a=1,\dots,n-2$, which we fix throughout this subsection. Suppose we are given two functions $h,u$ on $\Omega_{r_0}$. For simplicity, we introduce the short-hand notation $X=e^{-pu}h$ and $Y=e^u$. The drawstring metric $g'$ restricted to $\Sigma_r$ is
\begin{equation}\label{eq-ds:dsmetricXY}
    g'_r=X^2\omega_\th^2+\sum_a Y^2\omega_a^2,
\end{equation}
the corresponding $g'_r$-orthonormal frame is $e'_\th=X^{-1}e_\th$, $e'_a=Y^{-1}e_a$, and the dual forms are $\omega'_\th=X\omega_\th$, $\omega'_a=Y\omega_a$.

We compute the scalar curvature $R'_r:=R_{\Sigma_r,g'_r}$ by the method of moving frames, which we now briefly recall. Fix a local orthonormal frame $\{e_i\}$ and dual forms $\{\omega_i\}$ on a Riemannian manifold. Denote by $\gamma_{ijk}=\metric{[e_i,e_j]}{e_k}$ the structure constants, and by $\omega_{ij}=\metric{\D_{(\cdot)}e_i}{e_j}$ the connection forms. By Koszul's formula, we have
\begin{equation}\label{eq-ds:koszul}
    \omega_{ij}(e_k)=\frac12(\gamma_{kij}-\gamma_{ijk}+\gamma_{jki}).
\end{equation}
Moreover, $\omega_{ij}$ are the unique 1-forms satisfying $d\omega_i=\sum_j\omega_{ij}\wedge\omega_j$ and $\omega_{ji}=-\omega_{ij}$. The curvature 2-forms are given by $\Omega_{ij}=d\omega_{ij}-\sum_k\omega_{ik}\wedge\omega_{kj}$ and the scalar curvature is expressed as $R=\sum_{i,j}\Omega_{ij}(e_j,e_i)$.

Returning to our specific setting, we adopt the convention that indices $a,b,c,\cdots$ range in $\{1,2,\cdots,n-2\}$, and indices $i,j,k,\cdots$ range in $\{1,2,\cdots,n-2,\th\}$. Let $\omega_{ij}$ (resp. $\omega'_{ij}$) and $\Omega_{ij}$ (resp. $\Omega'_{ij}$) denote the connection and curvature forms of $g_r$ (resp. $g'_r$). We start with the following lemma, which computes the connection forms of $g'_r$:

\begin{lemma}\label{lem-ds:connforms}
    In the above setting, assume $X_\th=Y_\th=0$ (which holds under Condition \ref{cond-main:h_and_u}). Then the connection forms of $g'_r$ in \eqref{eq-ds:dsmetricXY} satisfy
    \begin{equation}\label{eq-ds:new_conn_wtha}
        \omega'_{\th a}=\frac{X}{Y}\omega_{\th a}-\frac12\big(\frac YX-\frac XY\big)\sum_b(\gamma_{\th ab}+\gamma_{\th ba})\omega_b-\frac{X_a}Y\omega_\th,
    \end{equation}
    and
    \begin{equation}\label{eq-ds:new_conn_wab}
        \omega'_{ab}=\omega_{ab}+\frac12(1-\frac{X^2}{Y^2})\gamma_{ab\th}\omega_\th+\frac{Y_a}Y\omega_b-\frac{Y_b}Y\omega_a.
    \end{equation}
\end{lemma}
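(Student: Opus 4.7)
The plan is to derive both formulas from a direct application of Koszul's identity (\ref{eq-ds:koszul}) to the new orthonormal frame $\{e'_\th = X^{-1}e_\th,\ e'_a = Y^{-1}e_a\}$, and then to read off the coefficients of the connection forms via $\omega'_{ij} = \sum_k \omega'_{ij}(e'_k)\,\omega'_k$ followed by the conversions $\omega'_\th = X\omega_\th$ and $\omega'_a = Y\omega_a$. First I would confirm that Condition \ref{cond-main:h_and_u} forces $X_\th = Y_\th = 0$: since $h$ depends only on $r$ and $u(x) = v(\pi(x))\,w(r(x))$, and since the vertical vector $e_\th$ satisfies $\pi_* e_\th = 0$ and annihilates any function of $r$, we get $h_\th = u_\th = 0$, hence the claim. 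This simplification is essential because it eliminates the cross-term $(e_\th Y^{-1})\,e_a$ that would otherwise contaminate $[e'_\th, e'_a]$.

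Next, a Leibniz-rule calculation gives
\begin{align*}
[e'_a, e'_b] &= Y^{-2}[e_a, e_b] + Y^{-3}\bigl(Y_b\,e_a - Y_a\,e_b\bigr), \\
[e'_\th, e'_a] &= X^{-1}Y^{-1}[e_\th, e_a] + X^{-2}Y^{-1}X_a\,e_\th,
\end{align*}
from which one reads off the new structure constants $\gamma'_{ijk} = \langle [e'_i,e'_j],e'_k\rangle_{g'_r}$, namely
\[
\gamma'_{abc} = Y^{-1}\gamma_{abc} + Y^{-2}\bigl(Y_b\delta_{ac} - Y_a\delta_{bc}\bigr), \quad \gamma'_{ab\th} = \tfrac{X}{Y^2}\gamma_{ab\th}, \quad \gamma'_{\th ab} = X^{-1}\gamma_{\th ab},
\]
together with $\gamma'_{\th a\th} = Y^{-1}\gamma_{\th a\th} + X^{-1}Y^{-1}X_a$. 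The remainder is mechanical: substitute these into Koszul to evaluate $\omega'_{ab}(e'_k)$ and $\omega'_{\th a}(e'_k)$ for each $k\in\{1,\dots,n-2,\th\}$, reassemble the 1-forms, and convert back to the basis $\{\omega_\th, \omega_a\}$, comparing at each step with the analogous unprimed Koszul expansion for $\omega_{ab}$ and $\omega_{\th a}$ to isolate the "difference terms."

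The only genuine obstacle is bookkeeping: in each coefficient there is a term of the form $\tfrac{X}{Y}\omega_{ij}(e_k)$ which, once re-expanded by the unprimed Koszul identity, partly cancels the new $\gamma$-contributions and partly combines with them to produce the asymmetric factors $(1 - X^2/Y^2)$ and $(Y/X - X/Y)$ visible in (\ref{eq-ds:new_conn_wab}) and (\ref{eq-ds:new_conn_wtha}). In particular, the symmetrized form $\gamma_{\th ab}+\gamma_{\th ba}$ in (\ref{eq-ds:new_conn_wtha}) emerges because the antisymmetric parts reassemble into $\omega_{\th a}$. The gradient terms $\tfrac{Y_a}{Y}\omega_b - \tfrac{Y_b}{Y}\omega_a$ in (\ref{eq-ds:new_conn_wab}) arise from the $\delta$-contributions to $\gamma'_{abc}$, and the $-\tfrac{X_a}{Y}\omega_\th$ term in (\ref{eq-ds:new_conn_wtha}) arises from the extra $X_a$-term in $\gamma'_{\th a\th}$. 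No conceptual input beyond Koszul's formula is required.
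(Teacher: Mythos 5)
Your proposal is correct, and it takes a genuinely different route from the paper. The paper adopts a \emph{guess-and-verify} strategy: define $\overline{\omega}'_{\theta a}$ and $\overline{\omega}'_{ab}$ as the stated right-hand sides and then invoke the uniqueness of connection forms, which reduces the lemma to checking the antisymmetry $\overline{\omega}'_{ab}=-\overline{\omega}'_{ba}$ and the first Cartan structure equation $d\omega'_i = \sum_j \overline{\omega}'_{ij}\wedge\omega'_j$. Your plan is instead a \emph{derivation from scratch}: compute the new structure constants $\gamma'_{ijk}$ of the rescaled orthonormal frame $\{e'_\theta=X^{-1}e_\theta,\, e'_a=Y^{-1}e_a\}$, feed them into Koszul's formula \eqref{eq-ds:koszul}, and then reassemble the 1-forms. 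I checked your bracket computations and the resulting structure constants (e.g.\ $\gamma'_{abc}=Y^{-1}\gamma_{abc}+Y^{-2}(Y_b\delta_{ac}-Y_a\delta_{bc})$, $\gamma'_{ab\theta}=\tfrac{X}{Y^2}\gamma_{ab\theta}$, $\gamma'_{\theta ab}=X^{-1}\gamma_{\theta ab}$, $\gamma'_{\theta a\theta}=Y^{-1}\gamma_{\theta a\theta}+X^{-1}Y^{-1}X_a$), and they are correct; in particular the $X_\theta=Y_\theta=0$ simplification is used exactly where you say, and all the structure constants needed for the Koszul evaluations $\omega'_{ij}(e'_k)$ are among the ones you listed. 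Each approach has merit: the paper's verification is shorter once the answer is known, while yours is constructive and explains how the symmetrized combination $\gamma_{\theta ab}+\gamma_{\theta ba}$ and the factor $(Y/X - X/Y)$ emerge from cancellation between $\tfrac{X}{Y}\omega_{\theta a}$ and the raw Koszul terms. One practical caveat: the final reassembly step you call ``mechanical'' (evaluating Koszul on each $e'_k$, comparing with the unprimed Koszul expansion, and converting $\omega'_k$ back to $\omega_k$) is where the bookkeeping is densest, and a clean write-up would spell out at least one coefficient in full, say $\omega'_{\theta a}(e'_b)$, to make the cancellation visible.
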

\begin{proof}
    Let $\overline{\omega}_{\theta a}'=-\overline\omega_{a\theta}'$ and $\overline{\omega}_{ab}'$ denote the right sides of \eqref{eq-ds:new_conn_wtha} and \eqref{eq-ds:new_conn_wab}, respectively. By the uniqueness of connection forms, in order to show $\omega'_{ij}=\overline\omega_{ij}'$, it suffices to verify that $\overline{\omega}'_{ab}=-\overline{\omega}'_{ba}$ and $d\omega'_i=\sum\overline\omega'_{ij}\wedge\omega'_j$. The first condition is clear from the formulas and the asymmetry of $\gamma_{ijk}$ in $ij$. It remains to verify
    \begin{equation}\label{eq-ds:to_verify}
        d\omega'_\th=\sum_a\bar\omega'_{\th a}\wedge\omega'_a\ \ \text{and}\ \ d\omega'_a=\sum_b\bar\omega'_{ab}\wedge\omega'_b+\bar\omega'_{a\th}\wedge\omega'_\th.
    \end{equation}
    
    The first equation of \eqref{eq-ds:to_verify} follows by calculating
    \begin{align*}
        \begin{split}
            \sum_a\overline\omega'_{\th a}\wedge\omega'_a &=
            \sum_a \frac{X}{Y}\omega_{\th a}\wedge Y\omega_a
            - \frac12\big(\frac YX-\frac XY\big)
                \sum_{a,b}
                    \overbrace{(\gamma_{\th ab}+\gamma_{\th ba})}^{\text{symmetric in $a,b$}}\cdot
                    \overbrace{\omega_b\wedge Y\omega_a}^{\text{antisymmetric}} \\
            &\qquad -\sum_a\frac{X_a}Y\omega_\th\wedge Y\omega_a 
        \end{split}\\
        &= X\sum_a\omega_{\th a}\wedge\omega_a+0+dX\wedge\omega_\th=d(X\omega_\th)=d\omega'_\th.
    \end{align*}
    Next, we verify the second part of \eqref{eq-ds:to_verify} for each $a$. To this end, we directly compute
    \begin{align}
        \sum_{b}\overline\omega'_{ab}\wedge\omega'_b &=
            \sum_{b} \omega_{ab}\wedge Y\omega_b
            + \frac12\big(1-\frac{X^2}{Y^2}\big)\sum_{b}\gamma_{ab\th}\omega_\th\wedge Y\omega_b
            - \sum_{b} \frac{Y_b}{Y}\omega_a\wedge Y\omega_b \nonumber\\
        &= Y\sum_{b}\omega_{ab}\wedge\omega_b
            + dY\wedge\omega_a
            + \frac12(Y-\frac{X^2}{Y})\sum_{b}\gamma_{ab\th}\omega_\th\wedge\omega_b,
            \label{eq-ds:omegathetaawedge1}
    \end{align}
    and then,
    \begin{equation}\label{eq-ds:omegathetaawedge2}
        \overline\omega'_{a\th}\wedge\omega'_\th = \frac{X^2}Y\omega_{a\th}\wedge\omega_\th+\frac12\big(Y-\frac{X^2}Y\big)\sum_{b}(\gamma_{\th ab}+\gamma_{\th ba})\omega_b\wedge\omega_\th.
    \end{equation}
    Using the general fact $\gamma_{ijk}=\omega_{jk}(e_i)-\omega_{ik}(e_j)$, we note that
    \begin{align}
        \sum_{b}\gamma_{ab\th}\omega_\th\wedge\omega_b+\sum_{b}(\gamma_{\th ab}+\gamma_{\th ba})\omega_b\wedge\omega_\th &= \sum_b(\gamma_{\th ab}+\gamma_{\th ba}-\gamma_{ab\th})\omega_b\wedge\omega_\th \nonumber\\
        &= 2\sum_b\omega_{a\th}(e_b)\omega_b\wedge\omega_\th=2\omega_{a\th}\wedge\omega_\th. \label{eq-ds:omegathetaawedge3}
    \end{align}
    Adding up \eqref{eq-ds:omegathetaawedge1}, \eqref{eq-ds:omegathetaawedge2}, and \eqref{eq-ds:omegathetaawedge3}, we obtain
    \begin{align*}
    \sum_{b}\overline\omega'_{ab}\wedge\omega'_b+\overline\omega'_{a\th}\wedge\omega'_\th &= Y\sum_{b}\omega_{ab}\wedge\omega_b+dY\wedge\omega_a+\frac{X^2}Y\omega_{a\th}\wedge\omega_\th+\big(Y-\frac{X^2}Y\big)\omega_{a\th}\wedge\omega_\th \\
    &= d(Y\wedge\omega_a)=d\omega'_a.
    \end{align*}
    Thus \eqref{eq-ds:to_verify} is verified, and the lemma follows.
\end{proof}

We summarize the bounds on structural constants and their derivatives in the following lemma.

\begin{lemma}\label{lemma-ds:conn_forms}
    The structure constants and connection forms of $g_r$ satisfy
    \begin{equation}\label{eq-ds:struc_const_bound}
        |\gamma_{a\th\th}|\leq Cr,\quad |\gamma_{\th ab}|\leq C,\quad |\gamma_{ab\th}|\leq Cr,\quad |\gamma_{abc}|\leq C,
    \end{equation}
    \begin{equation}\label{eq-ds:conn_form_bound}
        |\omega_{ij}|_{g_r}\leq C,\quad
        |d\omega_i|_{g_r}\leq C,
    \end{equation}
    and
    \begin{equation}\label{eq-ds:conn_deriv_bound}
        |e_\th\gamma_{\th ab}|\leq Cr^{-1},\quad
        |e_a\gamma_{bcd}|\leq C,\quad
        |e_a\gamma_{\th b\th}|\leq Cr,
    \end{equation}
    for some constant $C$ in $\Omega_{r_0}$.
\end{lemma}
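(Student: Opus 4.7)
My plan is to treat the lemma as a straightforward repackaging of Theorem~\ref{thm-exp:local_frame} in the language of structure and connection $1$-forms, using Koszul's formula \eqref{eq-ds:koszul} and the torsion-free identity $d\omega_k(e_i,e_j)=-\gamma_{ijk}$ as the only translation tools. Before beginning, I would observe that the frame $\{e_a,e_\theta\}$ restricts to a $g_r$-orthonormal frame on each $\Sigma_r$: both fields lie in $(\D r)^\perp$ by construction, so they are tangent to the level sets of $r$, and hence their Lie brackets computed in $M$ agree with the ones intrinsic to $\Sigma_r$. This confirms that the structure constants $\gamma_{ijk}=\metric{[e_i,e_j]}{e_k}$ are the correct ones associated with $g_r$.

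With this identification in hand, the four bounds of \eqref{eq-ds:struc_const_bound} are immediate specializations of items (i) and (ii) of Theorem~\ref{thm-exp:local_frame}: $|\gamma_{abc}|\leq C$ and $|\gamma_{ab\theta}|\leq Cr$ come from (i); $|\gamma_{\theta ab}|\leq C$ comes from (ii); and $|\gamma_{a\theta\theta}|=|\gamma_{\theta a\theta}|\leq Cr$ also comes from (ii) after using the antisymmetry $\gamma_{ijk}=-\gamma_{jik}$.

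For \eqref{eq-ds:conn_form_bound}, Koszul's formula expresses $\omega_{ij}(e_k)$ as a half-sum of three structure constants. Running through the four relevant index patterns
\begin{equation*}
(ij,k)\in\big\{(\theta a,\theta),\,(\theta a,b),\,(ab,\theta),\,(ab,c)\big\},
\end{equation*}
each evaluation is $O(1)$ by the bounds already established. Since $\{e_a,e_\theta\}$ is $g_r$-orthonormal, this yields $|\omega_{ij}|_{g_r}\leq C$. The bound on $d\omega_i$ then follows from $d\omega_k(e_i,e_j)=-\gamma_{ijk}=O(1)$ together with the fact that $\{\omega_a\wedge\omega_b,\,\omega_a\wedge\omega_\theta\}$ is an orthonormal basis of $\Lambda^2 T^*\Sigma_r$.

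The derivative estimates \eqref{eq-ds:conn_deriv_bound} are read off verbatim from the second halves of items (i) and (ii) of Theorem~\ref{thm-exp:local_frame}: the bound on $e_\theta\gamma_{\theta ab}$ is exactly $e_\theta\metric{[e_\theta,e_a]}{e_b}=O(r^{-1})$ from (ii), the bound on $e_a\gamma_{\theta b\theta}$ is $e_a\metric{[e_\theta,e_b]}{e_\theta}=O(r)$ from (ii), and the bound on $e_a\gamma_{bcd}$ is $e_a\metric{[e_b,e_c]}{e_d}=O(1)$ from (i). There is essentially no real obstacle; the lemma is a bookkeeping step whose only purpose is to put Theorem~\ref{thm-exp:local_frame} in a form directly usable for the scalar-curvature computation of $\Sigma_r$ in the next subsection. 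The only point requiring minor care is keeping the index conventions in Koszul's formula straight, so that no sign or factor of $r$ is lost in the four case analysis.
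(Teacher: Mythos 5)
Your proof is correct and takes essentially the same route as the paper's (which disposes of the lemma in a single sentence, citing Theorem~\ref{thm-exp:local_frame}(i)--(ii) for \eqref{eq-ds:struc_const_bound} and \eqref{eq-ds:conn_deriv_bound}, and the Koszul formula plus $d\omega_i=\sum\omega_{ij}\wedge\omega_j$ for \eqref{eq-ds:conn_form_bound}); you have merely written out the index-by-index bookkeeping that the paper leaves implicit, including the use of antisymmetry $\gamma_{a\th\th}=-\gamma_{\th a\th}$ and the preliminary remark that $\{e_a,e_\th\}$ is a $g_r$-orthonormal frame tangent to $\Sigma_r$, both of which are sound.
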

\begin{proof}
    \eqref{eq-ds:struc_const_bound} and \eqref{eq-ds:conn_deriv_bound} follow from Theorem \ref{thm-exp:local_frame} (i) and (ii), and \eqref{eq-ds:conn_form_bound} follows from Koszul formula \eqref{eq-ds:koszul} and $d\omega_i=\sum\omega_{ij}\wedge\omega_j$.
\end{proof}

It is finally time to consider the scalar curvature of $\Sigma_r$.

\begin{theorem}\label{thm-ds:scal_gr}
    Assume the General Setup \ref{setup-main:general} and suppose $r_0<r_I$ is the radius given by Theorem \ref{thm-exp:local_frame}. Suppose $h$ and $u$ are functions on $\Omega_{r_0}$ satisfying Condition \ref{cond-main:h_and_u}. Then the scalar curvatures of $\Sigma_r$ with respect to $g_r$ and $g_r'$ satisfy
    \begin{equation}\label{eq-ds:scal_final}
        \begin{aligned}
            |R'_r-e^{2pu}R_r| &\leq Ce^{2pu}r^{-1}\big|1-h\big|
            + Ce^{-2nu}\big|1-e^{(p+1)u}\big| \\
            &\qquad\qquad\qquad\qquad\qquad
            + Ce^{-2u}|w|\big(1+||v||^2_{C^2(\Sigma,g)}\big)
        \end{aligned}
    \end{equation}
    in $\Omega_{r_0}$, for some constant $C$ independent of $h$, $u$, where $w$ and $v$ are the functions that appear in Condition \ref{cond-main:h_and_u}.
\end{theorem}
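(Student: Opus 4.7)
The plan is to compute $R'_r$ directly by the moving-frame method, building on the connection-form formulas from Lemma \ref{lem-ds:connforms}. Set $X = e^{-pu}h$ and $Y = e^u$, so that $g'_r = X^2\omega_\theta^2 + \sum_a Y^2\omega_a^2$ with $g'_r$-orthonormal frame $e'_\theta = X^{-1}e_\theta$, $e'_a = Y^{-1}e_a$. I will split the new connection forms as $\omega'_{\theta a} = (X/Y)\omega_{\theta a} + Q_{\theta a}$ and $\omega'_{ab} = \omega_{ab} + P_{ab}$, where $P_{ab}, Q_{\theta a}$ collect the explicit correction terms of Lemma \ref{lem-ds:connforms}. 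The main steps will be: form $\Omega'_{ij} = d\omega'_{ij} - \sum_k \omega'_{ik}\wedge\omega'_{kj}$; take the trace $R'_r = \sum_{i,j}\Omega'_{ij}(e'_j, e'_i)$, which introduces $X^{-a}Y^{-b}$-type prefactors from the scaled frame; isolate the part that reassembles into $e^{2pu}R_r$; and match the remainder to the three error terms in \eqref{eq-ds:scal_final}. The scaling $e^{2pu}$ is motivated by the fact that the pure conformal rescaling $e^{-2pu}g_r$, which would arise from $g'_r$ by formally replacing $Y$ by $X$, has scalar curvature exactly $e^{2pu}R_r$ plus derivative terms.

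The error contributions divide naturally into three groups matching the three summands on the right-hand side of \eqref{eq-ds:scal_final}. The first group consists of algebraic mismatch factors $(1 - X^2/Y^2)$ and $(Y/X - X/Y)$: by writing $1 - X^2/Y^2 = (1 - e^{-2(p+1)u}) + e^{-2(p+1)u}(1 - h^2)$ and exploiting $u \leq 0$ and $\tfrac12 \leq h \leq 2$ from Condition \ref{cond-main:h_and_u}, each factor splits into a piece controlled by $|1 - e^{(p+1)u}|$ and a piece controlled by $|1 - h|$. Combined with the $Y^{-2}$ or $(XY)^{-1}$ trace prefactors, the first piece will produce the $e^{-2nu}|1 - e^{(p+1)u}|$ bound, while the second, paired with a structure-constant derivative of order $r^{-1}$, will produce the $e^{2pu}r^{-1}|1-h|$ bound. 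The second group consists of horizontal-derivative terms $X_a/Y$, $Y_a/Y$ and their tangential derivatives: since $h = h(r)$ and $w = w(r)$ force $h_a = w_a = 0$, one has $X_a = -pXw\cdot e_a(v)$, $Y_a = Yw\cdot e_a(v)$, and analogous identities for second derivatives. Theorem \ref{thm-exp:local_frame}(iv) then bounds these by $C|w|(|\nabla_\Sigma v| + |\nabla^2_\Sigma v|)$, and multiplying by the $Y^{-2} = e^{-2u}$ trace prefactor and squaring yields the third error $Ce^{-2u}|w|(1 + ||v||^2_{C^2})$.

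The main obstacle will be controlling the potential $r^{-1}$ singularity in the expansion. Derivatives of the form $|e_\theta\gamma_{\theta ab}|$ are only bounded by $Cr^{-1}$ (Lemma \ref{lemma-ds:conn_forms}), so a careless differentiation of $\omega'_{ab}$ could introduce a genuine $r^{-1}$ blowup in $R'_r$. The assumption $X_\theta = Y_\theta = 0$, automatic from Condition \ref{cond-main:h_and_u} since neither $h$ nor $u$ depends on $\theta$, will be critical: it prevents the $e_\theta$-derivative from falling on the prefactor $(1 - X^2/Y^2)$ itself, which would have been the worst case. The surviving $r^{-1}$ contributions will arise only when $d$ hits the underlying structure constant $\gamma_{ab\theta}$ inside a term carrying $(1 - X^2/Y^2)$. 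After the splitting described above, I expect these $r^{-1}$ factors to pair exclusively with the $|1 - h|$ piece, matching the first error term, while the $|1 - e^{(p+1)u}|$ piece always meets a bounded structure constant, consistent with the absence of an $r^{-1}$ factor in the second error bound of \eqref{eq-ds:scal_final}.
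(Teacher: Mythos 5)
Your overall strategy mirrors the paper's almost exactly: the $X,Y$ shorthand, the decomposition of $\omega'_{ij}$ into a scaled piece plus a correction, the three error groups with the splitting of $1 - X^2/Y^2$, and the use of $X_\theta = Y_\theta = 0$ and Theorem \ref{thm-exp:local_frame}(iv). You have also correctly identified the key danger: $e_\theta\gamma_{\theta aa} = O(r^{-1})$.

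However, there is a genuine gap in your treatment of that singularity, and it sits precisely at the step you describe as an ``expectation.'' After your splitting, the product $\bigl(\tfrac1{X^2}-\tfrac1{Y^2}\bigr)e_\theta\gamma_{\theta aa}$ distributes the $r^{-1}$ factor to \emph{both} pieces: you would be left with a term of size $e^{-2(p+1)u}\,r^{-1}\,|1-e^{(p+1)u}|$, and you would need this to be dominated by $e^{-2nu}|1-e^{(p+1)u}|$. Since $p+1 = n-1$, this requires $r^{-1}e^{2u}\leq C$; but Condition \ref{cond-main:h_and_u} only gives $re^{-2nu}\leq1$, i.e.\ $e^{2u}\geq r^{1/n}$, so $r^{-1}e^{2u}$ can be as large as $r^{-(n-1)/n}$, which blows up. There is no choice of the algebraic splitting of $1-X^2/Y^2$ that pairs $r^{-1}$ exclusively with $|1-h|$; the pairing must be \emph{arranged} by a cancellation, not hoped for.

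The missing ingredient is a Koszul-formula rearrangement. The quantity $\tfrac1{Y^2}\,d\omega_{\theta a}(e_a,e_\theta)$ that you will be carrying alongside the singular term itself contains an $e_\theta\gamma_{\theta aa}$ contribution, since
$d\omega_{\theta a}(e_a,e_\theta) = e_a\gamma_{a\theta\theta} + e_\theta\gamma_{\theta aa} - \omega_{\theta a}([e_a,e_\theta])$.
Adding this to $\bigl(\tfrac1{X^2}-\tfrac1{Y^2}\bigr)e_\theta\gamma_{\theta aa}$ cancels the singular term exactly, at the cost of changing the coefficient of $d\omega_{\theta a}(e_a,e_\theta)$ from $\tfrac1{Y^2}$ to $\tfrac1{X^2}$; what remains involves only the \emph{bounded} quantities $e_a\gamma_{a\theta\theta}$ and $\omega_{\theta a}([e_a,e_\theta])$. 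After this, the comparison of $\tfrac1{X^2}R_r$ to the target $e^{2pu}R_r$ costs $|\tfrac1{X^2}-e^{2pu}|\cdot|R_r| \leq Ce^{2pu}|1-h|\cdot r^{-1}$ (using $|R_r|\leq Cr^{-1}$ from the traced Gauss equation and Lemma \ref{lemma-exp:H2-A2}), and this is the \emph{only} source of an $r^{-1}$, paired harmlessly with $|1-h|$. Without this cancellation the argument does not close.
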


\begin{proof}
    Preliminary remarks are in order. Recall that Condition \ref{cond-main:h_and_u} states $u(x)=v(\x)w(r)$, $h=h(r)$, and in $N(\Sigma,r_I)$ we have
    \begin{equation}\label{eq-ds:further_assump}
        u\leq0,\qquad \frac12\leq h\leq2,\qquad re^{-2nu}\leq1,\qquad |w|\leq1.
    \end{equation}
    In the present proof, the generic constant $C$ may increase from line to line, but always remains independent of $h$ and $u$. As usual, we use $\omega_{ij}$ (resp. $\omega'_{ij}$) and $\Omega_{ij}$ (resp. $\Omega'_{ij}$) to denote the connection and curvature forms of $g_r$ (resp. $g'_r$). For visual clarity, we will interchangably use $e_iQ$ and $Q_i$ to denote the the derivative of a scalar quantity $Q$ (in the direction $e_i$) throughout our computations. We will repeatedly use $X_\th=Y_\th=0$, which follows immediately from the specific forms of $h$ and $u$. 
    
    We proceed by computing $R'_r$ in terms of $R_r$ and additional quantities depending on $h$ and $u$. In particular, we will compute $\Omega'_{\th a}(e'_a,e'_\th)$ and $\Omega'_{ab}(e'_b,e'_a)$. First up is the computation of $d\omega'_{\th a}(e'_a,e'_\th)$ and $d\omega'_{ab}(e'_b,e'_a)$. Differentiating \eqref{eq-ds:new_conn_wtha} yields

    \begin{align}
        d\omega'_{\th a}(e'_a,e'_\th) &= \frac1{XY}d\omega'_{\th a}(e_a,e_\th) \notag\\
        &= \frac{1}{XY}\Big[d\Big(\frac{X}{Y}\Big)\wedge\omega_{\theta a}\Big](e_a,e_\theta)
        + \frac{1}{XY}\frac{X}{Y}d\omega_{\theta a}(e_a,e_\theta) \notag\\
        &\qquad -\frac{1}{2}\frac{1}{XY}\Big[d\Big(\frac{Y}{X}-\frac{X}{Y}\Big)\wedge\sum_b(\gamma_{\th ab}+\gamma_{\th ba})\omega_b\Big](e_a,e_\theta) \notag\\
        &\qquad -\frac{1}{2}\frac{1}{XY}\Big(\frac{Y}{X}-\frac{X}{Y}\Big)\sum_b\Big[d(\gamma_{\th ab}+\gamma_{\th ba})\wedge \omega_b\Big](e_a,e_\theta) \notag\\
        &\qquad -\frac{1}{2}\frac{1}{XY}\Big(\frac{Y}{X}-\frac{X}{Y}\Big)\sum_b(\gamma_{\th ab}+\gamma_{\th ba})d\omega_b(e_a,e_\theta)\notag\\
        &\qquad-\frac{1}{XY}\Big[d\Big(\frac{X_a}{Y}\Big)\wedge\omega_\theta\Big](e_a,e_\theta)-\frac{1}{XY}\frac{X_a}{Y}d\omega_\theta(e_a,e_\theta)\notag\\
        \begin{split}\label{eq-ds:dconn_form}
        &= \frac1{Y^2}d\omega_{\th a}(e_a,e_\th)+\frac{(XY^{-1})_a}{XY}\omega_{\th a}(e_\th) \\
        &\qquad -\frac12\Big(\frac1{X^2}-\frac1{Y^2}\Big)\sum_b \Big[d(\gamma_{\th ab}+\gamma_{\th ba})\wedge\omega_b\Big](e_a,e_\th) \\
        &\qquad -\frac12\Big(\frac1{X^2}-\frac1{Y^2}\Big)\sum_b(\gamma_{\th ab}+\gamma_{\th ba})d\omega_b(e_a,e_\th) \\
        &\qquad -\frac1{2XY}\Big[d\big(\frac YX-\frac XY\big)\wedge\sum_b(\gamma_{\th ab}+\gamma_{\th ba})\omega_b\Big](e_a,e_\th) \\
        &\qquad -\frac1{XY}\Big(\frac{X_a}Y\Big)_a-\frac{X_a}{XY^2}d\omega_\th(e_a,e_\th).
        \end{split}
    \end{align}
    Then perform the following simplifications on this expression: for the first line of \eqref{eq-ds:dconn_form} we use $\omega_{\th a}(e_\th)=\gamma_{a\th\th}$, which follows from \eqref{eq-ds:koszul}. For the second line we use
    \[\Big[d(\gamma_{\th ab}+\gamma_{\th ba})\wedge\omega_b\Big](e_a,e_\th)
        = -\delta_{ab}e_\th(\gamma_{\th ab}+\gamma_{\th ba})
        = -2\delta_{ab}e_\th(\gamma_{\th aa}),\]
    where $\delta_{ab}$ denotes the Kronecker delta. 
    For the third and fifth lines we use $d\omega_i(e_j,e_k)=-\omega_i([e_j,e_k])=-\gamma_{jki}$ for all $i,j,k$. Finally, the fourth line vanishes since $X_\th=Y_\th=\omega_b(e_\th)=0$. Taking all of these observations into account, \eqref{eq-ds:dconn_form} becomes
    \begin{equation}\label{eq-ds:aux4}
        \begin{aligned}
            d\omega'_{\th a}(e_a',e_\th') &= \frac1{Y^2}d\omega_{\th a}(e_a,e_\th)
            +\frac{(XY^{-1})_a}{XY}\gamma_{a\th\th}
            +\Big(\frac1{X^2}-\frac1{Y^2}\Big)e_\th\gamma_{\th aa} \\
        &\qquad
            + \frac12\Big(\frac1{X^2}-\frac1{Y^2}\Big)\sum_b(\gamma_{\th ab}+\gamma_{\th ba})\gamma_{a\th b}
            -\frac1{XY}\Big(\frac{X_a}Y\Big)_a
            +\frac{X_a}{XY^2}\gamma_{a\th\th}.
        \end{aligned}
    \end{equation}
    
    To summarize, equation \eqref{eq-ds:aux4} contains $d\omega_{\th a}(e_a,e_\th)$ (which contributes to the original $g_r$-scalar curvature), along with several error terms that eventually contribute to right hand side of \eqref{eq-ds:scal_final}. What happens next is slightly subtle: one of the error terms, $e_\th\gamma_{\th aa}$, blows up on the order of $O(r^{-1})$ and needs to be balanced with a part of the first term of \eqref{eq-ds:aux4}.
    To do this, we use the Koszul formula $\omega_{ij}(e_k)=\frac12(\gamma_{kij}-\gamma_{ijk}+\gamma_{jki})$ to find
    \[\begin{aligned}
        d\omega_{\th a}(e_a,e_\th) &= e_a\big(\omega_{\th a}(e_\th)\big)-e_\th\big(\omega_{\th a}(e_a)\big)-\omega_{\th a}([e_a,e_\th]) \\
        &= e_a\gamma_{a\th\th}+e_\th\gamma_{\th aa}-\omega_{\th a}([e_a,e_\th]).
    \end{aligned}\]
    From this it follows
    \begin{align}\label{eq-ds:aux4.1}
        & \frac1{Y^2}d\omega_{\th a}(e_a,e_\th)+\Big(\frac1{X^2}-\frac1{Y^2}\Big)e_\th\gamma_{\th aa} \\
        &\qquad\qquad\qquad\qquad\quad =\frac1{X^2}d\omega_{\th a}(e_a,e_\th)-\Big(\frac1{X^2}-\frac1{Y^2}\Big)\big[e_a\gamma_{a\th\th}-\omega_{\th a}([e_a,e_\th])\big].\notag
    \end{align}
    Applying this identity to \eqref{eq-ds:aux4} will have the effect of changing the coefficient of $d\omega_{\th a}(e_a,e_\th)$ from $\frac1{Y^2}$ to $\frac1{X^2}$ and removing the singular term $e_\th\gamma_{\th aa}$. On the other hand, according to Lemma \ref{lemma-ds:conn_forms}, the remaining structural constants $\gamma_{a\th\th}$, $e_a\gamma_{a\th\th}$, $\omega_{\th a}([e_a,e_\th])$, and $\gamma_{\th ab}$ are all bounded by constants. Thus combining \eqref{eq-ds:aux4} with \eqref{eq-ds:aux4.1} yeilds
    \begin{align}
        \Big|d\omega'_{\th a}(e_a',e_\th')-\frac1{X^2}d\omega_{\th a}(e_a,e_\th)\Big| &\leq
        C\Big|\frac1{X^2}-\frac1{Y^2}\Big|
        + \frac{C}{Y^2}\Big[|e_a\log X|+|e_a\log Y| \label{eq-ds:aux2}\\
        &\qquad\qquad\quad +|e_a^2\log X|+|e_a\log X|\cdot|e_a\log Y|\Big], \nonumber
    \end{align}
    where $e^2_a$ denotes the second derivative in the direction $e_a$.
    
    Next, we simplify the derivative terms in \eqref{eq-ds:aux2}. We note that $\log X=h-pu=h(r)-pv(\x)w(r)$, hence $e_a\log X(x)=-pw(r)e_av(\x)$ and $e_a^2\log X(x)=-pw(r)e_a^2v(\x)$. Similarly, we have $e_a\log Y=we_av$ and $e_a^2\log Y=we_a^2v$. By Theorem \ref{thm-exp:local_frame}(iv), this gives
    \begin{equation}\label{eq-ds:deriv_logX}
        \begin{aligned}
            & |e_a\log X|\leq Cw(r)|\D_\Sigma v|,\quad & |e_a^2\log X|\leq Cw(r)\big(|\D_\Sigma v|+|\D^2_\Sigma v|\big), \\
            & |e_a\log Y|\leq Cw(r)|\D_\Sigma v|,\quad & |e_a^2\log Y|\leq Cw(r)\big(|\D_\Sigma v|+|\D^2_\Sigma v|\big).
        \end{aligned}
    \end{equation}
    Applying \eqref{eq-ds:deriv_logX} to \eqref{eq-ds:aux2}, and using $|w|\leq 1$ from \eqref{eq-ds:further_assump}, we get the desirable estimate
    \begin{equation}\label{eq-ds:scal_component1}
        \begin{aligned}
            \Big|d\omega'_{\th a}(e'_a,e'_\th)-\frac1{X^2}d\omega_{\th a}(e_a,e_\th)\Big| &\leq C\Big|\frac1{X^2}-\frac1{Y^2}\Big|+Ce^{-2u}|w|\big(1+||v||_{C^2(\Sigma,g)}^2\big).
	\end{aligned}
    \end{equation}
 
    Moving on, we compute the next main component by directly differentiating \eqref{eq-ds:new_conn_wab}. Using the general formula $d\omega_i(e_j,e_k)=-\gamma_{jki}$, we have:
    \begin{align}
        d\omega'_{ab}(e'_b,e'_a) &= \frac1{Y^2}d\omega'_{ab}(e_b,e_a) \nonumber\\
        \begin{split}&= \frac1{Y^2}d\omega_{ab}(e_b,e_a)
        + \frac1{2Y^2}\overbrace{\Big[d\Big[\big(1-\frac{X^2}{Y^2}\big)\gamma_{ab\th}\Big]\wedge\omega_\th\Big](e_b,e_a)}^{=0} \nonumber\\
        &\qquad + \frac12\frac{Y^2-X^2}{Y^4}\gamma_{ab\th}\overbrace{d\omega_\th(e_b,e_a)}^{=-\gamma_{ba\th}}
        +\frac{1}{Y^2}\Big[d\Big(\frac{Y_a}{Y}\Big)\wedge\omega_b\Big](e_b,e_a)+\frac{1}{Y^2}\frac{Y_a}{Y}\overbrace{d\omega_b(e_b,e_a)}^{=-\gamma_{bab}} \\
        &\qquad -\frac{1}{Y^2}\Big[d\Big(\frac{Y_b}{Y}\Big)\wedge \omega_a\Big](e_b,e_a)- \frac{1}{Y^2}\frac{Y_b}{Y} \overbrace{d\omega_a(e_b,e_a)}^{=-\gamma_{baa}}\end{split} \nonumber\\
        \begin{split}&= \frac1{Y^2}d\omega_{ab}(e_b,e_a)+\frac12\frac{Y^2-X^2}{Y^4}\gamma_{ab\th}^2-\frac1{Y^2}\Big(\frac{Y_a}Y\Big)_a-\frac1{Y^2}\Big(\frac{Y_b}Y\Big)_b \\
        &\qquad -\frac{Y_a}{Y^3}\gamma_{bab}+\frac{Y_b}{Y^3}\gamma_{baa}.\end{split}\label{eq-ds:daby2}
    \end{align}
    This time, changing the coefficient of $d\omega_{ab}(e_b,e_a)$ in \eqref{eq-ds:daby2} from $\frac1{Y^2}$ to $\frac1{X^2}$ is more straight-forward. Indeed, we use the Koszul formula and Lemma \ref{lemma-ds:conn_forms} to find
    \[\begin{aligned}
        d\omega_{ab}(e_b,e_a) &= e_b\gamma_{baa}+e_a\gamma_{abb}-\omega_{ab}([e_b,e_a])=O(1),
    \end{aligned}\]
    and so
    \begin{equation}\label{eq-ds:auxYtoX1}
        \frac{1}{Y^2}d\omega_{ab}(e_a,e_b)\leq\frac{1}{X^2}d\omega_{ab}(e_a,e_b)+C\Big|\frac{1}{X^2}-\frac{1}{Y^2}\Big|.
    \end{equation}
    For the second term of \eqref{eq-ds:daby2}, using $|\gamma_{ab\th}|\leq Cr$ from Lemma \ref{lemma-ds:conn_forms} along with the assumptions \eqref{eq-ds:further_assump}, we have
    \begin{align}
        \frac{|X^2-Y^2|}{Y^4}\gamma_{ab\th}^2 &\leq Cr^2\frac{X^2}{Y^2}\Big|\frac1{X^2}-\frac1{Y^2}\Big|\notag
        = Cr^2e^{-2pu-2u}h^2\Big|\frac1{X^2}-\frac1{Y^2}\Big|\\
        &\leq C\Big|\frac1{X^2}-\frac1{Y^2}\Big|.\label{eq-ds:auxYtoX2}
    \end{align}
    Applying \eqref{eq-ds:auxYtoX1} and \eqref{eq-ds:auxYtoX2} to \eqref{eq-ds:daby2}, and using \eqref{eq-ds:deriv_logX} for the remaining terms, we obtain
    \begin{equation}\label{eq-ds:scal_component2}
	\begin{aligned}
            \Big|d\omega'_{ab}(e'_b,e'_a)-\frac1{X^2}d\omega_{ab}(e_b,e_a)\Big| &\leq C\Big|\frac1{X^2}-\frac1{Y^2}\Big| +Ce^{-2u}|w|\big(1+||v||_{C^2(\Sigma,g)}^2\big).
        \end{aligned}
    \end{equation}
    
    It remains to compute the components $\omega'_{\th i}\wedge\omega'_{ia}(e'_a,e'_\th)$ and $\omega'_{ai}\wedge\omega'_{ib}(e'_b,e'_a)$ in the curvature form. We define quantities $\rho_{\theta a}$ and $\rho_{ab}$ as
    \begin{equation}
        \rho_{\th a}:=-\frac12\big(\frac YX-\frac XY\big)\sum_b(\gamma_{\th ab}+\gamma_{\th ba})\omega_b-\frac{X_a}Y\omega_\th,
    \end{equation}
    and
    \begin{equation}
        \rho_{ab}:=\frac12(1-\frac{X^2}{Y^2})\gamma_{ab\th}\omega_\th+\frac{Y_a}Y\omega_b-\frac{Y_b}Y\omega_a,
    \end{equation}
    so that 
    \[\omega'_{\th a}=\frac XY\omega_{\th a}+\rho_{\th a},\qquad
        \omega'_{ab}=\omega_{ab}+\rho_{ab}.\]
    Using Lemma \ref{lemma-ds:conn_forms} we can bound
    \begin{equation}\label{eq-ds:rho_bound1}
        |\rho_{\th a}(e_\th)|\leq\frac{|X_a|}{Y},\qquad
	|\rho_{\th a}(e_b)|\leq C\frac{|X^2-Y^2|}{XY},
    \end{equation}
    and
    \begin{equation}\label{eq-ds:rho_bound2}
        |\rho_{ab}(e_\th)|\leq C\frac{|X^2-Y^2|}{Y^2}r,\qquad
	|\rho_{ab}(e_c)|\leq\frac{|Y_a|+|Y_b|}{Y}.
    \end{equation}
    Now expand
    \[\begin{aligned}
        \sum_i[\omega'_{\th i}\wedge\omega'_{ia}](e'_a,e'_\th) &= \frac1{XY}\sum_b[\omega'_{\th b}\wedge\omega'_{ba}](e_a,e_\th) \\
        &= \frac1{Y^2}\sum_b[\omega_{\th b}\wedge\omega_{ba}](e_a,e_\th)
        +\frac1{XY}\sum_b[\rho_{\th b}\wedge\omega_{ba}](e_a,e_\th) \\
        &\qquad +\frac1{Y^2}\sum_b[\omega_{\th b}\wedge\rho_{ba}](e_a,e_\th)
        +\frac1{XY}\sum_b[\rho_{\th b}\wedge\rho_{ba}](e_a,e_\th),
    \end{aligned}\]
    and use \eqref{eq-ds:rho_bound1}, \eqref{eq-ds:rho_bound2}, and $|\omega_{ij}|\leq C$ from \eqref{eq-ds:conn_form_bound} to bound
    \begin{align}
        & \Big|\sum_i[\omega'_{\th i}\wedge\omega'_{ia}](e'_a,e'_\th)-\frac1{Y^2}\sum_i[\omega_{\th i}\wedge\omega_{ia}](e_a,e_\th)\Big| \notag\\
        \leq&\, \frac{1}{XY}\sum_b\Big[C|\rho_{\theta b}(e_a)|+C|\rho_{\theta b}(e_\theta)|\Big]
        +\frac{1}{Y^2}\sum_b\Big[C|\rho_{ba}(e_a)|+C|\rho_{ba}(e_\theta)|\Big]\notag \\
        &\qquad +\frac{1}{XY}\sum_b\Big[|\rho_{\theta b}(e_a)|\,|\rho_{ba}(e_\theta)|+|\rho_{\theta b}(e_\theta)|\,|\rho_{ba}(e_a)|\Big] \notag\\
        \leq&\, \frac1{XY}\Big[C\frac{|X^2-Y^2|}{XY}+C\frac{|X_a|}{Y}\Big]
        +\frac1{Y^2}\Big[C\frac{|X^2-Y^2|}{Y^2}r+C\sum_b\frac{|Y_b|}{Y}\Big]\notag \\
        &\qquad +\frac{1}{XY}\Big[C\frac{|X^2-Y^2|^2}{XY^3}r+C\sum_b\frac{|X_a|\cdot|Y_b|}{Y^2}\Big] \notag\\
        \begin{split}\leq&\, C\Big|\frac1{X^2}-\frac1{Y^2}\Big|\cdot\underbrace{\Big[1+r\frac{X^2}{Y^2}+r\frac{|X^2-Y^2|}{Y^2}\Big]}_{=:\mathcal{A}_1} \\
        &\qquad + \frac{C}{Y^2}\underbrace{\Big(|e_a\log X|+\sum_b|e_b\log Y|+\sum_b|e_a\log X|\cdot|e_b\log Y|\Big)}_{=:\mathcal{A}_2}.\end{split}  \label{eq-ds:aux3}
    \end{align}
    
    To continue our estimate \eqref{eq-ds:aux3}, we make 3 observations. For the first line of \eqref{eq-ds:aux3}, note that $r\frac{X^2}{Y^2}=re^{-2pu-2u}h^2\leq 1$ by our assumptions \eqref{eq-ds:further_assump}, and it follows that $r\frac{|X^2-Y^2|}{Y^2}\leq r\big(1+\frac{X^2}{Y^2}\big)\leq 2$. Consequently, $\mathcal{A}_1$ is uniformly bounded. On the other hand, \eqref{eq-ds:deriv_logX} implies that $\mathcal{A}_2\leq Cw(1+||v||^2_{C^2(\Sigma,g)})$. Finally, the connection forms $[\omega_{\th i}\wedge\omega_{ia}](e_a,e_\th)$ are bounded by constants, and so we may change its coefficient from $\frac1{Y^2}$ to $\frac1{X^2}$ at the cost of $C\big|\frac1{X^2}-\frac1{Y^2}\big|$. Combining these observations with \eqref{eq-ds:aux3} yields
    \begin{align}
        & \Big|\sum_i[\omega'_{\th i}\wedge\omega'_{ia}](e'_a,e'_\th)-\frac1{X^2}\sum_i[\omega_{\th i}\wedge\omega_{ia}](e_a,e_\th)\Big| \notag\\
        &\hspace{90pt}\leq \Big|\sum_i[\omega'_{\th i}\wedge\omega'_{ia}](e'_a,e'_\th)-\frac1{Y^2}\sum_i[\omega_{\th i}\wedge\omega_{ia}](e_a,e_\th)\Big|\notag\\
        &\hspace{90pt}\qquad\qquad+\Big|\frac{1}{X^2}-\frac{1}{Y^2}\Big|\cdot\Big|\sum_i[\omega_{\th i}\wedge\omega_{ia}](e_a,e_\th)\Big|\notag\\
        &\hspace{90pt}\leq C\Big|\frac1{X^2}-\frac1{Y^2}\Big|+Ce^{-2u}|w|\big(1+||v||_{C^2(\Sigma,g)}^2\big).\label{eq-ds:scal_component3}
    \end{align}
    
    Moving along, we split the last component $\sum_i[\omega'_{ai}\wedge\omega'_{ib}](e'_b,e'_a)$ into two pieces $\sum_c[\omega'_{ac}\wedge\omega'_{cb}](e'_b,e'_a)+[\omega'_{a\th}\wedge\omega'_{\th b}](e_b,e_a)$ and separately compute them. Similarly as above, we expand the first piece:
    \[\begin{aligned}
        \sum_c[\omega'_{ac}\wedge\omega'_{cb}](e'_b,e'_a) 
        &= \frac1{Y^2}\sum_c[\omega_{ac}\wedge\omega_{cb}](e_b,e_a)+\frac1{Y^2}\sum_c[\rho_{ac}\wedge\omega_{cb}](e_b,e_a) \\
        &\qquad +\frac1{Y^2}\sum_c[\omega_{ac}\wedge\rho_{cb}](e_b,e_a)+\frac1{Y^2}\sum_c[\rho_{ac}\wedge\rho_{cb}](e_b,e_a).
    \end{aligned}\]
    Repeating the steps used in \eqref{eq-ds:aux3} and \eqref{eq-ds:scal_component3} yeilds
    \begin{align}
        & \Big|\sum_c[\omega'_{ac}\wedge\omega'_{cb}](e'_b,e'_a)-\frac1{X^2}\sum_c[\omega_{ac}\wedge\omega_{cb}](e_b,e_a)\Big| \notag\\
        &\hspace{90pt}\leq\, C\Big|\frac1{X^2}-\frac1{Y^2}\Big|+\frac1{Y^2}\Big[C\sum_c\frac{|Y_c|}{Y}\Big]
        + \frac1{Y^2}\Big[C\sum_c\frac{|Y_c|}{Y}\Big]^2 \notag\\
        &\hspace{90pt}\leq\, C\Big|\frac1{X^2}-\frac1{Y^2}\Big|+Ce^{-2u}|w|\big(1+||v||_{C^2(\Sigma,g)}^2\big).\label{eq-ds:scal_component4}
    \end{align}
    The second piece of the last component expands as
    \begin{align}
        [\omega'_{a\th}\wedge\omega'_{\th b}](e'_b,e'_a) &= \frac1{Y^2}[\omega'_{a\th}\wedge\omega'_{\th b}](e_b,e_a) \\
        \begin{split}
        &= \frac{X^2}{Y^4}[\omega_{a\th}\wedge\omega_{\th b}](e_b,e_a)
        +\frac{X}{Y^3}[\omega_{a\th}\wedge\rho_{\th b}](e_b,e_a) \\
        &\qquad +\frac{X}{Y^3}[\rho_{a\th}\wedge\omega_{\th b}](e_b,e_a)
        +\frac1{Y^2}[\rho_{a\th}\wedge\rho_{\th b}](e_b,e_a).\end{split}\label{eq-ds:finaltermaux1}
    \end{align}
    We proceed with the following estimate, using \eqref{eq-ds:conn_form_bound} in the first inequality, \eqref{eq-ds:finaltermaux1} and \eqref{eq-ds:rho_bound1} in the second, and \eqref{eq-ds:further_assump} in the final:
    \begin{align}
        &\hspace{-60pt} \Big|[\omega'_{a\th}\wedge\omega'_{\th b}](e'_b,e'_a)-\frac1{X^2}[\omega_{a\th}\wedge\omega_{\th b}](e_b,e_a)\Big| \nonumber\\
        \leq & \, C\Big| \frac{1}{X^2}-\frac{X^2}{Y^4}\Big|+\Big|[\omega'_{a\th}\wedge\omega'_{\th b}](e'_b,e'_a)-\frac{X^2}{Y^4}[\omega_{a\th}\wedge\omega_{\th b}](e_b,e_a)\Big| \nonumber\\
        \leq & \, C\Big|\frac1{X^2}-\frac{X^2}{Y^4}\Big|+\frac{X}{Y^3}\Big[C\frac{|X^2-Y^2|}{XY}\Big]\notag\\
        &+ \frac X{Y^3}\Big[C\frac{|X^2-Y^2|}{XY}\Big]
        + \frac1{Y^2}\Big[C\frac{|X^2-Y^2|}{XY}\Big]^2 \nonumber\\
        \leq&\, C\Big|\frac1{X^2}-\frac1{Y^2}\Big|\cdot\Big[\big|1+\frac{X^2}{Y^2}\big|+\frac{X^2}{Y^2}+\frac{|X^2-Y^2|}{Y^2}\Big] \nonumber\\
        \leq&\, C\Big|\frac1{X^2}-\frac1{Y^2}\Big|\cdot\Big[1+\frac{X^2}{Y^2}\Big] \nonumber\\
        =& C\Big|\frac1{X^2}-\frac1{Y^2}\Big|\cdot\Big[1+e^{-2pu-2u}h^2\Big] \nonumber\\
        \leq&\, C\Big|\frac1{X^2}-\frac1{Y^2}\Big|\cdot e^{-2pu-2u}. \label{eq-ds:scal_component5}
    \end{align}
    This completes the estimation of all relevant terms in $\Omega'_{ij}$. 
    
    Combining \eqref{eq-ds:scal_component1}, \eqref{eq-ds:scal_component2}, \eqref{eq-ds:scal_component3}, \eqref{eq-ds:scal_component4}, and \eqref{eq-ds:scal_component5} together, we eventually obtain
    \begin{equation}\label{eq-ds:scal_prefinal}
        \big|R'_r-\frac1{X^2}R_r\big| \leq Ce^{-2pu-2u}\Big|\frac1{X^2}-\frac1{Y^2}\Big|+Ce^{-2u}|w|\big(1+||v||_{C^2(\Sigma,g)}^2\big).
    \end{equation}
    Further analyzing the first term on the right hand side, we find
    \[\begin{aligned}
        e^{-2pu-2u}\Big|\frac1{X^2}-\frac1{Y^2}\Big| &= e^{-2pu-2u}\big|e^{2pu}h^{-2}-e^{2pu}+e^{2pu}-e^{-2u}\big| \\
        &\leq e^{-2u}\big|h^{-2}-1\big|+e^{-2pu-4u}|e^{2(p+1)u}-1| \\
        &\leq Ce^{2pu}r^{-1}\big|1-h\big|+Ce^{-2nu}\big|1-e^{(p+1)u}\big|,
    \end{aligned}\]
    where the last line follows from assumptions \eqref{eq-ds:further_assump}. The error between the scaling $\frac1{X^2}$ and the target scaling $e^{2pu}$ is bounded by
    \[\Big|\frac1{X^2}-e^{2pu}\Big|\cdot|R_r|\leq Ce^{2pu}|1-h|\cdot Cr^{-1},\]
    where the fact $|R_r|\leq Cr^{-1}$ follows from traced Gauss equation $R_r=R_g-2\Ric_g(\p_r,\p_r)+H^2-|A|^2$ and Lemma \ref{lemma-exp:H2-A2}. Combining the above observations, we finally obtain
    \begin{equation}
        \begin{aligned}
            \big|R'_r-e^{2pu}R_r\big| &\leq \big|R'_r-\frac1{X^2}R_r\big|+\big|\frac1{X^2}-e^{2pu}\big|\cdot|R_r| \\
            &\leq C\frac{e^{2pu}}r\big|1-h\big|+Ce^{-2nu}\big|1-e^{(p+1)u}\big|+Ce^{-2u}|w|\big(1+||v||_{C^2(\Sigma,g)}^2\big),\notag
        \end{aligned}
    \end{equation}
    finishing the proof.
\end{proof}

\subsection{The proof of Theorem \ref{thm-main:scal_estimate}}

To conclude, we combine the main results of of this section to finish the proof of our main estimate.

\begin{proof}[Proof of Theorem \ref{thm-main:scal_estimate}]
    Let $r_0$ be the radius provided by Theorem \ref{thm-exp:local_frame}. We combine the traced Gauss equations for $g$ and $g'$ \eqref{eq-setup:gauss_eq1} \eqref{eq-setup:gauss_eq2} with the extrinsic curvature estimate \eqref{eq-ds:ext_terms_final} in Theorem \ref{thm-ds:extrinsic_curv} and $\Sigma_r$'s scalar curvature estimate \eqref{eq-ds:scal_final} in Theorem \ref{thm-ds:scal_gr}. What results is not quite the desired estimate \eqref{eq-main:scal_final}, and we require a small observation about the error term $|I_6|$ in \eqref{eq-ds:remainder_bounds}. The $|I_6|$ error term is further bounded using Condition \ref{cond-main:h_and_u}(iii)
    \[\begin{aligned}
        Cr^2e^{2pu}\big|e^{-2pu-2u}h^2-1\big| &= Cr^2e^{-2u}\big|h^2-1-e^{2(p+1)u}+1\big| \\
        &\leq 3C\big|1-h|+2C\big|1-e^{(p+1)u}\big|.
    \end{aligned}\]
    In particular, $|I_6|$ can be absorbed in the terms $e^{2pu}r^{-1}\big|1-h\big|$ and $e^{-2nu}\big|1-e^{(p+1)u}\big|$.
    
    Thus, we have obtained the desired estimate on $\Omega_{r_0}$. In general, we cover $\Sigma$ with a finite collection of coordinate charts $\{U_i\}$, each trivializing $N\Sigma$. Within each region $\pi^{-1}(U_i)\cap N(\Sigma,r_0)$ we perform the above estimate, and it follows that \eqref{eq-main:scal_final} holds by choosing $C_1\,\sim\,C_5$ as the maximal constant among those in all the $\pi^{-1}(U_i)\cap N(\Sigma,r_0)$.
\end{proof}


\section{Construction of \texorpdfstring{$h$ and $u$}{h and u}}\label{sec:cutoff}

In Theorem \ref{thm-main:scal_estimate}, we obtained a lower bound on the scalar curvature of the metric $g'$, in terms of $R_g$ and several terms involving $h$ and $u$. This section aims to construct $h$ and $u$ such that $R_{g'}\geq R_g-\epsilon$, and the remaining conditions in Theorem \ref{thm-main:h_and_u} are satisfied. The construction here is a generalization of the one used in \cite{KXdrawstring}.

\vspace{6pt}

\noindent\textbf{Setup:} The following data are fixed through this section: the function $v_0\in C^\infty(\Sigma)$ satisfying $v_0\leq0$ and constant $\varepsilon>0$ appearing in Theorem \ref{thm-main:h_and_u}, and the constants $r_0, C_1,\cdots,C_5$ appearing in Theorem \ref{thm-main:scal_estimate}.

\vspace{6pt}

We start with determining the radius $r_1$ appearing in Theorem \ref{thm-main:h_and_u}. We set
\begin{align}
    C_6 &:= C_4(p+1)\sup_\Sigma|v_0|+C_5\big(1+||v_0||_{C^2(\Sigma,g)}^2\big),\label{eq-cutoff:def_C6} \\
    R_0 &:= \sup_{N(\Sigma,r_0)}|R_g|.\label{eq-cutoff:def_R0}
\end{align}
The choice of $r_1$ is fixed once and for all according to the following lemma:
\begin{lemma}\label{lemma-cutoff:r1}
    There exists $r_1>0$ depending on $v_0,\epsilon,r_0,C_1,\cdots,C_6,R_0$, such that
    \begin{enumerate}[label=(\roman*)]
        \item $r_1<\min\big\{\frac1{100},r_0,\epsilon,\frac1{2C_1},\frac1{C_3}\big\}$, and in particular, $\log(\frac1{r_1})>4$ and $\log\log(\frac1{r_1})>0$,

        \item for all $0<r\leq r_1$, $\frac12\leq a\leq2$ and $1\leq b\leq 5$ we have
        \begin{equation}\label{eq-cutoff:rawb}
            \frac1{r^a\log(1/r)^b}>\max\Big\{4C_1,\,2C_2\sup_\Sigma|v_0|,\,2C_3,\,4C_6,\,100(R_0+1)\Big\}.
        \end{equation}
    \end{enumerate}
\end{lemma}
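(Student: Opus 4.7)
This is a purely quantitative lemma: every condition imposed on $r_1$ manifests as an explicit upper bound depending on the fixed data $v_0, \epsilon, r_0, C_1,\ldots,C_6, R_0$. The plan is to list these upper bounds and take $r_1$ smaller than their minimum; the only nontrivial point is handling the quantifiers ``for all $0<r\leq r_1$, $1/2\leq a\leq 2$, $1\leq b\leq 5$'' in (ii).

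For (i), I would take $r_1<\min\{1/100, r_0, \epsilon, 1/(2C_1), 1/C_3, e^{-4}\}$. The additional requirement $r_1<e^{-4}$ guarantees $\log(1/r_1)>4$, and hence also $\log\log(1/r_1)>\log 4>0$.

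For (ii), my approach is to locate the infimum of $F(r,a,b):=r^{-a}\log(1/r)^{-b}$ over the parameter domain. Assuming $r_1<1/e$ so that $\log(1/r)>1$ throughout, a log-derivative calculation gives $\partial_a(\ln F)=\log(1/r)>0$ and $\partial_b(\ln F)=-\log\log(1/r)<0$, so $F$ is strictly increasing in $a$ and strictly decreasing in $b$. Hence the infimum over $(a,b)\in[1/2,2]\times[1,5]$ is attained at $(a,b)=(1/2,5)$, reducing the problem to a lower bound on $G(r):=r^{-1/2}\log(1/r)^{-5}$. Writing $s=\log(1/r)$, the reciprocal $1/G=e^{-s/2}s^{5}$ has a unique critical point at $s=10$ and decreases strictly for $s>10$; equivalently, once $r_1<e^{-10}$, $G$ is strictly decreasing on $(0,r_1]$, so $\inf_{(0,r_1]}G=G(r_1)=1/(r_1^{1/2}\log(1/r_1)^{5})$.

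Finally, since $G(r_1)\to+\infty$ as $r_1\to 0^+$, I tighten $r_1$ once more so that $G(r_1)$ exceeds the maximum $M$ of the five constants on the right-hand side of \eqref{eq-cutoff:rawb}. The resulting $r_1$ depends only on the listed data, as required. There is no substantive obstacle here — the lemma is essentially bookkeeping over finitely many explicit constants, with the sole observation being the elementary monotonicity reduction described above.
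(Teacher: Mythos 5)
Your proof is correct and takes essentially the same approach as the paper, whose proof of this lemma is a single sentence: it simply cites the fact that $\lim_{r\to0^+}r^{-a}\log(1/r)^{-b}=\infty$ for any $a,b>0$. Your monotonicity analysis --- reducing to the worst case $(a,b)=(1/2,5)$ (valid since $\log(1/r)>1$ for $r<1/e$) and imposing $r_1<e^{-10}$ so that $G(r)=r^{-1/2}\log(1/r)^{-5}$ is decreasing on $(0,r_1]$ --- is a correct and natural way to make precise the uniformity over $(a,b)$ and over $0<r\leq r_1$ that the paper leaves implicit.
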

\begin{proof}
    This follows from the fact that for any $a,b>0$, we have $\lim_{r\to0^+}\frac1{r^a\log(1/r)^b}=\infty$.
\end{proof}

\vspace{6pt}

Next, we construct the functions $h, u$. We fix two smooth functions $\zeta$ and $\eta$ satisfying
\begin{align}\label{eq-cutoff:etazeta}
    \begin{split}
        & \zeta|_{[0,\frac12]}=0,\quad \zeta|_{[1,\infty)}=1,\quad    0\leq\zeta'\leq4,\quad |\zeta''|\leq16,\\
        & \eta|_{[0,\frac12]}=1,\quad \eta|_{[1,\infty)}=0,\quad 0\geq\eta'\geq-4, \quad|\eta''|\leq16.
    \end{split}
\end{align} 
For positive constants $r_2,c_1,c_2$ with $r_2\ll r_1$, we set
\begin{align}
    \psi(r)&=\int_0^r\Big[\zeta\big(\frac\rho{r_2}\big)\frac{1}{\rho\log^2(1/\rho)}+\big(1-\zeta\big(\frac\rho{r_2}\big)\big)\frac \rho{r_2}\Big]\,d\rho, \label{eq-cutoff:def_psi}\\
    h(r)&=1-c_1\eta\big(\frac r{r_1}\big)\psi(r), \label{eq-cutoff:def_h}
\end{align}
and consider the function
\begin{align}
    w(r) &= c_2\int_r^\infty\zeta\big(\frac \rho{4r_2}\big)\eta\big(\frac{4\rho}{r_1}\big)\frac{d\rho}{\rho\log(1/\rho)}\label{eq-cutoff:def_u}.
\end{align}
Finally, set
\begin{equation}\label{eq-cutoff:hudefinition}
    h(x)=h(r(x)),\qquad u(x)=v_0(\pi(x))w(r(x)).
\end{equation}
Figures \ref{fig:cutoffs} and \ref{fig:handu} depict the graphs of the cutoff functions and $h$, $w$. In Figure \ref{fig:handu}, the fact that $w(0)=1$ is proved in Lemma \ref{lemma-cutoff:misc}, after all the parameters are chosen appropriately.

\vspace{-18pt}
\begin{figure}[h!]
    \setlength{\abovecaptionskip}{-14pt}
    \hspace{-133pt}\includegraphics[scale=1.15]{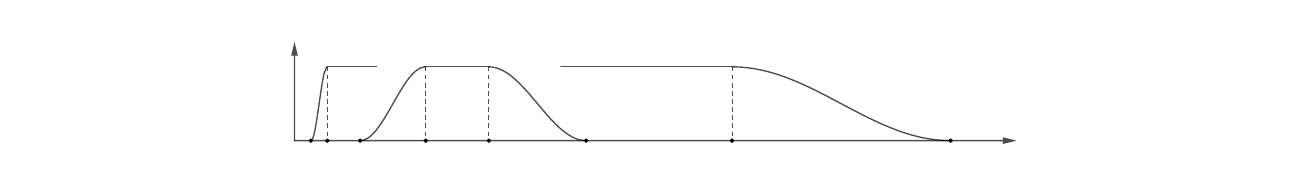}
    \begin{picture}(0,0)
        \put(48,90){$\zeta\big(\frac r{r_2}\big)$}
        \put(90,90){$\zeta\big(\frac{r}{4r_2}\big)\eta\big(\frac{4r}{r_1}\big)$}
        \put(210,90){$\eta\big(\frac r{r_1}\big)$}
        \put(22,28){$0$}
        \put(32,28){$\frac{r_2}2$}
        \put(44,28){$r_2$}
        \put(58,28){$2r_2$}
        \put(95,28){$4r_2$}
        \put(132,28){$\frac{r_1}8$}
        \put(186,28){$\frac{r_1}4$}
        \put(269,28){$\frac{r_1}2$}
        \put(394,28){$r_1$}
        \put(430,28){$r$}
    \end{picture}
    \caption{The cutoff functions that appear when defining $h$ and $w$.}
    \label{fig:cutoffs}
    \TODO{before publishing: the graph should be on the same page as the cutoff functions.}
\end{figure}

\TODO{before posting: adjust figure position}
\begin{figure}[h]
    \centering
    \hspace*{-24pt}\includegraphics[scale=0.8]{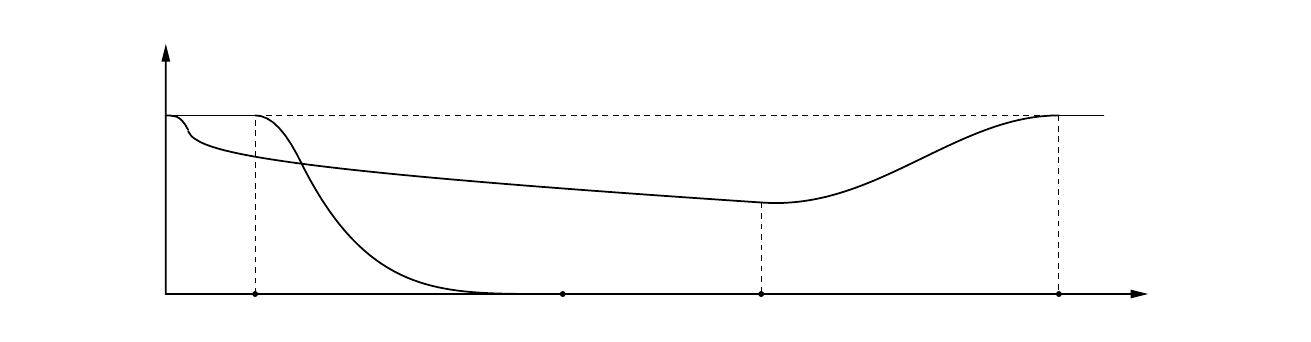}
    \begin{picture}(0,0)
        \put(-448,6){0}
        \put(-415,6){$2r_2$}
        \put(-293,5){$\frac{r_1}4$}
        \put(-217,5){$\frac{r_1}2$}
        \put(-101,7){$r_1$}
        \put(-451,83){$1$}
        \put(-370,43){$w(r)$}
        \put(-182,70){$h(r)$}
    \end{picture}
    \caption{The graphs of $h$ and $w$.}\label{fig:handu}
\end{figure}

\begin{rmk}
    The prototype functions \eqref{eq-intro:KX23_main_func} appearing in the introduction can be extracted from \eqref{eq-cutoff:def_psi}\,$\sim$\,\eqref{eq-cutoff:hudefinition} by taking $\zeta=\eta\equiv1$.
\end{rmk}

Note that $h(r)\equiv1$ and $w(r)\equiv0$ when $r\geq r_1$, hence the metric $g'$ defined by \eqref{eq-main:new_g} smoothly concatenates with the original metric $g$. We will verify that, for a certain choice of the parameters, the functions defined here satisfy all the statements of Theorem \ref{thm-main:h_and_u}.

\vspace{6pt}

The following technical lemma is useful.

\begin{lemma}\label{lemma-cutoff:psi}
    Suppose $r_2<r_1$ and consider the function $\psi$ defined by \eqref{eq-cutoff:def_psi}. Then $\psi(r)\leq\frac12$ for all $r\leq r_1$.
\end{lemma}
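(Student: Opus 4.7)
The plan is to split $\psi(r)$ according to the support properties of the cutoff $\zeta(\cdot/r_2)$. On $[0,r_2/2]$ the cutoff vanishes, so the integrand reduces to $\rho/r_2$; on $[r_2,\infty)$ the cutoff equals $1$ and the integrand reduces to $1/(\rho\log^2(1/\rho))$; on the transition interval $[r_2/2, r_2]$ the integrand is a convex combination of these two nonnegative quantities, and so is bounded above by their sum. Recall also from Lemma \ref{lemma-cutoff:r1}(i) that $r_2 < r_1 < 1/100$ and $\log(1/r_1) > 4$, which will drive the numerics.

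The second integrand has the explicit antiderivative $\int \frac{d\rho}{\rho\log^2(1/\rho)} = \frac{1}{\log(1/\rho)}+C$. Consequently, for any $r \in [r_2, r_1]$,
\[
    \int_{r_2}^{r} \frac{d\rho}{\rho\log^2(1/\rho)} = \frac{1}{\log(1/r)} - \frac{1}{\log(1/r_2)} \;\leq\; \frac{1}{\log(1/r_1)} \;<\; \frac14.
\]
So the ``tail'' contribution to $\psi(r)$ beyond $r_2$ is under $1/4$.

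For the remaining piece $\psi(r_2) = \int_0^{r_2}[\cdots]\,d\rho$, the convex-combination observation on $[r_2/2, r_2]$ yields
\[
    \psi(r_2) \;\leq\; \int_0^{r_2}\!\frac{\rho}{r_2}\,d\rho + \int_{r_2/2}^{r_2}\!\frac{d\rho}{\rho\log^2(1/\rho)} \;=\; \frac{r_2}{2} + \frac{\log 2}{\log(1/r_2)\log(2/r_2)}.
\]
Both pieces are small since $r_2 < 1/100$ and $\log(1/r_2) > \log(1/r_1) > 4$; in particular each is comfortably below $1/4$. Combining with the tail bound, $\psi(r) \leq 1/2$ for $r\in[r_2,r_1]$. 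For $r \leq r_2$ the integrand is nonnegative, so $\psi$ is monotone non-decreasing and $\psi(r) \leq \psi(r_2) < 1/2$ follows immediately.

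The argument is essentially a direct calculation; the only step that requires any thought is the handling of the transition region $[r_2/2,r_2]$, and that is dispatched by the convex-combination bound above.
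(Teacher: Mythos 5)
Your proof is correct and follows essentially the same route as the paper's: direct integration of $\psi$ after crude bounds on the cutoff, using the explicit antiderivative $\int d\rho/(\rho\log^2(1/\rho))=1/\log(1/\rho)$ and the smallness of $r_1$. The paper is more economical — it simply replaces $\zeta$ and $1-\zeta$ by $1$ to get $\psi(r)\leq\int_0^r\frac{d\rho}{\rho\log^2(1/\rho)}+\int_0^{\min(r,r_2)}\frac{\rho}{r_2}\,d\rho\leq\frac{1}{\log(1/r_1)}+\frac{r_2}{2}$ — whereas you split the domain by the support of $\zeta(\cdot/r_2)$, which costs you an extra cross term $\frac{\log 2}{\log(1/r_2)\log(2/r_2)}$ from the transition interval. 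One small nit on the last line: having three pieces each $<1/4$ literally gives only $3/4$, so the closing step really relies on the fact that the two contributions to $\psi(r_2)$ are much smaller than $1/4$ (indeed $r_2/2<1/200$ and the cross term is $<1/16$), so that their sum together with the tail $1/\log(1/r_1)<1/4$ stays under $1/2$. The numbers check out, so this is only a matter of phrasing, not a gap.
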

\begin{proof}
    Estimating the cutoff functions by 1 and explicitly integrating, we have
    \[\psi(r)\leq\int_0^r\frac{d\rho}{\rho\log^2(1/\rho)}+\int_0^{\min(r,r_2)}\frac \rho{r_2}\,d\rho\leq\frac1{\log(1/r_1)}+\frac{r_2}2\leq\frac12. \qedhere
    \]
\end{proof}

Based on the choice of $r_1$ in Lemma \ref{lemma-cutoff:r1}, we determine the remaining parameters $c_1,c_2,r_2$ according to the following lemma:
\begin{lemma}\label{lemma-cutoff:c1c2r2}
    There exists 
    $c_1,c_2>0$ and $r_2\in(0,r_1)$, depending on $r_1,v_0,\epsilon,C_1,\cdots,C_6,R_0$, such that:
    \begin{enumerate}[label=(\roman*)]
        \item the function $h$ defined by \eqref{eq-cutoff:def_h} satisfies
        \begin{equation}\label{eq-cutoff:aux3}
            \Big(-\frac{h_{rr}}{h}-\frac{2h_r}{rh}\Big)\geq-\epsilon/3,\ \ \ 
            C_1|h_r|\leq\epsilon/3,\ \ \ 
            C_3r^{-1}\big|1-h\big|\leq\epsilon/3
        \end{equation}
        on the interval $[r_1/4,r_1]$,
        
        \item the constants $c_1,c_2$ satisfy $c_1<\min\big\{r_1,1/100\big\}$ and
        \begin{equation}
            c_2\cdot\big(1+\sup_\Sigma|v_0|\big)\leq\min\Big\{\sqrt{\frac{c_1}{\alpha_n}},\ \frac{c_1}{2p},\ \frac1{2(n+p)},\ \frac{r_1}p\Big\}.
        \end{equation}
        
        \item the radius $r_2$ satisfies 
        \begin{equation}
            r_2<\min\Big\{\frac{r_1}{64},\,
            \frac{c_1}{C_6}e^{-2(n+p)\sup_\Sigma|v_0|},\,
            \frac{c_1e^{-2p\sup_\Sigma|v_0|}}{200(R_0+1)}\Big\}
        \end{equation}
        and
        \begin{equation}\label{eq-cutoff:w0=1}
            c_2\int_0^\infty\zeta\big(\frac \rho{4r_2}\big)\eta\big(\frac{4\rho}{r_1}\big)\frac{d\rho}{\rho\log(1/\rho)}=1.
        \end{equation}
    \end{enumerate}
\end{lemma}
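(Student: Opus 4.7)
The three parameters are chosen sequentially $c_1 \to c_2 \to r_2$, with a subsequent adjustment of $c_2$ (and hence $r_2$) to meet the smallness demands in (iii). Throughout, $r_1$ is the radius already fixed by Lemma \ref{lemma-cutoff:r1}. A guiding observation: provided we ultimately arrange $r_2 < r_1/64$, the cutoff $\zeta(r/r_2)$ is identically $1$ on $[r_1/4, r_1]$, so on that interval $\psi'(r) = 1/(r \log^2(1/r))$ is an explicit function independent of $r_2$, and $\psi \leq 1/2$ by Lemma \ref{lemma-cutoff:psi}.

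\emph{Choice of $c_1$.} Direct differentiation of \eqref{eq-cutoff:def_h} on $[r_1/4, r_1]$, together with the bounds on $\eta, \eta', \eta''$ from \eqref{eq-cutoff:etazeta} and the preceding observation, yields $|h_r|, |h_{rr}|, |1-h| \leq K c_1$ for a constant $K = K(r_1)$. Taking $c_1 < 1/100$ also ensures $h \geq 1/2$. Hence picking $c_1 \in (0, \min\{r_1, 1/100\})$ sufficiently small (depending on $r_1, \epsilon, C_1, C_3$) makes all three inequalities in \eqref{eq-cutoff:aux3} hold.

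\emph{Choice of $c_2$ and then $r_2$.} With $c_1$ in hand, the right-hand side of the inequality in (ii) is a positive constant, so one can start with any $c_2 > 0$ satisfying (ii). Define
\[
I(r_2) := \int_0^\infty \zeta\bigl(\tfrac{\rho}{4r_2}\bigr)\, \eta\bigl(\tfrac{4\rho}{r_1}\bigr)\, \frac{d\rho}{\rho \log(1/\rho)},
\]
so that condition \eqref{eq-cutoff:w0=1} reads $c_2 I(r_2) = 1$. The integrand is supported in $\rho \in [2r_2, r_1/4]$, is pointwise non-increasing in $r_2$, and equals $1/(\rho \log(1/\rho))$ throughout $[4r_2, r_1/8]$. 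Thus $I$ is continuous (by dominated convergence), non-increasing on $(0, r_1/8)$, and satisfies
\[
I(r_2) \geq \int_{4r_2}^{r_1/8} \frac{d\rho}{\rho \log(1/\rho)} = \log\log\tfrac{1}{4r_2} - \log\log\tfrac{8}{r_1} \longrightarrow \infty \quad (r_2 \to 0^+),
\]
while $I(r_2) = 0$ for $r_2 \geq r_1/8$. The intermediate value theorem then produces $r_2 \in (0, r_1/8)$ with $c_2 I(r_2) = 1$.

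\emph{Forcing $r_2$ small.} Let $r_2^*$ denote the minimum of the three upper bounds on $r_2$ required in (iii). By the monotonicity of $I$, replacing $c_2$ by $\min\{c_2,\, 1/I(r_2^*)\}$ (which preserves (ii), since $c_2$ only decreases) forces the solution of $c_2 I(r_2) = 1$ to satisfy $r_2 \leq r_2^*$, completing the selection. The only structural subtlety is the coupling between $c_2$ and $r_2$ through \eqref{eq-cutoff:w0=1}: the smallness thresholds on $r_2$ depend on $c_1$ and the fixed geometric data, while $r_2$ itself is pinned down by $c_2$ via the normalization. This is resolved by the logarithmic divergence $I(r_2) \to \infty$ as $r_2 \to 0^+$, which permits arbitrarily small $r_2$ by sufficiently shrinking $c_2$ while still satisfying (ii).
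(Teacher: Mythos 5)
Your proof is correct and follows essentially the same strategy as the paper's: fix $c_1$ by a smallness argument, then fix $c_2$ and $r_2$ jointly via the normalization integral and the intermediate value theorem. The two write-ups differ mainly in presentation. The paper first fixes a candidate $c_2$ satisfying both (ii) and the auxiliary bound $c_2\big[\log\log(1/r^*)-\log\log(1/r_1)\big]<1$, then applies IVT to the $c_2$-weighted integral directly on the interval $(0,r^*]$; you instead factor $c_2$ out of the integral, establish monotonicity and divergence of the $c_2$-independent function $I(r_2)$, and then shrink $c_2$ post hoc to pull the solution below $r_2^*$. Your version is arguably a bit cleaner conceptually, since $I$ is manifestly independent of $c_2$ and the coupling between the two parameters is localized in the replacement step.

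One small point of care: as written, replacing $c_2$ by $\min\{c_2,\,1/I(r_2^*)\}$ gives a solution with $r_2\leq r_2^*$, but the lemma asks for the strict inequality $r_2 < r_2^*$. This is trivially fixed by taking $c_2$ strictly less than $1/I(r_2^*)$ (e.g.\ half of it), which makes $c_2 I(r_2^*)<1$; combined with $I(r_2)\to\infty$ as $r_2\to 0^+$ and continuity, IVT then produces a solution with $r_2<r_2^*$. Also, the monotonicity of $I$ is not strictly needed for this argument — continuity, the divergence at $0^+$, and the finiteness at $r_2^*$ suffice to run IVT on $(0,r_2^*)$ — though the monotonicity claim you make is correct.
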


\begin{proof}
    \TODO{be careful about arxiv detection: copy pasted from previous paper:}
    We first establish some preliminary observations which are used below. Lemma \ref{lemma-cutoff:psi} states that $\psi\leq\frac12$ on $[0,r_1]$. By taking $c_1<1$, this implies $h\geq\frac12$. Next, we note that by taking $r_2<\frac1{64}r_1$, we have $\zeta(r/r_2)\equiv1$ on $[r_1/4,r_1]$. This implies that  $\psi'(r)=\frac1{r\log^2(1/r)}$ for $r\in[r_1/4,r_1]$. In particular, $\psi'$ and $\psi''$ are independent of $r_2$.
    
    To establish item (1) of the lemma we directly compute the derivatives of $h(r)=1-c_1\eta(r/r_1)\psi(r)$ in $[r_1/4,r_1]$:
    \begin{align}
        &h_r=-c_1r_1^{-1}\eta'\big(\frac r{r_1}\big)\psi(r)-c_1\eta\big(\frac r{r_1}\big)\psi'(r),\\
        &h_{rr}=-c_1r_1^{-2}\eta''\big(\frac r{r_1}\big)\psi(r)
            -2c_1r_1^{-1}\eta'\big(\frac r{r_1}\big)\psi'(r)
            -c_1\eta\big(\frac r{r_1}\big)\psi''(r).
    \end{align}
    From this and \eqref{eq-cutoff:etazeta}, we obtain the bounds
    \begin{align}
        &|h_r|\leq 2c_1r_1^{-1}+c_1\psi'(r)\\
        &|h_{rr}|\leq 8c_1r_1^{-2}+8c_1r_1^{-1}\psi'(r)+c_1|\psi''(r)|.
    \end{align}
    This leads to the inequality
    \begin{align}
        -\frac{h_{rr}}{h}-\frac{2h_r}{rh} &\geq -2|h_{rr}|-4r^{-1}|h_r| \\
        &\geq -2\Big[8c_1r_1^{-2}+8c_1r_1^{-1}\psi'(r)+c_1|\psi''(r)|\Big]
            -\frac4r\Big[2c_1r_1^{-1}+c_1\psi'(r)\Big].\label{eq-cutoff:aux2}
    \end{align}
    The right hand side of \eqref{eq-cutoff:aux2} depends only on $r,c_1,r_1$ but not on $r_2$, as long as $r_2<\frac1{64}r_1$ and $r\in[\frac{r_1}4,r_1]$. As $r_1$ are fixed, only $c_1,r$ enter as variables. Thus, we use $F(c_1,r)$ to denote the right hand side of \eqref{eq-cutoff:aux2}. By continuity, the function
    \begin{equation}
        G(c_1)=\min_{r\in[r_1/4,r_1]}F(c_1,r)
    \end{equation}
    is continuous for $c_1\in[0,1]$. Since $G(0)=0$, for all sufficiently small $c_1$ (depending only on $r_1$ and $\epsilon$), we have $c_1<\min\{r_1,1/100\}$ and $G(c_1)\geq-\epsilon/3$. Turning our attention to the other two quantities in item (i), notice that
    \[
    C_1|h_r|\leq 2C_1c_1\big[2r_1^{-1}+\psi'(r)\big],\qquad
    C_3r^{-1}|1-h|\leq C_3r^{-1}c_1\psi(r).
    \]
    Note that the right side of these inequalities are uniformly continuous (for $r\in[\frac{r_1}4,r_1]$) and vanish when $c_1=0$. We may therefore use the same argument as above, to find that there is a (perhaps smaller) choice $c_1<\min\{r_1,1/100\}$ such that \eqref{eq-cutoff:aux3} holds. We fix such a choice. This establishes the bounds in item (i) of the lemma so long as $r_2<\frac1{64}r_1$ holds.

    The remaining task is to choose $c_2$ and $r_2$ which satisfy items (ii) and (iii) of the lemma. Once this is achieved, we obtain $r_2<\frac1{64}r_1$, which ensures item (i). Set the constant
    \[r^*=\min\Big\{\frac{r_1}{64},\,\frac{c_1}{C_6}e^{-2(n+p)\sup|v_0|},\,\frac{c_1e^{-2p\sup_\Sigma|v_0|}}{200\max\{1,R_0\}}\Big\},\]
    which will be an upper bound for $r_2$, ensuring the first condition of item (iii). We note that there exists a choice of $c_2>0$ (depending on $c_1, r_1, n$) such that both item (ii) is satisfied and that
    \begin{equation}\label{eq-cutoff:aux4}
        c_2\Big[\log\log\frac1{r^*}-\log\log\frac1{r_1}\Big]<1.
    \end{equation}
    We fix this choice of $c_2$.
    
    Now consider the continuous function
    \begin{equation}
        I(r_2)=c_2\int_0^\infty\zeta\big(\frac \rho{4r_2}\big)\eta\big(\frac{4\rho}{r_1}\big)\frac{d\rho}{\rho\log(1/\rho)},\qquad r_2\in(0,r^*].
    \end{equation}
    By \eqref{eq-cutoff:aux4}, we have
    \begin{equation}
        I(r^*)\leq c_2\int_{r^*}^{r_1}\frac{d\rho}{\rho\log(1/\rho)}=c_2\Big[\log\log\frac1\rho\Big]_{r^*}^{r_1}<1.
    \end{equation}
    On the other hand, we have
    \begin{equation}
    \liminf_{r_2\to0^+}I(r_2)\geq c_2\liminf_{r_2\to0^+}\int_{4r_2}^{r_1/8}\frac{d\rho}{\rho\log(1/\rho)}=+\infty.
    \end{equation}
    Combining these last two inequalities, we may choose $r_2\in(0,r^*)$ so that $I(r_2)=1$. This establishes item (iii).
\end{proof}

Let us summarize:

\begin{lemma}[properties of $h, u$]\label{lemma-cutoff:misc}
    Consider functions $h,w,u$ constructed above, and assume the choice of $r_1,r_2,c_1,c_2$ given by Lemmas \ref{lemma-cutoff:r1} and \ref{lemma-cutoff:c1c2r2}. Then we have:
\begin{enumerate}[label=(\roman*), topsep=3pt, itemsep=0ex]
    \item $1-\frac{r_1}2\leq h\leq 1$ and $u\leq0$.
    \item $w(r)$ is non-increasing with $w(0)=1$, and is constant on $[0,2r_2]$. On $[0,r_1]$ we have
    \[0\leq w(r)\leq 1,\qquad w(r)\leq c_2\Big[\log\log\frac1r-\log\log\frac1{r_1}\Big].\]
    In particular, it holds $w(r)\leq c_2\log\log\frac1r$.
    \item On $[0,r_1]$ we have $e^{pu}\geq\log(\frac1r)^{-r_1}$. In particular, both $e^{2pu}\geq\log(\frac1r)^{-1}$ and $re^{-2nu}\leq1$ hold on $[0,r_1]$.
    \item The metric $g'$ in General Setup \ref{setup-main:general} constructed from $u$ and $h$ is smooth accross $\Sigma$.
\end{enumerate}
\end{lemma}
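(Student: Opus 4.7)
The plan is to verify the four items in order, using the explicit formulas \eqref{eq-cutoff:def_psi}--\eqref{eq-cutoff:hudefinition} and the parameter constraints fixed in Lemmas \ref{lemma-cutoff:r1} and \ref{lemma-cutoff:c1c2r2}.

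For item (i), $h \leq 1$ follows immediately from $c_1, \eta, \psi \geq 0$, and $h \geq 1 - r_1/2$ follows by combining $\psi \leq 1/2$ (Lemma \ref{lemma-cutoff:psi}) with $\eta \leq 1$ and $c_1 < r_1$. The bound $u \leq 0$ is deferred: once item (ii) gives $w \geq 0$, it follows from $v_0 \leq 0$.

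For item (ii), monotonicity, non-negativity, and $w(0) = 1$ are direct consequences of \eqref{eq-cutoff:def_u} and the normalization \eqref{eq-cutoff:w0=1}. The constancy of $w$ on $[0, 2r_2]$ is immediate because $\zeta(\rho/(4r_2)) \equiv 0$ for $\rho \leq 2r_2$. For the logarithmic estimate, I would discard $\zeta \leq 1$ and use that $\eta(4\rho/r_1)$ vanishes for $\rho \geq r_1/4$ to obtain
\[
    w(r) \leq c_2 \int_r^{r_1/4} \frac{d\rho}{\rho \log(1/\rho)} = c_2\big[\log\log(1/r) - \log\log(4/r_1)\big],
\]
and then enlarge the subtrahend by replacing $\log\log(4/r_1)$ with the smaller $\log\log(1/r_1) > 0$ (positivity by Lemma \ref{lemma-cutoff:r1}(i)). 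This yields the stated bound and, as a corollary, $w(r) \leq c_2 \log\log(1/r)$.

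For item (iii), I write $pu = p v_0 w$ and use $v_0 \leq 0$, $w \geq 0$ together with item (ii) to get $pu \geq -p c_2 \sup_\Sigma|v_0| \cdot \log\log(1/r)$. Exponentiating and using $c_2(1 + \sup|v_0|) \leq r_1/p$ from Lemma \ref{lemma-cutoff:c1c2r2}(ii) gives $p c_2 \sup|v_0| \leq r_1$, hence $e^{pu} \geq \log(1/r)^{-r_1}$. The inequality $e^{2pu} \geq \log(1/r)^{-1}$ then follows because $2r_1 < 1$ and $\log(1/r) > 1$ on $(0, r_1]$. For $re^{-2nu} \leq 1$, the analogous estimate gives $re^{-2nu} \leq r \log(1/r)^\alpha$ with $\alpha = 2nc_2\sup|v_0| \leq n/(n+p) < 1$ (again from Lemma \ref{lemma-cutoff:c1c2r2}(ii)); since $\log(1/r) > 1$, the exponent can be raised to $1$, reducing the claim to $r \log(1/r) \leq 1$, which is precisely what \eqref{eq-cutoff:rawb} guarantees with $a = b = 1$.

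For item (iv), smoothness of $g'$ on $M \setminus \Sigma$ is clear. Across $\Sigma$, the key observations are: (a) on $N(\Sigma, 2r_2)$ one has $w \equiv 1$ (by item (ii)), so $u(x) = v_0(\pi(x))$ has no $r$-dependence and is smooth; and (b) for $r \leq r_2/2$, the cutoffs collapse $\psi$ to $r^2/(2r_2)$, making $h(r) = 1 - c_1 r^2/(2r_2)$ smooth in $r^2$ with $h(0) = 1$. Writing
\[
    g' = e^{-2pu}\big(dr^2 + \omega_\theta^2\big) + e^{-2pu}(h^2 - 1)\omega_\theta^2 + e^{2u} g_\H,
\]
the first and third summands are products of smooth functions of $\pi(x)$ with smooth tensors coming from the original $g$. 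The middle summand factors as $e^{-2pu} \tilde{A}(r^2) (r^{-2}E) \cdot r^4 d\theta^2$, where $\tilde{A}$ is smooth (from the expansion $h^2 - 1 = r^2 \tilde{A}(r^2)$), $r^{-2}E$ is smooth by Lemma \ref{lemma-exp:E}, and $r^4 d\theta^2 = (y\,dz - z\,dy)^2$ in local Cartesian fiber coordinates. All factors are smooth across $\Sigma$, so $g'$ is smooth there. I expect this smoothness verification to be the most delicate step, as it relies on confirming that the cutoff construction kills the appropriate orders of $r$ near $\Sigma$; the other items reduce to routine unpacking of the definitions against the parameter choices.
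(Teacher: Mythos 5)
Your proof is correct and follows essentially the same approach as the paper, verifying each item directly from the explicit formulas and the parameter constraints of Lemmas \ref{lemma-cutoff:r1} and \ref{lemma-cutoff:c1c2r2}. The minor variations --- integrating the $w$-bound to $r_1/4$ rather than $r_1$, invoking the $\tfrac{1}{2(n+p)}$ rather than the $r_1/p$ constraint to bound the exponent in item (iii), and factoring out $r^4 d\theta^2=(y\,dz-z\,dy)^2$ rather than $(r\omega_\theta)^2$ in item (iv) --- do not change the structure of the argument.
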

\begin{proof}
    Item (i) follows directly from Lemma \ref{lemma-cutoff:psi} and $c_1\leq r_1$ from Lemma \ref{lemma-cutoff:c1c2r2}.
    
    For item (ii): Notice that the integrand in the definition of $w$ is supported in $[2r_2,r_1/4]$. In particular, $w(r)$ is constant on $[0,2r_2]$. Due to the smallness of $r_1$ in Lemma \ref{lemma-cutoff:r1}, we know $\log(1/r)>0$ on the support of the integrand, and so $w$ is nonnegative. Similarly, computing $w_r$ using the Fundamental Theorem of Calculus, we find $w$ is non-increasing. The fact $w(0)=1$ follows from \eqref{eq-cutoff:w0=1}.
    
    Next, a simple computation shows
        \begin{equation}\label{eq-cutoff:u_bound}
            w(r)\leq c_2\int_r^{r_1}\frac{d\rho}{\rho\log(1/\rho)}=c_2\Big[\log\log\frac1r-\log\log\frac1{r_1}\Big].
        \end{equation}
    The second term has the favorable sign (since $r_1<1/100$), and so $w(r)\leq c_2\log\log\frac1r$.
    
    For item (iii): Using the condition $c_2\sup_\Sigma|v_0|\leq r_1/p$ from Lemma \ref{lemma-cutoff:c1c2r2}(ii), we obtain
    \[e^{pu} = e^{-p|v_0|w}\geq \log\big(\frac1r\big)^{-pc_2\sup_\Sigma|v_0|}\geq\log\big(\frac1r\big)^{-r_1}.\]
    Next, The remaining inequalities follow from $r_1<1/2$ and $\log(\frac1r)^{2nr_1/p}<\log(\frac1r)<\frac1r$.

    For item (iv): since $u_r=0$ for sufficiently small $r$, it follows that $u$ is smooth across $\Sigma$. Recall that $g'=e^{-2pu}dr^2+e^{-2pu}h^2\omega_\th^2+e^{2u}g_{\H}$, which can be rewritten as
    \[g'=e^{-2pu}g_{\V}+e^{2u}g_{\H}+e^{-2pu}(1-h^2)\omega_\th^2,\]
    where $g_\V, g_\H$ are the restriction of $g$ to the distributions $\V,\H$. Integrating \eqref{eq-cutoff:def_h} we find $h(r)=1-c_1\frac{r^2}{2r_2}$ for all $0\leq r\leq\frac12r_2$. Since $r\omega_\th$ is smooth across $\Sigma$ by Lemma \ref{lemma-exp:E}, it follows that $(1-h^2)\omega_\th^2$ is smooth across $\Sigma$ as well. This shows that $g'$ is smooth.
\end{proof}

\subsection{Proof of Theorem \ref{thm-main:h_and_u}}

Now that we have constructed $h$, $u$, we are ready to begin the main proof of this section.

\begin{proof}[Proof of Theorem \ref{thm-main:h_and_u}] {\ }

    Let $h, u$ be the functions defined in \eqref{eq-cutoff:def_h} and \eqref{eq-cutoff:def_u}, with the parameters $r_1,r_2,c_1,c_2$ chosen as in Lemmas \ref{lemma-cutoff:r1} and \ref{lemma-cutoff:c1c2r2}. As always, let the metric $g'$ be defined by the formula \eqref{eq-main:new_g}. By the form of $h,u$,  Lemma \ref{lemma-cutoff:misc}, and the facts $h=1$ and $u=0$ outside $[0,r_1]$, we see that $h, u$ satisfy Condition \ref{cond-main:h_and_u}. Since $w(0)=1$ and $0\leq w\leq1$, one sees that $u|_\Sigma=v_0$ and $\sup|u|\leq\sup_\Sigma|v_0|$. This verifies item (I) in the main statement. For item (II), the smoothness of $g'$ follows from Lemma \ref{lemma-cutoff:misc}(iii), and $g'=g$ outside $N(\Sigma,r_1)$ follows from $h\equiv1, w\equiv0$ for $r\geq r_1$. Item (VII) follows from Lemma \ref{lemma-cutoff:misc}(i)(ii).
    
    The remaining items of the main statement are shown below.

    \vspace{6pt}
    \noindent\textbf{The scalar curvature lower bound.}
    
    Inserting $u=v_0w$ into the scalar curvature estimate \eqref{eq-main:scal_final} in Theorem \ref{thm-main:scal_estimate}, we obtain
    \begin{equation}\label{eq-cutoff:reduced_scal}
        \begin{aligned}
            R_{g'} &\geq e^{2pu}R_g+2e^{2pu}\Big[-\frac{h_{rr}}{h}-\frac{2h_r}{rh}-\alpha_nv_0^2(w_r)^2\Big] \\
            &\qquad - C_1e^{2pu}|h_r|-C_2e^{2pu}|v_0|\cdot|w_r|-C_3\frac{e^{2pu}}r\big|1-h\big|-C_4e^{-2nu}\big|1-e^{(p+1)u}\big| \\
            &\qquad -C_5e^{-2u}|w|\big(1+||v_0||_{C^2(\Sigma)}^2\big).
        \end{aligned}
    \end{equation}
    We aim to show that this is no less than $R_g-\epsilon$, thus verifying item (III).

    First consider the simpler case when $r\in[\frac14r_1,r_1]$. For such $r$, we have $w(r)=0$ and so $u(r)=0$. This greatly simplifies \eqref{eq-cutoff:reduced_scal}, and we may directly apply Lemma \ref{lemma-cutoff:c1c2r2}(i) to find
    \begin{equation}\label{eq-cutoff:scalboundbigr}
        R_{g'}\geq R_g+2\Big[-\frac{h_{rr}}{h}-\frac{2h_r}{rh}\Big]-C_1|h_r|-C_3r^{-1}|1-h|\geq R_g-\epsilon.
    \end{equation}
    
    In what follows, we suppose $r\in[0,\frac14r_1]$. On this interval we have $\eta(r/r_1)\equiv1$, which simplifies the computation for $h$. For convenience, we set
    \begin{equation}
        s:=\log(1/r)
    \end{equation}
    and record the following direct computations
    \begin{align}\label{eq-cutoff:dh}
        h_r &= -c_1\zeta\big(\frac r{r_2}\big)\frac1{rs^2}
        -c_1\big(1-\zeta\big(\frac r{r_2}\big)\big)\frac r{r_2}\\
        \begin{split}\label{eq-cutoff:ddh}
        h_{rr} &= -\frac{c_1}{r_2}\zeta'\big(\frac r{r_2}\big)\frac1{rs^2}
                +c_1\zeta\big(\frac r{r_2}\big)\frac1{r^2s^2}
                -2c_1\zeta\big(\frac r{r_2}\big)\frac1{r^2s^3} \\
                &\qquad +\frac{c_1}{r_2}\zeta'\big(\frac r{r_2}\big)\frac r{r_2}
                -c_1\big(1-\zeta\big(\frac r{r_2}\big)\big)\frac 1{r_2}.
        \end{split}
    \end{align}
    Also note that
    \begin{equation}\label{eq-cutoff:du}
        |w_r|=\frac{c_2}{rs}\zeta\big(\frac r{4r_2}\big)\eta\big(\frac{4r}{r_1}\big)\leq\frac{c_2}{rs}\zeta\big(\frac r{r_2}\big).
    \end{equation}
    
    Throughout the remainder of the proof, we adopt the shorthand $\zeta=\zeta(r/r_2)$ and $\zeta'=\zeta'(r/r_2)$. Combining \eqref{eq-cutoff:dh}, \eqref{eq-cutoff:ddh}, and \eqref{eq-cutoff:du} we estimate the first line of \eqref{eq-cutoff:reduced_scal}:
    \begin{align}
        -\frac{h_{rr}}h-\frac{2h_r}{rh}&-\alpha_nv_0^2w_r^2 \nonumber\\
        \begin{split}\geq&\, \frac1{1-c_1\psi}\Big[
            \frac{c_1}{r_2}\zeta'\frac1{rs^2}
            -c_1\zeta\frac1{r^2s^2}
            +2c_1\zeta\frac1{r^2s^3}
            -\frac{c_1}{r_2}\zeta'\frac r{r_2}
            +c_1(1-\zeta)\frac1{r_2}\Big] \\
        &\quad +\frac{2}{r(1-c_1\psi)}\Big[c_1\zeta\frac1{rs^2}+c_1(1-\zeta)\frac r{r_2}\Big]-\frac{\alpha_nv_0^2c_2^2}{r^2s^2}\zeta^2 
        \end{split}  \\ 
        =&\, \frac{c_1}{1-c_1\psi}\Big[\zeta'\cdot(\frac1{r_2rs^2}-\frac r{r_2^2})+\frac{\zeta}{r^2s^2}+\frac{2\zeta}{r^2s^3}+3\frac{1-\zeta}{r_2}\Big]-\frac{\alpha_nv_0^2c_2^2}{r^2s^2}\zeta^2.\label{eq-cutoff:0to18p1}
    \end{align}
    To proceed, notice that Lemma \ref{lemma-cutoff:r1}(ii) implies $\frac1{rs^2}>1\geq\frac r{r_2}$ holds on $[0,r_2]$. Since $\zeta'\geq0$ and $\zeta$ is constant on $[r_2,r_1]$, it follows that $\zeta'\cdot(\frac1{r_2rs^2}-\frac r{r_2^2})\geq0$ holds everywhere on $[0,\frac14r_1]$. Using this observation, we continue the estimate \eqref{eq-cutoff:0to18p1}:
    \begin{align}
        -\frac{h_{rr}}h-\frac{2h_r}{rh}-\alpha_nv_0^2w_r^2
        &\geq \frac{c_1}{1-c_1\psi}\Big[\frac{\zeta}{r^2s^2}+\frac{2\zeta}{r^2s^3}+3\frac{1-\zeta}{r_2}\Big]-\frac{\alpha_nc_2^2v_0^2}{r^2s^2}\zeta^2 \\
        &\geq (c_1-\alpha_nc_2^2v_0^2)\frac{\zeta}{r^2s^2}+c_1\frac{2\zeta}{r^2s^3}+3c_1\frac{1-\zeta}{r_2} \label{eq-cutoff:aux1}\\
        {}&\geq c_1\frac{2\zeta}{r^2s^3}+3c_1\frac{1-\zeta}{r_2}, \label{eq-cutoff:interval1_term1}
    \end{align}
    where we used $\psi\geq0$ and $\zeta^2\leq\zeta$ in deriving \eqref{eq-cutoff:aux1}, and used $c_2\sup_\Sigma|v_0|\leq \sqrt{c_1/\alpha_n}$ from Lemma \ref{lemma-cutoff:c1c2r2}(ii) in deriving \eqref{eq-cutoff:interval1_term1}. As a result,
    \begin{equation}\label{eq-cutoff:main_term}
        2e^{2pu}\Big[-\frac{h_{rr}}h-\frac{2h_r}{rh}-\alpha_nu_r^2\Big]\geq c_1e^{2pu}\frac{4\zeta}{r^2s^3}+c_1e^{2pu}\frac{6(1-\zeta)}{r_2}.
    \end{equation}

    Our next task is to show that each of the fives terms in the last two lines in \eqref{eq-cutoff:reduced_scal} are dominated by the right hand side of \eqref{eq-cutoff:main_term}. For this step, the final result is \eqref{eq-cutoff:cleaned_up} below. For the $C_1$ term, we use \eqref{eq-cutoff:dh} to obtain
    \begin{align}
            C_1e^{2pu}|h_r| &\leq C_1e^{2pu}\cdot c_1\Big[\frac{\zeta}{rs^2}+(1-\zeta)\frac r{r_2}\Big] \\
            \label{eq-cutoff:bound_C1}&< c_1e^{2pu}\Big[\frac{\zeta}{2r^2s^3}+\frac{1-\zeta}{r_2}\Big],
    \end{align}
    where in the second line we used $4C_1<\frac1{rs}$ and $r<\frac1{2C_1}$ from Lemma \ref{lemma-cutoff:r1}(i)(ii). For the $C_2$ term, we apply \eqref{eq-cutoff:du} to estimate
    \begin{align}
        C_2e^{2pu}|v_0|\cdot|w_r| &\leq C_2e^{2pu}\cdot\sup_{\Sigma}|v_0|\cdot\frac{c_2\zeta}{rs}\\
        \label{eq-cutoff:bound_C2}&< c_1e^{2pu}\frac{\zeta}{2r^2s^3},
    \end{align}
    where we used $c_2\leq c_1$ from Lemma \ref{lemma-cutoff:c1c2r2}(ii) and $C_2\sup_\Sigma|v_0|\leq\frac1{2rs^2}$ from Lemma \ref{lemma-cutoff:r1}(ii).
    
    Next, we consider the $C_3$ term of \eqref{eq-cutoff:reduced_scal}
    \begin{equation}\label{eq-ds:C3_1}
        C_3\frac{e^{2pu}}r\big|1-h\big|= C_3\frac{e^{2pu}}rc_1\psi.
    \end{equation}
    To proceed, we further estimate $\psi$ using its definition, recalling our abbreviation $\zeta=\zeta(r/r_2)$:
    \begin{align}
        \psi(r) &\leq \int_0^r\frac{\zeta(\rho/r_2)}{\rho\log^2(1/\rho)}d\rho+\int_0^{\min\{r,r_2\}}\frac{\rho}{r_2}\,d\rho&&{} \nonumber\\
        &\leq \int_0^r\frac{\zeta(r/r_2)}{\rho\log^2(1/\rho)}d\rho+\int_0^{\min\{r,r_2\}}\frac{\rho}{r_2}\,d\rho
        \qquad&&\text{(since $\zeta$ is non-decreasing)} \nonumber\\
        &= \frac{\zeta}s+\frac{\min\{r,r_2\}^2}{2r_2}\qquad&&\text{(by direct integration).} \label{eq-ds:C3_2}
    \end{align}
    Combining \eqref{eq-ds:C3_1} and \eqref{eq-ds:C3_2} shows
    \begin{equation}\label{eq-ds:C3_1aux}
        C_3\frac{e^{2pu}}r\big|1-h\big|\leq C_3e^{2pu}c_1\frac{\zeta}{rs}+C_3e^{2pu}c_1\frac{\min\{r,r_2\}^2}{2r_2r}.
    \end{equation}
    The first term on the right hand side is bounded by $C_3e^{2pu}c_1\frac{\zeta}{rs}<c_1e^{2pu}\frac{\zeta}{2r^2s^3}$, according to Lemma \ref{lemma-cutoff:r1}(ii). For the second term on the right side of \eqref{eq-ds:C3_1aux}, we separately consider the case $\zeta<\frac12$ and $\zeta>\frac12$. When $\zeta<\frac12$, we have $r<r_2$ and so
    \[\begin{aligned}
        C_3\frac{\min\{r,r_2\}^2}{2r_2r}= C_3\frac{r}{2r_2}\leq C_3\frac r{r_2}(1-\zeta)<\frac{1-\zeta}{r_2},
    \end{aligned}\]
    since $r<1/C_3$. When $\zeta>\frac12$ we have
    \[\begin{aligned}
        C_3\frac{\min\{r,r_2\}^2}{2r_2r}\leq C_3\frac{r_2^2}{2r_2r}\leq C_3\frac{r_2}r\zeta<\frac{\zeta}{2r^2s^3},
    \end{aligned}\]
    since $r_2<1$ and $2C_3<\frac1{rs^3}$ by Lemma \ref{lemma-cutoff:r1}(ii). To summarize, we have obtained
    \begin{equation}\label{eq-cutoff:bound_C3}
        C_3\frac{e^{2pu}}r\big|1-h\big|\leq c_1e^{2pu}\frac{\zeta}{r^2s^3}+c_1e^{2pu}\frac{1-\zeta}{r_2}.
    \end{equation}
    
    Finally, we consider the terms of \eqref{eq-cutoff:reduced_scal} containing $C_4$ and $C_5$. Note that $|1-e^x|\leq|x|$ holds for $x\leq0$, and thus
    \[C_4e^{-2nu}\big|1-e^{(p+1)u}\big|\leq C_4e^{-2nu}(p+1)|u|\leq C_4e^{-2nu}(p+1)\sup_\Sigma|v_0|\cdot w.\]
    This can be combined with the term containing $C_5$ to give
    \[C_4e^{-2nu}|1-e^{(p+1)u}|+C_5e^{-2u}|w|\big(1+||v_0||_{C^2(\Sigma,g)}^2\big)\leq C_6e^{-2nu}w,\]
    where we have used the fact that $e^{-2u}\leq e^{-2nu}$ and we recall \eqref{eq-cutoff:def_C6} for the definition of $C_6$.
    To bound this quantity, we do a case discussion. When $\zeta<\frac12$, we use $w(r)\leq1$ to derive
    \[\begin{aligned}
        C_6e^{-2nu}w &\leq C_6e^{-2nu}
        \leq C_6e^{2pu}\cdot e^{2(n+p)\sup_\Sigma|v_0|}\cdot 2(1-\zeta) \\
        &< e^{2pu}\cdot c_1\frac{1-\zeta}{r_2}
        \qquad\text{(by Lemma \ref{lemma-cutoff:c1c2r2}(iii))}.
    \end{aligned}\]
    When $\zeta\geq\frac12$, we use $w(r)\leq c_2\log s$ from Lemma \ref{lemma-cutoff:misc}(ii) and $2(n+p)c_2\sup_\Sigma|v_0|\leq1$ from Lemma \ref{lemma-cutoff:c1c2r2}(ii) to obtain
    \[e^{-2(p+n)u}=e^{2(p+n)|v_0|w}\leq s^{2(n+p)c_2|v_0|}\leq s,\]
    and then use $w\leq c_2\log s\leq c_1s$ to find
    \[\begin{aligned}
        C_6e^{-2nu}w &\leq C_6\cdot e^{2pu}s\cdot c_1s\cdot 2\zeta\\
        &= c_1e^{2pu}\zeta\cdot 2C_6s^2 \\
        &\leq c_1e^{2pu}\frac{\zeta}{2r^2s^3}
        \qquad\text{(by Lemma \ref{lemma-cutoff:r1}(ii)).}
    \end{aligned}\]
    Combining both cases, we have found
    \begin{equation}\label{eq-cutoff:bound_C4C5}
        \begin{aligned}
            C_4e^{-2nu}|1-e^{(p+1)u}|+C_5e^{-2u}|w|\big(&1+||v_0||_{C^2(\Sigma,g)}^2\big) \\
            &\leq c_1e^{2pu}\frac{\zeta}{2r^2s^3}+c_1e^{2pu}\frac{1-\zeta}{r_2}.
        \end{aligned}
    \end{equation}
    
    Finally, by combining inequalities \eqref{eq-cutoff:main_term}, \eqref{eq-cutoff:bound_C1}, \eqref{eq-cutoff:bound_C2}, \eqref{eq-cutoff:bound_C3}, and \eqref{eq-cutoff:bound_C4C5} to the main scalar curvature lower bound \eqref{eq-cutoff:reduced_scal}, we obtain the cleaned-up inequality
    \begin{equation}\label{eq-cutoff:cleaned_up}
        R_{g'}\geq e^{2pu}R_g+c_1e^{2pu}\frac{\zeta}{r^2s^3}+c_1e^{2pu}\frac{1-\zeta}{r_2},\qquad r\in[0,\frac14r_1].
    \end{equation}
    Let us show that $R_{g'}\geq R_g$ in the region $\big\{r<r_1/4\big\}$. We divide into three cases:

    \vspace{6pt}

    {\noindent\bf{Case i.}} Assume $R_g(x)\leq0$. Then we drop the last two terms in \eqref{eq-cutoff:cleaned_up} and use $u\leq0$ to obtain
    \begin{equation}\label{eq-cutoff:interval1_result1}
        R_{g'}\geq e^{2pu}R_g\geq R_g.
    \end{equation}
    
    {\noindent\bf{Case ii.}} Assume $R_g(x)>0$ and $r=r(x)$ is such that $\zeta<\frac12$. Note that this implies $r<r_2$. Using $w(r)\leq1$ from Lemma \ref{lemma-cutoff:misc}(ii), we have $e^{2pu}\geq e^{-2p\sup_\Sigma|v_0|}$, therefore by dropping the first two terms in \eqref{eq-cutoff:cleaned_up} and using Lemma \ref{lemma-cutoff:c1c2r2}(iii), we obtain
    \begin{equation}
        R_{g'}\geq c_1e^{2pu}\frac{1-\zeta}{r_2}
        \geq c_1e^{-2p\sup_\Sigma|v_0|}\frac1{2r_2}
        \geq 100R_0>R_g. \label{eq-cutoff:interval1_result2}
    \end{equation}

    {\noindent\bf{Case iii.}} Assume $R_g(x)>0$ and $\zeta\geq\frac12$. Note the following two lower bounds for $e^{2pu}$: on one hand, we use the definition of $u$ and the bound on $w$ from Lemma \ref{lemma-cutoff:misc}(ii) to find
    \[e^{2pu}\geq e^{-2p\sup_\Sigma|v_0|\cdot w}
    \geq \Big(\frac{\log(1/r_1)}{s}\Big)^{2pc_2\sup_\Sigma|v_0|}
    \geq \Big(\frac{\log(1/r_1)}{s}\Big)^{c_1}\]
    where we used Lemma \ref{lemma-cutoff:c1c2r2}(ii) in the final inequality. On the other hand, we have $e^{2pu}\geq\frac1s$ directly from Lemma \ref{lemma-cutoff:misc}(iii). Applying these two inequalities in turn to the first two terms in \eqref{eq-cutoff:cleaned_up} and dropping the final term, we obtain
    \begin{align}
        R_{g'}&\geq R_g\Big(\frac{\log(1/r_1)}{s}\Big)^{c_1}+\frac{c_1}{2r^2s^4} \nonumber\\
        &\geq R_g\Big(\frac{\log(1/r_1)}{s}\Big)^{c_1}+\frac{R_gc_1}{r}\qquad\text{(by Lemma \ref{lemma-cutoff:r1}(ii))}.\label{eq-cutoff:interval1_result21}
    \end{align}
    
    To proceed, we need the observation that the function $q(r)=\big(\frac{\log(1/r_1)}{\log(1/r)}\big)^{c_1}+\frac{c_1}{r}$ is decreasing on $[0,r_1]$. Indeed, we use the fact that $r\mapsto\log(1/r)$ is decreasing to compute
    \begin{equation*}
        \begin{aligned}
            q'(r) &= c_1\Big(\frac{\log(1/r_1)}{\log(1/r)}\Big)^{c_1}\cdot\frac1{r\log(1/r)}-\frac{c_1}{r^2} \\
            &\leq c_1\cdot 1\cdot\frac1{r\log(1/r)}-\frac{c_1}{r^2} \\
            &= c_1\frac{r-\log(1/r)}{r^2\log(1/r)}<0,
        \end{aligned}
    \end{equation*}
    since $r<r_1<\frac{1}{100}$.
    Finally, we use this observation with \eqref{eq-cutoff:interval1_result21} to see
    \begin{equation}\label{eq-cutoff:interval1_result3}
        R_{g'}\geq R_gq(r)\geq R_gq(r_1)=R_g(1+\frac{c_1}{r_1})>R_g.
    \end{equation}

    Inspecting the inequalities \eqref{eq-cutoff:interval1_result1}, \eqref{eq-cutoff:interval1_result2}, and \eqref{eq-cutoff:interval1_result3}, we have established $R_{g'}\geq R_g$ for $r\in[0,\frac14r_1]$. In light of estimate \eqref{eq-cutoff:scalboundbigr} on the complimentary interval, this proves condition (III) of Theorem \ref{thm-main:h_and_u}.
    
    \vspace{6pt}
    
    It remains to verify conditions (IV), (V), and (VI) of Theorem \ref{thm-main:h_and_u}.
    
    \vspace{6pt}
    
    \noindent\textbf{The distance estimate.}
    Fix $x\in\p N(\Sigma,r_1)$, consider a radial path traveling from $x$ to its projection $\pi(x)\in \Sigma$, and proceed by estimating the length of such a path. We apply $e^{-pu}\leq \log(1/r)^{r_1}$ on $\{r\leq r_1\}$ from Lemma \ref{lemma-cutoff:misc}(iii) with an integration by parts to find
    \begin{equation}\label{eq-ds:dist1}
        \begin{aligned}
            \mathrm{dist}_{g'}(x,\pi(x))\leq\int_0^{r_1}e^{-pu}\,dr &\leq \int_0^{r_1}\log(\frac1r)^{r_1}\,dr \\
            &= r_1\log(\frac1{r_1})^{r_1}+r_1\int_0^{r_1}\log(\frac1r)^{r_1-1}\,dr.
        \end{aligned}
    \end{equation}
    Using the facts $(\log\frac1x)^x<2$ for all $x>0$ and $\log\frac1{r_1}>1$, we estimate the previous line to find
    \begin{equation}\label{eq-cutoff:distest}
        \mathrm{dist}_{g'}(x,\pi(x))\leq 2r_1+r_1^2\leq 3r_1.
    \end{equation}
    This establishes condition (IV).
    
    \vspace{6pt}
    
    \noindent\textbf{The mean convexity condition.} Recall from \eqref{eq-ds:formula_H} that $H'=e^{pu}(H+\frac{h_r}h)$. Meanwhile, recall that our choice of $r_I$ ensures $H\geq\frac1{2r}$. Using the expression \eqref{eq-cutoff:dh} and the facts $c_1\leq\frac1{100}$, $\psi\leq\frac12$, $s\geq1$, we obtain
    \[\begin{aligned}
        \frac{|h_r|}h &\leq 2\big(c_1\zeta\frac1{rs^2}+c_1(1-\zeta)\frac r{r_2}\big) \leq \frac1{8r}+1\leq\frac1{4r}.
    \end{aligned}\]
    By Lemma \ref{lemma-cutoff:misc}(iii) we have $e^{pu}\geq\frac{1}{\log(1/r)^{1/2}}\geq \frac{1}{\log(1/r)}$, and the mean convexity condition (V) follows.
    
    \vspace{6pt}
    
    \noindent\textbf{The volume estimate.} By the metric expression \eqref{eq-main:new_g}, the volume is expressed as
    \[\Vol_{g'}(N(\Sigma,r_1))=\int_{N(\Sigma,r_1)}e^{-pu}h\,dV_g.\]
    Using the same bound for $e^{-pu}$ as we used in the distance estimate, we have
    \begin{align}
        \int_{N(\Sigma,r_1)}e^{-pu}h\,dV_g
        &\leq \int_{N(\Sigma,r_1)}\log(\frac1r)^{pc_2\sup_\Sigma|v_0|}\,dV_g&&\\
        &\leq \int_{N(\Sigma,r_1)}\log(\frac1r)^{r_1}\,dV_g
        &&\text{(by Lemma \ref{lemma-cutoff:c1c2r2}(ii))}\\
        &= \int_0^{r_1}\log(\frac1r)^{r_1}|\Sigma_r|dr &&\\
        &\leq 4\pi r_1|\Sigma|_g\int_0^{r_1}\log(\frac1r)^{r_1}dr&&\text{(by our choice of $r_I$)}.
    \end{align}
    Integrating by parts as we did in the distance estimate \eqref{eq-ds:dist1} yields the desired volume bound $\Vol_{g'}(N(\Sigma,r_1))\leq 12\pi|\Sigma|_gr_1^2$ in condition (VI).
\end{proof}


\section{Applications}\label{sec:app}

Now that the primary construction has been made and its properties established, we are ready to discuss its consequences.

\subsection{Collapsing of subsets}

We begin with Theorems \ref{thm-intro:collapse_submfd} and \ref{thm-intro:partial_collapse} on collapsing sequences with controlled scalar curvature. Recall that Theorem \ref{thm-intro:collapse_submfd} describes metrics collapsing a submanifold $K\subset M$ of arbitrary codimension.

\begin{proof}[Proof of Theorem \ref{thm-intro:collapse_submfd}] {\ }

    Fix $\delta\ll1$. We first claim that there exists a connected embedded codimension 2 submanifold $\Sigma$, such that:
    
    (i) $\Sigma\subset N_g(K,\delta/2)$,

    (ii) for any $x\in K$, there exists $z\in\Sigma$ with $d(x,z)\leq\delta$.

    \noindent To construct $\Sigma$, we first find a smoothly embedded contractible loop $\gamma:S^1\to N_g(K,\delta/4)$ so that for any $x\in K$ there exists $z\in\gamma$ with $d(x,z)\leq\delta/2$. If $n=3$ then the choice $\Sigma=\gamma$ already satisfies the required conditions. Now we assume $n\geq4$. Since the normal bundle of $\gamma$ is trivial, we may consider a normal framing $e_1,\cdots,e_{n-1}$ of $\gamma$. Then for sufficiently small $r<\delta/2$, the submanifold
    \[\Sigma=\Big\{\exp_{\gamma(t)}\big(a_1e_1+\cdots+a_{n-2}e_{n-2}\big): t\in S^1, a_1^2+\cdots+a_{n-2}^2=r^2\Big\}\]
    is a connected smooth submanifold which satisfies our requirements.

    Then we use Theorem \ref{thm-main:h_and_u} to produce a drawstring along $\Sigma$ to produce a metric $g_\delta$ on $M$ with $R_{g_\delta}\geq R_g-\delta$. In our application of Theorem \ref{thm-main:h_and_u}, we choose $v_0$ sufficiently negative so that the (intrinsic) diameter of $(\Sigma,e^{2v_0}g|_\Sigma)$ is less than $\delta/2$, and $\epsilon<\delta/4$ sufficiently small so that
    \begin{equation}\label{eq-app:aux1}
        \Vol_{g_\delta}(N_g(K,\delta))\leq(1+\delta)\Vol_g(N_g(K,\delta)).
    \end{equation}
    To see such a choice exists, we note by Theorem \ref{thm-main:h_and_u}(VI) that there exists $r_1<\epsilon$ so that $\Vol_{g_\delta}(N_g(\Sigma,r_1))\leq 12\pi r_1^2\Area_g(\Sigma)$. Thus
    \[\begin{aligned}
        \Vol_{g_\delta}(N_g(K,\delta)) &= \Vol_{g_\delta}(N_g(\Sigma,\epsilon))+\Vol_{g_\delta}(N_g(K,\delta)\setminus N_g(\Sigma,\epsilon)) \\
        &\leq 12\pi\epsilon^2|\Sigma|_g+\Vol_g(N_g(K,\delta)),
    \end{aligned}\]
    where we have used the fact that $g_\delta=g$ away from $N_g(\Sigma,r_1)$. Thus \eqref{eq-app:aux1} holds by choosing $\epsilon\ll1$.
    Moreover, using the size of $v_0$ and Theorem \ref{thm-main:h_and_u}(IV), it follows that the diameter of $N_g(K,\delta)$ with respect to $g_\delta$ is less than $6\delta$.
    
    Having verified $\diam_{g_\delta}(N_g(K,\delta))\to0$ and \eqref{eq-app:aux1}, we may apply the scrunching theorem \cite[Theorem 2.5]{BS} to conclude that $(M,g_\delta)\to (M,g)/\{K\sim\pt\}$ in the Intrinsic-Flat and Gromov-Hausdorff senses for a sequence $\delta\to0$. One may check that the proof of \cite{BS} (see also \cite[Lemmas 5.4, 5.5, 5.7]{BDS}) carries over to all dimensions without modification.
\end{proof}

Next up is Theorem \ref{thm-intro:partial_collapse} on sequences which $c$-partially collapse codimension-2 submanifolds $\Sigma\subset M$.

\begin{proof}[Proof of Theorem \ref{thm-intro:partial_collapse}] {\ }

    Let $\Sigma^{n-2}$ and $c$ be as in the theorem. We consider the case where $c<+\infty$, which occupies the majority of the proof. For a sequence of numbers $\delta\to0$, we apply Theorem \ref{thm-main:h_and_u} with the choice $v_0\equiv -c$ and $\epsilon=\delta$, to obtain a metric $g_\delta$. The scalar curvature condition is clear from Theorem \ref{thm-main:h_and_u}(V). It remains to show the uniform minA lower bound and the Gromov-Hausdorff convergence.

    \vspace{6pt}
    \noindent\textbf{MinA lower bound.}
    
    Recall the choice of $r_I$ in Section \ref{sec:main}, which ensures that the equidistant surfaces $\Sigma_r:=\big\{d_g(\cdot,\Sigma)=r\big\}$ have mean curvature at least $1/(2r)$ for $r\leq 2r_I$. In combination with Theorem \ref{thm-main:h_and_u}(VI), it follows that the region $N_g(\Sigma,2r_I)$ is foliated by strictly mean convex hypersurfaces. Now let $S\subset (M^n,g_\delta)$ be a closed embedded minimal hypersurface. By the maximum principle, $\Sigma$ cannot be contained in $N_g(\Sigma,2r_I)$. Thus, $\Sigma$ exits this region and we can find a point $x\in S\setminus N_g(\Sigma,2r_I)$. Observe that $g_\delta=g$ in $B(x,r_I)$. Since the geometry of this region is independant of $\delta$, we may apply the monotonicity formula \cite[(7.5)]{CM} and conclude that $|S\cap B(x,r_I)|$ has a lower bound depending only on $r_I$ and the geometry of $g$.

    \vspace{6pt}
    \noindent\textbf{Gromov-Hausdorff convergence.}

    We will in fact show that for some sequence $\delta\to0$, we have $d_{g_\delta}\to d_c$ in the $C^0$ sense.

    First we show that $d_c\geq d_{g_\delta}+o(1)$. On $\Sigma$ consider the tensor $g_\Sigma$ which equals $g$ in the tangential directions and equals zero in the normal directions. Define the $L^\infty$ Riemannian metric
    \[g_\infty=\left\{\begin{aligned}
         &g&&\qquad\text{outside $\Sigma$}, \\
         &e^{-2c}g_\Sigma+e^{2(n-2)c}(g-g_\Sigma)&&\qquad\text{on $\Sigma$.}
    \end{aligned}\right.\]
    
    From the definition of $d_c$ \eqref{eq-intro:def_dc} and Theorem \ref{thm-main:h_and_u}(VII) (from which we have $\frac12e^{-2c}g\leq g_\delta\leq e^{2(n-2)c}g$), $d_c$ and $\{d_{g_\delta}\}_{\delta>0}$ are uniformly bi-Lipschitz with respect to $d_g$. Hence $\{d_c,d_{g_\delta}\}_{\delta>0}$ is an equi-continuous family with respect to $d_g$. By the Arzela-Ascoli Theorem, $\{d_{g_\delta}\}$ has a subsequence which $C^0$-converges to a limiting function $F$.
    
    According to properties Theorem \ref{thm-main:h_and_u}(I) and (II), there is a pointwise convergence $g_\delta\to g_\infty$ as $\delta\to0$. As a consequence, for each $x,y\in M$ we may estimate
    \[\begin{aligned}
        d_c(x,y) &= \inf\Big\{|\gamma|_{g_\infty}: \gamma\text{ is a piecewise smooth path joining $x,y$}\Big\} \\
        &= \inf\Big\{\lim_{\delta\to0}|\gamma|_{g_\delta}: \gamma\text{ is a piecewise smooth path joining $x,y$}\Big\} \\
        &\geq\limsup_{\delta\to0} \inf\Big\{|\gamma|_{g_\delta}: \gamma\text{ is a piecewise smooth path joining $x,y$}\Big\} \\
        &= \limsup_{\delta\to0}d_{g_\delta}(x,y).
    \end{aligned}\]
    It follows that $d_c\geq F$. Hence we have $d_c\geq d_{g_\delta}-o(1)$ for our selected subsequence.

    Next we show $d_c\leq d_{g_\delta}+o(1)$. For $\delta\ll1$, let $\tilde d_\delta$ be the distance obtained by $c$-partially pulling $\bar{N_g(\Sigma,\delta)}$. By the form of $g_\delta$ (see \eqref{eq-main:new_g}) and Theorem \ref{thm-main:h_and_u}(VII), note that
    \[
    d_{g_\delta}\geq (1-o(1))\tilde d_\delta.
    \]
    It suffices to show that $\tilde d_\delta\geq(1-o(1))d_c$. Consider the map $f: M\to M$ defined so that
    
    (i) $f|_{N_g(\Sigma,\delta)}=\pi$, the previously defined orthogonal projection map,
    
    (ii) $f|_{M\setminus N_g(\Sigma,r_I)}=\text{id}$,
    
    (iii) on $N_g(\Sigma,r_I)\setminus N_g(\Sigma,\delta)$ we set
    \[
        f\big(\exp_y(t\nu)\big)=\exp_y\Big(\frac{t-\delta}{r_I-\delta}\cdot r_I\nu\Big),
    \]
    where $\delta\leq t\leq r_I$ and $\nu$ is a unit normal vector of $\Sigma$. It can be verified that $f$ is a $(1+C\delta)$-Lipschitz map, where $C$ is a constant depending only on the geometry of $\Sigma$. By Lemma \ref{lemma-app:Lipschitz_to_cpull} below, $f$ is a $(1+C\delta)$-Lipschitz map from $(M,\tilde d_\delta)$ to $(M,d_c)$. This implies $\tilde d_\delta\geq(1-o(1))d_c$.
\end{proof}

\begin{lemma}\label{lemma-app:Lipschitz_to_cpull}
    Let $X$ be a length space, and $B\subset A$ be two compact sets of $X$. Denote by $d_{c,A}$ and $d_{c,B}$ the metrics obtained by $c$-partially pulling $A$ and $B$, respectively. Suppose $f: X\to X$ is an $L$-Lipschitz map, such that $f(A)\subset B$ and $f|_B=\text{id}$. Then $f$ is $L$-Lipschitz from $(X,d_{c,A})$ to $(X,d_{c,B})$.
\end{lemma}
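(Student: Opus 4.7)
The plan is to transform any near-optimal chain realizing $d_{c,A}(x,y)$ into an admissible chain for $d_{c,B}(f(x), f(y))$ whose weighted length is at most $L$ times as large. Concretely, I would fix $\varepsilon > 0$ and choose points $p_1, q_1, \ldots, p_N, q_N \in A$ satisfying
\[
d_X(x,p_1) + e^{-c}d_A(p_1,q_1) + d_X(q_1,p_2) + \cdots + d_X(q_N,y) \leq d_{c,A}(x,y) + \varepsilon.
\]
Applying $f$ pointwise produces the sequence $f(x), f(p_1), f(q_1), \ldots, f(p_N), f(q_N), f(y)$. Since $f(A) \subset B$, every $f(p_i)$ and $f(q_i)$ lies in $B$, so this sequence is an admissible chain in the definition of $d_{c,B}(f(x), f(y))$.

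For each ``external'' segment between neighboring points, the Lipschitz hypothesis immediately yields $d_X(f(q_i), f(p_{i+1})) \leq L\, d_X(q_i, p_{i+1})$. The more delicate step is bounding the intrinsic length-distance terms by $d_B(f(p_i), f(q_i)) \leq L\, d_A(p_i, q_i)$. I would establish this via the standard principle that an $L$-Lipschitz map does not increase the length of rectifiable curves by more than a factor of $L$: given any curve $\gamma \subset A$ joining $p_i$ to $q_i$, the composition $f\circ\gamma$ lies in $f(A) \subset B$ and has $d_X$-length at most $L \cdot \mathrm{length}_{d_X}(\gamma)$. Taking the infimum over $\gamma$ approaching $d_A(p_i,q_i)$ yields the claim.

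Combining these bounds, the weighted length of the transformed chain is at most $L(d_{c,A}(x,y)+\varepsilon)$, and letting $\varepsilon \to 0$ gives $d_{c,B}(f(x),f(y)) \leq L\, d_{c,A}(x,y)$, the desired Lipschitz bound. The main point to verify carefully is the length-comparison step above: one needs to check that the infimum defining $d_A$ can indeed be restricted to rectifiable curves (standard in length spaces) and that the Lipschitz pushforward of such a curve is itself rectifiable with the stated length bound. It is worth observing that neither the hypothesis $B \subset A$ nor the condition $f|_B = \mathrm{id}$ is actually needed in this argument; they appear only to match the setup of the preceding application.
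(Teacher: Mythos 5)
Your proof is correct and follows essentially the same route as the paper: transform a near-optimal $A$-chain for $d_{c,A}(x,y)$ into a $B$-chain for $d_{c,B}(f(x),f(y))$ by applying $f$, bound the external segments by the Lipschitz hypothesis, and bound the internal length-distance terms via $d_B(f(p_i),f(q_i))\leq L\,d_A(p_i,q_i)$ (the paper states this inequality without the curve-pushforward justification you supply, which is a reasonable elaboration). Your side observation that $B\subset A$ and $f|_B=\mathrm{id}$ are not used in the argument is accurate — the paper's proof does not invoke them either.
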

\begin{proof}
    We let $d_A(p,q)$ (resp. $d_B(p,q)$) denote the infimum of lengths of paths within $A$ (resp. $B$) from $p$ to $q$ and equal to $\infty$ if no such path exists. According to the definition of the $c$-partially pulled metrics, for any $x,y\in X$ and $\epsilon>0$ there are points $p_i,q_i\in A$ ($1\leq i\leq N$) such that
    \[d_X(x,p_1)+e^{-c}d_A(p_1,q_1)+d_M(q_1,p_2)+e^{-c}d_A(p_2,q_2)+\cdots+d_X(q_N,y)<(1+\epsilon)d_{c,A}(x,y).\]
    Since $f(A)\subset B$, we have
    \[d_B\big(f(p_i),f(q_i)\big)\leq Ld_A(p_i,q_i).\]
    This implies
    \[\begin{aligned}
        d_{c,B}\big(f(x),f(y)\big) &\leq d_X\big(f(x),f(p_1)\big)+e^{-c}d_B\big(f(p_1),f(q_1)\big) \\
        &\hspace{60pt} +d_X\big(f(q_1),f(p_2)\big)+\cdots+d_X\big(f(q_N),f(y)\big) \\
        &\leq Ld_X(x,p_1)+e^{-c}Ld_A(p_1,q_1)+\cdots+Ld_X(q_N,y) \\
        &\leq (1+\epsilon)d_{c,A}(x,y).
    \end{aligned}\]
    Taking $\epsilon\to0$ proves the lemma.
\end{proof}

\subsection{Realizing arbitrary conformal limits}

We move on to Theorem \ref{thm-intro:leetopping}, showing that metrics in a Yamabe-positive conformal class can be approximated (in the uniform convergence topology) by metrics of positive scalar curvature.

\begin{proof}[Proof of Theorem \ref{thm-intro:leetopping}] {\ }

    The proof is done by putting drawstrings along an increasingly dense collection of closed curves. On each curve, the function $v_0$ in Theorem \ref{thm-main:h_and_u} is set to be equal to $f$.

    Suppose $\epsilon_0,L$ are constants so that
    \[R_g\geq\lambda+\epsilon_0,\qquad |f|\leq L,\qquad |\D f|_g\leq L\]
    everywhere on $M$. Let $r_0$ be the injectivity radius of $M$. Set $\epsilon_1<\min\big\{\epsilon_0/2,r_0/4,1/100\big\}$. Define $X:=\{(x,y): d_g(x,y)=2\epsilon_1\}\subset M\times M$. Let $Y$ be a finite $\epsilon_1^2$-dense set in $X$. Then it follows that: for all $(x,y)\in X$ there exists $(p,q)\in Y$ such that $d_g(x,p)<\epsilon_1^2$ and $d_g(y,q)<\epsilon_1^2$. For each $(p,q)\in Y$ we associate a $g$-geodesic segment $\sigma_{pq}:(-\epsilon_1,\epsilon_1)\to M$, such that $\sigma_{pq}(-\epsilon_1)=p$ and $\sigma_{pq}(\epsilon_1)=q$. Through a small perturbation of $Y$, we may assume that the closures of $\sigma_{pq}$ are pairwise disjoint. Finally, we extend each $\sigma_{pq}$ to a smooth closed curve $\Sigma_{pq}\supset\sigma_{pq}$ in the way that $\{\Sigma_{pq}\}_{(p,q)\in Y}$ are disjoint.

    Choose a radius $r_2$ such that:

    (1) the neighborhoods $\big\{N(\Sigma_{pq},2r_2)\big\}_{(p,q)\in Y}$ are pairwise disjoint,

    (2) For each $(p,q)\in Y$, the radius $r_2$ is less than the radius $r_I$ specified in Section \ref{sec:main}.

    Denote $\epsilon_3=\min\{\epsilon_1^2,r_2\}$. For each $(p,q)\in Y$, we create a drawstring around $\Sigma_{pq}$ by applying Theorem \ref{thm-main:h_and_u} with $\epsilon=\epsilon_3$, $v_0=f|_{\Sigma_{pq}}$. Since the surgery regions are all disjoint, we can put all the drawstrings into a single metric, which we denote by $g'$. By Theorem \ref{thm-main:h_and_u}(III) we have $R_{g'}\geq R_g-\epsilon_3>\lambda$.

    Denote $\bar g=e^{2f}g$. In each $N_g(\Sigma_{pq},\epsilon_3)$ we have the metric expression
    \[g=dr^2+\omega_\th^2+g_{\H}\]
    and
    \[g'=e^{-2u}dr^2+e^{-2u}h^2\omega_\th^2+e^{2u}g_{\H}\]
    for some $u,h$ satisfying the conditions of Theorem \ref{thm-main:h_and_u}. In particular, we have $1-\epsilon_3\leq h\leq1$ and $f\circ\pi\leq u\leq0$ (where recall that $\pi$ is the orthogonal projection onto $\Sigma_{pq}$). Thus, in $N_g(\sigma_{pq},\epsilon_3)$ we have
    \begin{equation}\label{eq-leetopping:lower}
        \begin{aligned}
            g' &\geq dr^2+(1-\epsilon_3)^2\omega_\th^2+e^{2f(\pi(x))}g_{\H}
            \geq (1-\epsilon_3)^2e^{2f\circ\pi}g
            \geq (1-\epsilon_3)^2e^{-2L\epsilon_3}\cdot e^{2f}g
        \end{aligned}
    \end{equation}
    and
    \begin{equation}\label{eq-leetopping:upper1}
        g'\leq e^{2|f\circ\pi|}dr^2+e^{2|f\circ\pi|}\omega_\th^2+g_{\H}\leq e^{2\sup|f|}g\leq e^{2L}g.
    \end{equation}
    Outside the union of $N_g(\Sigma_{pq},\epsilon_3)$ we have $g'=g$, so in summary we have obtained
    \begin{equation}\label{eq-leetopping:joint_bounds}
        (1-\epsilon_1^2)^2e^{-2L\epsilon_1^2}\bar g\leq g'\leq e^{2L}g.
    \end{equation}
    Note that this implies $g'\geq(1-o(1))\bar g$, where $o(1)\to0$ when $\epsilon_1\to0$. In particular, $d_{g'}\geq(1-o(1))d_{\bar g}$.

    Next, we obtain the sharp distance upper bound $d_{g'}\leq(1+o(1))d_{\bar g}$. Fix $x,y\in M$. Let $\gamma$ be a shortest $\bar g$-geodesic from $x$ to $y$. On $\gamma$ we select a sequence of points $x_1, x_2, x_3,\cdots, x_N$ in the following way: we first choose $x_1=x$, and inductively choose $x_{k+1}$ to be the first point with $d_g(x_k,x_{k+1})=2\epsilon_1$. When $d_g(x_N,y)\leq2\epsilon_1$ the selection process stops. Let $\gamma_k$ be the portion of $\gamma$ starting with $x_k$ and ending with $x_{k+1}$. Notice that $\diam_g(\gamma_k)\leq2\epsilon_1$, since otherwise this violates our choice of $x_{k+1}$. On the other hand, we have $\length_g(\gamma_k)\geq d_g(x_k,x_{k+1})=2\epsilon_1$. Counting the total length, it follows that
    \begin{equation}\label{eq-leetopping:N_bound}
        N-1\leq(2\epsilon_1)^{-1}\length_g(\gamma)\leq \epsilon_1^{-1}e^{\sup|f|}\length_{\bar g}(\gamma)\leq\epsilon_1^{-1}e^{L}\diam_{\bar g}(M).
    \end{equation}
    By our construction of the dense net, for each $1\leq k\leq N-1$ there is $(p,q)\in Y$ such that $d_g(x_k,p)\leq\epsilon_1^2$ and $d_g(x_{k+1},q)\leq\epsilon_1^2$. By \eqref{eq-leetopping:joint_bounds} we have
    \begin{equation}\label{eq-leetopping:error1}
        d_{g'}(x_k,p)\leq e^{L}d_g(x_k,p)\leq e^L\epsilon_1^2,
    \end{equation}
    and $d_{g'}(q,x_{k+1})\leq e^L\epsilon_1^2$ similarly. To estimate $d_{g'}(p,q)$, we note that
    \[\sup\big\{d_g(y,z): y\in\gamma_k,z\in\sigma_{pq}\big\}\leq\diam_g(\gamma_k)+\diam_g(\sigma_{pq})+d_g(x_k,p)\leq 5\epsilon_1.\]
    Also, recall from the drawstring construction that $g'|_{\sigma_{pq}}=e^{2f}g|_{\sigma_{pq}}$. Hence
    \[\begin{aligned}
        d_{g'}(p,q)\leq\length_{g'}(\sigma_{pq})\leq\sup_{\sigma_{pq}}(e^{f})\cdot2\epsilon_1\leq e^{5L\epsilon_1}\inf_{\gamma_k}(e^f)\cdot\length_g(\gamma_k)\leq e^{5L\epsilon_1}\length_{\bar g}(\gamma_k).
    \end{aligned}\]
    Summing over $k$ of what we obtained:
    \[\begin{aligned}
        d_{g'}(x,y) &\leq \sum_{k=1}^{N-1}\Big[d_{g'}(x_k,p)+d_{g'}(p,q)+d_{g'}(q,x_{k+1})\Big]+d_{g'}(x_N,y) \\
        &\leq \sum_{k=1}^{N-1}\Big[2e^{L}\epsilon_1^2+e^{5L\epsilon_1}\length_{\bar g}(\gamma_k)\Big]+e^{L}d_g(x_N,y).
    \end{aligned}\]
    Using \eqref{eq-leetopping:N_bound} and \eqref{eq-leetopping:error1} it follows that
    \begin{align}
        d_{g'}(x,y) &\leq 2e^{L}\epsilon_1^2\cdot\epsilon_1^{-1}e^{L}\diam_{\bar g}(M)+e^{5L\epsilon_1}d_{\bar g}(x,y)+2e^{L}\epsilon_1 \nonumber\\
        &\leq d_{\bar g}(x,y)+\big(e^{5L\epsilon_1}-1\big)\diam_{\bar g}(M)+2e^{2L}\big(1+\diam_{\bar g}(M)\big)\epsilon_1. \label{eq-leetopping:upper}
    \end{align}
    Finally, take the above construction for a sequence $\epsilon_1\to0$, and rename the resulting metrics $g'$ as $g_i$. The desired conclusions immediately follow from \eqref{eq-leetopping:joint_bounds} and \eqref{eq-leetopping:upper}.
\end{proof}

\subsection{Asymptotically flat examples}\label{sec:AFexamples}

This subsection concerns asymptotically flat manifolds and the Positive Mass Theorem. We first construct the drawstring examples containing no closed minimal surfaces.

\begin{proof}[Proof of Theorem \ref{thm-intro:AFexample}]
    Fix $m<1/4$. Choose a cutoff function $\eta:[0,\infty)\to[0,1]$ so that $\eta|_{[0,1/2]}\equiv1$ and $\eta|_{[1,\infty)}\equiv0$ and $\eta'<0$ in $(1/2,1)$. Consider the function
    \begin{equation}
        V_m(r)=1-\eta(r)2mr^2-\big(1-\eta(r)\big)\frac{2m}r,
    \end{equation}
    which is positive since $m<1/4$. Using polar coordinates, we define the following metric on $\mathbb{R}^3$
    \begin{equation}\label{eq-af:spheretoschw}
        g_m=\frac{dr^2}{V_m(r)}+r^2g_{S^2},
    \end{equation}
    In $\{r\leq1/2\}$, the metric $g_m$ is isometric to a geodesic ball in a $3$-sphere of curvature $2m$, via the change of variable $r=\frac{1}{\sqrt{2m}}\sin(\sqrt{2m}\rho)$. In $\{r>1\}$, $g_m$ is an exterior portion of the mass $m$ Schwarzschild metric (in static coordinates). Moreover, $g_m$ smoothly converges to the flat Euclidean metric as $m\to0$.
    
    The scalar curvature of $g_m$ is computed as (see \cite[Section 3.1]{geometric})
    \[\begin{aligned}
        \frac{r^2}2R_{g_m} &= 1-V_m-rV_m' \\
        &= 2\eta mr^2+(1-\eta)\frac{2m}r+4\eta mr^2-(1-\eta)\frac{2m}r+r\eta'\big(2mr^2-\frac{2m}r\big) \\
        &= 6m\eta r^2+2m\eta'(r^3-1).
    \end{aligned}\]
    Since $\eta$ is non-increasing, $\eta'(r)(r^3-1)\geq0$ everywhere, hence $R_{g_m}\geq0$ throughout $M$ and strict positivity $R_{g_m}>0$ holds on $\{r<1\}$. Also, note that the constant $r$-sphere has mean curvature
    \begin{equation}
        H=\frac12\tr_{r^2g_{S^2}}\big(\sqrt{V_m}\partial_r(r^2 g_{S^2})\big)=\frac{2\sqrt{V_m}}{r}>0,
    \end{equation}
    and so these spheres form a mean convex folliation of $\mathbb{R}^3$.
    
    We now apply the drawstring construction. For $\epsilon\in(0,1/100)$, apply Theorem \ref{thm-main:h_and_u} to $(\mathbb{R}^3,g_m)$ taking $\Sigma$ to be the circle $\sigma=\big\{z=0,x^2+y^2=1/4\big\}$, $v_0\equiv \log\varepsilon$, and let $g_{m,\varepsilon}$ denote the resulting metric on $\mathbb{R}^3$. Evidently, $g_{m,\varepsilon}$ satisfies conditions (i), (iii), (iv), and (v) of Theorem \ref{thm-intro:AFexample}. It remains to show that $(\mathbb{R}^3,g_{m,\varepsilon})$ has no closed embedded minimal surfaces. Once this is established, Theorem \ref{thm-intro:AFexample} follows by scaling $g$ with the factor of 4.
    
    We first notice that $g_{m,\varepsilon}$ is axisymmetric about the $z$-axis. To see this, observe that the functions $u$ and $h$ in Theorem \ref{thm-main:h_and_u} depend only on the $g_m$-distance to $\sigma$. Since this distance function has the desired axisymmetry, so does $g_{m,\varepsilon}$. 
    
    For sake of contradiction, suppose that there exists an embedded minimal $\mathcal{S}\subset\mathbb{R}^3$. Since $\mathbb{R}^3$ is contractible, $\mathcal{S}$ bounds some region $\Omega$. By passing to the outermost minimal surface enclosing $\Omega$, we may assume that $\mathcal{S}$ is the only closed minimal surface in $\mathbb{R}^3\setminus\Omega$. Notice that $\mathcal{S}$ is a stable minimal surface in a non-flat $3$-manifold of nonnegative scalar curvature. As such, well-known arguments \cite{geometric} imply that $\mathcal{S}$ is topologically a sphere. Moreover, notice that $\mathcal{S}$ is axially symmetric. Indeed, if it were not, then the outward minimizing enclosure of all the rotations of $\mathcal{S}$ would be a non-trivial minimal surface in $\RR^3\setminus\Omega$.
    
    Since $\mathcal{S}$ is symmetric about the $z$-axis and topologically a sphere, it must intersect the $z$-axis.
    In particular, we may find a $p\in\mathcal{S}$ so that $B_{g_m}(p,1/8)$ lies away from the drawstring region $N_{g_0}(\sigma,\varepsilon)$. Finally, the smooth convergence $g_m\to g_0$ implies that $g_m$ satisfies a sectional curvature bound for all small $m$, allowing us to apply the monotonicity formula on $B_{g_m}(p,1/8)$. This yields a lower bound $|\mathcal{S}|\geq A_0$ independent of $m$ and $\varepsilon$. For sufficiently small $m$, this contradicts the Riemannian Penrose inequality \cite{Bray,HI} which states that the outerminimizing surface has area no greater than $16\pi m^2$. It follows that the metrics $g_{m=\varepsilon,\varepsilon}$ satisfy the desired properties.
\end{proof}

\begin{proof}[Proof of Corollary \ref{cor-intro:pmt}]
    For a sequence $\epsilon_i\to0$, consider the metrics $g_{\epsilon_i}$ constructed in the proof of Theorem \ref{thm-intro:AFexample}. Let $g_\infty$ be the Euclidean metric in $\RR^3$.
    
    For each integer $k\geq10$, apply Proposition \ref{prop-prlim:pulledstringconv} with the choice $M^3=B_{\RR^3}(\vec{0},k)$, $p=\vec{0}$, $\sigma=\{z=0,x^2+y^2=1\}$, $g_i=g_{\epsilon_i}$, and $U_i=N_{g_\infty}(\sigma,\epsilon_i)$. Theorem \ref{thm-intro:AFexample}(iv) and Theorem \ref{thm-main:h_and_u}(IV) and (VI) ensures that the hypotheses of Proposition \ref{prop-prlim:pulledstringconv} are met. In particular, there is a sequence of compact domains exhausting $\mathbb{R}^3$ on which we have convergence in the pointed sense. It follows that $(\RR^3,g_i,\vec{0})$ converges in the pointed Gromov-Hausdorff and intrinsic flat senses to the space obtained from $\RR^3$ by pulling $\sigma$.
\end{proof}

We conclude this section with the proof of Theorem \ref{thm-intro:DSplane} on Dong-Song's stability result for the positive mass theorem. 

\begin{proof}[Proof of Theorem \ref{thm-intro:DSplane}]
    The result of Dong-Song \cite[Theorem 1.3]{Dong-Song_2023} provides an open set $Z_i\subset M_i$ and a map $\mathcal{U}_i:M_i\setminus Z_i\to\mathbb{R}^3$ satisfying the desired items 1 and 2. It is immediately clear from the construction of $Z_i$ that $\mathcal{U}_i$ is an injective immersion and that the $C^0$ norm of $g_i-\mathcal{U}_i^*\delta$ on $M_i\setminus Z_i$ tends to 0, where $\delta$ denotes the flat metric. It follows that the Euclidean area of $\partial \mathcal{U}_i(Z_i)$ tends to 0 as $i\to\infty$. Now let $\pi:\mathbb{R}^3\to P$ denote orthogonal projection onto a plane $P\in\mathbb{R}^3$. Since $\pi$ is distance non-increasing and $\pi(\mathcal{U}_i(\partial Z_i))$ contains $\mathcal{U}_i(Z_i)\cap P$, we find that $\big|\mathcal{U}_i(Z_i)\cap P\big|_\delta\leq \big|\partial \mathcal{U}_i(Z_i)\big|_\delta$, completing the proof.
\end{proof}

\subsection{Instability of the 2-form version of Llarull's Theorem}

The next task is to introduce drawstring geometry to the unit sphere and show Theorem \ref{thm-intro:larrull}, which demonstrates a type of instability of Llarull's Theorem.

\begin{proof}[Proof of Theorem \ref{thm-intro:larrull}] {\ }

    Let $(S^3,g_0)$ be the round sphere, and $\gamma$ be an arbitrary great circle. For each $i$ we take $g_i$ to be the metric constructed in Theorem \ref{thm-main:h_and_u}, with $\Sigma=\gamma$, $v_0\equiv-i$, and $\epsilon=\frac{1}{100i}$. Let $(r,\th,t)$ denote Fermi coordinates about $\gamma$ (here $t$ parameterizes $\gamma$ and $r$ represents radial distance to $\gamma$), in which the metric takes the form
    \[
    g_0=dr^2+\sin^2rd\th^2+\cos^2rdt^2.
    \]
    Recall that the drawstring metrics take the form
    \[
    g_i=e^{-2u_i}dr^2+e^{-2u_i}h_i^2\sin^2 rd\th^2+e^{2u_i}\cos^2rdt^2.
    \]
    Therefore, the three eigenvalues of $g_i$ relative to $g_0$ are $\{e^{-u_i},e^{-u_i}h_i,e^{u_i}\}$. As the forms $dr\wedge d\th, dr\wedge dt, d\th\wedge dt$ constitute an orthogonal basis of $\Lambda^2(T^*M)$, at the level of 2-forms we find that $g_i$ has eigenvalues $\{e^{-2u_i}h_i,1,h_i\}$ relative to $g_0$. By Theorem \ref{thm-main:h_and_u}(VII), these eigenvalues are greater than $1-\tfrac{1}{100i}$.

    Finally, we show that the Cheeger constant of $g_i$ is uniformly bounded below. In what follows, for ease of notation we fix some $i$ and use $|\Omega|$, $|\p\Omega|$ to denote the $g_i$-volume and area of a set $\Omega$. By the classical theory, there exists a minimizer $E$ satisfying
    \begin{equation}
        \frac{|\partial E|}{|E|}=\mathrm{Ch}(S^3,g_i),\qquad |E|\leq \frac12|S^3|.
    \end{equation}
    Classical regularity theory ensures that $\p E$ is a smooth surface, and the first variation formula shows that $\p E$ has constant mean curvature. Let $H$ be its mean curvature. 
    
    Next, we make the basic observation that $|H|\leq\mathrm{Ch}(S^3,g_i)$. 
    To see this, consider a variation $\{E_t\}_{t\in(-\epsilon,\epsilon)}$, such that $E_0=E$, and let $\varphi$ denote the speed of variation at $t=0$ in the direction of the unit normal pointing out of $E$. By the first variation, we have
    \[\frac d{dt}|E_t|\Big|_{t=0}=\int_{\p E}\varphi,\qquad\frac d{dt}|\p E_t|\Big|_{t=0}=\int_{\p E}H\varphi.\]
    First consider $\varphi=-1$. Then $E_t$ is moving inward and hence $\frac{|\p E_t|}{|E_t|}\geq\mathrm{Ch}(S^3,g_i)=\frac{|\p E_0|}{|E_0|}$ for all $t\geq0$. Taking the one-sided derivative at $t=0$, we obtain
    \begin{align}
    \label{eq-Llarull:variation}0\leq\frac d{dt}\frac{|\p E_t|}{|E_t|}\Big|_{t=0^+}&=\frac{\int_{\p E}H\varphi\cdot|E|-|\p E|\cdot\int_{\p E}\varphi}{|E|^2}\\
    {}&=\frac{|\p E|}{|E|}\big(\!-H+\Ch(S^3,g_i)\big).
    \end{align}
    This implies that $H\leq\Ch(S^3,g_i)$. To estimate $-H$, we consider two cases: If $|E|<\frac12|S^3|$ strictly, then we can take $\varphi=1$ in \eqref{eq-Llarull:variation} and conclude that $H=\Ch(S^3,g_i)$. In the case where $|E|=\frac12|S^3|$, the setting $\varphi=1$ moves $E$ outward, thus we need to compare $\frac{|\p E_t|}{|S^3\setminus E_t|}\geq\Ch(S^3,g_i)=\frac{|\p E|}{|S^3\setminus E|}$. It follows that
    \begin{align}
        0 \leq \frac d{dt}\frac{|\p E_t|}{|S^3\setminus E_t|}\Big|_{t=0^+} &= \frac{\int_{\p E}H\varphi\cdot|E|+|\p E|\cdot\int_{\p E}\varphi}{|E|^2} \\
        &= \frac{|\p E|}{|E|}\big(H+\Ch(S^3,g_i)\big).
    \end{align}
    As a result, we have $H\geq-\Ch(S^3,g_i)$, thus proving our claim.
    
    The claim will yield a uniform bound on $H$. By considering a fixed geodesic ball away from $\gamma$, there is an upper bound $|H|\leq \mathrm{Ch}(S^3,g_i)\leq100$. By combining this bound with the mean convexity of the tubular neighborhoods $\Sigma_r$, the maximum principle for hypersurfaces will show that $\partial E$ cannot be contained in a small neighborhood of $\gamma$. Indeed, by Theorem \ref{thm-main:h_and_u} there is a radius $r_i<1/100i$ so that: the mean curvature of the tubular neighborhoods $\Sigma_r$ is at least $\frac1{4r\log(1/r)}$ for $r\leq r_i$, and $g_i=g_0$ outside $N(\gamma,r_i)$. It follows that the $g_i$-mean curvatures of $\Sigma_r$ satisfy
    \begin{equation}\label{eq-Llarull:meancurv}
    H_{\Sigma_r}\geq\min\Big\{\cot(r)-\tan(r),\frac1{4r\log(1/r)}\Big\}
    \end{equation}
    for all $r<\frac{1}{10}$ and all $i$. In particular, there is a uniform $\rho$ so that $H_{\Sigma_r}\geq200$ for all $i$ and $r<\rho$. Then by the maximum principle, there exists a point $x\in\p E$ such that $d_{g_0}(x,\gamma)\geq\rho$, meaning that the ball $B_{g_0}(x,\rho/2)$ is disjoint from the drawstring for large $i$. This allows us to use the monotonicity formula for hypersurfaces with bounded mean curvature, yielding a uniform lower bound on the quantity $|\p E\cap B(x,\rho/2)|$. Hence $\mathrm{Ch}(S^3,g_i)\geq|\p E\cap B(x,\rho/2)|\cdot|S^3|^{-1}$ is uniformly bounded below.

    To conclude, we will apply the Basilio-Sormani Scrunching Theorem \cite[Theorem 2.5]{BS} (discussed in Appendix \ref{sec:scrunch}) to show that $(S^3,g_i)$ converges to the space pulling $\gamma$ to a point.  In particular, in \cite[Theorem 2.5]{BS} we take the scrunching regions $U_i$ as $N_{g_0}(\gamma,1/i)$. To verify the hypotheses there: item (i) is evident, item (ii) follows from Theorem \ref{thm-main:h_and_u}(VI), and item (iii) follows from Theorem \ref{thm-main:h_and_u}(I)(IV).
\end{proof}


\section{Conformal inversion and the prototype functions}\label{sec:confinv}

The goal of this section is to present a heuristic derivation of the prototype drawstring formulas \eqref{eq-intro:KX23_main_func}. In particular, we consider a warped product ansatz $g_0+\varphi^2dt^2$ and attempt to produce a metric of positive scalar curvature with the property that the $dt^2$ factor rapidly changes in size. In doing so, we are naturally led to the extreme examples of Sormani-Tian-Wang \cite{STW23} where $\varphi$ becomes unbounded. In order to create examples where the $dt^2$ factor becomes arbitrarily small, we consider $\varphi^4g_0+\varphi^{-2}dt^2$ and note a special relationship between the scalar curvature of this metric and the original warped product's. After changing coordinates appropriately, this new metric may be identified with a special case of the doubly warped products \eqref{eq-intro:warped_prod} used as the fundamental building block for drawstring metrics.

\vspace{6pt}

Let $(S^2,g_0)$ be the unit round sphere, and suppose $\varphi$ is a positive smooth function on $S^2$. It is known that the warped product metric $g_0+\varphi^2dt^2$ has nonnegative scalar curvature if and only if $\Delta_0\varphi\leq\varphi$. As $\varphi$ is a supersolution to an elliptic equation, a uniform lower bound on $\varphi$ is obtainable from Moser's Harnack inequality. On the other hand, the condition $\Delta_0\varphi\leq\varphi$ does not imply any uniform upper bound on $\varphi$. In \cite{STW23} the following example is considered:
\begin{equation}\label{eq-app:STW}
    \varphi(r,\th)=\frac12\log\Big(\frac{1+\delta}{\sin^2r+\delta}\Big)+1,
\end{equation}
where $(r,\th)$ are geodesic polar coordinates on the unit sphere and $\delta>0$. Such functions satisfy $\Delta_0\varphi\leq\varphi$ and become unbounded as $\delta\to0$. To obtain a drawstring-like metric from \eqref{eq-app:STW}, we seek a transformation that has the effect of inverting the warping factor. The key observation is the following:

\begin{lemma}\label{lemma-app:scalarwarpedconformal}
    Suppose $(\Sigma,g)$ is a Riemannian surface with a smooth function $\varphi>0$. Set $\tilde g=\varphi^4g$. Then $\Delta_g\varphi\leq K\varphi$ if and only if $\Delta_{\tilde g}\varphi^{-1}\leq\tilde K\varphi^{-1}$ (where $K,\tilde K$ are the Gauss curvatures of $g,\tilde g$).
\end{lemma}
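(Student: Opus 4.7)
The plan is to set $u=2\log\varphi$ so that $\tilde g = e^{2u}g$ is a conformal change in dimension two, and then translate the inequality $\Delta_{\tilde g}\varphi^{-1}\leq \tilde K \varphi^{-1}$ directly into an expression in the background metric $g$. Because $\dim\Sigma = 2$, the standard conformal transformation laws give
\begin{equation*}
    \Delta_{\tilde g} = e^{-2u}\Delta_g = \varphi^{-4}\Delta_g,\qquad \tilde K = e^{-2u}\bigl(K-\Delta_g u\bigr)=\varphi^{-4}\bigl(K-2\Delta_g\log\varphi\bigr).
\end{equation*}
These two formulas carry essentially all the geometric content; everything remaining is a short algebraic manipulation.

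First I would compute $\Delta_g\varphi^{-1}$ using the product rule, yielding
\begin{equation*}
    \Delta_g\varphi^{-1} = -\varphi^{-2}\Delta_g\varphi + 2\varphi^{-3}|\nabla\varphi|_g^2,
\end{equation*}
and combine this with the expansion
\begin{equation*}
    \Delta_g\log\varphi = \varphi^{-1}\Delta_g\varphi - \varphi^{-2}|\nabla\varphi|_g^2.
\end{equation*}
Plugging both identities into $\Delta_{\tilde g}\varphi^{-1}\leq \tilde K\varphi^{-1}$ and multiplying through by the positive quantity $\varphi^6$, the two terms involving $|\nabla\varphi|_g^2$ cancel, and a term $\Delta_g\varphi$ cancels as well. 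After simplification the inequality collapses to $\Delta_g\varphi\leq K\varphi$, and since every step is an equivalence (the multiplication is by a strictly positive factor), the converse direction follows simultaneously.

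The only subtlety is keeping track of the signs and powers of $\varphi$ during the cancellation, but no substantive obstacle is expected: the result is really just the two-dimensional conformal covariance of the Laplacian, combined with the observation that $\varphi\mapsto\varphi^{-1}$ corresponds to reversing the sign of $u$. Indeed, at a conceptual level one may recognize that if $g = g_0 + \varphi^2 dt^2$ has $R_g\geq 0$ on a surface times a circle, then a suitable pair of conformal and coordinate changes converts the warping factor $\varphi$ into $\varphi^{-1}$, which is exactly the structural reason that the lemma holds; this viewpoint could be used to give an alternative, more geometric proof but is not needed for the verification above.
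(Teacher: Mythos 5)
Your proposal is correct and takes essentially the same approach as the paper: both arguments substitute a logarithmic variable (you set $u=2\log\varphi$, the paper sets $v=\log\varphi$ with $\tilde g=e^{4v}g$), invoke the two-dimensional conformal transformation formulas for the Laplacian and Gauss curvature, and reduce the claim to a direct algebraic identity that makes the equivalence manifest. The only cosmetic difference is that you expand $\Delta_g\varphi^{-1}$ and $\Delta_g\log\varphi$ in terms of $\varphi$ and its gradient, whereas the paper works with $e^{\pm v}$; your remark that ``a term $\Delta_g\varphi$ cancels'' is slightly loose (the two $\Delta_g\varphi$ contributions combine to leave one), but the computation goes through exactly as you claim.
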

\begin{proof}
    Define a function $v$ so that $\varphi=e^v$. Then we have $\tilde g=e^{4v}g$ and the conformal change formula gives
    \[\tilde K=e^{-4v}\big(K-2\Delta_g v\big).\]
    On the other hand, one can directly compute
    \[\begin{aligned}
        \Delta_{\tilde g}\varphi^{-1} &= e^{-4v}\Delta_g(e^{-v})=e^{-5v}\big(-\Delta_g v+|\D_g v|^2\big).
    \end{aligned}\]
    Combining these two computations shows
    \[\Delta_{\tilde g}\varphi^{-1}-\tilde K\varphi^{-1}=e^{-5v}\big(\Delta_gv+|\D_g v|^2-K\big)=e^{-6v}\big(\Delta_ge^v-Ke^v\big). \qedhere\]
\end{proof}

Lemma \ref{lemma-app:scalarwarpedconformal} implies that $g_0+\varphi^2dt^2$ has positive scalar curvature if and only if the metric $\varphi^4g_0+\varphi^{-2}dt^2$ does. Taking $\varphi$ as in \eqref{eq-app:STW} with $\delta\to0$, we obtain another sequence of warped products in which the warping factor is arbitrarily close to zero at $r=0$.

We rewrite the new warped product as
\begin{equation}\label{eq-app:g'conformalform}
    g=\varphi^4g_0+\varphi^{-2}dt^2=e^{2v}\big(e^{2v}dr^2+e^{2v}\sin^2r d\th^2\big)+e^{-2v}dt^2,
\end{equation}
and consider the re-parametrization $\tilde r=\tilde r(r)$ so that
\[d\tilde r=e^{v(r)}dr=\varphi(r)\,dr.\]
Next, keeping in mind that $r$ is a function of $\tilde{r}$, consider the functions $u(\tilde r)=-v(r)$, $f(\tilde r)=e^{v(r)}\sin r$. We finally obtain
\begin{equation}
    g=e^{-2u(\tilde r)}\big(d\tilde r^2+f(\tilde r)^2d\th^2\big)+e^{2u(\tilde r)}dt^2,
\end{equation}
which has the same form as \eqref{eq-intro:warped_prod}. It is natural to investigate the asymptotics of $f, u$ as $\tilde r\to0$. We show that in the extreme case $\delta=0$ in \eqref{eq-app:STW}, we obtain exactly the main formula \eqref{eq-intro:KX23_main_func}, up to an immaterial difference in coefficients:

\begin{theorem}\label{thm-app:main}
    The functions $f,u$ defined above using $\delta=0$ in \eqref{eq-app:STW} have the following asymptotics as $\tilde r\to0$
    \begin{equation}
        u(\tilde r)=-\log\log\frac1{\tilde r}+o(1),\qquad
        f(\tilde r)=\tilde r\Big[1-\frac1{\log(1/\tilde r)}+o\big(\frac1{\log(1/\tilde r)}\big)\Big].
    \end{equation}
\end{theorem}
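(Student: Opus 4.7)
The plan is to derive small-$r$ asymptotics for $\varphi$, integrate to obtain $\tilde r(r)$, invert that relation, and then substitute into the definitions of $u$ and $f$. With $\delta=0$, the function $\varphi$ reduces to $\varphi(r)=1-\log\sin r$. Using $\log\sin r=\log r+O(r^2)$ near $r=0$, one obtains
\begin{equation}
\varphi(r)=L(r)+1+O(r^2),\qquad L(r):=\log(1/r).
\end{equation}
Integrating $d\tilde r=\varphi(r)\,dr$ from $0$ to $r$, using the elementary antiderivative $\int_0^r(-\log s)\,ds=rL(r)+r$ and the bound $\log\sin s-\log s=O(s^2)$, gives
\begin{equation}
\tilde r=rL(r)+2r+O(r^3).
\end{equation}

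The heart of the argument is inverting this relation to express $L(r)$ in terms of $\tilde L:=\log(1/\tilde r)$. Taking $-\log$ of both sides yields
\begin{equation}
\tilde L=L(r)-\log\bigl(L(r)+2+O(r^2)\bigr)=L(r)-\log L(r)+O(1/L(r)).
\end{equation}
Since the left-hand side must tend to $+\infty$ as $\tilde r\to 0$, so must $L(r)$; a single iteration then gives $L(r)/\tilde L\to 1$, whence $\log L(r)=\log\tilde L+o(1)$. Substituting back produces
\begin{equation}
L(r)=\tilde L+\log\tilde L+o(1)\quad\text{as }\tilde r\to 0.
\end{equation}

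The two asymptotics now follow by direct substitution. Writing $u(\tilde r)=-\log\varphi(r)=-\log L(r)+O(1/L(r))$ and plugging in the inversion yields $u(\tilde r)=-\log\tilde L+o(1)=-\log\log(1/\tilde r)+o(1)$. For $f$, using $\sin r=r+O(r^3)$, compute
\begin{equation}
\frac{f(\tilde r)}{\tilde r}=\frac{\varphi(r)\sin r}{\tilde r}=\frac{L(r)+1}{L(r)+2}+O(r^2)=1-\frac{1}{L(r)+2}+O(r^2),
\end{equation}
and expand $1/(L(r)+2)=1/L(r)+O(1/L(r)^2)=1/\tilde L+o(1/\tilde L)$, where the last equality uses $L(r)=\tilde L(1+O(\log\tilde L/\tilde L))$. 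This produces the desired $f(\tilde r)=\tilde r\bigl[1-1/\log(1/\tilde r)+o(1/\log(1/\tilde r))\bigr]$. The only real obstacle is carrying out the iterative inversion $\tilde L\mapsto L$ with enough precision to justify the $o(1)$ and $o(1/\log(1/\tilde r))$ remainders; once this is done, the rest is straightforward book-keeping.
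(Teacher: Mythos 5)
Your argument is correct, and it takes a noticeably more economical route than the paper's. The paper first proves an explicit second-order inversion formula $r=\frac{\tilde r}{\tilde s}\bigl[1-\frac{\log\tilde s}{\tilde s}-(c+1)\frac1{\tilde s}+o(\frac1{\tilde s})\bigr]$ (Lemma \ref{lemma-app:asymp2}), then obtains $s=\tilde s+\log\tilde s+o(1)$ from it, and finally, for $f$, substitutes both expansions into $\varphi(r)\sin r$ and multiplies out, relying on a cancellation of the $c$-dependent terms to recover the $c$-independent answer. You instead work at the level of the logarithms: you invert $\tilde L=L-\log L+O(1/L)$ once to get $L=\tilde L+\log\tilde L+o(1)$ (the same key asymptotic, but reached without ever solving for $r$ itself), and for $f$ you form the ratio $\varphi(r)\sin r/\tilde r=\frac{L(r)+1}{L(r)+2}+O(r^2)$ while $r$ is still the free variable, only converting $1/(L+2)$ to $1/\tilde L+o(1/\tilde L)$ at the very end. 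This sidesteps the second-order $r$-inversion entirely and makes the $c$-independence of the final answer structural rather than a post hoc cancellation. The one bookkeeping point worth flagging is that the final step needs $O(r^2)=o(1/\tilde L)$, which holds since $r\sim\tilde r/\tilde L$ so $r^2\tilde L\sim\tilde r^2/\tilde L\to0$; you do not state this explicitly, but it is immediate and the remaining estimates (in particular $1/L-1/\tilde L=O(\log\tilde L/\tilde L^2)=o(1/\tilde L)$) are all justified by the inversion you established.
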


For convenience, we introduce the notation $s=\log(1/r)$ and $\tilde s=\log(1/\tilde r)$. We remind the reader that $s,\tilde s$ are large when $r\to0$. To prove Theorem \ref{thm-app:main}, we establish some preliminary asymptotics. Setting $\delta=0$ in \eqref{eq-app:STW}, we have the expansion
\[\varphi(r)=s+c+o(1)\]
near $r=0$, where $c$ is a constant (whose value is not important). By direct integration, we have
\begin{equation}\label{eq-app:tilde_r}
    \begin{aligned}
        \tilde r=\int_0^r\Big[\log\frac1r+c+o(1)\Big]\,dr &= rs+(c+1)r+o(r) \\
        &= r\big[s+(c+1)+o(1)\big].
    \end{aligned}
\end{equation}
The next step is to express $r$ as a function in $\tilde r$.

\begin{lemma}\label{lemma-app:asymp2}
    As $r\to0$ we have
    \begin{equation}\label{eq-app:asymp_r}
        r=\frac{\tilde r}{\tilde s}\Big[1-\frac{\log\tilde s}{\tilde s}-(c+1)\frac{1}{\tilde s}+o\big(\frac{1}{\tilde s}\big)\Big].
    \end{equation}
\end{lemma}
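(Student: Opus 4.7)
The plan is to invert the relation \eqref{eq-app:tilde_r} by standard asymptotic manipulations, treating $\tilde s = \log(1/\tilde r)$ as the new large parameter.

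First I would take the logarithm of both sides of $\tilde r = r[s + (c+1) + o(1)]$. Using $s = \log(1/r)$, this yields
\[
-\tilde s = -s + \log\bigl[s + (c+1) + o(1)\bigr] = -s + \log s + \log\!\Big(1 + \tfrac{c+1}{s} + o(1/s)\Big),
\]
which expands to
\[
\tilde s = s - \log s - \frac{c+1}{s} + o(1/s).
\]

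Next I would invert this to express $s$ in terms of $\tilde s$. Since $s - \tilde s = \log s + o(1)$ and the two quantities are comparable (indeed $s/\tilde s \to 1$), we get $\log s = \log\tilde s + \log(s/\tilde s) = \log\tilde s + o(1)$, and hence
\[
s = \tilde s + \log\tilde s + o(1).
\]
Expanding the reciprocal gives
\[
\frac{1}{s} = \frac{1}{\tilde s}\cdot\frac{1}{1 + \tfrac{\log\tilde s}{\tilde s} + o(1/\tilde s)} = \frac{1}{\tilde s} - \frac{\log\tilde s}{\tilde s^2} + o(1/\tilde s^2),
\]
and consequently $\frac{1}{s^2} = \frac{1}{\tilde s^2}(1 + o(1))$.

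Finally I would rewrite \eqref{eq-app:tilde_r} as
\[
r = \frac{\tilde r}{s}\Big[1 - \frac{c+1}{s} + o(1/s)\Big] = \tilde r\left[\frac{1}{s} - \frac{c+1}{s^2} + o(1/s^2)\right],
\]
and substitute the expansions above. This produces
\[
r = \tilde r\left[\frac{1}{\tilde s} - \frac{\log\tilde s}{\tilde s^2} - \frac{c+1}{\tilde s^2} + o(1/\tilde s^2)\right] = \frac{\tilde r}{\tilde s}\left[1 - \frac{\log\tilde s}{\tilde s} - \frac{c+1}{\tilde s} + o(1/\tilde s)\right],
\]
as claimed. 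There is no real obstacle here; the only care needed is bookkeeping of remainder terms, in particular verifying that $\log s/\tilde s = o(1)$ so that the expansion of $1/s$ in terms of $1/\tilde s$ is valid to the stated order.
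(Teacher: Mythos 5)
Your proof is correct and proceeds from the same central input as the paper's: the expansion $\tilde s = s - \log s - (c+1)/s + o(1/s)$ obtained by logarithmizing \eqref{eq-app:tilde_r}. The only difference is stylistic: you invert this series to write $s = \tilde s + \log\tilde s + o(1)$ and then substitute into $r = \tilde r / [s + (c+1) + o(1)]$, whereas the paper verifies the equivalent statement that $\tilde s\bigl[\tfrac{r\tilde s}{\tilde r} - 1 + \tfrac{\log\tilde s}{\tilde s} + \tfrac{c+1}{\tilde s}\bigr]\to 0$ by expanding $r\tilde s^2/\tilde r$ in powers of $s^{-1}$; both routes reduce to the same bookkeeping.
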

\begin{proof}
    It is sufficient to show that
    \begin{equation}\label{eq-app:to_verify}
        0=\lim_{r\to0}\tilde s\Big[\frac{r\tilde s}{\tilde r}-1+\frac{\log\tilde s}{\tilde s}+(c+1)\frac1{\tilde s}\Big]=\lim_{r\to0}\Big[\frac{r\tilde s^2}{\tilde r}-\tilde s+\log\tilde s+(c+1)\Big].
    \end{equation}
    Note that \eqref{eq-app:tilde_r} implies
    \begin{equation}\label{eq-app:tilde_s}
        \tilde s = \log\frac1{rs\big[1+(c+1)s^{-1}+o(s^{-1})\big]}
        = s+\log\frac1{s}-(c+1)s^{-1}+o(s^{-1}),
    \end{equation}
    where we have made use of Taylor expansion in the variable $s^{-1}$.
    Plugging \eqref{eq-app:tilde_r} and \eqref{eq-app:tilde_s} into the first term of \eqref{eq-app:to_verify} and expanding the square, we obtain
    \begin{align}
        \frac{r\tilde s^2}{\tilde r} &= \frac{s^2\big[1+2s^{-1}\log\frac1{s}+s^{-2}\log^2\frac1s-2(c+1)s^{-2}+o(s^{-2})\big]^2}{s\big[1+(c+1)s^{-1}+o(s^{-1})\big]} \nonumber\\
        &= s\Big[1+2s^{-1}\log\frac1s+o(s^{-1})\Big]\cdot\Big[1-(c+1)s^{-1}+o(s^{-1})\Big] \nonumber\\
        &= s+2\log\frac1s-(c+1)+o(1). \label{eq-app:term1}
    \end{align}
    On the other hand, we may use \eqref{eq-app:tilde_s} to find
    \begin{equation}\label{eq-app:term3}
        \log\tilde s = \log \Big[s\big(1+s^{-1}\log\frac1s+o(s^{-1})\big)\Big] 
        = \log s+o(1).
    \end{equation}
    Combining \eqref{eq-app:tilde_s}, \eqref{eq-app:term1}, and \eqref{eq-app:term3} with \eqref{eq-app:to_verify}, we obtain the result.
\end{proof}

\begin{proof}[Proof of Theorem \ref{thm-app:main}]
    From Lemma \ref{lemma-app:asymp2} we have
    \[s=\log\frac1r=\log\Big[\frac{\tilde s}{\tilde r}\big(1+o(1)\big)\Big]=\tilde s\cdot\big(1+o(1)\big),\]
    therefore
    \[\begin{aligned}
        u(\tilde r) &= -\log\varphi(r)
        = -\log\big[s+c+o(1)\big]
        = -\log s-cs^{-1}+o(s^{-1})\\
        &= -\log\tilde s-o(1).
    \end{aligned}\]
    Turning our attention to $f$, we use Lemma \ref{lemma-app:asymp2} to obtain a finer asymptotics:
    \begin{equation}\label{eq-app:asymp_s}
        \begin{aligned}
            s &= -\log r
            = -\log\frac{\tilde r}{\tilde s}+\log\Big[1-\frac{\log\tilde s}{\tilde s}-(c+1)\frac1{\tilde s}+o(\frac1{\tilde s})\Big] \\
            &= \tilde s+\log\tilde s-\frac{\log\tilde s}{\tilde s}-(c+1)\frac1{\tilde s}+o(\frac1{\tilde s}) \\
            &= \tilde s+\log\tilde s+o(1).
        \end{aligned}
    \end{equation}
    Using \eqref{eq-app:asymp_s} and \eqref{eq-app:asymp_r} and the fact $r=o(\tilde r)$ from \eqref{eq-app:asymp_r}, we have
    \[\begin{aligned}
        f(\tilde r) &= \varphi(r)\sin r
        = \big[s+c+o(1)\big]\cdot r\big[1+O(r^2)\big] \\
        &= \big[\tilde s+\log\tilde s+c+o(1)\big]\cdot\frac{\tilde r}{\tilde s}\Big[1-\frac{\log\tilde s}{\tilde s}-(c+1)\frac{1}{\tilde s}+o\big(\frac{1}{\tilde s}\big)\Big]\cdot\big[1+o(\tilde r^2)\big] \\
        &= \tilde r\Big[1-\frac1{\tilde s}+o\big(\frac1{\tilde s}\big)\Big],
    \end{aligned}\]
    where in the last equality we algebraically expanded, noted a cancelation of the second order term and in terms involving $c$, and used the fact that $\frac{(\log \tilde s)^2}{\tilde{s}^2}=o(\frac{1}{\tilde{s}})$.
\end{proof}


\appendix

\section{Convergence to pulled string space}\label{sec:scrunch}

Suppose we are given a metric space $(X,d^X)$ and a connected compact subset $K\subset X$. One may use this data to construct a {\emph{pulled string}} space $(Y,d^Y)$ by, informally speaking, declaring $K$ to have vanishing length. Precisely, the metric space $(Y,d^Y)$ is given by
\begin{equation}
    Y=(X\setminus K)\cup\{K\},\quad d^Y(x,y)=\min\left\{d^X(x,y),d^X(x,K)+d^X(K,y)\right\}.
\end{equation}
We refer to $(Y,d^Y)$ as {\emph{created by pulling $K$ in $(X,d^X)$}}. In the case where $(X,d^X)$ carries an integral current structure, such as the case of a Riemannian manifold, an associated structure is induced on the pulled-string space, see \cite{BDS} for details.

The following proposition is a modification of a result by Basilio-Sormani \cite[Theorem 2.5]{BS} which gives a sufficient condition for a sequence $(M,g_i)$ to converge to a pulled string space. In particular, we do not require that the metrics $g_i$ are constant away from the curve $\sigma$.

\begin{prop}\label{prop-prlim:pulledstringconv}
    Fix a compact Riemannian $3$-manifold $(M^3,g_\infty)$, a basepoint $p\in M\setminus\p M$, and a closed curve $\sigma\subset M^3\setminus(\{p\}\cup\partial M^3)$. Suppose $(M^3,g_i)$ are Riemannian manifolds with subsets $U_i\subset M^3$ containing $\sigma$ and positive numbers $\delta_i\to0$ such that:
    \begin{enumerate}[label=(\roman*), topsep=3pt, itemsep=0ex]
        \item $U_i=N_{g_\infty}(\sigma,\delta_i)$,
        \item there is $C^0$ convergence away from $U_i$, $||g_i-g_\infty||_{C^0(M\setminus\mathring{U}_i,g_\infty)}\to0$,
        \item $\Vol(U_i,g_i)\to0$ and $\Vol(M,g_i)\leq \Vol(M,g_\infty)(1+\delta_i)$,
        \item $\diam(U_i,g_i)\to0$.
    \end{enumerate}
    Then $(M_i,d_{g_i},p)$ converges to the space resulting from pulling $\sigma$ in $(M,d_{g_\infty},p)$ in the pointed intrinsic flat and Gromov-Hausdorff senses.
\end{prop}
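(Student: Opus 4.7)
The plan is to first establish uniform convergence of distance functions $d_{g_i}$ to the pulled string metric $d^Y$ on $Y$, and then invoke the Basilio--Sormani scrunching theorem \cite[Theorem 2.5]{BS} (after a small auxiliary modification) to upgrade this to intrinsic flat convergence. Let $\pi_i: M \to Y$ denote the surjective map which collapses $U_i$ to the equivalence class $[\sigma]$ and acts as the identity on $M \setminus U_i$. The first goal is to show that $\pi_i$ is a pointed $\epsilon_i$-Hausdorff approximation; since $p \notin \sigma$ ensures $p \notin U_i$ for large $i$, this will yield pointed GH convergence via $\pi_i(p) = [p]$.

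The core estimate is $|d_{g_i}(x,y) - d^Y(\pi_i(x), \pi_i(y))| \leq \epsilon_i$ for all $x,y \in M$, with $\epsilon_i \to 0$. For the upper bound, a nearly $d^Y$-optimal path is either a $g_\infty$-geodesic from $x$ to $y$ avoiding $\sigma$, or a concatenation of $g_\infty$-geodesics from $x,y$ to $\sigma$. Such a curve can be lifted to a $g_i$-admissible path by truncating at $\partial U_i$ and bridging $U_i$ by a detour of $g_i$-length at most $\diam(U_i, g_i) \to 0$ by (iv), while the $C^0$-convergence (ii) ensures the outer portion has $g_i$-length close to its $g_\infty$-length. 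For the lower bound, given any $g_i$-path $\gamma$ from $x$ to $y$, I split into cases: if $\gamma$ avoids $U_i$ then (ii) gives $\length_{g_i}(\gamma) \geq (1-o(1)) d_{g_\infty}(x,y) \geq (1-o(1)) d^Y(\pi_i(x), \pi_i(y))$; if $\gamma$ enters $U_i$, discarding the interior portion and using (i)--(ii) bounds the exterior portions below by $(1-o(1))[d_{g_\infty}(x,\sigma) + d_{g_\infty}(\sigma,y)] - O(\delta_i)$. Taking minima and supremizing gives the required estimate.

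For the intrinsic flat convergence, I propose to introduce the slightly enlarged neighborhood $\hat U_i := N_{g_\infty}(\sigma, 2\delta_i)$ and a smooth auxiliary metric $\hat g_i$ on $M$ which equals $g_i$ on $U_i$, equals $g_\infty$ outside $\hat U_i$, and smoothly interpolates on the annulus $\hat U_i \setminus U_i$. The $C^0$-smallness from (ii) ensures that all of $g_i$, $\hat g_i$, $g_\infty$ are pairwise $o(1)$-close on this annulus. Consequently, hypotheses (i)--(iv) transfer from $g_i$ to $\hat g_i$ modulo $o(1)$ errors (volume transfer uses that $\Vol_{g_\infty}(\hat U_i) \to 0$ because $\hat U_i$ is a tubular neighborhood of a curve of vanishing radius). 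The Basilio--Sormani scrunching theorem then applies directly to $(M, \hat g_i)$, identifying the intrinsic flat limit as the pulled string space. It remains to show that the intrinsic flat distance between $(M, d_{g_i}, \Vol_{g_i})$ and $(M, d_{\hat g_i}, \Vol_{\hat g_i})$ tends to zero, which I plan to establish by embedding both current spaces into a common cylinder $(M \times [0,1], G_i)$ with an interpolating product metric, and bounding the mass of the resulting filling current using (ii) and (iii).

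The main obstacle is this last intrinsic flat step. Conceptually it reduces to bounding the integral of $|d\Vol_{\hat g_i} - d\Vol_{g_i}|$ over the annulus $\hat U_i \setminus U_i$; by (ii) this is of order $o(1) \cdot \Vol_{g_\infty}(\hat U_i)$, and the latter is itself $o(1)$ since $\delta_i \to 0$ and $\sigma$ has codimension at least two. The construction of the interpolating Lipschitz metric on $M \times [0,1]$ and the passage to integral currents follow standard techniques in the intrinsic flat literature, but careful bookkeeping of the $o(1)$ errors across the annulus will be necessary, as will verification that the basepoint $p$ is preserved throughout the passage to the limit.
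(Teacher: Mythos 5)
Your proof is correct and follows the same overall architecture as the paper's: build an auxiliary metric that equals $g_\infty$ away from a small region so that the Basilio--Sormani/BDS pulled-string result applies, then compare $g_i$ to the auxiliary metric in GH and IF. The interesting difference is your choice of auxiliary metric. The paper's $g'_i$ is a \emph{conformal modification of $g_\infty$} inside $U_i$ (chosen solely to have small diameter and volume there), so $g'_i$ and $g_i$ can be wildly different on $U_i$, and the comparison requires the full strength of the paper's Lemma~\ref{lem-prelim:auxconvergence} (a cylinder-filling argument which handles an arbitrary small-diameter, small-volume discrepancy region). Your $\hat g_i$ instead \emph{equals $g_i$ inside $U_i$}, interpolating to $g_\infty$ across the annulus $\hat U_i\setminus U_i$; combined with hypothesis (ii), this makes $g_i$ and $\hat g_i$ globally $C^0$-close (indeed equal on $U_i$), so your comparison step is a statement about uniformly bi-Lipschitz-close metrics on a fixed manifold --- a mild but genuine simplification over Lemma~\ref{lem-prelim:auxconvergence}. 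The price is that you must re-verify the scrunching hypotheses for $\hat g_i$ over the enlarged $\hat U_i$, which you do correctly (diameter and volume of the annulus are controlled by $\delta_i$ and (ii)). Your explicit GH argument via the collapsing map $\pi_i$ is also correct, though technically redundant once the IF comparison is in hand; the paper gets both from Lemma~\ref{lem-prelim:auxconvergence} at once. The one step you leave sketchy --- the cylinder/filling bound for $d_{\IF}\bigl((M,g_i),(M,\hat g_i)\bigr)$ --- is exactly the content of the appendix Lemma~\ref{lem-prelim:auxconvergence} and its proof over $(M\times[0,3\eta],g_t+dt^2)$; in your setting it can be done with a plain convex interpolation of metrics over $M\times[0,1]$ since $g_i$ and $\hat g_i$ are uniformly close, so the deferred bookkeeping is indeed routine.
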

\begin{proof}
    For each $i$, construct auxillary metrics $g'_i$ on $M$ so that $g'_i=g_\infty$ on $M\setminus U_i$, $\diam(U_i,g'_i)\to0$, and $\Vol(U_i,g'_i)\leq \Vol(B_{g_\infty}(\sigma,1/i))$. Such metrics may be constructed by conformally modifying $g_\infty$ on $U_i$. According to \cite[Theorem 5.2]{BDS}, $(M,g'_i)$ converges to the pulled string space created by pulling $\sigma$ in $(M,g_\infty)$. Moreover, the map in lines \cite[(131)--(135)]{BDS} used to prove \cite[Theorem 2.5]{BDS} is the identity outside $U_i$, and it follows that the convergence holds in the pointed sense. On the other hand, Lemma \ref{lem-prelim:auxconvergence} shows that $(M,d_{g_i},p)$ is close to $(M,d_{g'_i},p)$ in the pointed Gromov-Hausdorff and Intrinsic Flat senses. Combining these facts yields the desired conclusion.
\end{proof}

\begin{lemma}\label{lem-prelim:auxconvergence}
    Let $M^n$ be a compact $n$-manifold equipped with Riemannian metrics $g,g_0$ and fix a compact set $U\subset M$ and a basepoint $p\in M$. For each $D,V,\mu>0$, there are $\delta,\epsilon>0$ so that the following holds: If 
    \begin{enumerate}[label=(\roman*), topsep=3pt, itemsep=0ex]
        \item $\diam(M,g),\diam(M,g_0)\leq D$ and $\Vol(M,g),\Vol(M,g_0)\leq V$,
        \item $||g-g_0||_{C^0(M\setminus U,\,g_0)}<\delta$,
        \item $\diam(U,g),\diam(U,g_0)\leq\epsilon$,
        \item $\Vol(U,g),\Vol(U,g_0)<\epsilon$,
    \end{enumerate}
    then $d_{\GH}\big((M,d_g,p),(M,d_{g_0},p)\big)<\mu$ and $d_{\IF}\big((M,d_g,p),(M,d_{g_0},p)\big)<\mu$.
\end{lemma}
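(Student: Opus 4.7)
The plan is to establish the two assertions — pointed Gromov--Hausdorff closeness and pointed Intrinsic Flat closeness — separately, with the former a direct distance comparison and the latter requiring a common ambient space construction.

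For the GH assertion, I would show that the identity map $\mathrm{id}\colon (M,d_g,p)\to (M,d_{g_0},p)$ is a pointed $\mu$-GH approximation by proving the uniform estimate $|d_g(x,y)-d_{g_0}(x,y)|\leq C(\delta D+\epsilon)$ on $M\times M$ for a universal $C$, then taking $\delta\ll \mu/D$ and $\epsilon\ll\mu$. Fix $x,y\in M$ and a $d_{g_0}$-minimizing path $\gamma$ from $x$ to $y$. If $\gamma$ avoids $U$, assumption (ii) gives $\text{length}_g(\gamma)\leq(1+C\delta)\text{length}_{g_0}(\gamma)\leq d_{g_0}(x,y)+C\delta D$. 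Otherwise let $a<b$ be the first entry and last exit times of $\gamma$ in $U$; the outer subsegments $\gamma|_{[0,a]}$ and $\gamma|_{[b,1]}$ lie in $M\setminus U$, so their $g$-lengths are at most $(1+C\delta)$ times their $g_0$-lengths, while the endpoints $\gamma(a),\gamma(b)\in\bar U$ can be joined by a $g$-path of length at most $\epsilon$ by (iii). Concatenating yields $d_g(x,y)\leq(1+C\delta)d_{g_0}(x,y)+\epsilon$, and the reversed inequality follows symmetrically using a $d_g$-minimizer.

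For the IF assertion, I would embed both spaces isometrically into a common length space $(Z,d_Z)$ and bound the IF distance by the mass of a connecting cylinder viewed as an integral current. Following the strategy of Lakzian--Sormani, build $Z$ from the cylinder $M\times[0,h]$ with $(M,d_g)$ on top and $(M,d_{g_0})$ on bottom; away from $U$, where the two metrics are $\delta$-close, a standard product-like interpolation makes both slices isometrically embedded with cylinder volume at most $ChV$. Inside $U$, where the metrics may differ substantially, one inflates the vertical direction over $U$ by a factor at most $\sup|d_g-d_{g_0}|$ (already controlled by the GH step) so that cross-slice shortcuts cannot violate isometricity of the two slices; the resulting extra mass is at most of order the inflated height times $\Vol(U)$, hence controlled by (iii) and (iv). Since $M$ is closed, the boundary of the cylinder current is precisely the difference of the two slice currents, so $d_{\IF}$ is bounded by the total cylinder mass, which can be made $<\mu$ by choosing $h,\delta,\epsilon$ small depending on $D,V,\mu$.

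The main obstacle is the IF step, specifically arranging for isometric embeddings of both slices into $(Z,d_Z)$. A naive product metric on $M\times[0,h]$ fails because geodesics between points of the same slice could detour through the cylinder interior and shortcut the intrinsic distance, and this failure is most acute inside $U$, where $d_g$ and $d_{g_0}$ may disagree substantially. The saving grace is precisely that $U$ is small in both diameter (iii) and volume (iv), so the local vertical stretching of the cylinder over $U$ needed to restore isometricity contributes only a negligible amount to the mass. Should the explicit gluing become cumbersome, an alternative is to invoke an embedding lemma from the Sormani--Wenger theory directly, feeding in the quantitative pointwise estimate from the GH step together with the ambient volume bound from (i) and the small-volume bound (iv).
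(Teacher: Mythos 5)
Your GH argument is correct and cleaner than the route the paper actually takes for that half: you control $|d_g-d_{g_0}|$ in $C^0$ by routing geodesics through (or around) the small-diameter set $U$ and using the $C^0$-closeness of the metrics outside $U$, then conclude GH closeness from the identity map. The paper does not treat GH separately; it extracts both GH and IF closeness simultaneously from one comparison space, so your more elementary approach for GH is a genuine simplification and is valid.

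The IF sketch has the right skeleton (a Lakzian--Sormani-style filling cylinder $M\times[0,h]$ with interpolated slice metrics; the paper does exactly this) but the mechanism you propose over $U$ does not work as stated. You suggest preventing cross-slice shortcuts by inflating the vertical $dt^2$-direction over $U$ by a factor of order $\sup|d_g-d_{g_0}|$, and you claim the resulting extra mass is of order (inflated height)$\times\Vol(U)$. There are two problems. First, an inflation factor near $1$ does not obviously block shortcuts; what actually blocks them in the paper is a Pythagorean gain: a path that descends to depth $\geq\eta$ before coming back is at least $\eta^2/(3D)$ longer than its horizontal shadow, and the cylinder height $\eta$ is chosen precisely so that $\eta^2/(3D)$ dominates both the $\delta D$ distortion outside $U$ and the $\epsilon$ diameter of $U$ (see the definition $\eta=\max\{\sqrt{3\delta D^2},\sqrt{3D(\epsilon+2\delta D)}\}$). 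Second, and more seriously, the cylinder mass over $U$ involves $\sqrt{\det g_t}$ for the interpolated slice metric $g_t$, and this is \emph{not} controlled by $\Vol(U,g)$ and $\Vol(U,g_0)$: for instance with $g=\mathrm{diag}(\Lambda,1/\Lambda)$ and $g_0=\mathrm{diag}(1/\Lambda,\Lambda)$ over $U$, both metrics have unit volume form, yet the midpoint interpolant has volume form of order $\Lambda$. The paper sidesteps this by \emph{conformally shrinking} the middle layer over $U$ (the factor $F<1$ with $\int_{U\times[\eta,2\eta]}F^{n+1}<\epsilon$), which forces the mass there to be small regardless of how badly $g$ and $g_0$ disagree on $U$, and it then handles the shortcuts this shrinking could create by the small-diameter bound $\diam(U)\leq\epsilon$ together with the choice of $\eta$ above. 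So you would need to replace the inflation idea with a shrink-and-case-check argument, or else make rigorous the appeal to a black-box embedding lemma you mention as a fallback.
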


\begin{proof}
    We build a comparison space $(Z,\bar g)$. Fix
    \begin{equation}\label{eq-prelim:eta}
        \eta=\max\Big\{\sqrt{3\delta D^2}, \sqrt{3D(\epsilon+2\delta D)}\Big\},
    \end{equation}
    noting that $\eta\to0$ as both $\epsilon,\delta\to0$.
    The comparison space will take the form $(Z,\bar g)=(M\times[0,3\eta],g_t+dt^2)$, where $g_t$ is a metric on $M\times\{t\}$. First define a preliminary metric $\hat g_t$
    \begin{equation}
        \widehat g_t=
        \begin{cases}
            g&\text{ for }t\in[0,\eta],\\
            (2-t/\eta)g+(t/\eta-1)g_0&\text{ for }t\in[\eta,2\eta],\\
            g_0&\text{ for }t\in [2\eta,3\eta].
        \end{cases}
    \end{equation}
    Now we will modify $\widehat g_t$ to decrease its volume in the region $U\times[\eta,2\eta]$. In particular, set $g_t(x)=F(x,t)^2\hat g_t(x)$ where $F$ is a positive function on $Z$ chosen so that $F(x,t)=1$ for $(x,t)\in Z\setminus(U\times[\eta,2\eta])$ and $\int_{U\times[\eta,2\eta]}F^{n+1}dV_{dt^2+\widehat g_t}<\epsilon$.
    
    We claim that $d_Z(x,y)=d_g(x,y)$ for all $x,y\in M\times\{0\}\subset Z$. It is sufficient to show that $d_Z(x,y)\geq d_g(x,y)$. Let $\gamma$ be a $d_Z$-minimizing geodesic between $x$ and $y$. Set $W=U\times[\eta,3\eta]$. Let $\Pi:M\times[0,3\eta]\to M\times\{0\}$ denote the projection map. Note the following elementary inequality: for a point $x\in M\times\{0\}$ and a point $p\in M\times\{\eta\}$, we can estimate the product distance by
    \begin{align}\label{eq-prelim:productdist}
        \begin{split}
            d_{M\times[0,\eta]}(x,p)&=\sqrt{d_g(x,\Pi(p))^2+\eta^2}\\
            &= d_g(x,\Pi(p))^2+\frac{\eta^2}{\sqrt{d_g(x,\Pi(p))^2+\eta^2}+d_g(x,\Pi(p))}\\
            &\geq d_g(x,\Pi(p))^2+\frac{\eta^2}{3D}.
        \end{split}
    \end{align}
    Also, note that outside $W$ we have
    \begin{equation}\label{eq-prelim:outside_W}
        \bar g\geq dt^2+(1-\delta)g\geq dt^2+(1-\delta)^2g.
    \end{equation}
    
    There are two cases for the $d_X$-geodesic $\gamma$:
    
    {\bf{Case 1:}} $\gamma\cap W=\emptyset$. If $\gamma$ is contained in $M\times[0,\eta]$, then since $\bar g=g+dt^2$ inside it, we clearly have $|\gamma|\geq d_g(x,y)$. Suppose $\gamma$ intersects $M\times[\eta,3\eta]$. Let $p,q\in M\times\{\eta\}$ be the points of first entry and last exit. By \eqref{eq-prelim:outside_W}, and the fact that $\gamma$ is minimizing and avoids $W$, we have $d_Z(p,q)\geq(1-\delta)d_g(\Pi(p),\Pi(q))$. Therefore, using \eqref{eq-prelim:productdist},
    \[\begin{aligned}
        d_Z(x,y) &= d_Z(x,p)+d_Z(p,q)+d_Z(q,y) \\
        &\geq \sqrt{d_g(x,\Pi(p))^2+\eta^2}+(1-\delta)d_g(\Pi(p),\Pi(q))+\sqrt{d_g(y,\Pi(q))^2+\eta^2} \\
        &\geq d_g(x,\Pi(p))+\frac{\eta^2}{3D}+d_g(\Pi(p),\Pi(q))-\delta D+d_g(y,\Pi(q))+\frac{\eta^2}{3 D} \\
        &\geq d_g(x,y)+\frac{2\eta^2}{3D}-\delta D.
    \end{aligned}\]
    Then by \eqref{eq-prelim:eta} we have $d_Z(x,y)\geq d_g(x,y)$ in this case.

    {\bf{Case 2:}} $\gamma\cap W\neq \emptyset$. Let $p$ and $q$ be the first and final points where $\gamma$ intersects $W$. Again, since $\bar g\geq dt^2+(1-\delta)^2g$ outside $W$, and the $t$ coordinates of $p,q$ are at least $\eta$, we may estimate as in \eqref{eq-prelim:productdist}
    \[d_Z(x,p)\geq\sqrt{(1-\delta)^2d(x,\Pi(p))^2+\eta^2}\geq(1-\delta)d_g(x,\Pi(p))+\frac{\eta^2}{3D}.\]
    Then we have
    \[\begin{aligned}
        d_Z(x,y) &\geq d_Z(x,p)+d_Z(q,y) \\
        &= (1-\delta)d_g(x,\Pi(p))+(1-\delta)d_g(y,\Pi(q))+\frac{2\eta^2}{3D} \\
        &\geq d_g(x,\Pi(p))+d_g(y,\Pi(q))-2\delta D+\frac{2\eta^2}{3D} \\
        &\geq d_g(x,y)-\diam(U)-2\delta D+\frac{2\eta^2}{3D}.
    \end{aligned}\]
    By \eqref{eq-prelim:eta}, we also have $d_Z(x,y)\geq d_g(x,y)$ in this case.
    
    Since the roles of $g$ and $g_0$ are symmetric in the above arguments, we also conclude that $d_{g_0}(x,y)=d_Z(x,y)$ for all $x,y\in M\times\{3\eta\}$. It follows that the inclusions $(M,g)\hookrightarrow M\times\{0\}$ and $(M,g_0)\hookrightarrow M\times\{3\eta\}$ are isometries. Hence these imply that $d_{\GH}\big((M,d_g,p),(M,d_{g_0},p)\big)<3\eta$.
    
    Next, note that $d_{\IF}\big((M,d_g,p),(M,d_{g_0},p)\big)\leq\Vol(Z,\bar g)$. We directly bound the volume of $Z$ using assumptions $(i)$, $(iv)$, and the condition on $F$:
    \[\begin{aligned}
        \Vol(Z,\bar g) &\leq
            \Vol\big((M\setminus U)\times[0,3\eta],dt^2+(1+\delta)^2g\big)
            + \Vol\big(U\times[0,\eta],dt^2+g\big)\\
        {}&\qquad
            + \Vol\big(U\times[2\eta,3\eta],dt^2+g_0\big)
            + \Vol\big(U\times[\eta,2\eta],dt^2+F\widehat{g}_t\big)\\
        {}&\leq 3\eta(1+\delta)^n \Vol(M\setminus U,g)+2\eta \epsilon +\int_{U\times[\eta,2\eta]}F^{n+1}dV_{dt^2+\widehat g_t}\\
        {}&\leq 3\cdot 2^n\eta V+2\eta\epsilon+\epsilon.
    \end{aligned}\]
    The result follows by choosing $\epsilon,\delta\ll1$ so that $3\eta<\mu$ and $3\cdot 2^n\eta V+2\eta\epsilon+\epsilon<\mu$.
\end{proof}


\section{Analytic lemmas}\label{sec:ana}

The following lemmas concern the smoothness of a two-variable function expressed in polar coordinates.

\begin{lemma}\label{lemma-ana:smoothness}
    Let $U\in\RR^{n-2}$ be an open set, and $(\x,y,z)$ be the Cartesan coordinates on $U\times D^{\RR^2}(r_0)\subset\RR^n$, where $\x=(x_1,\cdots,x_{n-2})$. Consider the polar coordinates $(\x,r,\th)$ with the transformation $x=r\cos\th$, $y=r\sin\th$. Suppose a function $f=f(\x,r,\th)$ is smooth on $U\times[0,r_0)\times S^1$ and satisfies for all $k\geq0$
    \begin{equation}\label{eq-ana:deriv_condition}
        \frac{\p^k f}{\p r^k}(\x,0,\th)=\sum_{i=0}^k c_{k,i}(\x)\cos^i\th\sin^{k-i}\th\quad
    \end{equation}
    for some smooth functions $c_{k,i}$ on $U$. Then $f$ is a smooth function on $U\times D^{\RR^2}(r_0)$, when expressed in the Cartesan coordinates $(\x,y,z)$.
\end{lemma}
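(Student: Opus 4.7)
The plan is to combine Borel's theorem with a Taylor-remainder argument. I would first build a smooth Cartesian function $g(\x, y, z)$ whose formal Taylor series at every base point $(\x, 0, 0)$ matches the prescribed radial expansion of $f$, and then show that the remainder $h := f - g \circ \Phi$, where $\Phi(\x, r, \th) = (\x, r\cos\th, r\sin\th)$, is itself smooth across the axis because all of its radial derivatives vanish at $r = 0$.

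For the first step, note that the monomials $\{\cos^i\th\sin^{k-i}\th\}_{i=0}^{k}$ are linearly independent on $S^1$, so the functions $c_{k,i}(\x)$ are uniquely determined by $f$ and hence smooth in $\x$. Applying Borel's theorem with smooth parameter $\x \in U$, I would produce $g \in C^\infty(U \times D^{\RR^2}(r_0))$ (possibly after shrinking $r_0$) satisfying $\p_y^{i} \p_z^{k-i} g(\x, 0, 0) = c_{k,i}(\x)/\binom{k}{i}$ for all $k \geq 0$ and $0 \leq i \leq k$. By the chain rule,
\[
\frac{\p^k (g\circ\Phi)}{\p r^k}\bigg|_{r=0} = \sum_{i=0}^k \binom{k}{i} \cos^i\th\sin^{k-i}\th \cdot \p_y^{i}\p_z^{k-i}g(\x, 0, 0) = \sum_{i=0}^k c_{k,i}(\x)\cos^i\th\sin^{k-i}\th,
\]
which matches the prescribed radial derivatives of $f$ at $r=0$. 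Hence $h := f - g\circ\Phi$ satisfies $\p_r^k h|_{r=0} \equiv 0$ for every $k$, and since $\p_\x, \p_\th$ commute with $\p_r$ and with restriction to $\{r=0\}$, the same vanishing holds for every mixed derivative $\p_\x^{\alpha}\p_\th^{b}\p_r^{a} h$. Taylor's theorem then yields, on each compact subset of $U \times [0, r_0) \times S^1$, the estimate $|\p_\x^{\alpha}\p_\th^{b}\p_r^{a} h(\x, r, \th)| \leq C_{N} r^{N}$ for every $N \geq 0$.

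To finish, I would define $\tilde h(\x, y, z) = h(\x, r, \th)$ off the axis $\{y = z = 0\}$ and $\tilde h \equiv 0$ on the axis. Smoothness of $\tilde h$ off the axis is immediate since polar-to-Cartesian is a diffeomorphism there. For smoothness across the axis, I would iterate the chain-rule identities $\p_y = \cos\th\,\p_r - r^{-1}\sin\th\,\p_\th$ and $\p_z = \sin\th\,\p_r + r^{-1}\cos\th\,\p_\th$ to express any Cartesian partial derivative of $\tilde h$ as a finite polar sum
\[
\p_\x^{\alpha_\x}\p_y^{\alpha_y}\p_z^{\alpha_z}\tilde h = \sum_{a,b,j} \frac{Q_{a,b,j}(\cos\th,\sin\th)}{r^{j}}\, \p_\x^{\alpha_\x}\p_r^{a}\p_\th^{b} h,
\]
whose exponents satisfy the invariant $j \leq \alpha_y + \alpha_z$. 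Combined with the Taylor estimate from the previous paragraph (taking $N > \alpha_y + \alpha_z$), each such Cartesian derivative is bounded by a positive power of $r$ and therefore extends continuously by zero across the axis. It follows that $\tilde h \in C^\infty$ and that $f = g + \tilde h$ is smooth in Cartesian coordinates.

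The hard part will be verifying the polar expansion with the invariant $j \leq \alpha_y + \alpha_z$. Each application of $\p_y$ or $\p_z$ can hit either the function $h$, the trigonometric coefficient $Q$, or the prefactor $r^{-j}$; the Leibniz step on $r^{-j}$ raises the exponent to $r^{-(j+1)}$, and differentiating $Q$ through the $r^{-1}\p_\th$ term can also introduce an extra factor of $1/r$. A straightforward induction on $\alpha_y + \alpha_z$ confirms that $j$ increases by at most one per Cartesian differentiation, preserving the bound $j \leq \alpha_y + \alpha_z$ — precisely what is needed for the infinite-order vanishing of $h$ to dominate every singular factor.
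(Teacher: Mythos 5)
Your argument is correct in substance, but it takes a genuinely different route from the paper's. The paper works finite-order: for each $k$ it writes $f(\x,r,\th)=\sum_{l\leq k}\tfrac{r^l}{l!}\p_r^l f(\x,0,\th)+r^{k+1}R_k$ with the integral-form remainder, observes that the polynomial part becomes an honest polynomial in $(y,z)$ after inserting \eqref{eq-ana:deriv_condition}, and then bounds Cartesian derivatives of $r^{k+1}R_k$ directly via the homogeneity estimates $|\p_y^i\p_z^{k-i}r|\lesssim r^{1-k}$, $|\p_y^i\p_z^{k-i}\th|\lesssim r^{-k}$ to get $C^k$-regularity for every $k$. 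You instead invoke Borel's theorem (with parameters in $\x$) to subtract the entire formal jet at once, reducing to the case where all polar $r$-derivatives of $h$ vanish at $r=0$; you then get infinite-order flatness of $h$ and convert Cartesian derivatives to polar ones with the singularity bookkeeping $j\leq\alpha_y+\alpha_z$. Both approaches land on the same final ingredient — that a function $C^\infty$ away from the axis, whose Cartesian derivatives extend continuously (here, by zero) across the axis, is globally $C^\infty$, which is a standard FTC-type argument that both proofs leave implicit. What your version buys is conceptual cleanliness (one subtraction handles all orders simultaneously) at the cost of appealing to Borel's non-constructive existence theorem; the paper's version is more elementary and effective, giving an explicit $O(r)$ bound at each finite order. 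Two small notational points: your constant should be $C_{N,\alpha,a,b}$ rather than $C_N$, since the Taylor bound does depend on the multi-index, and the "possibly after shrinking $r_0$" remark after Borel is unnecessary since Borel's theorem yields a function on all of $U\times\RR^2$.
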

\begin{proof}
    For each $k>0$, there exists a function $R_k(\x,r,\th)$ which is smooth on $U\times[0,r_0)\times S^1$, such that
    \begin{equation}\label{eq-ana:taylor}
        f(\x,r,\th)=\sum_{l=0}^k\frac{r^l}{l!}\frac{\p^l f}{\p r^l}(\x,0,\th)+r^{k+1}R_k(\x,r,\th).
    \end{equation}
    Indeed, Taylor's formula with integral remainder gives the expression
    \[R_k(\x,r,\th)=\frac1{k!}\int_0^1 \p^{k+1}_rf(\x,tr,\th)(1-t)^k\,dt.\]
    Applying \eqref{eq-ana:deriv_condition} to \eqref{eq-ana:taylor} and transforming to Cartesan coordinates, we obtain
    \begin{equation}\label{eq-ana:taylor2}
        f(\x,r,\th)=\sum_{l=0}^k\sum_{i=0}^l\frac{c_{l,i}(\x)}{l!}y^iz^{l-i}+r^{k+1}R_k(\x,r,\th).
    \end{equation}
    Recall that $r=\sqrt{y^2+z^2}$ and $\th=\arctan\frac zy$. It follows from homogeneity that
    \[|\p^i_y\p^{k-i}_zr|\leq C(k)r^{1-k},\quad
        |\p^i_y\p^{k-i}_z\th|\leq C(k)r^{-k}.\]
    Consecutively taking derivatives, we obtain
    \[\big|\p^i_y\p^{k-i}_z\p_{\x}^j(r^{k+1}R_k)\big|\leq Cr,\]
    where $C$ depends on the derivatives of $R_k$. Along with \eqref{eq-ana:taylor2} this shows that $f\in C^n(U\times D^{\RR^2}(r_0))$ in the Cartesan coordinates.
\end{proof}

\begin{lemma}\label{lemma-ana:smoothness_jac}
    Assume the same notations as in Lemma \ref{lemma-ana:smoothness}. Let $K(\x,y,z)$ be a function on $U\times D^{\mathbb{R}^2}(r_0)$ which is smooth in Cartesan coordinates. Define a function $h(\x,r,\th)$ by solving the initial value problem
    \begin{equation}\label{eq-ana:ivp}
	\left\{\begin{aligned}
	   & h_{rr}=-K(\x,r\cos\th,r\sin\th)h, \\
	   & h(\x,0,\th)=0,\ h_r(\x,0,\th)=1.
        \end{aligned}\right.
    \end{equation}
    Then $r^{-1}h$ is smooth when expressed in terms of the Cartesan coordinates $(\x,y,z)$.
\end{lemma}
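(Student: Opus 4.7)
The plan is to reduce this to Lemma \ref{lemma-ana:smoothness} applied to $f := r^{-1} h$. The first step is to show that $f$ is well-defined and smooth on $U\times[0,r_0)\times S^1$. Standard smooth dependence of ODE solutions on initial data and parameters yields that $h$ is smooth on $U\times[0,r_0)\times S^1$. Since $h(\x,0,\th)=0$ with $h$ smooth, Taylor's theorem with remainder produces a smooth function $\tilde h$ with $h(\x,r,\th)=r\,\tilde h(\x,r,\th)$, so $f=\tilde h$ is smooth on $U\times[0,r_0)\times S^1$.

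Next I would verify the Taylor-coefficient hypothesis of Lemma \ref{lemma-ana:smoothness}. Setting $a_k(\x,\th):=\partial_r^k h(\x,0,\th)$, I have $a_0=0$, $a_1=1$, and by differentiating the ODE $h_{rr}=-Kh$ $k$ times via the Leibniz rule,
\begin{equation}
a_{k+2}(\x,\th)=-\sum_{j=0}^k\binom{k}{j}\,\partial_r^jK(\x,r\cos\th,r\sin\th)\big|_{r=0}\cdot a_{k-j}(\x,\th).
\end{equation}
Since $K$ is smooth in the Cartesian coordinates $(y,z)=(r\cos\th,r\sin\th)$, the chain rule gives
\begin{equation}
\partial_r^jK(\x,r\cos\th,r\sin\th)\big|_{r=0}=\sum_{i=0}^j\binom{j}{i}\cos^i\!\th\,\sin^{j-i}\!\th\,\big(\partial_y^i\partial_z^{j-i}K\big)(\x,0,0),
\end{equation}
which is a homogeneous trigonometric polynomial of degree $j$ and parity $j$ in $(\cos\th,\sin\th)$, with smooth coefficients in $\x$.

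The main step is the inductive claim: $a_k(\x,\th)$ is a polynomial in $(\cos\th,\sin\th)$ of degree $\leq k-1$ and parity $k-1$, with smooth $\x$-coefficients. The base cases $a_0=0$ and $a_1=1$ hold trivially. For the inductive step, each product in the recursion has degree $\leq j+(k-j-1)=k-1$ and parity $k-1$, so the sum defining $a_{k+2}$ also has degree $\leq k-1$ and parity $k-1$; since $k-1\equiv k+1\pmod 2$ and $k-1\leq k+1$, multiplying by the appropriate power of $(\cos^2\th+\sin^2\th)=1$ expresses $a_{k+2}$ as a homogeneous trigonometric polynomial of degree $k+1$ in $(\cos\th,\sin\th)$, closing the induction. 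The main bookkeeping obstacle is exactly this parity/degree balancing; everything else is routine Leibniz and Taylor arithmetic.

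Finally, from $h(\x,r,\th)=\sum_{k\geq1}a_k(\x,\th)r^k/k!$ (formally, or rigorously via Taylor with integral remainder), I read off
\begin{equation}
\partial_r^m f(\x,0,\th)=\frac{a_{m+1}(\x,\th)}{m+1},
\end{equation}
which by the inductive claim is a homogeneous trigonometric polynomial of degree $m$ in $(\cos\th,\sin\th)$ with smooth $\x$-coefficients, i.e. of the form
\begin{equation}
\partial_r^m f(\x,0,\th)=\sum_{i=0}^{m}c_{m,i}(\x)\cos^i\!\th\,\sin^{m-i}\!\th.
\end{equation}
This is exactly hypothesis \eqref{eq-ana:deriv_condition} of Lemma \ref{lemma-ana:smoothness}, so that lemma gives smoothness of $f=r^{-1}h$ in Cartesian coordinates, finishing the proof.
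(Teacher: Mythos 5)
Your proof is correct and takes essentially the same approach as the paper: both verify the Taylor-coefficient hypothesis of Lemma \ref{lemma-ana:smoothness} by inductively differentiating the ODE, and both use the degree/parity mismatch argument to upgrade the coefficient of $r^m$ to a homogeneous degree-$m$ trigonometric polynomial by multiplying with powers of $\cos^2\th+\sin^2\th$. The only cosmetic difference is that you apply Lemma \ref{lemma-ana:smoothness} directly to $f=r^{-1}h$ (after establishing its polar smoothness via Taylor), whereas the paper applies it to $h_r$ and then passes to $r^{-1}h=\int_0^1 h_r(\x,ty,tz)\,dt$; the two routes are equivalent since $\partial_r^m f(\x,0,\th)=a_{m+1}/(m+1)$ and $\partial_r^m(h_r)(\x,0,\th)=a_{m+1}$ differ only by a constant.
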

\begin{proof}
    Since $h(\x,0,\th)=0$, we have
    \[h(\x,r,\th)=\int_0^r h_r(\x,\rho,\th)\,d\rho=r\int_0^1 h_r(\x,tr,\th)\,dt.\]
    It suffices to show that $h_r$ is a smooth function in Cartesan coordinates (this implies the smoothness of $h_r(\x,tr,\th)=h_r(\x,ty,tz)$ in Cartesan coordinates, and the lemma follows). By Lemma \ref{lemma-ana:smoothness}, it is sufficient to verify condition \eqref{eq-ana:deriv_condition} for $h_r$. The case $k=0$ follows directly from the initial condition. Suppose that \eqref{eq-ana:deriv_condition} holds for $k\leq N$, let us verify it with $k=N+1$. Differentiating the first line of \eqref{eq-ana:ivp}, we have
    \[\begin{aligned}
        \p_r^{N+1}(h_r) &= \p_r^N(h_{rr})=-\sum_{\substack{a,b,c\geq0, \\ a+b+c=N}}\binom{N}{c}\binom{a+b}{a}\p^a_y\p^b_z K\cdot\cos^a\th\sin^b\th\cdot\p_r^c h
    \end{aligned}\]
    Note that $h(\x,0,\th)=0$, hence the terms with $c=0$ are zero when evaluating at $(\x,0,\th)$. Therefore
    \[\begin{aligned}
        \p_r^{N+1}(h_r)(\x,0,\th) &= \sum_{\substack{a,b,d\geq0, \\ a+b+d=N-1}}C(N,a,b)\p^a_y\p^b_z K(\x,0,0)\cdot\cos^a\th\sin^b\th\cdot\p_r^d(h_r) \\
        &= \sum_{\substack{a,b,d\geq0, \\ a+b+d=N-1}}C(N,a,b)\sum_{i=0}^d\Big[\p^a_y\p^b_z K(\x,0,0)c_{d,i}(\x)\Big]\cdot\cos^{a+i}\th\sin^{b+d-i}\th,
    \end{aligned}\]
    where we used the induction hypothesis in the second line. In this way we can express
    \[\p_r^{N+1}(h_r)(\x,0,\th)=\sum_{i=0}^{N-1}\tilde c_i(\x)\cos^i\th\sin^{N-1-i}\th\]
    for some functions $\tilde c_i(\x)$. To further express this as a linear combination of $\cos^i\th$ and $\sin^{N+1-i}\th$, we multiply the right hand side by $\cos^2\th+\sin^2\th$ and re-group the coefficients. This shows condition \eqref{eq-ana:deriv_condition}.
\end{proof}

\bibliographystyle{alpha}
\bibliography{refs}

\vspace{24pt}

\noindent\textit{Department of Mathematics, Duke University, Durham NC, USA}


\noindent\textit{Email: \href{mailto:kx35@math.duke.edu}{kx35@math.duke.edu}}

\vspace{6pt}

\noindent\textit{Department of Mathematics, Michigan State University, East Lansing MI, USA}


\noindent\textit{Email: \href{mailto:kazarasd@msu.edu}{kazarasd@msu.edu}}

\end{document}